\newcommand{\cIG}{\mathcal{IG}}
\newcommand{\cGI}{\mathcal{GI}}
\newcommand{\Ab}{\mathbf{Ab}}
\newcommand{\tp}{{\rm tp}}
\newcommand{\pd}{{\rm pd}} 
\newcommand{\HH}{\mathsf{H}}
\newcommand{\EE}{{\mathsf{E}}}
\newcommand{\FF}{{\mathsf{F}}}
\newcommand{\AAf}{{\mathsf{A}}}
\newcommand{\BB}{{\mathsf{B}}}
\newcommand{\KK}{{\mathsf{K}}}
\newcommand{\TT}{{\mathsf{T}}}
\newcommand{\RR}{\mathsf{R}}
\newcommand{\II}{\mathsf{I}}
\newcommand{\CC}{\mathsf{C}}
\newcommand{\NN}{\mathsf{N}}
\newcommand{\sF}{\mathsf{F}}
\newcommand{\gd}{\mathrm{glo.d}}
\newcommand{\fd}{\mathrm{fin.d}}
\newcommand{\dd}{\mathbf{d}}
\newcommand{\cmE}{\mathscr{E}}
\newcommand{\cmS}{\mathscr{S}}
\newcommand{\lfp}{$\mathsf{lfp}\;$}
\newcommand{\lfpp}{$\mathsf{lfp}$}
\newcommand{\Gd}{{\rm{Gd}}}
\newcommand{\Res}{{\rm Res}^{\fg}_{\fg_{\oa}} }
\newcommand{\Ind}{{\rm Ind}^{\fg}_{\fg_{\oa}} }
\newcommand{\Coind}{{\rm Coind}^{\fg}_{\fg_{\oa}} }
\newcommand{\coind}{{\rm Coind} }
\newcommand{\oa}{\bar{0}}
\newcommand{\ob}{\bar{1}}
\newcommand{\tto}{\twoheadrightarrow}
\newcommand{\cB}{\mathcal B}
\newcommand{\cO}{\mathcal O}
\newcommand{\scO}{s\hspace{-0.1mm}\mathcal{O}}
\newcommand{\cP}{\mathcal{P}}
\newcommand{\cGP}{\mathcal{G\hspace{-.2mm}P}}
\newcommand{\im}{{\rm{im}\,}}
\newcommand{\id}{\mathrm{id}}
\newcommand{\Id}{\mathsf{Id}}
\newcommand{\ad}{\mathrm{ad}}
\newcommand{\Hom}{\mathrm{Hom}} 
\newcommand{\Ext}{\mathrm{Ext}} 
\newcommand{\GE}{\mathrm{G\hspace{-0.4mm}Ext}}
\newcommand{\End}{\mathrm{End}} 
\newcommand{\coker}{\mathrm{coker}\,}
\newcommand{\fk}{\mathfrak{k}}
\newcommand{\dfs}{{/\kern-2pt/}}
\newcommand{\op}{{\rm op}}
\newcommand{\mk}{\Bbbk}
\newcommand{\cC}{\mathcal{C}}
\newcommand{\cL}{\mathcal{L}}
\newcommand{\cD}{\mathcal{D}}
\newcommand{\cA}{\mathcal{A}}
\newcommand{\add}{{\rm{add}}}
\newcommand{\fn}{\mathfrak{n}}
\newcommand{\fg}{\mathfrak{g}}
\newcommand{\fu}{\mathfrak{u}}
\newcommand{\fl}{\mathfrak{l}}
\newcommand{\fp}{\mathfrak{p}}
\newcommand{\fh}{\mathfrak{h}}
\newcommand{\fb}{\mathfrak{b}}
\newcommand{\mN}{\mathbb{N}}
\newcommand{\mC}{\mathbb{C}}
\newcommand{\mZ}{\mathbb{Z}}
\numberwithin{equation}{section}
\newtheoremstyle{notes} {} {} {} {} {\bfseries} {.} {.5em} {}
\theoremstyle{plain}
\newtheorem{prop}[subsubsection]{Proposition}
\newtheorem{scholium}[subsubsection]{Scholium}
\newtheorem{lemma}[subsubsection]{Lemma}
\newcommand{\tr}{{\rm{tr}}}
\newtheorem{cor}[subsubsection]{Corollary}
\newtheorem{thm}[subsubsection]{Theorem}
\theoremstyle{remark}
\newtheorem{rem}[subsubsection]{Remark} 
\newtheorem{ddef}[subsubsection]{Definition} 
\pretocmd{\appendix}{\addtocontents{toc}{\protect\addvspace{10\p@}}}{}{}
\theoremstyle{remark}
\newtheorem{ex}[subsubsection]{Example}
\newtheoremstyle{construction} {} {} {} {} {\bfseries} { } {0pt} {}
\theoremstyle{construction}
\title[Gorenstein homological algebra]{Gorenstein homological algebra for rngs and Lie superalgebras}
\author{Kevin~Coulembier}
\newcommand{\ind}{{\rm Ind}}
\newcommand{\res}{{\rm Res}}
\keywords{Gorenstein projective objects, Gorenstein extension groups, Gorenstein category, Frobenius extensions, rngs with enough idempotents, Lie superalgebras, Serre functor, category~$\cO$, Tate cohomlogy}
\subjclass[2010]{17B55; 18G15; 18G25; 17B10}
\begin{document} 
\date{} 
\begin{abstract}
We generalise notions of Gorenstein homological algebra for rings to the context of arbitrary abelian categories. The results are strongest for module categories of rngs with enough idempotents.
We also reformulate the notion of Frobenius extensions of noetherian rings into a setting which allows for direct generalisation to arbitrary abelian categories.

The abstract theory is then applied to the BGG category~$\cO$ for Lie superalgebras, which can now be seen as a ``Frobenius extension" of the corresponding category for the underlying Lie algebra and is therefore ``Gorenstein". In particular we obtain new and more general formulae for the Serre functors and instigate the theory of Gorenstein extension groups. 
	\end{abstract}

\maketitle 


\section*{Introduction} 

The original motivation for this paper is the following {\bf observation}: ``For an arbitrary module~$M$ in the BGG category~$\cO$ of a classical Lie superalgebra, see~\cite{BGG, preprint, book}, the projective dimension of the restriction of $M$ to the underlying Lie algebra (inside category~$\cO$ for this Lie algebra) is an intrinsic categorical property of $M$ itself''. Note that the construction of such categorical invariants is motivated by the ongoing work on equivalences between blocks in category~$\cO$, see e.g.~\cite{BG, CMW, CS}. Although this observation is not particularly difficult to prove directly, it is rather surprising since restriction to the underlying Lie algebra is a purely Lie algebraic manipulation, with {\it a priori} no intrinsic categorical interpretation.

In Part~I of the paper, we develop ideas from Frobenius extensions and Gorenstein homological algebra in ring theory to arbitrary abelian categories. 
In Part~II, we apply this to module categories over Lie superalgebras, which explains in particular the above observation in a general framework.

In Section~\ref{SecFE}, we propose a notion of a ``Frobenius extension of an abelian category''. When applied to the category of finitely generated modules over noetherian rings, we find that the Frobenius extensions of $S$-mod, are precisely the categories ${R}$-mod for all Frobenius extensions $R$ of $S$ in the classical sense. This demonstrates the consistency of the new definition with the classical one.

In Section~\ref{SecGHA}, we generalise some results on Gorenstein homological algebra for rings from \cite{AB, Avramov, Holm, Iwanaga} to arbitrary abelian categories. The generalisation are immediate, without serious modification of proofs. 
We define Gorenstein projective objects, G-dimensions, and Gorenstein and Tate extension groups. The latter two form a long exact sequence with the ordinary extension groups, generalising one of the main results of \cite{Avramov}. As generalisations of the notions for rings, see e.g.~\cite{AR, Avramov}, we define notions of ``Gorenstein'' and ``Iwanaga-Gorenstein'' categories.

In Section~\ref{SecIdem}, we focus on module categories over rngs (r$i$ngs without $i$dentity) with enough idempotents which satisfy some noetherian property. As a special case we consider, in Section~\ref{SecLFP}, locally finite $\mk$-linear abelian categories with enough projective and injective objects. The class of such categories is closed under Frobenius extensions. Furthermore, the Frobenius extension is Gorenstein if and only if the original category was. In this setting, the G-dimension of an object in the extension is the G-dimension of the ``image'' of that object in the underlying category.

In Sections~\ref{SecLSA} and \ref{SecO}, we focus on representations of Lie superalgebras, mainly in category~$\cO$. It is known that the universal enveloping algebra of a Lie superalgebra is a Frobenius extension (in the classical sense) of that of the underlying Lie algebra, see~\cite{Bell}. Consequently, category~$\cO$ for the Lie superalgebra is a Frobenius extension, in the new sense, of category~$\cO$ for the Lie algebra. Category~$\cO$ for a Lie algebra is Gorenstein in an obvious degenerate way, such that G-dimensions coincide with projective dimensions. The results in Part I thus imply that category~$\cO$ for a Lie superalgebra is Gorenstein, where the G-dimension of a module is equal to the projective dimension of its restriction as a Lie algebra module. This explains the original observation.
As an extra result, we find a new characterisation of Serre functors, simplifying and extending some results in \cite{Serre}. 

In Section~\ref{SecGL}, we initiate the study of Gorenstein extensions and G-dimensions in category~$\cO$ over~$\mathfrak{gl}(m|n)$.

\part{Abstract Gorenstein homological algebra}

\section{Preliminaries}
\subsection{Categories and functors} We let $\cC$ denote an arbitrary abelian category. 
\subsubsection{} In general we do not assume existence of projective or injective objects, we thus define extension functors as in~\cite[Section~3.4]{Weibel}.
For $X\in\cC$, the projective dimension~$\pd_{\cC}X\in\mN\cup\{\infty\}$ is the minimal $i$ for which $\Ext^j_{\cC}(X,-)=0$ for all $j>i$ and similarly for the injective dimension~$\id_{\cC}X$. Furthermore, the global (resp. finitary) dimension of the category,~$\gd\cC\in\mN\cup\{\infty\}$ (resp. $\fd\cC\in\mN\cup\{\infty\}$), is the supremum of all (resp. all finite) projective dimensions of the objects in~$\cC$. 

\subsubsection{}
For a chain complex $\{d_i:C_i\to C_{i-1}\}$ in $\cC$ we have syzygies $\Omega^i(C_\bullet)=\coker d_{i+1}$ and homologies $\HH_i(C_\bullet)=\ker d_i/\im d_{i+1}$. For a cochain complex $\{\partial^{i}:C^{i-1}\to C^{i}\}$, we have cohomologies $\HH^i(C^\bullet)=\ker\partial^{i+1}/\im\partial^i$ and cocycles $\CC^i(C^\bullet)=\ker \partial^{i+1}$.
 
\subsubsection{} We let $p\cC$, resp. $i\cC$, denote the full subcategory of projective, resp. injective, objects in~$\cC$. We denote the full subcategory of objects with finite projective dimension by $p\cC^{(-)}$. 

Assuming that~$\cC$ has enough projective objects, for any $n\in\mN$, the full subcategory of $\cC$ of all objects of the form $\Omega^n(P_\bullet)$, for~$P_\bullet$ a projective resolution of some $X\in\cC$ is denoted by $\Omega^n\cC$. 

We denote the full subcategory of~$\cD^b(\cC)$ of complexes quasi-isomorphic to perfect complexes (finite complexes of projective objects) by~$\cD_{per}(\cC)\cong K^b(p\cC)$.

\subsubsection{}Functors are assumed to be covariant unless specified otherwise. Whenever we are working in additive (resp. $\mk$-linear) categories, functors are assumed to be additive (resp. $\mk$-linear). For a $\mk$-linear additive category~$\cD$ with finite dimensional homomorphism spaces, a {\bf Serre functor} $\Phi$ on~$\cD$, following~\cite[Section~I.1]{VDB}, is an autoequivalence of~$\cD$ equipped with isomorphisms
$$\xi_{X,Y}: \Hom_{\cD}(X,\Phi Y)\;\;\tilde\to\;\; \Hom_{\cD}(Y,X)^\ast,$$
for all $X,Y\in\cD$, natural in both $X$ and $Y$.
By~\cite[Lemma~I.1.3]{VDB}, a Serre functor on~$\cD$, if it exists, is unique up to isomorphism. 

\subsection{Elementary diagram constructions}
Fix an abelian category~$\cC$.
\begin{lemma}\label{baby}[Baby horseshoe lemma]
Consider short exact sequences $X\hookrightarrow P\tto X_1$,~$Z\hookrightarrow Q\tto Z_1$ and~$X\hookrightarrow Y\tto Z$ in~$\cC$ and assume that~$\Ext^1_{\cC}(Z,P)=0$. Then there exists $Y_1$ such that we have a commuting diagram with exact columns and rows:
{{\footnotesize$$\xymatrix{&0\ar[d]&0\ar[d]&0\ar[d]\\
0\ar[r]&X\ar[r]\ar[d]&P\ar[r]\ar[d]&X_1\ar[r]\ar[d]&0\\
0\ar[r]&Y\ar[r]\ar[d]&P\oplus Q\ar[r]\ar[d]&Y_1\ar[r]\ar[d]&0\\
0\ar[r]&Z\ar[r]\ar[d]&Q\ar[r]\ar[d]&Z_1\ar[r]\ar[d]&0\\
&0&0&0
}$$}}
\end{lemma}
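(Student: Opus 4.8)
The plan is to build the diagram from the given data using the standard horseshoe-type argument, with the hypothesis $\Ext^1_\cC(Z,P)=0$ entering exactly once to split an obstruction. First I would set up the middle object $P \oplus Q$ and construct the inclusion $Y \hookrightarrow P\oplus Q$. To do this, observe that from $X \hookrightarrow P$ we get a map $X \to P$, and composing the inclusion $X \to Y$ with... no: rather, I need a map $Y \to P$ extending $X \to P$. This is where the hypothesis enters: applying $\Hom_\cC(-,P)$ to $X \hookrightarrow Y \tto Z$ gives the exact sequence $\Hom_\cC(Y,P) \to \Hom_\cC(X,P) \to \Ext^1_\cC(Z,P) = 0$, so the inclusion $\iota\colon X\hookrightarrow P$ lifts to some $\tilde\iota\colon Y \to P$. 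On the other hand, the composite $Y \tto Z \hookrightarrow Q$ gives a map $Y \to Q$. Together these yield $\phi = (\tilde\iota, \,Y\to Z\to Q)\colon Y \to P\oplus Q$.

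Next I would check that $\phi$ is a monomorphism and identify its cokernel. Monomorphicity: if $y \in Y$ maps to $0$, then in particular $y$ maps to $0$ in $Q$, hence (since $Y\to Q$ factors as $Y\tto Z\hookrightarrow Q$ and $Z\hookrightarrow Q$ is mono) $y$ lies in the image of $X\hookrightarrow Y$; but then $\tilde\iota(y)=\iota(\text{preimage})$, and $\iota$ is mono, so the preimage is $0$, hence $y=0$. For the cokernel, I would use the snake lemma on the map of short exact sequences from the top row $X\hookrightarrow P\tto X_1$ to the bottom row $Z\hookrightarrow Q\tto Z_1$, but this is precisely what I am trying to construct. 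The cleaner route is: define $Y_1$ as the cokernel of $\phi$, giving the middle row $Y\hookrightarrow P\oplus Q\tto Y_1$ by construction. Then the left column $X\hookrightarrow Y\tto Z$ is given, the middle column $P\hookrightarrow P\oplus Q\tto Q$ is the obvious split sequence, and I need to produce the right column $X_1 \to Y_1 \to Z_1$ and verify exactness.

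For the right column, I would apply the snake lemma (or the $3\times 3$ lemma) to the commuting square formed by the left two columns and the top two rows: we have a map of short exact sequences $(X\hookrightarrow Y\tto Z) \to (P \hookrightarrow P\oplus Q \tto Q)$ — wait, the arrows go the other way. Let me instead invoke the $3\times 3$ (nine) lemma directly: I have the top two rows and the left two columns commuting with exact rows and the left/middle columns exact (the middle column $P\hookrightarrow P\oplus Q\tto Q$ splits). The nine lemma then guarantees that the cokernels $X_1,Y_1,Z_1$ fit into an exact right column $X_1 \to Y_1 \to Z_1$, and that the bottom row $Z\hookrightarrow Q\tto Z_1$ and the bottom-right square commute — but I must match $Z_1$ here with the given $Z_1$ from $Z\hookrightarrow Q\tto Z_1$, which is automatic since the cokernel of $Q\hookrightarrow\ $ induced... more carefully: the cokernel of the middle column map is the cokernel computation giving $Y_1$, and chasing shows the induced map $Y_1\to\coker(P\oplus Q\to Q)=Z_1$ is surjective with kernel the image of $X_1$.

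The main obstacle, and the only place cleverness is needed, is the very first step: producing the lift $\tilde\iota\colon Y\to P$ of $\iota\colon X\to P$, which is exactly what $\Ext^1_\cC(Z,P)=0$ buys us; everything after that is a mechanical diagram chase (or a citation of the nine lemma) with no further hypotheses needed. I expect the bookkeeping of verifying that all nine squares commute and all rows and columns are exact to be routine but tedious, so in the write-up I would either cite a standard reference for the nine lemma or perform the snake-lemma computation once and let the rest follow by symmetry and construction.
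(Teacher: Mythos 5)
Your proposal is correct and follows essentially the same route as the paper: use $\Ext^1_{\cC}(Z,P)=0$ to lift $X\hookrightarrow P$ to a map $Y\to P$, combine it with the composite $Y\tto Z\hookrightarrow Q$ to get a monomorphism $Y\to P\oplus Q$, and obtain $Y_1$ and the exact right column from the snake (or nine) lemma. The only cosmetic point is that your element-wise check of injectivity should be read via generalized elements (or the Freyd--Mitchell embedding), since $\cC$ is an arbitrary abelian category.
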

\begin{proof}
Since $\Ext^1_{\cC}(Z,P)=0$, we have $\Hom_{\cC}(Y,P)\tto \Hom_{\cC}(X,P)$. Take $\alpha:Y\to P$ in the preimage of $X\hookrightarrow P$. Compose $Y\tto Z$ and~$Z\hookrightarrow Q$ to construct $\beta:Y\to Q$. Then we have a monomorphism $\alpha+\beta:Y\to P\oplus Q$, admitting two commuting squares in the diagram. The existence of~$Y_1$ and the short exact sequence in the last column then follow from the snake lemma.
\end{proof}

\begin{lemma}\label{baby2}[Baby comparison lemma]
Consider short exact sequences $Y\hookrightarrow P\tto Y_1$,~$Z\hookrightarrow Q\tto Z_1$, a morphism $\alpha:Y\to Z$ in~$\cC$ and assume that~$\Ext^1_{\cC}(Y_1,Q)=0$. Then there exists a commuting diagram with exact rows:
$$\xymatrix{
0\ar[r]&Y\ar[r]\ar[d]^{\alpha}&P\ar[r]\ar[d]&Y_1\ar[r]\ar[d]&0\\
0\ar[r]&Z\ar[r]&Q\ar[r]&Z_1\ar[r]&0.
}$$
Moreover, if $\alpha$ is an epimorphism, the diagram contains an exact sequence
$$0\,\to\, \ker\alpha\,\to\, P\,\to\, Q\oplus Y_1\,\to \,Z_1\,\to\, 0.$$
\end{lemma}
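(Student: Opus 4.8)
The plan is to imitate the construction in the proof of Lemma~\ref{baby}. First I would build the middle vertical arrow. Composing $\alpha$ with the monomorphism $Z\hookrightarrow Q$ gives a morphism $Y\to Q$; since $\Ext^1_{\cC}(Y_1,Q)=0$, applying $\Hom_{\cC}(-,Q)$ to the short exact sequence $Y\hookrightarrow P\tto Y_1$ yields a surjection $\Hom_{\cC}(P,Q)\tto\Hom_{\cC}(Y,Q)$, so this morphism extends to some $\beta\colon P\to Q$. By construction, the restriction of $\beta$ to $Y$ equals $(Z\hookrightarrow Q)\circ\alpha$, so $\alpha$ and $\beta$ form a commuting left square; letting $\gamma\colon Y_1\to Z_1$ be the morphism induced by $\beta$ on the cokernels of the two left-hand monomorphisms gives the commuting right square. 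This proves the first assertion.

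For the second assertion assume $\alpha$ is an epimorphism, and write $\pi\colon P\tto Y_1$ and $\rho\colon Q\tto Z_1$ for the two quotient maps. I claim that
$$0\longrightarrow\ker\alpha\longrightarrow P\xrightarrow{\ (\beta,\pi)\ }Q\oplus Y_1\xrightarrow{\ (\rho,-\gamma)\ }Z_1\longrightarrow 0,$$
with $\ker\alpha\to P$ the composite $\ker\alpha\hookrightarrow Y\hookrightarrow P$, is exact. The composite of the two middle maps sends $p$ to $\rho\beta(p)-\gamma\pi(p)=0$, since $\gamma$ makes the right square commute, and $(\rho,-\gamma)$ is an epimorphism because $\rho$ already is. Exactness at $P$ is immediate: $\ker\pi$ is the image of $Y$ in $P$ and $\beta|_Y=(Z\hookrightarrow Q)\circ\alpha$, whence $\ker(\beta,\pi)=\ker\beta\cap Y=\ker\alpha$.

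The only step needing attention — and the place where the hypothesis that $\alpha$ is epi is actually used — is exactness at $Q\oplus Y_1$; I expect this to be the main obstacle, albeit a mild one. I would settle it by a diagram chase, legitimate in any abelian category: given $(q,y_1)$ with $\rho(q)=\gamma(y_1)$, lift $y_1$ along $\pi$ to some $p\in P$; then $\rho\beta(p)=\gamma\pi(p)=\gamma(y_1)=\rho(q)$, so $q-\beta(p)\in\ker\rho$, i.e.\ $q-\beta(p)$ is the image of some $z\in Z$; using that $\alpha$ is epi, pick $y\in Y$ with $\alpha(y)=z$, and observe that $p+y$ (viewing $Y$ inside $P$) is again a lift of $y_1$ along $\pi$, now with $\beta(p+y)=q$, so $(\beta,\pi)(p+y)=(q,y_1)$. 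Alternatively one can argue without elements: the left square exhibits $P/\ker\alpha$ as the pullback $Q\times_{Z_1}Y_1$ (apply the five lemma to the evident morphism from $0\to Z\to P/\ker\alpha\to Y_1\to 0$ to $0\to\ker\rho\to Q\times_{Z_1}Y_1\to Y_1\to 0$), and since $\rho$ is epi this pullback is the kernel of the epimorphism $(\rho,-\gamma)$; splicing with $0\to\ker\alpha\to P\to P/\ker\alpha\to 0$ then produces the asserted four-term exact sequence. Conceptually, the mapping cone of the morphism of short exact sequences from the first part is acyclic, giving $0\to Y\to P\oplus Z\to Y_1\oplus Q\to Z_1\to 0$, and surjectivity of $\alpha$ is exactly what lets one cancel the copy of $Z$ and replace $Y$ by $\ker\alpha$.
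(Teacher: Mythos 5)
Your proof is correct and follows essentially the same route as the paper: extend $(Z\hookrightarrow Q)\circ\alpha$ to $\beta\colon P\to Q$ using $\Ext^1_{\cC}(Y_1,Q)=0$, induce $\gamma$ on the cokernels, and then analyse the map $\phi=(\beta,\pi)\colon P\to Q\oplus Y_1$, whose kernel is $\ker\alpha$ and whose cokernel is $Z_1$ once $\alpha$ is an epimorphism. The paper states this kernel/cokernel identification tersely ("immediate by construction"), and your verification of exactness at $Q\oplus Y_1$ (in particular the element-free pullback argument) simply fills in that same computation.
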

\begin{proof}
By $\Ext^1_{\cC}(Y_1,Q)=0$, we have $\Hom_{\cC}(P,Q)\tto \Hom_{\cC}(Y,Q)$. This allows to construct a morphism $\beta:P\to Q$ to make a commuting square with $\alpha$. 
We have an exact sequence
$$0\to \Hom_{\cC}(Y_1,Z_1)\to\Hom_{\cC}(P,Z_1)\to \Hom_{\cC}(Y,Z_1).$$
Let $\beta'\in\Hom_{\cC}(P,Z_1)$ denote the composition of~$\beta$ with $Q\tto Z_1$. By construction,~$\beta'$ is mapped to zero in the above sequence, meaning we can consider a non-zero $\gamma\in\Hom_{\cC}(Y_1,Z_1)$ in its preimage, which completes the commuting diagram.

Now we construct the exact sequence. Consider a morphism $\phi:P\to Q\oplus Y_1$ obtained by adding the morphisms $P\to Q$ and $P\to Y_1$ in the diagram. It is immediate by construction that~$\ker\phi\cong\ker\alpha$ and $\coker\phi$ is isomorphic to the cokernel of the composition of $\alpha$ with $Z\hookrightarrow Q$. When $\alpha$ is an epimorphism it thus follows that~$\coker \phi\cong Z_1$.
\end{proof}

\subsection{Subcategories of abelian categories}
For the remainder of this section we fix an arbitrary full subcategory~$\cB$ of an abelian category~$\cC$. 

\subsubsection{} Assume $\cC$ has enough projective objects. Following~\cite{AB} or~\cite[Section~3]{AR}, a full karoubian (idempotent split) subcategory~$\cB$ of~$\cC$ is {\bf (projectively) resolving} if it contains all projective objects in $\cC$ and for every short exact sequence
$$0\to X\to Y\to Z\to 0$$
in~$\cC$ with $Z\in \cB$, we have that~$X\in \cB$ if and only if $Y\in\cB$. 

\subsubsection{} We let ${}^{\perp}\cB$, resp. $\cB^{\perp}$, denote the full subcategory of objects $X\in \cC$ which satisfy $\Ext_{\cC}^i(X,Y)=0$, resp. $\Ext_{\cC}^i(Y,X)=0$, for all $Y\in\cB$ and all $i>0$. If we only require first extensions to vanish we write ${}^{\perp_1}\cB$, or $\cB^{\perp_1}$. A chain complex~$C_\bullet$ in $\cC$, is called {\bf left}, resp. {\bf right,~$\cB$-acyclical} if $\Hom_\cC(Y, C_\bullet)$, resp. $\Hom_{\cC}(C_\bullet, Y)$, is exact for all $Y\in\cB$.

\begin{lemma}\label{secPacyc}
An exact sequence $P_\bullet$ of projective objects in~$\cC$ is right $\cB$-acyclical if and only if each syzygy object is in~${}^\perp \cB$.
\end{lemma}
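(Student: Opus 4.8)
The plan is to prove both implications by inducting on the position of the syzygy and repeatedly using the long exact sequence in $\Ext_{\cC}^\bullet(-,Y)$ attached to the short exact sequences $\Omega^{i+1}(P_\bullet)\hookrightarrow P_i \tto \Omega^i(P_\bullet)$ extracted from the exact complex $P_\bullet$. Write $\Omega^i = \Omega^i(P_\bullet)$ for the $i$-th syzygy (with the convention $\Omega^0 = \coker d_1$, or whatever cokernel makes $P_\bullet$ exact at the appropriate spot), so that for each $i\geq 0$ one has a short exact sequence $0\to \Omega^{i+1}\to P_i\to \Omega^i\to 0$ with $P_i$ projective.

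First I would unwind the definition: $P_\bullet$ is right $\cB$-acyclical means $\Hom_{\cC}(P_\bullet,Y)$ is an exact complex for every $Y\in\cB$. Chopping the exact complex $P_\bullet$ into short exact sequences and applying $\Hom_{\cC}(-,Y)$, exactness of $\Hom_{\cC}(P_\bullet,Y)$ at spot $i$ is equivalent to exactness of $0\to\Hom_{\cC}(\Omega^i,Y)\to\Hom_{\cC}(P_i,Y)\to\Hom_{\cC}(\Omega^{i+1},Y)\to 0$, i.e.\ to the surjectivity of $\Hom_{\cC}(P_i,Y)\to\Hom_{\cC}(\Omega^{i+1},Y)$. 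By the long exact sequence for $0\to\Omega^{i+1}\to P_i\to\Omega^i\to 0$ and projectivity of $P_i$, this surjectivity is exactly $\Ext^1_{\cC}(\Omega^i,Y)=0$, and moreover the connecting maps give dimension shifts $\Ext^{j}_{\cC}(\Omega^i,Y)\cong\Ext^{j+1}_{\cC}(\Omega^{i+1},Y)$ for all $j\geq 1$ (again using $P_i$ projective). So right $\cB$-acyclicity of $P_\bullet$ amounts to $\Ext^1_{\cC}(\Omega^i,Y)=0$ for all $i$ and all $Y\in\cB$.

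Then I would close the loop with the dimension-shift isomorphisms: for a fixed syzygy $\Omega^i$ and a fixed $j>0$, iterating $\Ext^{j}_{\cC}(\Omega^i,Y)\cong\Ext^{j+1}_{\cC}(\Omega^{i+1},Y)\cong\cdots\cong\Ext^{1}_{\cC}(\Omega^{i+j-1},Y)$ shows that $\Ext^{j}_{\cC}(\Omega^i,Y)=0$ for all $j>0$ precisely when $\Ext^1_{\cC}(\Omega^{i'},Y)=0$ for all $i'\geq i$. Combining over all $i$: every syzygy lies in ${}^\perp\cB$ iff $\Ext^1_{\cC}(\Omega^i,Y)=0$ for all $i$ and all $Y\in\cB$, which by the previous paragraph is the same as right $\cB$-acyclicity. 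This gives both directions at once.

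I do not expect a genuine obstacle here; the content is entirely the standard syzygy/dimension-shifting bookkeeping, and the only things to be slightly careful about are (a) the indexing convention for $\Omega^0$ so that the short exact sequences $0\to\Omega^{i+1}\to P_i\to\Omega^i\to 0$ are set up correctly from the given exact complex $P_\bullet$ (here one should note $P_\bullet$ need not be a resolution of a prescribed object, but an exact complex of projectives still decomposes into such short exact sequences), and (b) remembering to use projectivity of the $P_i$ both for the vanishing $\Ext^{>0}_{\cC}(P_i,Y)=0$ that produces the dimension-shift isomorphisms and for identifying the cokernel term $\Ext^1_{\cC}(\Omega^i,Y)$ with the obstruction to lifting maps $\Omega^{i+1}\to Y$ through $P_i$. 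Everything else is the long exact sequence of $\Ext_{\cC}(-,Y)$, which is available in the generality of~\cite[Section~3.4]{Weibel} used in the paper.
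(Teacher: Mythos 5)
Your proof is correct and is in substance the same argument as the paper's, which simply observes that the truncation $\cdots\to P_{i+1}\to P_i\to 0$ is a projective resolution of $\Omega^i(P_\bullet)$ and hence $\Ext^k_{\cC}(\Omega^i(P_\bullet),Y)\cong\HH^{i+k}(\Hom_{\cC}(P_\bullet,Y))$ — exactly the dimension-shifting bookkeeping you carry out via the short exact sequences $0\to\Omega^{i+1}\to P_i\to\Omega^i\to 0$. The only cosmetic point is that the index $i$ should range over all of $\mZ$ (the complex is unbounded in both directions, not a resolution), but your argument applies verbatim at every spot.
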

\begin{proof}
Take $i\in\mZ$. The truncated complex~$\cdots\to P_{i+1}\to P_{i}\to 0$ is a projective resolution of $X:=\Omega^i(P_\bullet)$. Hence we find~$\Ext^k_{\cC}(X, Y)\cong \HH^{i+k}(\Hom_{\cC}(P_\bullet,Y))$, for~$k\ge 1$ and~$Y\in \cB$.
\end{proof}

\begin{ddef}\label{properDef}
A {\bf $\cB$-resolution} of an object $X\in \cC$ is a chain complex
$$\cdots\to B_{2}\to^{d_2} B_1\to^{d_1} B_0\to^{d_{0}} 0$$ in~$\cB$, with only non-zero homology given by~$\HH_0(B_\bullet)\cong X\cong\Omega^0(B_\bullet)$.
A $\cB$-resolution~$B_\bullet$ of~$X$ is {\bf proper} if the augmented exact sequence $B_\bullet\to X\to 0$ is left $\cB$-acyclical.
\end{ddef}

\subsubsection{}Following~\cite[Section~1]{AR}, a {\bf right $\cB$-approximation} of~$X\in\cC$ is an epimorphism $\alpha: A\tto X$ with $A\in\cB$, such that the induced morphism $\Hom_{\cC}(A',A)\to \Hom_{\cC}(A',X)$ is surjective for each $A'\in\cB$. The subcategory~$\cB$ is {\bf contravariantly finite} in $\cC$ if each object in~$\cC$ admits a right $\cB$-approximation.
An epimorphism $\alpha: A\tto X$, for~$A\in\cB$ such that~$K:=\coker (\alpha)$ is in~${\cB}^{\perp_1}$ is called a {\bf special} right $\cB$-approximation.

\subsubsection{} A {\bf cotorsion pair} in an abelian category is a pair $(\cA,\cB)$ of subcategories such that~$\cA={}^{\perp_1}\cB$ and~$\cB=\cA^{\perp_1}$. A cotorsion pair is {\bf hereditary} if, in addition, the restriction of  $\Ext^j_{\cC}(-,-)$ to~$\cA^{\op}\times \cB$ vanishes for all $j>0$. A cotorsion pair {\bf admits enough projectives} if every object $X\in\cC$ admits a special right $\cA$-approximation. Here, this is $B\hookrightarrow A\tto X$, with $A\in\cA$ and $B\in\cB$.

\subsection{Noetherian rings} By ring we always mean unital ring. 
The category of all left modules over a ring~$R$ is denoted by~$R$-Mod.
By noetherian ring we mean a ring which is both left and right noetherian. For a noetherian ring $R$, we denote by~$R$-mod the (abelian) category of finitely generated modules. 
A {\bf finite ring extension} of a ring $S$ is a ring $R$ such that~$S$ is a (unital) subring and such that the (left) $S$-module~$R$ is finitely generated.
\begin{ddef}\label{DefFEring}
Consider a noetherian ring $S$ with automorphism $\alpha$. A ring $R$ is an {\bf $\alpha$-Frobenius extension of~$S$} if 
it is a finite ring extension, with $R$ projective as a left $S$-module and for which we have an $(R,S)$-bimodule morphism 
\begin{equation}\label{eqbimod}R\;\cong\; \Hom_S(R,{}_\alpha S).\end{equation}
\end{ddef}

Such an isomorphism~$\varphi: R\to \Hom_S(R,{}_\alpha S)$, $a\mapsto\varphi_a$, for all $a\in R$, leads to a bi-additive map
$$\sigma:R\times R\to S,\qquad \sigma(a,b)=\varphi_b(a),$$
which satisfies $\sigma(ab,c)=\sigma(a,bc)$, for~$a,b,c\in R$, and $\sigma(xa,b)=\alpha(x)\sigma(a,b)$ and $\sigma(a,bx)=\sigma(a,b)x$, for~$x\in S$. By \cite[Theorem~1.1]{Bell},~$\sigma$ is non-degenerate.

\begin{ex}\label{exAlgebra}
Let $\mk$ be any field and $A$ a finite dimensional $\mk$-algebra. By the above, it follows easily that~$A$ is a {\bf Frobenius algebra} if and only if it is an $\id_{\mk}$-Frobenius extension of~$\mk$.
\end{ex}

For (associative) algebras we will in general {\bf not} assume that they are unital or finite dimensional. A $\mk$-algebra is thus a $\mk$-vector space with a bilinear associative product.


\section{Frobenius extensions of abelian categories}\label{SecFE}

\subsection{Definitions}

We let $\cC$ be an arbitrary abelian category. 
\begin{ddef}\label{DefFE}
A {\bf Frobenius extension} of~$\cC$ is an abelian category~$\widetilde\cC$ with (additive) functor
$\RR:\widetilde\cC\to \cC$ such that
\begin{enumerate}[(i)]
\item $\RR$ has a left adjoint $\II$ and a right adjoint $\CC$;
\item $\RR,\II$ and $\CC$ are faithful;
\item $\II\cong\CC\circ \EE$ for an auto-equivalence $\EE$ of~$\cC$.
\end{enumerate}
\end{ddef}
Clearly $\widetilde{\cC}$ is a Frobenius extension of~$\cC$ if and only if $\widetilde{\cC}^{\op}$ is a Frobenius extension of~$\cC^{\op}$.
We will show in Theorem~\ref{ThmEquivFro} that this is an appropriate generalisation Definition~\ref{DefFEring}. In analogy with the situation in ring theory, one could use the term $\EE$-Frobenius extension of a category~$\cC$.
\begin{lemma}\label{LemBasix}
Consider the data of Definition~\ref{DefFE}. 
\begin{enumerate}[(i)]
\item The functors~$\RR,\II,\CC$ are exact.
\item The functors~$\RR,\II,\CC$ restrict to functors between $p\widetilde{\cC}$ and $p\cC$, and between $i\widetilde\cC$ and $i\cC$.
\item The units $\Id_{\cC}\to \RR\circ\II$ and $\Id_{\widetilde\cC}\to \II\circ\CC$ are monomorphisms.
\item The counits $\II\circ\RR\to \Id_{\widetilde\cC}$ and $\RR\circ\CC\to\Id_{\cC}$ are epimorphisms.
\end{enumerate}
\end{lemma}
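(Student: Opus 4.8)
The plan is to derive all four statements from standard facts about adjoint functors, using the three conditions of Definition~\ref{DefFE}; no diagram chase is needed. I would prove~(i) first, then~(ii), and finally~(iii) and~(iv) together.

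\emph{Part~(i).} Since $\RR$ has a left adjoint it is itself a right adjoint, hence preserves all limits and in particular kernels, so it is left exact; since it also has a right adjoint it is a left adjoint, hence preserves cokernels and is right exact. Thus $\RR$ is exact. It then follows formally that $\II$, being a left adjoint, is right exact and that $\CC$, being a right adjoint, is left exact. Now I invoke condition~(iii): the auto-equivalence $\EE$ is exact, so $\II\cong\CC\circ\EE$ is left exact (as $\CC$ is), hence exact; and then $\CC\cong\II\circ\EE^{-1}$ is a composite of exact functors, hence exact.

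\emph{Part~(ii).} Here I would use two elementary observations: if $F\dashv G$ with $G$ exact then $F$ preserves projectives, because $\Hom(FP,-)\cong\Hom(P,G-)$ is then exact; dually, if $F\dashv G$ with $F$ exact then $G$ preserves injectives. Since all three functors are exact by~(i), applying these to the adjunction $\II\dashv\RR$ shows that $\II$ preserves projectives and $\RR$ preserves injectives, and applying them to $\RR\dashv\CC$ shows that $\RR$ preserves projectives and $\CC$ preserves injectives. The two remaining assertions, that $\II$ preserves injectives and $\CC$ preserves projectives, are not formal consequences of adjointness; this is exactly where condition~(iii) is used a second time: $\EE$ and $\EE^{-1}$ are auto-equivalences and so preserve projectives and injectives alike, whence $\II\cong\CC\circ\EE$ preserves injectives and $\CC\cong\II\circ\EE^{-1}$ preserves projectives.

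\emph{Parts~(iii) and~(iv).} These are instances of the standard dictionary between adjunctions and faithfulness: in an adjunction $F\dashv G$ the unit $\Id\to GF$ is a pointwise monomorphism if and only if $F$ is faithful, and the counit $FG\to\Id$ is a pointwise epimorphism if and only if $G$ is faithful. By condition~(ii) of Definition~\ref{DefFE} the functors $\RR$, $\II$ and $\CC$ are all faithful, so the adjunction $\II\dashv\RR$ yields that the unit $\Id_{\cC}\to\RR\circ\II$ is a monomorphism and the counit $\II\circ\RR\to\Id_{\widetilde\cC}$ is an epimorphism, while the adjunction $\RR\dashv\CC$ yields that its unit $\Id_{\widetilde\cC}\to\CC\circ\RR$ is a monomorphism and its counit $\RR\circ\CC\to\Id_{\cC}$ is an epimorphism. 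The only point requiring any real thought is the part of~(ii) about $\II$ on injective objects and $\CC$ on projective objects: this genuinely needs the Frobenius twist $\EE$ and does not follow merely from $\RR$ possessing both adjoints.
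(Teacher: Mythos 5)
Your proof is correct and takes essentially the same route as the paper: the paper handles (i)--(ii) as standard consequences of adjunction (your explicit appeals to the auto-equivalence $\EE$ for the exactness of $\II,\CC$ and for $\II$ preserving injectives and $\CC$ projectives are exactly the non-formal ingredients implicitly needed there), and its proof of (iii)--(iv) is precisely a hands-on verification of the standard fact you cite, that faithfulness of a left (resp.\ right) adjoint makes the unit a monomorphism (resp.\ the counit an epimorphism). You also correctly read the second unit as $\Id_{\widetilde\cC}\to\CC\circ\RR$, which is what the statement must mean since $\II\circ\CC$ does not compose.
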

\begin{proof}
Parts (i) and (ii) are standard homological properties of adjoint functors.
For parts (iii) and (iv) we just prove that~$\eta: \Id_{\cC}\to \RR\circ\II$ is a monomorphism, the other claims can be proved identically. As $(\II,\RR)$ is a pair of adjoint functors, for arbitrary~$X\in\cC$, we have an isomorphism
$$\nu:\Hom_{\widetilde\cC}(\II-,\II X)\;\;\tilde\to\;\;\Hom_{\cC}(-,\RR\circ\II X),$$
of contravariant functors. By definition, we have $\eta_X=\nu_X(1_{\II X})$ and, for an arbitrary morphism $\alpha: Y\to X$ in $\cC$, the diagram
$$\xymatrix{
\Hom_{\widetilde\cC}(\II X,\II X)\ar[r]^{\nu_X}\ar[d]^{-\circ \II(\alpha)}& \Hom_{\cC}(X,\RR\circ\II X)\ar[d]^{-\circ\alpha}\\
\Hom_{\widetilde\cC}(\II Y,\II X)\ar[r]^{\nu_Y}& \Hom_{\cC}(Y,\RR\circ\II X)
}$$
commutes. This implies that~$\eta_X\circ\alpha=\nu_Y(\II(\alpha))$. Since $\II$ is faithful and $\nu_Y$ an isomorphism, we have $\eta_X\circ\alpha\not=0$ for any non-zero $\alpha$ and indeed $\eta_X$ is a monomorphism for all $X$.
\end{proof}

\subsection{Properties of Frobenius extensions}
Fix $\cC,\widetilde\cC$ as in Definition~\ref{DefFE}.

\begin{prop}\label{enough}
The category~$\widetilde\cC$ contains enough projective objects if and only if $\cC$ contains enough projective objects.
If there exist enough projective objects, then
\begin{enumerate}[(i)]
\item the projective object in $\cC$ are the direct summands of objects $\RR P$, for arbitrary~$P\in p\widetilde\cC$;
\item  the projective object in $\widetilde\cC$ are the direct summands of objects $\II P$ (or equivalently $\CC P$), for arbitrary~$P\in p\cC$.
\end{enumerate}
The same statements hold true for injective objects.
\end{prop}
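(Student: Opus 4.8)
The plan is to exploit the adjunctions and faithfulness in Definition~\ref{DefFE} together with Lemma~\ref{LemBasix}. First I would prove the equivalence ``$\widetilde\cC$ has enough projectives $\iff$ $\cC$ has enough projectives''. For the direction ($\cC$ has enough) take $X\in\widetilde\cC$; pick an epimorphism $P\tto \RR X$ in $\cC$ with $P$ projective, apply the exact functor $\II$ (exact by Lemma~\ref{LemBasix}(i)) to get $\II P\to \II\RR X$, and precompose with the adjunction counit $\II\RR X\to X$, which is an epimorphism by Lemma~\ref{LemBasix}(iv). Since $\II P$ is projective (left adjoint of the exact functor $\RR$ sends projectives to projectives, Lemma~\ref{LemBasix}(ii)), $\widetilde\cC$ has enough projectives. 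For the converse, given $X\in\cC$, choose a projective epimorphism $Q\tto \II X$ in $\widetilde\cC$; applying the exact functor $\RR$ gives $\RR Q\tto \RR\II X$, and composing with a retraction $\RR\II X\to X$ of the monomorphism unit $\eta_X$ — which exists because $\eta_X$ splits when... no, $\eta_X$ need not split. Instead I would use that $X$ is a direct summand of $\RR\II X$ only if... so the cleaner route is: the counit $\RR\CC\to \Id_\cC$ is an epimorphism (Lemma~\ref{LemBasix}(iv)), so compose $\RR Q\tto \RR\CC\,? $ — better: pick $Q\tto \CC X$ projective epimorphism in $\widetilde\cC$, apply $\RR$ to get $\RR Q\tto \RR\CC X$, then compose with the counit $\RR\CC X\tto X$; since $\RR Q$ is projective in $\cC$ (Lemma~\ref{LemBasix}(ii)), $\cC$ has enough projectives.

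Next I would establish the characterisations (i) and (ii) assuming enough projectives exist. For (i): any $\RR P$ with $P\in p\widetilde\cC$ is projective in $\cC$ by Lemma~\ref{LemBasix}(ii), hence so are its direct summands; conversely, given a projective $P'\in p\cC$, the composite $\II P'\to \II\RR(\text{something})$... rather, I would find $P'$ as a summand of $\RR P$ for some $P\in p\widetilde\cC$. From the enough-projectives argument there is a projective epimorphism $\RR Q\tto P'$ with $Q\in p\widetilde\cC$ obtained as above (take $Q\tto \CC P'$); since $P'$ is projective this epimorphism splits, so $P'$ is a direct summand of $\RR Q$, proving (i). For (ii): $\II P$ and $\CC P$ are projective in $\widetilde\cC$ for $P\in p\cC$ by Lemma~\ref{LemBasix}(ii) (and $\II\cong \CC\circ\EE$ with $\EE$ an autoequivalence shows the two classes of summands coincide); conversely, given a projective $P''\in p\widetilde\cC$, apply the enough-projectives construction in the other direction: there is a projective epimorphism $\II P\tto \II\RR P''$ with $P\in p\cC$ (take $P\tto \RR P''$ projective epi in $\cC$), compose with the epi counit $\II\RR P''\tto P''$ to get a projective epimorphism $\II P\tto P''$, which splits since $P''$ is projective, exhibiting $P''$ as a summand of $\II P$.

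Finally, the injective statements follow by the duality remark already recorded in the excerpt: $\widetilde\cC$ is a Frobenius extension of $\cC$ if and only if $\widetilde\cC^{\op}$ is a Frobenius extension of $\cC^{\op}$ (with $\RR^{\op}$ having left adjoint $\CC^{\op}$ and right adjoint $\II^{\op}$, and $\CC^{\op}\cong \II^{\op}\circ(\EE^{\op})^{-1}$ up to the obvious bookkeeping). Since injective objects in $\cC$ are precisely the projective objects in $\cC^{\op}$, applying parts (i) and (ii) already proven to the opposite categories yields the corresponding statements for injectives, with the roles of $\II$ and $\CC$ interchanged as appropriate.

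I expect the main obstacle to be the ``conversely'' half of parts (i) and (ii): showing every projective is a \emph{direct summand} of one of the distinguished objects, rather than merely a quotient. The key point making this work is that an epimorphism onto a projective object splits, combined with the fact that the relevant comparison epimorphisms ($\RR Q\tto P'$, $\II P\tto P''$) are built from the epi counits of Lemma~\ref{LemBasix}(iv); one must be slightly careful to route through $\CC$ versus $\II$ so that the counit used is genuinely epic, and to invoke $\II\cong\CC\circ\EE$ to see that ``summands of $\II P$'' and ``summands of $\CC P$'' describe the same class. Everything else is a direct unwinding of the exactness and faithfulness already secured in Lemma~\ref{LemBasix}.
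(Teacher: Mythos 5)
Your proof is correct and takes essentially the same route as the paper: the comparison maps $\II P\to X$ and $\RR Q\to M$ are the adjoints of projective epimorphisms onto $\RR X$ and $\CC M$, shown to be epimorphisms (you via exactness plus the epic counits of Lemma~\ref{LemBasix}(iv), the paper via the adjunction square and faithfulness of $\RR$, which is the same argument underlying that lemma). Parts (i), (ii) and the injective statements then follow, as in the paper, by splitting epimorphisms onto projective objects, using $\II\cong\CC\circ\EE$, and passing to opposite categories.
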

\begin{proof}
Assume first that~$\cC$ contains enough projective objects. Consider an arbitrary~$N\in\widetilde\cC$ with epimorphism $\pi:P\tto \RR N$, for~$P$ projective, in~$\cC$. Define $\alpha:\II P\to N$ as the morphism corresponding to~$\pi$ under $(\II,\RR)$-adjunction. By Lemma~\ref{LemBasix}(ii),~$\II P$ is projective. By definition of adjunction, for any $\beta:N\to M$ in $\widetilde{\cC}$, we have a commuting square
$$\xymatrix{
\Hom_{\cC}(P,\RR N)\ar[r]\ar[d]^{\RR(\beta)\circ-}& \Hom_{\widetilde\cC}(\II P, N)\ar[d]^{\beta\circ-}\\
\Hom_{\cC}(P,\RR M)\ar[r]& \Hom_{\widetilde\cC}(\II P, M),
}$$
where the horizontal arrows are isomorphisms. Therefore, if~$\beta\circ\alpha=0$, then~$\RR(\beta)\circ\pi=0$. Since $\pi$ is an epimorphism the latter means $\RR(\beta)=0$ and by faithfulness of~$\RR$ thus finally $\beta=0$. Hence, $\alpha$ is an epimorphism
and~$\widetilde\cC$ contains enough projective objects.
 

Now assume that~$\widetilde{\cC}$ contains enough projective objects. For any $M\in \cC$, we have an epimorphism $\pi: P\tto \CC M$ with $P\in p\widetilde{\cC}$. Using adjunction one constructs, as above, an epimorphism $\RR P\tto M$, where $\RR P$ is projective by Lemma~\ref{LemBasix}(ii). Hence $\cC$ contains enough projective objects.

The other statements in the proposition follow from the above construction, the symmetry between projective and injective objects and Definition~\ref{DefFE}(iii).
\end{proof}

\begin{lemma}\label{LemTriv}
For any $X\in\cC$ and $Y\in\widetilde{\cC}$, we have
$$\pd_{\widetilde\cC}\II X\,=\,\pd_{\widetilde\cC}\CC X\,\le\, \pd_{\cC}X\qquad\mbox{and}\qquad \pd_{\cC}\RR Y\,\le\, \pd_{\widetilde\cC}Y.$$
The same statements hold true for injective dimensions.
\end{lemma}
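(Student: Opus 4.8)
The plan is to reduce every inequality to the fact, established in Lemma~\ref{LemBasix}, that $\RR,\II,\CC$ are all exact and preserve projectives (and injectives), together with the adjunctions and faithfulness from Definition~\ref{DefFE}. The strategy is the standard one: an exact functor preserving projectives sends a projective resolution to a complex of projectives which, if the functor is moreover exact, is again a resolution; so projective dimensions can only drop. The only subtle point is the two-sided statement $\pd_{\widetilde\cC}\II X = \pd_{\widetilde\cC}\CC X$, which needs the autoequivalence $\EE$.

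First I would prove $\pd_{\widetilde\cC}\II X \le \pd_{\cC} X$. If $X$ has infinite projective dimension there is nothing to prove, so assume $\pd_{\cC} X = n < \infty$ and pick a projective resolution $P_\bullet \to X$ of length $n$ in $\cC$ (if $\cC$ has no projectives at all, then no object has finite projective dimension and the statement is vacuous; so we may assume enough projectives, or at least the existence of a finite projective resolution of $X$). Since $\II$ is exact by Lemma~\ref{LemBasix}(i), the complex $\II P_\bullet \to \II X$ is exact, and by Lemma~\ref{LemBasix}(ii) each $\II P_i$ is projective in $\widetilde\cC$. Hence $\II P_\bullet$ is a projective resolution of $\II X$ of length $n$, giving $\pd_{\widetilde\cC}\II X \le n$. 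The identical argument with $\CC$ in place of $\II$ — again using Lemma~\ref{LemBasix}(i),(ii) — gives $\pd_{\widetilde\cC}\CC X \le n$. For the remaining equality $\pd_{\widetilde\cC}\II X = \pd_{\widetilde\cC}\CC X$, I use Definition~\ref{DefFE}(iii): $\II \cong \CC\circ\EE$ with $\EE$ an autoequivalence of $\cC$, so $\II X \cong \CC(\EE X)$, and applying the inequality just proved in both directions gives $\pd_{\widetilde\cC}\II X = \pd_{\widetilde\cC}\CC(\EE X) \le \pd_{\cC}(\EE X) = \pd_{\cC}(\EE^{-1}\EE X)$, while symmetrically $\pd_{\widetilde\cC}\CC X = \pd_{\widetilde\cC}\II(\EE^{-1} X) \le \pd_{\cC}(\EE^{-1} X)$; more directly, since $\EE$ is an autoequivalence every projective resolution of $\EE^{-1} X$ maps under $\EE$ to one of $X$, so $\pd_{\cC} \EE X = \pd_{\cC} X$, and then $\pd_{\widetilde\cC}\II X = \pd_{\widetilde\cC}\CC(\EE X) \le \pd_{\cC}\EE X = \pd_{\cC} X$ recovers the bound while the two expressions $\pd_{\widetilde\cC}\CC X$ and $\pd_{\widetilde\cC}\II X = \pd_{\widetilde\cC}\CC \EE X$ agree because $\CC$ applied to the $\EE$-translate of a resolution of $X$ is a resolution of $\II X$ of the same length as $\CC$ applied to a resolution of $\EE X$; the cleanest phrasing is simply that $\EE$ preserves projective dimension and $\II X\cong \CC\EE X$ forces the two sides equal.

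For $\pd_{\cC}\RR Y \le \pd_{\widetilde\cC} Y$ I run the same argument with $\RR$: assume $\pd_{\widetilde\cC} Y = m < \infty$, take a length-$m$ projective resolution $Q_\bullet \to Y$ in $\widetilde\cC$, apply $\RR$, and invoke Lemma~\ref{LemBasix}(i) (exactness, so $\RR Q_\bullet \to \RR Y$ stays exact) and Lemma~\ref{LemBasix}(ii) ($\RR Q_i$ projective in $\cC$) to conclude $\pd_{\cC}\RR Y \le m$. The injective-dimension statements are obtained formally by passing to opposite categories: $\widetilde\cC^{\op}$ is a Frobenius extension of $\cC^{\op}$ via the functor $\RR^{\op}$ with adjoints swapping the roles of $\II$ and $\CC$, as noted right after Definition~\ref{DefFE}, and injective dimension in $\cC$ equals projective dimension in $\cC^{\op}$; applying the results already proved in $\cC^{\op},\widetilde\cC^{\op}$ yields $\id_{\widetilde\cC}\CC X = \id_{\widetilde\cC}\II X \le \id_{\cC} X$ and $\id_{\cC}\RR Y \le \id_{\widetilde\cC} Y$. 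I expect no real obstacle here; the only thing requiring a moment's care is the bookkeeping in the equality $\pd_{\widetilde\cC}\II X = \pd_{\widetilde\cC}\CC X$, and making sure the "take a projective resolution" step is stated so as to be vacuously fine when the relevant dimension is infinite or when $\cC$ lacks projectives.
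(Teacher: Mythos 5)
Your overall strategy (exact functors preserving projectives applied to resolutions, plus passage to opposite categories for the injective statements) is reasonable, but it does not match the setting of the lemma, and this creates a genuine gap. The paper proves the lemma purely through the $\Ext$-form of adjunction for pairs of exact functors: since $(\II,\RR)$ and $(\RR,\CC)$ are adjoint pairs of exact functors one has $\Ext^j_{\widetilde\cC}(\II X,Y)\cong\Ext^j_{\cC}(X,\RR Y)$ and $\Ext^j_{\cC}(\RR Y,Z)\cong\Ext^j_{\widetilde\cC}(Y,\CC Z)$, and the inequalities follow directly from the paper's definition of $\pd_{\cC}X$ as the minimal $i$ with $\Ext^j_{\cC}(X,-)=0$ for $j>i$ (the cited reference is invoked precisely ``for the case without injective or projective objects''). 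Section~2 does not assume $\cC$ or $\widetilde\cC$ has enough projectives, and your patch --- ``if $\cC$ has no projectives at all, then no object has finite projective dimension and the statement is vacuous'' --- is false for the paper's $\Ext$-based definition: in a hereditary abelian category with no nonzero projective objects (e.g.\ coherent sheaves on a smooth projective curve) every object has $\pd\le 1$. More generally, $\pd_{\cC}X=n<\infty$ in this sense does not provide a finite projective resolution of $X$ to push through $\II$, $\CC$, $\RR$, so your argument only covers objects that happen to admit such resolutions.

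The second gap is the equality $\pd_{\widetilde\cC}\II X=\pd_{\widetilde\cC}\CC X$, which you never actually establish. From $\II\cong\CC\circ\EE$ you get $\pd_{\widetilde\cC}\II X=\pd_{\widetilde\cC}\CC(\EE X)$, and from $\EE$ being an autoequivalence you get $\pd_{\cC}\EE X=\pd_{\cC}X$; combining these only re-proves $\pd_{\widetilde\cC}\II X\le\pd_{\cC}X$, it does not give $\pd_{\widetilde\cC}\CC(\EE X)=\pd_{\widetilde\cC}\CC X$, which is what the asserted equality amounts to. Within your resolution framework, applying $\CC$ to a resolution of $\EE X$ and to a resolution of $X$ produces resolutions of two different objects ($\II X$ and $\CC X$) whose minimal lengths are never compared, so your closing sentence (``$\EE$ preserves projective dimension and $\II X\cong\CC\EE X$ forces the two sides equal'') is an assertion, not an argument. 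In the adjunction framework the relevant observation is that $\CC\cong\II\circ\EE^{-1}$ is itself a left adjoint with exact right adjoint $\EE\circ\RR$, so $\Ext^j_{\widetilde\cC}(\CC X,Y)\cong\Ext^j_{\cC}(X,\EE\RR Y)$ while $\Ext^j_{\widetilde\cC}(\II X,Y)\cong\Ext^j_{\cC}(X,\RR Y)$; the equality requires comparing these two descriptions, and some such argument must be supplied --- ``$\EE$ preserves $\pd$'' alone does not do it. The reduction of the injective-dimension statements to the opposite categories is fine and agrees with the paper's remark after Definition~\ref{DefFE}.
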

\begin{proof}
This follows immediately from adjointness properties of exact functors, see e.g. \cite[Proposition~7]{Sigma} for the case without injective or projective objects.
\end{proof}

\begin{lemma}\label{idP}
For all projective objects $P\in\cC$ and $Q\in\widetilde\cC$, we have
$$\id_{\cC}P\;=\;\id_{\widetilde\cC}\II P\;=\;\id_{\widetilde\cC}\CC P\qquad\mbox{and}\qquad \id_{\widetilde\cC}Q\;=\;\id_{\cC}\RR Q.$$
The same holds for projective dimensions of injective objects.
\end{lemma}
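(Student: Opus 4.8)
The plan is to reduce everything to the already-established Lemma~\ref{LemTriv}, applied on both sides, and to the fact that $\RR$, $\II$, $\CC$ are exact and faithful (Lemma~\ref{LemBasix}) and send projectives to projectives (Proposition~\ref{enough}, or Lemma~\ref{LemBasix}(ii)). The point is that the inequalities of Lemma~\ref{LemTriv} become equalities once one of the two objects involved is projective, because then an injective resolution of the object can be transported back and forth without loss.

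First I would prove $\id_{\cC}\RR Q=\id_{\widetilde\cC}Q$ for $Q\in p\widetilde\cC$. Lemma~\ref{LemTriv} (injective version) already gives $\id_{\cC}\RR Q\le\id_{\widetilde\cC}Q$. For the reverse, let $n=\id_{\cC}\RR Q$ and take an injective resolution $0\to\RR Q\to I^0\to\cdots\to I^n\to 0$ in $\cC$. Applying $\CC$, which is exact and carries injectives to injectives, yields an injective resolution $0\to\CC\RR Q\to\CC I^0\to\cdots\to\CC I^n\to 0$ in $\widetilde\cC$ of length $\le n$, so $\id_{\widetilde\cC}\CC\RR Q\le n$. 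Now the counit $\II\RR Q\to Q$ is an epimorphism (Lemma~\ref{LemBasix}(iv)); since $Q$ is projective this epimorphism splits, exhibiting $Q$ as a direct summand of $\II\RR Q\cong\CC\EE\RR Q$. Applying the auto-equivalence $\EE^{-1}$, the object $\EE^{-1}Q$ is a summand of $\CC\RR Q$, and since injective dimension is stable under auto-equivalences and passes to direct summands, $\id_{\widetilde\cC}Q=\id_{\widetilde\cC}\EE^{-1}Q\le\id_{\widetilde\cC}\CC\RR Q\le n$. Combined with Lemma~\ref{LemTriv} this gives the equality.

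Next I would prove $\id_{\cC}P=\id_{\widetilde\cC}\II P=\id_{\widetilde\cC}\CC P$ for $P\in p\cC$. Lemma~\ref{LemTriv} gives $\id_{\widetilde\cC}\II P=\id_{\widetilde\cC}\CC P\le\id_{\cC}P$. For the reverse, note $\II P$ and $\CC P$ are projective in $\widetilde\cC$ by Lemma~\ref{LemBasix}(ii), so the already-proved first half applies: $\id_{\widetilde\cC}\CC P=\id_{\cC}\RR\CC P$. The counit $\RR\CC P\to P$ is an epimorphism (Lemma~\ref{LemBasix}(iv)) which splits because $P$ is projective, so $P$ is a direct summand of $\RR\CC P$, whence $\id_{\cC}P\le\id_{\cC}\RR\CC P=\id_{\widetilde\cC}\CC P$. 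This closes the chain of inequalities. The final sentence of the statement, concerning projective dimensions of injective objects, follows by the same argument read in $\cC^{\op}$ and $\widetilde\cC^{\op}$, using that $\widetilde\cC^{\op}$ is a Frobenius extension of $\cC^{\op}$ with the roles of $\II$ and $\CC$ interchanged.

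The only mild subtlety — and the step I would be most careful about — is the use of the splitting of the counits: this is exactly where projectivity of $Q$ (resp. of $P$) is essential, and it is what upgrades the one-sided inequalities of Lemma~\ref{LemTriv} into equalities. Everything else is the routine transport of (co)resolutions along the exact faithful adjoint triple, together with the observation that $\EE$ being an auto-equivalence preserves injective dimension.
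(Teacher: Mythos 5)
Your overall strategy is exactly the paper's: Lemma~\ref{LemTriv} supplies the inequalities in one direction, and the splitting of the counits at projective objects (Lemma~\ref{LemBasix}(iv)) exhibits $P$ as a direct summand of $\RR\CC P$ and $Q$ as a direct summand of $\II\RR Q$, which closes the chains of inequalities; the paper writes this as $\id_{\cC}P\ge\id_{\widetilde\cC}\CC P\ge\id_{\cC}\RR\CC P\ge\id_{\cC}P$ and treats $Q$ ``similarly''. However, one step of your first half fails as written: ``applying the auto-equivalence $\EE^{-1}$, the object $\EE^{-1}Q$ is a summand of $\CC\RR Q$'' is a category error, since $\EE$ is an auto-equivalence of $\cC$, not of $\widetilde\cC$, so neither $\EE^{-1}Q$ nor $\EE^{-1}(\II\RR Q)$ is defined for $Q\in\widetilde\cC$. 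The repair is immediate: the inequality you actually need, $\id_{\widetilde\cC}\II\RR Q\le\id_{\cC}\RR Q$, is already part of Lemma~\ref{LemTriv} (which asserts $\id_{\widetilde\cC}\II X=\id_{\widetilde\cC}\CC X\le\id_{\cC}X$; take $X=\RR Q$); alternatively, write $\II\RR Q\cong\CC(\EE\RR Q)$ and apply your transport argument to $\EE\RR Q$, whose injective dimension in $\cC$ equals that of $\RR Q$ because $\EE$ is an auto-equivalence of $\cC$. With this fix your argument coincides with the paper's proof, including the reduction of the second half to the first via projectivity of $\CC P$ and the passage to opposite categories for the final sentence.

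A minor further caution: your re-derivation of $\id_{\widetilde\cC}\CC\RR Q\le\id_{\cC}\RR Q$ by pushing a finite injective resolution through $\CC$ tacitly assumes that $\RR Q$ admits such a resolution, i.e.\ that $\cC$ has enough injectives; this is not assumed in this section, and the paper defines injective dimension via vanishing of $\Ext$ precisely to avoid it. Citing Lemma~\ref{LemTriv} directly, as you do elsewhere in the argument, sidesteps the issue and keeps the proof valid in the stated generality.
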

\begin{proof}
By Lemma~\ref{LemBasix}(iv), we find that~$P$ is a direct summand of~$\RR\CC( P)$. Lemma~\ref{LemTriv} thus implies
$$\id_{\cC}P\;\ge\; \id_{\widetilde{\cC}}\CC (P)\;\ge\;\id_{\cC}\RR\CC (P)\;\ge\;\id_{\cC}P.$$
Similarly, one proves $\id_{\widetilde\cC}Q=\id_{\cC}\RR Q.$
\end{proof}

\begin{lemma}\label{LemPhiPsi}
Assume $\cC$ has enough projective objects and $\Phi$ is an auto-equivalence of~$\cD_{per}(\cC)$ with quasi-inverse $\Psi$. If there exist triangulated functors $\widetilde\Phi$ and $\widetilde\Psi$ on $\cD_{per}(\widetilde{\cC})$ which satisfy 
$$\widetilde{\Phi}\circ \II\;\cong\;\CC\circ\Phi\quad\mbox{and}\quad \widetilde{\Psi}\circ\CC\;\cong\;\II\circ\Psi,$$
then $\widetilde\Phi$ is an auto-equivalence of~$\cD_{per}(\cC)$ with quasi-inverse $\widetilde\Psi$.
\end{lemma}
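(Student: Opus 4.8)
The plan is to build the proof around the fact, established in Proposition~\ref{enough}, that every projective object of $\widetilde{\cC}$ is a direct summand of some $\II P$ with $P\in p\cC$ (equivalently a summand of some $\CC P$), and that $\RR$ carries projectives to projectives by Lemma~\ref{LemBasix}(ii). Since $\cD_{per}(\widetilde{\cC})\cong K^b(p\widetilde{\cC})$, it suffices to check that $\widetilde\Phi$ and $\widetilde\Psi$ are mutually quasi-inverse on the image of objects of $p\widetilde{\cC}$, and then extend to all of $K^b(p\widetilde{\cC})$ by the usual dévissage along distinguished triangles (the full subcategory of $\cD_{per}(\widetilde{\cC})$ on which the natural transformation $\widetilde\Psi\widetilde\Phi\to\Id$ is an isomorphism is triangulated and contains the generators, hence is everything).

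First I would record that the hypothesis gives natural isomorphisms $\widetilde\Phi\circ\II\cong\CC\circ\Phi$ and $\widetilde\Psi\circ\CC\cong\II\circ\Psi$ of triangulated functors on $\cD_{per}(\cC)\to\cD_{per}(\widetilde{\cC})$. Composing, $\widetilde\Psi\circ\widetilde\Phi\circ\II\cong\widetilde\Psi\circ\CC\circ\Phi\cong\II\circ\Psi\circ\Phi\cong\II$, using that $\Psi$ is a quasi-inverse of $\Phi$. Thus $\widetilde\Psi\widetilde\Phi$ is isomorphic to the identity on every object in the essential image of $\II$ restricted to $p\cC$, hence on every direct summand of such an object, hence on every object of $p\widetilde{\cC}$ viewed inside $\cD_{per}(\widetilde{\cC})$. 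Symmetrically, using $\II\cong\CC\circ\EE$ from Definition~\ref{DefFE}(iii) — or more simply working with the $\CC$-version of Proposition~\ref{enough} — one gets $\widetilde\Phi\circ\widetilde\Psi\circ\CC\cong\widetilde\Phi\circ\II\circ\Psi\cong\CC\circ\Phi\circ\Psi\cong\CC$, so $\widetilde\Phi\widetilde\Psi$ is isomorphic to the identity on the essential image of $\CC$ on $p\cC$, hence on all of $p\widetilde{\cC}$. Now the localizing-subcategory (really: thick triangulated subcategory) argument upgrades these isomorphisms on generators to all of $K^b(p\widetilde{\cC})=\cD_{per}(\widetilde{\cC})$: bounded complexes of projectives are built from their (shifted) terms by finitely many cones, and the locus where a natural transformation between triangulated functors is an isomorphism is closed under cones, shifts and summands.

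The only genuinely delicate point is passing from "isomorphism on objects" to "isomorphism of functors" — i.e. checking that the isomorphism $\widetilde\Psi\widetilde\Phi\cong\Id$ one obtains is natural, not merely objectwise. This is handled by noting that the displayed isomorphism $\widetilde\Psi\widetilde\Phi\II\cong\II$ is already a natural isomorphism of functors out of $\cD_{per}(\cC)$ (being a composite of the given natural isomorphisms and the given quasi-inverse datum $\Psi\Phi\cong\Id$), and that $\II$, being faithful and exact with the adjointness of Lemma~\ref{LemBasix}, together with Proposition~\ref{enough}, exhibits $\cD_{per}(\widetilde{\cC})$ as generated by $\im(\II)$; one then invokes that a natural transformation of triangulated functors which is an isomorphism on a generating set and is compatible with summands is an isomorphism everywhere. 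I expect this bookkeeping — keeping the naturality squares straight while reducing along triangles and splitting off summands — to be the main (though entirely routine) obstacle; everything else is formal manipulation of the given isomorphisms.
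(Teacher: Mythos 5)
Your proposal is correct and follows essentially the same route as the paper: compose the given isomorphisms to obtain $\widetilde\Psi\circ\widetilde\Phi\circ\II\cong\II$ and $\widetilde\Phi\circ\widetilde\Psi\circ\CC\cong\CC$, then invoke Proposition~\ref{enough} and the fact that the objects $\II P$, $P\in p\cC$, generate $\cD_{per}(\widetilde{\cC})\cong K^b(p\widetilde{\cC})$ as a triangulated category. The paper's proof is exactly this two-line argument; your additional remarks on dévissage and naturality only spell out what the paper leaves implicit.
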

\begin{proof}
By assumption, we have isomorphisms of functors on $\cD_{per}(\widetilde{\cC})$
$$\widetilde{\Psi}\circ\widetilde{\Phi}\circ\II\,\cong\,\II\quad\mbox{and}\quad\widetilde{\Phi}\circ\widetilde{\Psi}\circ\CC\,\cong\,\CC.$$
The conclusion thus follows from Proposition~\ref{enough}, since the full subcategory of objects $\II P$ with $P$ a projective object in $\cC$ generates $\cD_{per}(\widetilde{\cC})$ as a triangulated category.
\end{proof}

\subsection{Frobenius extensions of noetherian rings revisited}

\begin{thm}\label{ThmEquivFro}
For $S$ a noetherian ring, the Frobenius extensions of~$S${\rm -mod}, in the sense of Definition~\ref{DefFE}, are, up to equivalence, given by~$R${\rm-mod} for all $\alpha$-Frobenius extensions $R$ of~$S$ in the sense of Definition~\ref{DefFEring}.
\end{thm}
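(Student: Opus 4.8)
The plan is to prove the two directions separately: first that for any $\alpha$-Frobenius extension $R$ of $S$, the category $R$-mod is a Frobenius extension of $S$-mod in the sense of Definition~\ref{DefFE}; and conversely that any Frobenius extension $\widetilde{\cC}$ of $S$-mod arises this way. For the first direction, I would take $\RR: R\text{-mod}\to S\text{-mod}$ to be the restriction functor along $S\hookrightarrow R$. Since $R$ is a finite ring extension, restriction sends finitely generated modules to finitely generated modules and is clearly exact and faithful. Its left adjoint is $\II = R\otimes_S -$ and its right adjoint is $\CC = \Hom_S(R,-)$; both land in $R$-mod and are exact because ${}_SR$ is finitely generated projective, and faithful because $R$ is faithfully flat (resp. faithfully coflat) over $S$ as a consequence of the unit $S\hookrightarrow R$ splitting as $S$-modules (using projectivity and the nondegenerate $\sigma$). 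The crucial point (iii) is that the bimodule isomorphism \eqref{eqbimod}, $R\cong\Hom_S(R,{}_\alpha S)$, upgrades to a natural isomorphism $\II\cong\CC\circ\EE$ where $\EE = {}_\alpha(-)$, i.e. twisting the $S$-action by the automorphism $\alpha$, which is an autoequivalence of $S$-mod. Indeed $\CC(\EE X) = \Hom_S(R,{}_\alpha X)\cong \Hom_S(R,{}_\alpha S)\otimes_S X\cong R\otimes_S X = \II X$, using that ${}_SR$ is finitely generated projective to commute $\Hom_S(R,-)$ with $-\otimes_S X$; one checks this is natural and an $R$-module map.

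For the converse direction, suppose $\widetilde{\cC}$ is a Frobenius extension of $S\text{-mod}$ with functor $\RR$, adjoints $\II\dashv\RR\dashv\CC$, and $\II\cong\CC\circ\EE$. Since $S\text{-mod}$ has enough projectives, Proposition~\ref{enough} shows $\widetilde{\cC}$ has enough projectives, and its projective generator is $P:=\II S$ (as $S$ is a projective generator of $S\text{-mod}$, and $\II$ of a generator is a generator by the adjunction/faithfulness argument in the proof of Proposition~\ref{enough}). Set $R:=\End_{\widetilde\cC}(P)^{\op}$. Standard Morita-type reasoning (a locally finite/finitely generated abelian category with a projective generator $P$ is equivalent to finitely generated modules over $\End(P)^{\op}$, via $\Hom_{\widetilde\cC}(P,-)$) gives an equivalence $\widetilde{\cC}\simeq R\text{-mod}$. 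Under this equivalence, $\RR$ becomes a functor $R\text{-mod}\to S\text{-mod}$; because $\RR\II\cong\RR\CC\EE$ and $\RR$ has both adjoints, $\RR$ is (isomorphic to) restriction of scalars along a ring homomorphism $S\to R$, obtained by applying $\RR$ to $\End_{\widetilde\cC}(P)$ and using $\RR P\cong$ (a progenerator of $S\text{-mod}$ assembled from copies of $S$, at least after noting $\RR\II S$ contains $S$ as a summand by Lemma~\ref{LemBasix}(iii)-(iv)). One then identifies $R$ as a finite ring extension of $S$ (finiteness of $\RR P$ as an $S$-module plus noetherianity), projective as a left $S$-module (since $\RR$ of the projective $P$ is projective by Lemma~\ref{LemBasix}(ii), hence a finitely generated projective $S$-module), and finally extracts the bimodule isomorphism \eqref{eqbimod} from the isomorphism $\II\cong\CC\circ\EE$ by evaluating at $S$ and using $\II S = P\cong R$ as $(R,S)$-bimodules and $\CC(\EE S) = \Hom_S(R,{}_\alpha S)$, where $\alpha$ is the automorphism of $S$ corresponding to the autoequivalence $\EE$ of $S\text{-mod}$ (autoequivalences of $S\text{-mod}$ fixing isomorphism classes of indecomposable projectives appropriately are given by twisting along ring automorphisms — more precisely, $\EE$ is given by an invertible $S$-bimodule, which for the purpose of the bimodule isomorphism can be taken of the form ${}_\alpha S$ after adjusting).

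The main obstacle I anticipate is the bookkeeping in the converse direction: reconstructing the ring homomorphism $S\to R$ in a way that is canonical enough to match $\RR$ with restriction of scalars, and then showing the autoequivalence $\EE$ of $S\text{-mod}$ can be represented (for the purposes of \eqref{eqbimod}) by an honest automorphism $\alpha$ of $S$ rather than merely an invertible bimodule. The first part is essentially the classical fact that a functor between module categories admitting a right adjoint is tensoring with a bimodule, combined with the fact that $\RR$ preserves the unit object's role via the adjunction isomorphisms; the second part uses that $\CC\EE\cong\II$ forces the invertible bimodule giving $\EE$ to be compatible with the bimodule $\Hom_S(R,S)$ in a way that pins down $\alpha$ up to inner automorphism, which is harmless. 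I would handle the direct direction in full first, as it is the ``consistency check'' the paper emphasises, and treat the converse more briefly, citing the standard Morita dictionary and Lemmas~\ref{LemBasix} and Proposition~\ref{enough} for the homological inputs.
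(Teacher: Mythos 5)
Your proposal follows essentially the same route as the paper: same functors $(\II,\RR,\CC)$ and the isomorphism $R\otimes_S-\cong\Hom_S(R,{}_\alpha-)$ in the forward direction, and in the converse the same construction $R:=\End_{\widetilde\cC}(\II S)^{\op}$, the equivalence $\Hom_{\widetilde\cC}(\II S,-):\widetilde\cC\to R$-mod, the identification of ${}_SR$ with the finitely generated projective module $\RR\II S$, and the extraction of \eqref{eqbimod} from $\II\cong\CC\circ\EE$ evaluated at $S$. So the skeleton is correct and matches the paper.

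A few soft spots in your justifications are worth flagging. First, the step ``$\RR$ is restriction of scalars along $S\to R$'' is exactly where the paper works concretely: it defines the embedding $S\hookrightarrow R$ via faithfulness of $\II$ and proves the isomorphism $\res^R_S\circ\Hom_{\widetilde\cC}(\II S,-)\cong\RR$ (equation \eqref{eqRes}) by a direct naturality computation with the adjunction $\Hom_{\widetilde\cC}(\II S,X)\cong\Hom_S(S,\RR X)\cong\RR X$. Your appeal to Eilenberg--Watts is heavier and strictly speaking applies to cocomplete module categories, and note also that \eqref{eqRes} is needed \emph{before} your ``Morita-type'' equivalence can be invoked, since it is what guarantees that $\Hom_{\widetilde\cC}(\II S,-)$ lands in finitely generated modules and that $R$ is noetherian; your sketch asserts the equivalence first and the finiteness afterwards. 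Second, two auxiliary claims are not right as stated: the unit $S\hookrightarrow R$ of a Frobenius extension need not split as $S$-modules (split Frobenius extensions form a proper subclass), and Lemma~\ref{LemBasix}(iii) gives only a monomorphism $S\to\RR\II S$, not a direct summand; the facts you actually need (faithfulness of $\II$ and $\CC$, and that ${}_SR\cong\RR\II S$ is finitely generated projective by Lemma~\ref{LemBasix}(ii)) are fine, but these particular justifications should be dropped or replaced. Third, your hesitation about whether the autoequivalence $\EE$ is given by a ring automorphism rather than merely an invertible bimodule is legitimate, but the paper resolves it simply by asserting that every autoequivalence of $S$-mod is of the form ${}_\alpha(-)$; your ``after adjusting, up to inner automorphism'' remark would need exactly the same input, so you are no less complete than the paper here, though neither argument addresses base rings for which invertible bimodules need not have this form.
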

\begin{proof}
Consider an $\alpha$-Frobenius extension~$R$ of~$S$. The~$(R,S)$-bimodule isomorphism $R\cong \Hom_S(R,{}_\alpha S)$ and exactness of $\Hom_S(R,-)$ imply an isomorphism of functors
$$R\otimes_S-\;\cong\; \Hom_S(R,{}_\alpha S\otimes_S-)\,:\;\; S\mbox{-Mod}\;\to\;R\mbox{-Mod.}$$
The faithful functor~$\II:=R\otimes_S-$ always restricts to a functor from $R$-mod to $S$-mod. Since $R$ is a finite ring extension, the faithful functor~$\RR:= \res^R_S$ restricts to a functor from $R$-mod to~$S$-mod. By the above isomorphism, also $\CC:=\Hom_S(R,-)$ restricts to a functor between those categories. It follows that~$\RR: R\mbox{-mod}\to S\mbox{-mod}$  satisfies the conditions in Definition~\ref{DefFE}, where the auto-equivalence $\EE$ in condition (iii) is~${}_\alpha(-)={}_\alpha S\otimes_S-$.

 Now consider a Frobenius extension~$\widetilde{\cC}$ of $S$-mod in the sense of Definition~\ref{DefFE}. By Proposition~\ref{enough}, $\widetilde{\cC}$ contains enough projective objects and $p\widetilde{\cC}=\add(\II S)$. Define the ring and functor
$$R:=\End_{\widetilde{\cC}}(\II S)^{\op}\quad\mbox{ and}\qquad \Phi=\Hom_{\widetilde{\cC}}(\II S,-): \widetilde{\cC}\to R\mbox{-Mod}.$$
 By faithfulness of $\II: S^{\op}\hookrightarrow R^{\op}$, it follows that~$S$ is a subring of $R$. The commuting diagram
 $$\xymatrix{
 \Hom_{\widetilde{\cC}}(\II S,X)\ar[rr]^{\sim}\ar[d]^{-\circ\II(a)}&& \Hom_{S}(S,\RR X)\ar[d]^{-\circ a}\ar[rr]^{\sim}&&\RR X\ar[d]^{a\cdot-}\\
 \Hom_{\widetilde{\cC}}(\II S,X)\ar[rr]^{\sim}&& \Hom_{S}(S,\RR X)\ar[rr]^{\sim}&&\RR X\\
 }$$
of group homomorphisms for any $X\in\widetilde{\cC}$ and $a\in S^{\op}=\Hom_{S}(S,S)$, shows that we have an isomorphism of functors
\begin{equation}\label{eqRes}\res^R_S\circ \Phi\;\cong\; \RR\;:\;\widetilde{\cC}\to S\mbox{-mod}.\end{equation}
Evaluation on $\II S$ shows that the left $S$-module $R$ is isomorphic to~$\RR\II S$. By Lemma~\ref{LemBasix}(ii), the latter is a (finitely generated) projective $S$-module. In particular, $R$ is a finite ring extension of $S$ and thus noetherian. Equation~\eqref{eqRes} then further implies that~$\Phi$ actually restricts to a functor from $\widetilde{\cC}$ to~$R$-mod. By construction, this is an exact functor which restricts to an equivalence between the categories of projective modules. It thus follows that we have an equivalence of categories
$$\Phi=\Hom_{\widetilde{\cC}}(\II S,-):\;\;\widetilde{\cC}\;\;\tilde\to\;\; R\mbox{-mod.}$$

To conclude the proof, we now only need to establish the isomorphism of bimodules in \eqref{eqbimod}. Consider a quasi-inverse $\Psi$ of $\Phi$. By uniqueness of adjoint functors and~\eqref{eqRes}, we have
$$\II\cong\Psi\circ\ind^{R}_S\qquad\mbox{and}\qquad \CC\cong \Psi\circ\coind^R_S.$$
Consequently, we have
$$R\otimes_S-\;\cong\;\Hom_S(R,-)\circ \EE.$$
The isomorphism of bimodules \eqref{eqbimod} thus follows by observing that any auto-equivalence $\EE$ of $S$-mod is of the form ${}_\alpha$ for some automorphism $\alpha$.\end{proof}

Recall that we take the convention that `functors' between $\mk$-linear categories are $\mk$-linear
\begin{cor}
Fix a field $\mk$. The Frobenius extensions of~$\mk${\rm-mod} are, up to equivalence, the categories $A${\rm -mod}, for~$A$ a finite dimensional Frobenius $\mk$-algebra.
\end{cor}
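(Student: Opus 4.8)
The plan is to derive this corollary directly from Theorem~\ref{ThmEquivFro} by specialising to the case $S = \mk$, keeping track of the $\mk$-linearity convention announced just before the statement. First I would observe that a field $\mk$, regarded as a ring, is noetherian, and that $\mk$-mod is precisely the category of finite-dimensional $\mk$-vector spaces; moreover the only ring automorphism that is relevant here is $\id_{\mk}$, since any $\mk$-linear auto-equivalence $\EE$ of $\mk$-mod is isomorphic to the identity functor (a $\mk$-linear exact auto-equivalence of finite-dimensional vector spaces is determined by where it sends $\mk$, and must send it to a one-dimensional space, hence is isomorphic to $\Id$). Thus the twist ${}_\alpha(-)$ in Definition~\ref{DefFEring} becomes trivial, and an $\alpha$-Frobenius extension of $\mk$ is the same as an $\id_{\mk}$-Frobenius extension.

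Next I would invoke Theorem~\ref{ThmEquivFro} with $S=\mk$: the Frobenius extensions of $\mk$-mod, in the sense of Definition~\ref{DefFE}, are exactly the categories $R$-mod for $R$ an $\alpha$-Frobenius extension of $\mk$. By the previous paragraph these $R$ are exactly the $\id_{\mk}$-Frobenius extensions of $\mk$. It remains to identify the latter with finite-dimensional Frobenius $\mk$-algebras. This is precisely the content of Example~\ref{exAlgebra}: a finite-dimensional $\mk$-algebra $A$ is a Frobenius algebra if and only if it is an $\id_{\mk}$-Frobenius extension of $\mk$. Conversely, any ring $R$ that is finite as a left $\mk$-module is a finite-dimensional $\mk$-algebra (with $\mk$ in its centre, since $\mk$ is a field and a central subring of $R$ — here one uses that $\mk$ being a field forces the subring $\mk \subseteq R$ to be central, or one simply restricts attention to $\mk$-linear structure as the convention dictates), so the class of $\id_{\mk}$-Frobenius extensions of $\mk$ coincides with the class of finite-dimensional Frobenius $\mk$-algebras.

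Assembling these observations: $\widetilde{\cC}$ is a Frobenius extension of $\mk$-mod (as $\mk$-linear categories) if and only if $\widetilde{\cC} \simeq R$-mod for some $\id_{\mk}$-Frobenius extension $R$ of $\mk$, if and only if $\widetilde{\cC} \simeq A$-mod for some finite-dimensional Frobenius $\mk$-algebra $A$. One should note that $R$-mod $= R$-Mod when $R$ is finite-dimensional, so there is no discrepancy between the "finitely generated" and "all modules" versions; I would mention this so the reader is not confused by the passage from $S$-mod to $A$-mod.

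The main (and only real) obstacle is the bookkeeping around the $\mk$-linear convention: one must check that the equivalence $\widetilde{\cC} \simeq R$-mod produced by Theorem~\ref{ThmEquivFro} is compatible with the $\mk$-linear structures, i.e. that when $S = \mk$ all the functors $\II, \RR, \CC$ and the constructed ring $R = \End_{\widetilde{\cC}}(\II S)^{\op}$ are automatically $\mk$-algebras over $\mk$, so that $R$ genuinely is a finite-dimensional $\mk$-algebra rather than merely a ring containing $\mk$. Since the ambient convention already declares all functors between $\mk$-linear categories to be $\mk$-linear, this is immediate: $\II S$ is an object of the $\mk$-linear category $\widetilde{\cC}$, so its endomorphism ring is a $\mk$-algebra, and $\RR\II S \cong R$ being a finitely generated (hence finite-dimensional) $\mk$-module makes $R$ finite-dimensional. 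Everything else is a direct translation, so the corollary follows.
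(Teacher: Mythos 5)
Your proof is correct and follows essentially the same route as the paper: note that any $\mk$-linear auto-equivalence $\EE$ of $\mk$-mod is isomorphic to the identity, then apply Theorem~\ref{ThmEquivFro} with $S=\mk$ together with Example~\ref{exAlgebra}. One small caveat: your parenthetical claim that a subfield $\mk\subseteq R$ is automatically central is false in general (e.g.\ $\dbC$ inside the quaternions), but this is harmless since, as you also say, the $\mk$-linearity convention on functors already makes $R=\End_{\widetilde\cC}(\II\,\mk)^{\op}$ a $\mk$-algebra.
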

\begin{proof} 
Since $\EE$ in Definition~\ref{DefFE} must be a $\mk$-linear auto-equivalence of~$\mk$-mod, it is isomorphic to the identity. The result then follows as a special case of Theorem~\ref{ThmEquivFro}, by Example~\ref{exAlgebra}. 
\end{proof}


\section{Naive Gorenstein homological algebra in abelian categories}\label{SecGHA}

Fix for the entire section an abelian category~$\cC$ which {\em contains enough projective objects}.
We generalise some well-established notions from ring theory, see e.g. \cite{AB, Avramov, Holm}. 
\subsection{Gorenstein projective objects } 

\begin{ddef}\label{defGP}A {\bf totally acyclic complex} $P_\bullet$ in~$p\cC$ is an exact sequence of objects in~$p\cC$ which is right $p\cC$-acyclical. An object $X$ in~$\cC$ is {\bf Gorenstein projective} if and only if it is isomorphic to a syzygy object of a totally acyclic complex in~$p\cC$. We denote denote the full subcategory of~$\cC$ of Gorenstein projective modules by~$gp\cC$. 
\end{ddef}
Clearly,~$p\cC$ is a subcategory of~$gp\cC$.
In the notation of \cite[Section~5]{AR} we have~$gp\cC:=\mathscr{X}_{p\cC}$.

\begin{prop}\label{Prop0inf}${}$
\begin{enumerate}[(i)]
\item We have
$gp\cC\subset {}^\perp p\cC={}^\perp p\cC^{(-)}.$ 
\item $gp\cC$ is the category of all $X$ which admit a projective coresolution with each cocycle in~${}^\perp p\cC$.
\item
 Each $X\in gp\cC$ satisfies either $\pd_{\cC}X=0$ or $\pd_{\cC}X=\infty$.
 \end{enumerate}
\end{prop}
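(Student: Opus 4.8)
The plan is to prove the three parts in order, since (ii) will use (i) and (iii) will use both.

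For part (i): the inclusion $gp\cC\subset{}^\perp p\cC$ is essentially a direct application of Lemma~\ref{secPacyc}. If $X\cong\Omega^i(P_\bullet)$ for a totally acyclic complex $P_\bullet$, then by Lemma~\ref{secPacyc} every syzygy of $P_\bullet$, in particular $X$, lies in ${}^\perp p\cC$, since a totally acyclic complex is by definition right $p\cC$-acyclical. For the equality ${}^\perp p\cC={}^\perp p\cC^{(-)}$, the inclusion $\supseteq$ is trivial since $p\cC\subseteq p\cC^{(-)}$. For $\subseteq$, take $X\in{}^\perp p\cC$ and $Y\in p\cC^{(-)}$ with a finite projective resolution; I would argue by induction on $\pd_\cC Y$, using the long exact sequence in $\Ext_\cC(X,-)$ associated to a short exact sequence $\Omega^1(Q_\bullet)\hookrightarrow Q_0\tto Y$ (where $\Omega^1(Q_\bullet)$ has strictly smaller projective dimension), together with $\Ext^i_\cC(X,Q_0)=0$ for $i>0$.

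For part (ii): one direction is contained in the definition together with (i) — a Gorenstein projective $X$ sits inside a totally acyclic complex, whose ``upper half'' $0\to X\to P_{-1}\to P_{-2}\to\cdots$ is the desired projective coresolution, and its cocycles are again syzygies of the totally acyclic complex, hence in ${}^\perp p\cC$ by (i). For the converse, suppose $X$ admits a projective coresolution with all cocycles in ${}^\perp p\cC$; splice onto it any projective resolution $\cdots\to P_1\to P_0\to X\to 0$ of $X$ to obtain an exact complex $P_\bullet$ of projectives. One must check $P_\bullet$ is right $p\cC$-acyclical, i.e.\ that every syzygy lies in ${}^\perp p\cC$ by Lemma~\ref{secPacyc}: the syzygies in the ``negative'' part are the prescribed cocycles, which lie in ${}^\perp p\cC$ by hypothesis; the syzygies in the ``positive'' part, namely the ordinary syzygies $\Omega^n(P_\bullet)$ of $X$, lie in ${}^\perp p\cC$ by dimension shifting from the fact that $X\in{}^\perp p\cC$ (the $n=0$ cocycle, which is $X$ itself) — this uses $\Ext^{\geq 1}_\cC(-, Q)$ vanishing on projectives $Q$ and the long exact sequence.

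For part (iii): suppose $X\in gp\cC$ has $\pd_\cC X = n < \infty$; I must show $n = 0$. By (i), $X\in{}^\perp p\cC$, so $\Ext^i_\cC(X,P)=0$ for all projective $P$ and all $i>0$; in particular $\Ext^n_\cC(X,-)$ vanishes on projectives. But in a category with enough projectives, $\pd_\cC X = n$ means $\Ext^n_\cC(X,-)\neq 0$, and moreover $\Ext^n_\cC(X,-)$ is right exact at the top, so $\Ext^n_\cC(X,Z)$ is a quotient of $\Ext^n_\cC(X,P)$ for any epimorphism $P\tto Z$ with $P$ projective; hence $\Ext^n_\cC(X,Z)=0$ for every $Z$ admitting such an epimorphism, i.e.\ for every $Z\in\cC$ — contradicting $\pd_\cC X = n \geq 1$. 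Therefore $n=0$.

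The main obstacle I anticipate is the bookkeeping in part (ii): one has to be careful that splicing a coresolution onto a resolution genuinely yields an \emph{exact} complex at the splice point (this is where $\HH_0(B_\bullet)\cong X\cong\Omega^0(B_\bullet)$ and the analogous property of the coresolution are used), and that the two families of syzygies — the genuine syzygies of $X$ and the prescribed cocycles — are both verified to lie in ${}^\perp p\cC$ via the appropriate direction of dimension shifting. The individual diagram-chases are routine, but assembling them into a single totally acyclic complex requires attention. Parts (i) and (iii) should be quick once the lemmas from Section~\ref{SecGHA} and the definition of projective dimension via $\Ext$ are in hand.
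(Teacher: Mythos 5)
Your proposal is correct and follows essentially the same route as the paper: parts (i) and (ii) via Lemma~\ref{secPacyc} applied to totally acyclic complexes (with the splicing of a resolution onto the given coresolution and dimension shifting filling in what the paper leaves as ``immediate''), and part (iii) by the identical argument of covering a test object by a projective and using the vanishing $\Ext^{k+1}_{\cC}(X,-)=0$ to contradict membership in ${}^{\perp}p\cC$. No gaps; your explicit induction for ${}^{\perp}p\cC={}^{\perp}p\cC^{(-)}$ is exactly the ``straightforward'' step the paper omits.
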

\begin{proof}
The equality~${}^{\perp}p\cC={}^\perp p\cC^{(-)}$ is straightforward. Parts (i) and (ii) then follow immediately from Definition \ref{defGP} and Lemma~\ref{secPacyc}. 

To prove part~(iii) let $X$ be an object of projective dimension~$k\in\mZ_{>0}$ and~$N$ some object with $\Ext^k_{\cC}(X,N)\not=0$. For an epimorphism $P\tto N$, with $P$ projective, we find an exact sequence
$$\Ext^k_{\cC}(X,P)\to\Ext^k_{\cC}(X,N)\to0,$$
proving that~$X$ is not in~${}^\perp p\cC$. Part~(iii) thus follows from part~(i).
\end{proof}
An immediate consequence of Proposition~\ref{Prop0inf}(ii), or Definition~\ref{defGP}, is that
\begin{equation}\label{eqgpCin}gp\cC\;\subseteq\; \Omega^k\cC,\qquad\mbox{ for any~$k\in\mN$}.\end{equation}

The following is the analogue of~\cite[Proposition~5.1]{AR} or~\cite[Theorem~2.5]{Holm}.
\begin{prop}\label{PropRes}
The subcategory~$gp\cC$ is projectively resolving in~$\cC$.
\end{prop}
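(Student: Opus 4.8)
The goal is to show that $gp\cC$ is projectively resolving, i.e.\ (a) $gp\cC$ is a full karoubian subcategory containing $p\cC$, and (b) for every short exact sequence $0\to X\to Y\to Z\to 0$ with $Z\in gp\cC$, one has $X\in gp\cC \iff Y\in gp\cC$. The containment $p\cC\subseteq gp\cC$ was already noted after Definition~\ref{defGP}. The karoubian (idempotent-split) property will follow by splicing: given $X\cong X'\oplus X''$ with $X\in gp\cC$ arising as a syzygy of a totally acyclic complex $P_\bullet$, one forms the direct sum of $P_\bullet$ with a (shifted) copy of a totally acyclic complex built from $X''$ and a projective cover—more cleanly, one uses Proposition~\ref{Prop0inf}(ii): $X',X''\in {}^\perp p\cC$ since this class is closed under summands, and then one assembles a projective coresolution of $X'$ with cocycles in ${}^\perp p\cC$ by truncating the coresolution of $X$ and pulling back along the split injection. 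So the real content is part (b).

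For part (b) I would argue in two directions. \emph{Direction 1 ($Y\in gp\cC$, $X\in gp\cC \Rightarrow Z$ — actually we are given $Z\in gp\cC$; so the two implications to prove are $X\in gp\cC\Rightarrow Y\in gp\cC$ and $Y\in gp\cC\Rightarrow X\in gp\cC$).} For $X\in gp\cC\Rightarrow Y\in gp\cC$: pick totally acyclic complexes realizing $X$ and $Z$ as syzygies, say with $X=\Omega^0$ of a complex whose positive part is a projective resolution $P_\bullet\to X$ and whose negative part is a projective \emph{co}resolution $X\to P^\bullet$ with cocycles in ${}^\perp p\cC$, and likewise $Q_\bullet\to Z\to Q^\bullet$. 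Using the Baby horseshoe lemma~\ref{baby} (whose $\Ext^1$ hypothesis holds because the relevant syzygy of $Z$ lies in ${}^\perp p\cC$ by Proposition~\ref{Prop0inf}(ii), so $\Ext^1_\cC(-,P)=0$) one builds a projective resolution of $Y$ with middle terms $P_i\oplus Q_i$ and syzygies sitting in short exact sequences $\Omega^i(X)\hookrightarrow \Omega^i(Y)\tto \Omega^i(Z)$. The same horseshoe-type construction, applied in the injective-style direction, extends $Y$ to the right by a complex with terms $P^i\oplus Q^i$; one checks each cocycle of the spliced complex lies in ${}^\perp p\cC$ because it is an extension of a cocycle of the $Z$-side by a cocycle of the $X$-side, and ${}^\perp p\cC$ is closed under extensions (it is a left ${}^\perp$ of a class, hence closed under extensions). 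The resulting complex of projectives, being exact with all cocycles in ${}^\perp p\cC$, is totally acyclic by Lemma~\ref{secPacyc}, so $Y\in gp\cC$.

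\emph{Direction 2 ($Y\in gp\cC\Rightarrow X\in gp\cC$).} Here $Y,Z\in gp\cC$, hence $Y,Z\in {}^\perp p\cC$ by Proposition~\ref{Prop0inf}(i), and the long exact $\Ext$ sequence for $0\to X\to Y\to Z\to 0$ immediately forces $X\in {}^\perp p\cC$ as well (using $\Ext^i_\cC(Z,P)=0$ for $i>0$ to sandwich $\Ext^i_\cC(X,P)$ between $\Ext^i_\cC(Y,P)=0$ and $\Ext^{i+1}_\cC(Z,P)=0$). It remains to produce a projective \emph{co}resolution of $X$ with cocycles in ${}^\perp p\cC$; then Proposition~\ref{Prop0inf}(ii) gives $X\in gp\cC$. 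To do this, take a special right-style step: realize $Y$ as $\Omega^0$ of a totally acyclic complex, so there is a short exact sequence $Y\hookrightarrow P\tto Y_1$ with $P$ projective and $Y_1\in {}^\perp p\cC$. Composing with the given surjection, or rather using the Baby comparison lemma~\ref{baby2} applied to $X\hookrightarrow Y$ together with $Y\hookrightarrow P\tto Y_1$ — whose hypothesis $\Ext^1_\cC(-,P)=0$ again holds because the relevant object lies in ${}^\perp p\cC$ — one obtains a short exact sequence $X\hookrightarrow P\tto X_1$ with $X_1$ fitting in a short exact sequence $X_1\hookrightarrow Y_1\tto Z_1$ where $Z_1$ is the next syzygy of $Z$; since $Y_1,Z_1\in {}^\perp p\cC$, again $X_1\in{}^\perp p\cC$, and since it is an extension within $gp\cC$-cocycles one can in fact iterate to get $X_1\in gp\cC$ by induction on the construction (or appeal to Proposition~\ref{Prop0inf}(ii) once the whole coresolution is assembled). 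Iterating produces the desired projective coresolution of $X$ with all cocycles in ${}^\perp p\cC$.

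\textbf{Main obstacle.} The delicate point is Direction~1: one must glue the ``left half'' (projective resolutions, via the horseshoe lemma) and the ``right half'' (projective coresolutions, via the dual/comparison construction) into a single doubly-infinite exact complex of projectives \emph{and} verify that \emph{every} cocycle — including those at the splice point between the two halves — lies in ${}^\perp p\cC$. Closure of ${}^\perp p\cC$ under extensions handles each cocycle individually, but one has to be careful that the short exact sequences $\Omega^i(X)\hookrightarrow \Omega^i(Y)\tto \Omega^i(Z)$ produced on the left and the analogous cocycle sequences on the right are genuinely compatible and that the $\Ext^1$-vanishing hypotheses of Lemmas~\ref{baby} and \ref{baby2} are available at each stage; these hypotheses are exactly the statements that the relevant syzygies/cocycles of $Z$ lie in ${}^\perp p\cC$, which is guaranteed by $Z\in gp\cC$ and Proposition~\ref{Prop0inf}. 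Once this bookkeeping is in place, the proof is a routine diagram chase.
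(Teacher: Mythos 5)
Your treatment of the two short-exact-sequence directions is essentially the paper's argument: both are funnelled through the characterisation of Proposition~\ref{Prop0inf}(ii), with the extension direction handled by iterating Lemma~\ref{baby} and the kernel direction by a comparison/composition step. (You do not need to assemble both halves of a totally acyclic complex in Direction~1; Proposition~\ref{Prop0inf}(ii) makes the coresolution half suffice, since $\Ext$-shifting puts the resolution-side syzygies in ${}^\perp p\cC$ automatically.) One slip in Direction~2: the cokernel $X_1$ of $X\hookrightarrow P$ does not sit in a sequence $X_1\hookrightarrow Y_1\tto Z_1$. If you simply compose $X\hookrightarrow Y\hookrightarrow P$ you get $0\to Z\to X_1\to Y_1\to 0$, while the paper's application of Lemma~\ref{baby2} to $Y\tto Z$ gives $0\to X_1\to Q\oplus Y_1\to Z_1\to 0$. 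Either corrected sequence still yields $X_1\in{}^\perp p\cC$ (indeed, with the composition route, $X_1\in gp\cC$ by the already-proved extension closure), so the iteration goes through; this is repairable, but as written the sequence you iterate on is wrong.

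The genuine gap is the karoubian part, which you dispose of in one clause. ``Truncating the coresolution of $X=X'\oplus X''$ and pulling back along the split injection'' only produces the first step: $X'\hookrightarrow P^0$ with cokernel an extension of the first cocycle of $X$ by $X''$. That cokernel lies in ${}^\perp p\cC$, but to continue the coresolution you must embed it into a projective with cokernel again in ${}^\perp p\cC$, and at that point all you know about $X''$ is that it is a direct summand of a Gorenstein projective object --- which is exactly what is being proved. The module-theoretic shortcut (Eilenberg swindle) is unavailable here because $\cC$ is an arbitrary abelian category with enough projectives and need not have countable coproducts, so summand-closure is not free in this generality. The paper closes this with a pushout trick: setting $X_1'=P\sqcup_X X''$ for a sequence $X\hookrightarrow P\tto G$ with $G\in gp\cC$, one gets $0\to X'\to P\to X_1'\to 0$ and $0\to X''\to X_1'\to G\to 0$, hence $X\hookrightarrow X_1'\oplus X'\tto G$; the already-established extension closure shows $X_1'\oplus X'\in gp\cC$, so $X_1'$ is again a summand of a Gorenstein projective (in particular lies in ${}^\perp p\cC$), and iterating yields a projective coresolution of $X'$ with all cocycles in ${}^\perp p\cC$, whence $X'\in gp\cC$ by Proposition~\ref{Prop0inf}(ii). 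Some argument of this kind (or a carefully bookkept iteration of Lemma~\ref{baby} applied to the successive extensions) is needed; your sketch as it stands does not supply it.
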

\begin{proof}
It is an easy exercise to prove that~${}^\perp p\cC$ is resolving, which we will use freely. Now consider a short exact sequence $X\hookrightarrow Y\tto Z$ with $Z\in gp\cC$.

Assume first that~$X\in gp\cC$. Clearly $Y\in {}^{\perp}p\cC$. Take $P$, resp. $Q$, to be the zero component of the projective coresolution of~$X$, resp. $Z$, in Proposition~\ref{Prop0inf}(ii) and denote the respective cokernels by~$X_1$ and~$Z_1$. Lemma~\ref{baby} then implies that there exists a short exact sequence
$$0\to Y\to P\oplus Q\to Y_1\to 0,$$
where $Y_1$ admits a short exact sequence $X_1\hookrightarrow Y_1\tto Z_1$ with again $X_1,Z_1\in gp\cC$. We can thus continue this procedure to construct a projective coresolution of~$Y$ where all cocycles are in~${}^{\perp}p\cC$. Again by Proposition~\ref{Prop0inf}(ii), we have~$Y\in gp\cC$.

Now assume that~$Y\in gp\cC$. Take $P$, resp. $Q$, to be the zero component of the projective coresolution of~$Y$, resp. $Z$, in Proposition~\ref{Prop0inf}(ii) and denote the respective cokernels by~$Y_1,Z_1\in gp\cC$. By Lemma~\ref{baby2}, there exists an exact sequence
$$0\to X\to P\to Q\oplus Y_1\to Z_1\to 0.$$
 Hence we have a short exact sequence $X\hookrightarrow P\tto X_1$, where $X_1$ admits a short exact sequence 
$$0\to X_1\to Q\oplus Y_1\to Z_1\to 0.$$
As the middle and right term in the above short exact sequence are again Gorenstein projective, we can proceed iteratively to construct a projective coresolution of~$X$ as in Proposition~\ref{Prop0inf}(ii).

Finally, if $X=X'\oplus X''\in gp\cC$, we claim that both $X'$ and~$X''$ are in $gp\cC$ as well. By Proposition~\ref{Prop0inf}(ii), there exist $P\in p\cC$ and $G\in gp\cC$ for which we have a short exact sequence $X\hookrightarrow P\tto G$. Hence, the pushout $X_1'= P\sqcup_{X}X''$ admits short exact sequences
$$0\to X'\to P\to X_1'\to 0\qquad\mbox{and}\qquad0\to X''\to X_1'\to G\to 0,$$
by \cite[Theorems~2.15 and~2.54]{Freyd}.
Taking the direct sum with $X'$ in the first two terms of the second exact sequence yields $X\hookrightarrow X_1'\oplus X'\tto G$.
As $X$ and~$G$ are in $gp\cC$, the first part of the proof shows that~$X'_1\oplus X'\in gp\cC$ and thus that~$X_1'$ is again the direct summand of a Gorenstein projective object. We can thus repeat the construction to construct a projective coresolution of~$X'$ as in Proposition~\ref{Prop0inf}(ii).
\end{proof}

\begin{lemma}\label{PropCharP}
If for~$G\in\ gp\cC$ there exists $q\in\mZ_{>0}$ such that~$\Ext_{\cC}^q(G',G)=0$ for all $G'\in gp\cC$, then $G$ is projective.
\end{lemma}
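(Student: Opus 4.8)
The statement is the abelian-category analogue of a standard fact: a Gorenstein projective object that admits a vanishing of some higher self-extension among Gorenstein projectives must be projective. The plan is to use the totally acyclic complex through $G$ together with the hypothesis to force the relevant short exact sequence to split.

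\textbf{Setup.} Since $G\in gp\cC$, by Definition~\ref{defGP} there is a totally acyclic complex $P_\bullet$ in $p\cC$ with $G\cong\Omega^0(P_\bullet)$; write $G_j:=\Omega^j(P_\bullet)$ for $j\in\mZ$, so $G_0\cong G$ and each $G_j\in gp\cC$ (each $G_j$ is again a syzygy of the shifted totally acyclic complex). For $j\ge 0$ the truncation $\cdots\to P_{j+1}\to P_j\tto G_j$ is a projective resolution of $G_j$, and the short exact sequences $G_{j+1}\hookrightarrow P_j\tto G_j$ let us dimension-shift: $\Ext^q_{\cC}(G,G)\cong\Ext^1_{\cC}(G_{q-1},G)$, where $G_{q-1}\in gp\cC$. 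By hypothesis (applied with $G':=G_{q-1}$) we get $\Ext^1_{\cC}(G_{q-1},G)=0$.

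\textbf{Key step.} Now use the short exact sequence $G_{q}\hookrightarrow P_{q-1}\tto G_{q-1}$. Since $G_{q-1}\in gp\cC$, Proposition~\ref{Prop0inf}(i) gives $G_{q-1}\in{}^{\perp}p\cC$, so $\Ext^1_{\cC}(G_{q-1},P_{q-1})=0$ and hence $\Ext^1_{\cC}(G_{q-1},-)$ applied to $G_{q}\hookrightarrow P_{q-1}\tto G_{q-1}$ shows that the connecting map identifies $\Ext^1_{\cC}(G_{q-1},G_{q})$ with a subquotient of $\Ext^1_{\cC}(G_{q-1},G_{q-1})$; more usefully, splice the acyclic complex: the portion $G_{q}\hookrightarrow P_{q-1}\tto G_{q-1}$ together with $\cdots\to P_{q+1}\to P_q\tto G_q$ realizes $G_q$ with a projective coresolution as well (it is again a syzygy of a totally acyclic complex), so $G_q\in{}^{\perp}p\cC$. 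The cleaner route: apply $\Hom_{\cC}(G_{q-1},-)$ to the totally acyclic complex itself near position $0$; right $p\cC$-acyclicity is not directly what we want, but since $\Ext^1_{\cC}(G_{q-1},G)=0$ and $G\cong G_0$, the short exact sequence $G_{1}\hookrightarrow P_0\tto G\,=\,G_0$ need not involve $G_{q-1}$. Instead, iterate: from $\Ext^1_{\cC}(G_{q-1},G_0)=0$ and the sequences $G_{j+1}\hookrightarrow P_j\tto G_j$ together with $\Ext^{\ge 1}_{\cC}(G_{q-1},P_j)=0$ (again Proposition~\ref{Prop0inf}(i)), a downward induction on $j$ from $j=0$ gives $\Ext^1_{\cC}(G_{q-1},G_j)=0$ for all $0\le j\le q-1$; at $j=q-1$ this reads $\Ext^1_{\cC}(G_{q-1},G_{q-1})=0$, so the short exact sequence $G_{q}\hookrightarrow P_{q-1}\tto G_{q-1}$ splits. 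Therefore $G_{q-1}$ is a direct summand of $P_{q-1}\in p\cC$, hence projective.

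\textbf{Conclusion.} Once $G_{q-1}$ is projective, run the splitting backwards: the short exact sequence $G_{q-1}\hookrightarrow P_{q-2}\tto G_{q-2}$ has $G_{q-1}$ projective, but we need $G_{q-2}$ projective, which is not automatic from that direction. Instead use that $G_{q-1}\in gp\cC$ has finite (indeed zero) projective dimension and apply Proposition~\ref{Prop0inf}(iii): any $X\in gp\cC$ has $\pd_{\cC}X\in\{0,\infty\}$. We have shown $\pd_{\cC}G_{q-1}=0$. Now $G\cong G_0\in gp\cC$ and $\Ext^q_{\cC}(G,G)\cong\Ext^1_{\cC}(G_{q-1},G)=0$; more to the point, the sequences $G_{j+1}\hookrightarrow P_j\tto G_j$ with $G_{j+1}$ projective for $j=q-1$ give $\pd_{\cC}G_{q-2}\le 1$, hence by Proposition~\ref{Prop0inf}(iii) (as $G_{q-2}\in gp\cC$) we get $\pd_{\cC}G_{q-2}=0$; descending inductively, $\pd_{\cC}G_j=0$ for all $0\le j\le q-1$, in particular $\pd_{\cC}G=\pd_{\cC}G_0=0$, i.e.\ $G$ is projective.

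\textbf{Main obstacle.} The delicate point is the downward induction establishing $\Ext^1_{\cC}(G_{q-1},G_j)=0$ for $j=0,\dots,q-1$: one must correctly track the long exact sequence in $\Ext_{\cC}(G_{q-1},-)$ attached to $G_{j+1}\hookrightarrow P_j\tto G_j$, using $\Ext^1_{\cC}(G_{q-1},P_j)=\Ext^2_{\cC}(G_{q-1},P_j)=0$ from $G_{q-1}\in{}^{\perp}p\cC$ to pass $\Ext^1$-vanishing from $G_j$ to $G_{j+1}$ in the right direction; the base case is exactly the hypothesis after dimension-shifting, and the arithmetic of indices (that $q$ steps land on $G_{q-1}$) must be done carefully.
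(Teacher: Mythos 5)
There is a genuine gap in your Key step. Applying $\Hom_{\cC}(G_{q-1},-)$ to $0\to G_{j+1}\to P_j\to G_j\to 0$ and using $\Ext^{\ge 1}_{\cC}(G_{q-1},P_j)=0$ gives $\Ext^1_{\cC}(G_{q-1},G_{j+1})\cong\coker\bigl(\Hom_{\cC}(G_{q-1},P_j)\to\Hom_{\cC}(G_{q-1},G_j)\bigr)$ and $\Ext^i_{\cC}(G_{q-1},G_j)\cong\Ext^{i+1}_{\cC}(G_{q-1},G_{j+1})$ for $i\ge 1$: moving deeper into the syzygies in the second slot \emph{raises} the cohomological degree, so $\Ext^1_{\cC}(G_{q-1},G_j)=0$ says nothing about $\Ext^1_{\cC}(G_{q-1},G_{j+1})$, and your downward induction collapses. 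Moreover, even if you knew $\Ext^1_{\cC}(G_{q-1},G_{q-1})=0$, that is not the obstruction to splitting $G_q\hookrightarrow P_{q-1}\tto G_{q-1}$; you would need $\Ext^1_{\cC}(G_{q-1},G_q)=0$, which is precisely the (circular) assertion that $\id_{G_{q-1}}$ lifts along $P_{q-1}\tto G_{q-1}$. There is also a bookkeeping slip at the start: the hypothesis with $G'=G_{q-1}$ gives $\Ext^q_{\cC}(G_{q-1},G)=0$, not $\Ext^1$; what your dimension shift actually uses is $\Ext^q_{\cC}(G,G)=0$, i.e.\ the hypothesis at $G'=G$ only.

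That weaker input cannot suffice, so the strategy itself, not just the bookkeeping, must change: over a self-injective Nakayama algebra with radical square zero and $n\ge 2$ simples, every module is Gorenstein projective and every simple $S$ satisfies $\Ext^1(S,S)=0$, yet $S$ is not projective. The hypothesis has to be exploited for $G'$ ranging over Gorenstein projectives other than $G$ and its syzygies — concretely, over the \emph{cosyzygies} of $G$. This is what the paper does: take the coresolution-direction sequence $0\to G\to P\to G_1\to 0$ with $P\in p\cC$ and $G_1\in gp\cC$ furnished by Definition~\ref{defGP}; for $q=1$ the hypothesis at $G'=G_1$ splits it, and for $q>1$ Proposition~\ref{Prop0inf}(i) gives $\Ext^{q-1}_{\cC}(G',G_1)\cong\Ext^{q}_{\cC}(G',G)=0$ for all $G'\in gp\cC$, so induction on $q$ makes $G_1$ projective and the sequence again splits, giving $G$ projective directly. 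Your concluding descent of projectivity along syzygies via Proposition~\ref{Prop0inf}(iii) is fine in itself, but it rests entirely on the broken key step.
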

\begin{proof}
Assume first that~$q=1$. Definition~\ref{defGP} implies we have a short exact sequence
\begin{equation}\label{eqGPG}0\to G\to P\to G_1\to 0,\qquad\mbox{ for some $P\in p\cC$ and $G_1\in gp\cC$}.\end{equation}
By assumption, such a short exact sequence should split and hence both $G$ and~$G_1$ are projective.

Now assume that the statement has been proved for~$q_0\ge 1$ and consider the case $q=q_0+1$. We still have \eqref{eqGPG}. By Proposition~\ref{Prop0inf}(i), this implies that 
$$\Ext^{q_0}_{\cC}(G',G_1)\cong \Ext^{q_0+1}_{\cC}(G',G)=0,$$
for all $G'\in gp\cC$. Hence we can apply the induction step to conclude that~$G_1$ is projective. The short exact sequence~\eqref{eqGPG} then shows that also $G$ must be projective.
\end{proof}

\subsection{G-dimension}
We introduce the G-dimension of objects in an abelian category~$\cC$ following~\cite{AB}, \cite[Section~3]{Avramov} or \cite[Definition~2.8]{Holm}. 
\begin{ddef}\label{DefGD}
The G-dimension~$\Gd_{\cC}X\in\mN\cup\{\infty\}$ of $X\in \cC$ is the minimal length of a 
 $gp\cC$-resolution of $X$.\end{ddef}
Clearly,~$\Gd_{\cC}X=0$ if and only if $X\in gp\cC$. We use the notation~$gp\cC^{(n)}$, resp. $gp\cC^{(-)}$, for the full subcategory of all objects of G-dimension at most $n$, resp. finite G-dimension. 

The following is an analogue of \cite[Theorem~3.1]{Avramov} or \cite[Theorem~2.10]{Holm}. The proof is also identical, so we only give a sketch.
\begin{prop}\label{PropPPG}The following are equivalent, for~$k\in\mN$ and an object $X\in\cC$:
\begin{enumerate}[(i)]
\item $X$ has G-dimension at most $k$;
\item we have~$\Omega^k(P_\bullet)\in gp\cC$ for any projective resolution~$P_\bullet$ of~$X$;
\item $X$ admits a $gp\cC$-resolution of the form
\begin{equation}\label{strictres}0\to P_k\to P_{k-1}\to\cdots\to P_1\to G_0\to 0,\qquad\mbox{ with $P_i\in p\cC$, for~$1\le i\le k$.}\end{equation}
\end{enumerate}
\end{prop}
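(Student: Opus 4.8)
The plan is to prove the three-way equivalence by establishing the implications (i) $\Rightarrow$ (ii) $\Rightarrow$ (iii) $\Rightarrow$ (i), following the classical argument of Auslander--Bridger and Avramov--Martsinkovsky but now inside the abstract $\cC$. The cycle (iii) $\Rightarrow$ (i) is immediate: a resolution of the form \eqref{strictres} has all terms in $gp\cC$ (projectives are Gorenstein projective), and it has length $k$, so $\Gd_{\cC}X \le k$ by Definition~\ref{DefGD}. The implication (ii) $\Rightarrow$ (iii) is also comparatively routine: given a projective resolution $P_\bullet$ of $X$ with $G_0 := \Omega^k(P_\bullet) \in gp\cC$, the truncation $0 \to G_0 \to P_{k-1} \to \cdots \to P_0 \to 0$ is \emph{not} quite of the required shape, so one instead takes the truncation $0 \to P_k \to P_{k-1} \to \cdots \to P_1 \to \Omega^1(P_\bullet) \to 0$ and observes that the first syzygy $\Omega^1(P_\bullet)$ is Gorenstein projective: indeed, $\Omega^1(P_\bullet)$ fits into $0 \to \Omega^1(P_\bullet) \to P_0 \to X \to 0$... wait, that is the wrong direction. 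More carefully: one proves by a downward induction on the length that a projective resolution can always be truncated into \eqref{strictres} once some syzygy is Gorenstein projective, using that $gp\cC$ is projectively resolving (Proposition~\ref{PropRes}) to move the ``Gorenstein'' property between syzygies; specifically, if $\Omega^k(P_\bullet) \in gp\cC$ then setting $G_0 = \Omega^1(P_\bullet)$ and noting $\Omega^j(P_\bullet) \in gp\cC$ for all $1 \le j \le k$ (each is an extension involving projectives and $\Omega^k$, handled by Proposition~\ref{PropRes}) gives the displayed resolution directly.

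The substantive implication is (i) $\Rightarrow$ (ii): if $X$ admits \emph{some} $gp\cC$-resolution of length $\le k$, then \emph{every} $k$-th projective syzygy of $X$ is Gorenstein projective. The standard approach is: first reduce to showing that if $0 \to G_k \to \cdots \to G_0 \to 0$ is any $gp\cC$-resolution of $X$ with $G_i \in gp\cC$, and $P_\bullet$ is any projective resolution, then $\Omega^k(P_\bullet) \in gp\cC$. One argues by induction on $k$. For the inductive step one uses a comparison between the given $gp\cC$-resolution and the projective resolution, together with the key fact that $gp\cC \subseteq {}^\perp p\cC$ (Proposition~\ref{Prop0inf}(i)) so that $Ext$-vanishing makes the usual dimension-shifting go through; a generalized Schanuel-type argument (built from Lemma~\ref{baby} and Lemma~\ref{baby2}) identifies the $k$-th syzygies up to projective summands, and then Proposition~\ref{PropRes} (projective resolving, in particular closed under direct summands and under kernels of epimorphisms from $gp\cC$) shows the projective syzygy inherits membership in $gp\cC$. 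One also needs the elementary fact that $gp\cC$ is closed under syzygies taken within $gp\cC$-resolutions, which again is Proposition~\ref{PropRes}.

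The main obstacle I expect is the Schanuel/comparison bookkeeping in (i) $\Rightarrow$ (ii): one must show that for a $gp\cC$-resolution $0 \to G_k \to \cdots \to G_0 \to 0$ and a projective resolution $P_\bullet \to X$, the objects $\Omega^k(P_\bullet)$ and the $k$-th ``syzygy'' $\ker(G_{k-1} \to G_{k-2})$-type object of the $G$-resolution agree modulo projectives, which requires iterating the baby horseshoe and baby comparison lemmas while tracking the $Ext^1$-vanishing hypotheses those lemmas demand --- these are supplied by $G_i \in {}^\perp p\cC$ and by projectivity of the $P_i$. Once the modulo-projectives identification is in hand, the conclusion is a one-line application of Proposition~\ref{PropRes}. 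Since the authors state ``the proof is also identical, so we only give a sketch'', I would present exactly this skeleton: (i)$\Rightarrow$(ii) via induction on $k$ plus the generalized Schanuel lemma and Proposition~\ref{PropRes}; (ii)$\Rightarrow$(iii) via truncation using that all intermediate syzygies lie in $gp\cC$; (iii)$\Rightarrow$(i) trivially by definition.
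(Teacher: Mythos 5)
Your overall cycle is reasonable and two of the three legs are essentially what the paper does: (iii)$\Rightarrow$(i) is immediate from Definition~\ref{DefGD}, and your sketch of (i)$\Rightarrow$(ii) (induction on $k$, comparison arguments whose $\Ext^1$-vanishing hypotheses come from Proposition~\ref{Prop0inf}(i), then Proposition~\ref{PropRes}) is the argument the paper imports from Avramov--Martsinkovsky; just note that the comparison does \emph{not} identify the $k$-th syzygies up to projective summands --- already for $k=1$ the pullback only produces an exact sequence $0\to\Omega^1(P_\bullet)\to G_1\oplus P_0\to G_0\to 0$, whose splitting would require $\Ext^1_{\cC}(G_0,\Omega^1(P_\bullet))=0$, and one concludes with the two-out-of-three property of Proposition~\ref{PropRes} rather than with summand closure. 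The genuine gap is in (ii)$\Rightarrow$(iii). The claim that $\Omega^k(P_\bullet)\in gp\cC$ forces $\Omega^j(P_\bullet)\in gp\cC$ for all $1\le j\le k$ ``by Proposition~\ref{PropRes}'' is false: a resolving subcategory is closed under kernels of epimorphisms onto its objects, under extensions and under direct summands, but passing from $\Omega^{j+1}(P_\bullet)$ to $\Omega^j(P_\bullet)$ needs closure under cokernels of monomorphisms into projectives, which the resolving property does not provide. If your claim were valid, the same reasoning at $j=0$ would put $X$ itself in $gp\cC$, so every object of finite G-dimension would be Gorenstein projective, contradicting Corollary~\ref{CorFinDim}(ii) (an object with $0<\pd_{\cC}X<\infty$ has $\Gd_{\cC}X=\pd_{\cC}X>0$, hence lies outside $gp\cC$). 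Moreover, even granting the claim, the complex $0\to P_k\to\cdots\to P_1\to\Omega^1(P_\bullet)\to 0$ is not a $gp\cC$-resolution of $X$ in the sense of Definition~\ref{properDef}: its zeroth homology is $0$, not $X$. Condition (iii) amounts to an epimorphism $G_0\tto X$ with $G_0\in gp\cC$ whose kernel has projective dimension at most $k-1$, and constructing such a $G_0$ is the actual content of this implication.

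The route the paper intends (mutatis mutandis \cite[Theorem~2.10]{Holm}) is the following. Truncate a projective resolution to $0\to G\to P_{k-1}\to\cdots\to P_0\to X\to 0$ with $G:=\Omega^k(P_\bullet)\in gp\cC$. By Definition~\ref{defGP}, $G$ also sits in an exact sequence $0\to G\to Q_{k-1}\to\cdots\to Q_0\to G'\to 0$ with all $Q_i\in p\cC$ and with $G'$ and every intermediate cocycle Gorenstein projective. Iterated application of Lemma~\ref{baby2}, the required $\Ext^1$-vanishing being supplied by Proposition~\ref{Prop0inf}(i), lifts $\id_G$ to a morphism of complexes from the second exact sequence to the first; the mapping cone is exact, and cancelling the split copy of $G$ at the top leaves an exact sequence $0\to Q_{k-1}\to P_{k-1}\oplus Q_{k-2}\to\cdots\to P_1\oplus Q_0\to P_0\oplus G'\to X\to 0$, which is of the form~\eqref{strictres} because $P_0\oplus G'\in gp\cC$. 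Without an argument of this kind, your (ii)$\Rightarrow$(iii) does not go through.
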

\begin{proof}
Clearly (ii) and (iii) imply (i). That~(i) implies (ii) is an application of Proposition~\ref{Prop0inf}(i) and the comparison lemma, mutatis mutandis the proof of \cite[Lemma~2.5]{Avramov}. That~(ii) implies (iii) is an application of the the comparison lemma, mutatis mutandis the proof of \cite[Theorem~2.10]{Holm}.
\end{proof}
This proposition has four useful corollaries.
\begin{cor}\label{CorKGX}\label{CorSpecial}
For any $X\in gp\cC^{(-)}$, there exist $G\in gp\cC$ and $K\in p\cC^{(-)}$ (more precisely with $\pd_{\cC}K=\Gd_{\cC}X-1$), with short exact sequence
\begin{equation}\label{KGX}0\to K\to G\to X\to 0.\end{equation}
In particular, each object in~$gp\cC^{(-)}$ admits a special right $gp\cC$-approximation.
\end{cor}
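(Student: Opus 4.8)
The plan is to induct on $k := \Gd_{\cC}X$, using the characterisation of $G$-dimension from Proposition~\ref{PropPPG}(iii). When $k = 0$ there is nothing to prove: take $G = X$ and $K = 0$. So assume $k \ge 1$ and take a resolution
$$0\to P_k\to P_{k-1}\to\cdots\to P_1\to G_0\to 0$$
as in \eqref{strictres}, with each $P_i$ projective and $G_0 \in gp\cC$. First I would split off the kernel of $P_1 \to G_0$: writing $G_0 \hookrightarrow P_0 \tto G_{-1}$ for the defining short exact sequence of the Gorenstein projective object $G_0$ coming from Definition~\ref{defGP} (so $P_0 \in p\cC$ and $G_{-1}\in gp\cC$), I would form the pullback of $P_1 \tto G_0$ along $P_0 \twoheadrightarrow G_{-1}$... — actually it is cleaner to go the other way. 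Let $Y = \coker(P_k \to P_{k-1})$, so that $Y$ sits at the top of a length-$(k-1)$ projective resolution and by Proposition~\ref{PropPPG}(ii) has $\Gd_\cC Y \le k-1$; in fact the complex $0\to P_{k-1}/\im(P_k)\to\cdots$, hmm, let me instead phrase the induction on the object one syzygy down.

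Concretely: the resolution \eqref{strictres} exhibits a short exact sequence $0 \to L \to G_0 \to X \to 0$ where $L := \Omega^1$ of the truncation $0\to P_k\to\cdots\to P_1\to 0$, i.e. $L = \coker(P_2\to P_1)$ when $k\ge 2$ and $L = P_1$ when $k = 1$. In either case $L$ has a projective resolution of length $\le k-1$, so $\pd_\cC L \le k-1$; and when $k=1$ we get $\pd_\cC L = 0 = \Gd_\cC X - 1$ directly. For $k\ge 2$ I still need $\pd_\cC L = k-1$ exactly, not just $\le k-1$: this follows because if it were strictly smaller then splicing would give $X$ a $gp\cC$-resolution of length $< k$, contradicting minimality of $\Gd_\cC X = k$ (using that $G_0 \in gp\cC$ and $gp\cC$ is closed under the relevant operations — Proposition~\ref{PropRes}). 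Thus the short exact sequence \eqref{KGX} is produced with $G = G_0 \in gp\cC$ and $K = L$ satisfying $\pd_\cC K = k-1 = \Gd_\cC X - 1$.

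For the final sentence, I need to check that \eqref{KGX} is a \emph{special} right $gp\cC$-approximation, i.e. that $G \tto X$ is an epimorphism with $G \in gp\cC$ and $\coker$... — rather, that the kernel $K$ lies in $(gp\cC)^{\perp_1}$. But $K \in p\cC^{(-)}$, so $K$ has finite projective dimension, and by Proposition~\ref{Prop0inf}(i) we have $gp\cC \subset {}^\perp p\cC = {}^\perp p\cC^{(-)}$; dualising the membership (or rather, reading it the right way round), every object of finite projective dimension lies in $(gp\cC)^{\perp_1}$ — indeed in $(gp\cC)^{\perp}$ — since $\Ext^i_\cC(G,-)$ on a complex of projectives vanishes in positive degrees for $G \in gp\cC$. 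Hence $K \in (gp\cC)^{\perp_1}$ and $G \tto X$ is the required special right $gp\cC$-approximation.

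The main obstacle is the exactness of the projective-dimension bookkeeping in the inductive step: getting $\pd_\cC K = \Gd_\cC X - 1$ \emph{on the nose} rather than merely $\le \Gd_\cC X - 1$. This is where minimality of the $G$-dimension must be invoked, together with the fact (from Proposition~\ref{PropRes} and Proposition~\ref{PropPPG}) that a shorter projective resolution of $K$ would splice with $0\to K\to G\to X\to 0$ to contradict $\Gd_\cC X = k$; one should be a little careful that splicing a projective resolution of $K$ with the fixed sequence indeed yields a genuine $gp\cC$-resolution of $X$ of the shape \eqref{strictres}, which it does because $G \in gp\cC$ occupies the single non-projective slot.
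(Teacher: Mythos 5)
Your argument is correct and is essentially the paper's own proof: the short exact sequence is read off from the resolution of Proposition~\ref{PropPPG}(iii) with $K=\ker(G_0\tto X)$, and the ``special'' claim follows from Proposition~\ref{Prop0inf}(i), i.e.\ $p\cC^{(-)}\subseteq gp\cC^{\perp_1}$. The only addition is that you spell out, via the splicing/minimality argument, why $\pd_{\cC}K$ equals $\Gd_{\cC}X-1$ exactly rather than just being bounded by it, a point the paper leaves implicit.
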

\begin{proof}
Proposition~\ref{PropPPG} implies that each $X\in gp\cC^{(-)}$ admits such a short exact sequence. By Proposition~\ref{Prop0inf}(i), $p\cC^{(-)}\subseteq gp\cC^{\perp_1}$, so $G\tto X$ is a special right $gp\cC$-approximation.
\end{proof}

\begin{cor}\label{CharGP}
We have~${}^\perp p\cC\,\cap\, gp\cC^{(-)}\;=\;gp\cC$.
\end{cor}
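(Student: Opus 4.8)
The plan is to prove the two inclusions ${}^\perp p\cC \cap gp\cC^{(-)} \subseteq gp\cC$ and $gp\cC \subseteq {}^\perp p\cC \cap gp\cC^{(-)}$ separately, where the second is essentially already recorded. Indeed, $gp\cC \subseteq {}^\perp p\cC$ is Proposition~\ref{Prop0inf}(i), and $gp\cC \subseteq gp\cC^{(-)}$ is immediate since objects of $gp\cC$ have G-dimension $0$; so the reverse inclusion is trivial and all the content is in showing that an object of finite G-dimension lying in ${}^\perp p\cC$ is in fact Gorenstein projective.

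For the nontrivial inclusion, I would argue by induction on $\Gd_{\cC}X = k$. The case $k = 0$ is the definition of $gp\cC$. For $k \ge 1$, apply Corollary~\ref{CorKGX} to obtain a short exact sequence $0 \to K \to G \to X \to 0$ with $G \in gp\cC$ and $\pd_{\cC}K = k - 1 < \infty$. The goal is to show $K$ is projective, for then $G \to X$ has projective kernel, and since $gp\cC$ is projectively resolving (Proposition~\ref{PropRes}) — hence closed under extensions and under the relevant two-out-of-three property — the sequence $K \hookrightarrow G \tto X$ together with $K \in p\cC \subseteq gp\cC$ and $G \in gp\cC$ forces $X \in gp\cC$. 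To see that $K$ is projective: apply $\Hom_{\cC}(X, -)$-type reasoning, or rather use the long exact sequence in $\Ext_{\cC}(-, N)$ associated to $0 \to K \to G \to X \to 0$ for arbitrary $N$. Since $X \in {}^\perp p\cC$ and $G \in {}^\perp p\cC$, one gets $\Ext^i_{\cC}(K, P) \cong \Ext^{i+1}_{\cC}(X, P) = 0$ for all projective $P$ and all $i > 0$ (using that the $\Ext$ of $G$ against $P$ vanishes in positive degrees), so $K \in {}^\perp p\cC$ as well; but then by Proposition~\ref{Prop0inf}(iii) applied after noting $K$ has finite projective dimension — more directly, an object with $\pd_{\cC}K = k-1$ and $K \in {}^\perp p\cC$ must have $\pd_{\cC}K = 0$ by the argument in the proof of Proposition~\ref{Prop0inf}(iii) (take $N$ with $\Ext^{k-1}_{\cC}(K,N) \ne 0$, resolve $N$ by a projective, derive a contradiction with $K \in {}^\perp p\cC$). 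Hence $K$ is projective.

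An alternative, cleaner route: once $0 \to K \to G \to X \to 0$ is in hand with $K$ of finite projective dimension, I could instead invoke Lemma~\ref{PropCharP} applied to $K$ after verifying $K \in gp\cC$ — but this is circular, so the $\Ext$-vanishing argument above is the right one. The main obstacle, such as it is, is bookkeeping: making sure that the finiteness of $\pd_{\cC}K$ coming from Corollary~\ref{CorKGX} combines correctly with $K \in {}^\perp p\cC$ to yield projectivity, and then feeding $K \in p\cC$ back through the projectively resolving property of $gp\cC$ (Proposition~\ref{PropRes}) to conclude $X \in gp\cC$. No genuinely hard step is involved; everything is an assembly of Proposition~\ref{Prop0inf}, Corollary~\ref{CorKGX}, and Proposition~\ref{PropRes}.

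Concretely, the write-up would read roughly: ``The inclusion $\supseteq$ follows from Proposition~\ref{Prop0inf}(i). For $\subseteq$, take $X \in {}^\perp p\cC$ with $\Gd_{\cC}X = k < \infty$ and induct on $k$; the case $k=0$ is trivial. For $k \ge 1$, Corollary~\ref{CorKGX} gives $0 \to K \to G \to X \to 0$ with $G \in gp\cC$ and $\pd_{\cC}K = k-1$. The long exact sequence for $\Ext_{\cC}(-,P)$, $P \in p\cC$, together with $X, G \in {}^\perp p\cC$, yields $K \in {}^\perp p\cC$; by the argument of Proposition~\ref{Prop0inf}(iii), an object in ${}^\perp p\cC$ of finite projective dimension is projective, so $K \in p\cC$. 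Since $gp\cC$ is projectively resolving (Proposition~\ref{PropRes}) and $K, G \in gp\cC$, the short exact sequence gives $X \in gp\cC$.''
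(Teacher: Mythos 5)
There is a genuine gap in your final step. Being \emph{projectively resolving} (Proposition~\ref{PropRes}) means: $gp\cC$ contains $p\cC$, is closed under direct summands, and for a short exact sequence $0\to X\to Y\to Z\to 0$ with the \emph{cokernel} $Z\in gp\cC$, the kernel lies in $gp\cC$ if and only if the middle term does. It does \emph{not} give the two-out-of-three you invoke, namely that kernel and middle term in $gp\cC$ force the cokernel into $gp\cC$; resolving subcategories are not closed under cokernels of monomorphisms. Indeed your own configuration is the standard counterexample: from $0\to K\to G\to X\to 0$ with $K$ projective and $G\in gp\cC$ one can in general only conclude $\Gd_{\cC}X\le 1$ — take $K,G$ both projective and $X$ of projective dimension exactly $1$; then $X\notin gp\cC$ by Proposition~\ref{Prop0inf}(iii). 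So the sentence ``the sequence $K\hookrightarrow G\tto X$ together with $K\in p\cC$ and $G\in gp\cC$ forces $X\in gp\cC$'' is not justified and, without using the hypothesis $X\in{}^\perp p\cC$ \emph{at that point}, is false.

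The repair is immediate and is in fact the paper's proof: you never need to show $K$ is projective, and no induction on $k$ is needed. From Corollary~\ref{CorKGX} you have $0\to K\to G\to X\to 0$ with $G\in gp\cC$ and $\pd_{\cC}K<\infty$. Since $X\in{}^\perp p\cC={}^\perp p\cC^{(-)}$ (the equality noted in Proposition~\ref{Prop0inf}), we get $\Ext^1_{\cC}(X,K)=0$, so the sequence splits and $X$ is a direct summand of $G$; closure of $gp\cC$ under direct summands (part of Proposition~\ref{PropRes}) then gives $X\in gp\cC$. Your intermediate computation that $K\in{}^\perp p\cC$ and hence $K\in p\cC$ is correct but superfluous — and even granting it, you must still use $X\in{}^\perp p\cC$ to split the sequence (projectivity of the \emph{sub}object $K$ alone does not split it), which is exactly the step your write-up omits.
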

\begin{proof}
The inclusion~${}^\perp p\cC\cap gp\cC^{(-)}\supset gp\cC$ is clear. Now take $X\in gp\cC^{(-)}$, with short exact sequence~\eqref{KGX}.
If~$X\in {}^\perp p\cC={}^\perp p\cC^{(-)}$ then the sequence must split. Hence,~$X$ is a direct summand of $G\in gp\cC$, so by Proposition~\ref{PropRes} we have $X\in gp\cC$, which concludes the proof.
\end{proof}

The following corollary shows that the G-dimension is a refinement of the projective dimension. Definition~\ref{DefGD} thus allows to capture more information on objects of infinite projective dimension.
\begin{cor}\label{CorFinDim}
For an arbitrary object $X$ in~$\cC$ we have
\begin{enumerate}[(i)]
\item $\Gd_{\cC} X\;\le\; \pd_{\cC}X$;
\item $\Gd_{\cC} X\;=\; \pd_{\cC}X\;$ if $\;\pd_{\cC}X<\infty$.
\end{enumerate}
\end{cor}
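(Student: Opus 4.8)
The plan is to derive both statements from Proposition~\ref{PropPPG}, which identifies $\Gd_{\cC}X$ with the first place where a projective resolution of $X$ has its syzygy in $gp\cC$, together with the inclusion $p\cC\subseteq gp\cC$ noted right after Definition~\ref{defGP}.

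For part (i): fix a projective resolution $P_\bullet$ of $X$. If $\pd_{\cC}X=\infty$ there is nothing to prove, so assume $\pd_{\cC}X=n<\infty$. Then $\Omega^n(P_\bullet)$ is projective (a standard consequence of finite projective dimension: truncating the resolution at stage $n$ gives a projective resolution of the syzygy which must itself be projective since it has projective dimension zero). Since $p\cC\subseteq gp\cC$, we get $\Omega^n(P_\bullet)\in gp\cC$, and Proposition~\ref{PropPPG}(ii)$\Rightarrow$(i) yields $\Gd_{\cC}X\le n=\pd_{\cC}X$. Here I would be a little careful about the edge case $n=0$, but then $X$ itself is projective, hence Gorenstein projective, and $\Gd_{\cC}X=0$.

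For part (ii): assume $\pd_{\cC}X=n<\infty$; by (i) it remains to show $\Gd_{\cC}X\ge n$. Suppose for contradiction that $\Gd_{\cC}X=k<n$. By Proposition~\ref{PropPPG}(iii), $X$ admits a $gp\cC$-resolution $0\to P_k\to\cdots\to P_1\to G_0\to 0$ with $P_i$ projective for $1\le i\le k$ and $G_0\in gp\cC$. It therefore suffices to show $\pd_{\cC}X\le k$, i.e.\ that $G_0$ has projective dimension zero, since then a dimension-shift along the projective tail gives $\pd_{\cC}X\le k<n$, contradicting $\pd_{\cC}X=n$. Now $G_0\in gp\cC$, so by Proposition~\ref{Prop0inf}(iii) we have $\pd_{\cC}G_0\in\{0,\infty\}$; on the other hand, from the resolution of $X$ with projective tail, $\pd_{\cC}G_0$ is finite (it is bounded in terms of $\pd_{\cC}X$ and $k$ via the long exact sequences, or directly: $\pd_{\cC}X<\infty$ forces all the relevant syzygies, including $G_0$, to have finite projective dimension). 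Hence $\pd_{\cC}G_0=0$, completing the contradiction.

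The main obstacle is the careful bookkeeping in part (ii): one must correctly propagate finiteness of projective dimension from $X$ through the mixed resolution (projective terms $P_1,\dots,P_k$ and a Gorenstein projective term $G_0$) to conclude $\pd_{\cC}G_0<\infty$, and only then invoke the dichotomy of Proposition~\ref{Prop0inf}(iii). An alternative, cleaner route to the same end is to use Corollary~\ref{CorKGX}: for $X\in gp\cC^{(-)}$ there is a short exact sequence $0\to K\to G\to X\to 0$ with $G\in gp\cC$ and $\pd_{\cC}K=\Gd_{\cC}X-1$; if $\pd_{\cC}X<\infty$ then from this sequence $\pd_{\cC}G<\infty$, so $\pd_{\cC}G=0$ by Proposition~\ref{Prop0inf}(iii), whence $\pd_{\cC}X\le \pd_{\cC}K+1=\Gd_{\cC}X$, and combined with (i) this gives equality. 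I would use whichever of these two formulations reads most smoothly given what has already been established.
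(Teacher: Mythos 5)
Your proposal is correct and takes essentially the same approach as the paper: part (i) from $p\cC\subseteq gp\cC$, and part (ii) by propagating finiteness of projective dimension to the Gorenstein projective term of an approximation and then invoking the dichotomy of Proposition~\ref{Prop0inf}(iii). Indeed, your ``cleaner alternative'' via Corollary~\ref{CorKGX} is precisely the paper's argument, and your primary version via the strict resolution of Proposition~\ref{PropPPG}(iii) is just an unravelled form of it.
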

\begin{proof}
Part (i) is immediate by Definition~\ref{DefGD}, as~$p\cC\subset gp\cC$. Now assume that~$\pd_{\cC}X=k_0\in\mN$, so $\Gd_{\cC}X=k\le k_0$ by part~(i). Consider the short exact sequence~\eqref{KGX}. As one can not have precisely one object of infinite projective dimension in a short exact sequence we have $\pd_{\cC}G<\infty$. Proposition~\ref{Prop0inf}(iii) implies that~$G\in p\cC$. But then we find $k_0=\pd_{\cC}X\le k$ and hence $k=k_0$.\end{proof}

The following is an analogue of \cite[Theorem~2.20]{Holm}.
\begin{cor}\label{CorGDExt}
For any~$X\in gp\cC^{(-)}$ and~$k\in\mN$, the following are equivalent:
\begin{enumerate}[(i)]
\item $\Gd_{\cC}X\le k$;
\item $\Ext_{\cC}^j(X,-)$ is trivial on~$p\cC$, for all $j> k$.
\end{enumerate}
\end{cor}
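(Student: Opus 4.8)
The plan is to show the implications (i) $\Rightarrow$ (ii) and (ii) $\Rightarrow$ (i) separately, using Corollary~\ref{CorKGX} and Proposition~\ref{PropPPG} as the main tools. For (i) $\Rightarrow$ (ii): suppose $\Gd_{\cC}X\le k$. By Proposition~\ref{PropPPG}(iii), $X$ admits a $gp\cC$-resolution $0\to P_k\to\cdots\to P_1\to G_0\to 0$ with $P_i\in p\cC$ for $1\le i\le k$. Splicing in a short exact sequence $0\to K_0\to P_0\to G_0\to 0$ coming from Corollary~\ref{CorKGX} (so $G_0\in gp\cC$, $\pd_\cC K_0=\Gd_\cC G_0-1$ which is $-1$, i.e. $K_0$ is itself projective, hence $G_0$ admits a projective presentation), one builds a complex whose higher cocycles are Gorenstein projective; more directly, from Proposition~\ref{PropPPG}(ii) one knows $\Omega^k(P_\bullet)\in gp\cC$ for a projective resolution $P_\bullet$ of $X$, so $\Ext^j_\cC(X,Q)\cong\Ext^{j-k}_\cC(\Omega^k(P_\bullet),Q)$ for $j>k$ and $Q\in p\cC$, and this vanishes because $gp\cC\subseteq{}^\perp p\cC$ by Proposition~\ref{Prop0inf}(i). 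This gives (ii).

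For (ii) $\Rightarrow$ (i): suppose $\Ext^j_\cC(X,-)|_{p\cC}=0$ for all $j>k$, and let $\Gd_\cC X=m$ (finite by hypothesis $X\in gp\cC^{(-)}$). We must show $m\le k$, so assume for contradiction $m\ge k+1$. Take a projective resolution $P_\bullet$ of $X$; by Proposition~\ref{PropPPG}(ii), $G:=\Omega^k(P_\bullet)$ has $\Gd_\cC G= m-k\ge 1$, in particular $G$ is \emph{not} Gorenstein projective. On the other hand, dimension-shifting gives, for any $Q\in p\cC$ and $j\ge 1$,
$$\Ext^j_\cC(G,Q)\;\cong\;\Ext^{j+k}_\cC(X,Q)\;=\;0,$$
since $j+k>k$. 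Thus $G\in{}^\perp p\cC$. But $G=\Omega^k(P_\bullet)$ lies in $gp\cC^{(-)}$ (its G-dimension is finite), so by Corollary~\ref{CharGP} we get $G\in{}^\perp p\cC\cap gp\cC^{(-)}=gp\cC$, contradicting $\Gd_\cC G\ge 1$. Hence $m\le k$, proving (i).

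The main obstacle — really the only subtlety — is making sure the dimension-shift isomorphisms $\Ext^j_\cC(G,Q)\cong\Ext^{j+k}_\cC(X,Q)$ are legitimate and that $G=\Omega^k(P_\bullet)$ genuinely has finite G-dimension equal to $m-k$; the first is the standard syzygy shift for $\Ext$ against an arbitrary object (valid since $\cC$ has enough projectives, using the truncated resolution of $G$), and the second is exactly the content of Proposition~\ref{PropPPG} applied at level $k\le m$, after which Corollary~\ref{CharGP} does the decisive work. Everything else is bookkeeping; no new diagram chase is needed beyond what is already established.
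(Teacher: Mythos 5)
Your proof is correct, and it rests on the same two pillars as the paper's argument: dimension shifting against $p\cC$ and Corollary~\ref{CharGP}. The packaging differs, though. The paper argues by induction on $k$: it takes a $gp\cC$-resolution $G_\bullet$ of $X$ and the short exact sequence $\Omega^1(G_\bullet)\hookrightarrow G_0\tto X$, shifting $\Ext^j_{\cC}(X,-)$ on $p\cC$ one degree at a time (using $G_0\in{}^\perp p\cC$), with base case $k=0$ being exactly Corollary~\ref{CharGP}. You instead pass to the $k$-th syzygy $G=\Omega^k(P_\bullet)$ of an ordinary projective resolution, shift all $k$ degrees at once, and feed $G$ into Corollary~\ref{CharGP} and Proposition~\ref{PropPPG}; this avoids the induction at the modest cost of checking the $k$-fold shift and that $G$ has finite G-dimension. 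Two cosmetic remarks. First, your (ii)$\Rightarrow$(i) needs no contradiction and no computation of $\Gd_{\cC}G$: from the shift one gets $G\in{}^\perp p\cC$, and $G\in gp\cC^{(-)}$ because $\Omega^m(P_\bullet)\in gp\cC$ is a syzygy of the truncated resolution of $G$; then Corollary~\ref{CharGP} gives $G\in gp\cC$ and Proposition~\ref{PropPPG} gives $\Gd_{\cC}X\le k$ directly. Second, the opening of your (i)$\Rightarrow$(ii) paragraph (splicing in a sequence from Corollary~\ref{CorKGX} with $\pd_{\cC}K_0=\Gd_{\cC}G_0-1=-1$) is garbled and superfluous; the ``more directly'' argument via Proposition~\ref{PropPPG}(ii) and Proposition~\ref{Prop0inf}(i) is the one to keep.
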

\begin{proof}
For $k=0$, the statement is a reformulation of Corollary~\ref{CharGP}. Now take $X\in gp\cC^{(-)}$ with $gp\cC$-resolution~$G_\bullet$ of length $\Gd_{\cC}(X)>0$. The syzygy$Y:=\Omega^1(G_\bullet)$ admits a short exact sequence
$ Y\hookrightarrow G_0\tto X.$
We clearly have $\Gd_{\cC}X=\Gd_{\cC}Y+1$ and
$$\Ext^j_{\cC}(X,-)\;\cong\;\Ext^{j-1}_{\cC}(Y,-)\quad\mbox{on}\; \,p\cC,\qquad \mbox{for~$j>0$.}$$
The full result thus follows by induction on $k$.
\end{proof}

\subsection{Gorenstein resolutions and extensions}
The following special case of Definition~\ref{properDef} follows \cite[Section~4]{Avramov}.
\begin{ddef}
A {\bf Gorenstein resolution} of an object $X\in \cC$ is a proper $gp\cC$-resolution of~$X$. Let~${}^g\cC$ denote the full subcategory of~$\cC$ of objects admitting a Gorenstein resolution.
\end{ddef}

\begin{lemma}\label{lemres}
A $gp\cC$-resolution as in equation~\eqref{strictres} is a Gorenstein resolution. Consequently we have~$gp\cC^{(-)}\,\subseteq {}^g\cC$. More precisely, for any~$k\in\mN$,~$gp\cC^{(k)}$ is the full subcategory of~${}^g\cC$ of all objects admitting Gorenstein resolutions of length $k$.
\end{lemma}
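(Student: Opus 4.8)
The plan is to verify the three assertions in turn, reducing everything to the defining property of a proper resolution (Definition~\ref{properDef}) and to the characterisation of G-dimension in Proposition~\ref{PropPPG}. First I would show that a $gp\cC$-resolution of the special shape \eqref{strictres}, namely $0\to P_k\to\cdots\to P_1\to G_0\to 0$ with $P_i\in p\cC$ for $1\le i\le k$, is automatically proper. Recall that being proper means the augmented complex $B_\bullet\to X\to 0$ is left $gp\cC$-acyclical, i.e. $\Hom_\cC(G,-)$ applied to the augmented complex is exact for every $G\in gp\cC$. To check this I would split \eqref{strictres} into the short exact sequences $0\to \Omega^{i+1}\to B_i\to \Omega^i\to 0$ obtained from the differentials, where $\Omega^0=X$ and $\Omega^i$ for $i\ge 1$ denotes the relevant syzygy. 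By Proposition~\ref{PropPPG}, each such $\Omega^i$ has G-dimension at most $k-i$, so in particular lies in $gp\cC^{(-)}$; moreover for $1\le i\le k-1$ the object $\Omega^i$ sits between a projective $P_i$ and $\Omega^{i-1}$, and one sees inductively from the top that in fact the kernels appearing are of the form needed. The key point is that each of these short exact sequences stays exact after applying $\Hom_\cC(G,-)$ for $G\in gp\cC$: this is exactly the vanishing $\Ext^1_\cC(G,\Omega^{i+1})=0$, which holds because $\Omega^{i+1}\in p\cC^{(-)}\subseteq gp\cC^{\perp_1}$ by Proposition~\ref{Prop0inf}(i) when $i+1$ is large enough, and because the top piece has $G_0\in gp\cC$ with $\Omega^1$ of finite projective dimension. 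Splicing these exact Hom-sequences together gives exactness of $\Hom_\cC(G,B_\bullet\to X\to 0)$, so the resolution is proper, hence a Gorenstein resolution.

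Second, the inclusion $gp\cC^{(-)}\subseteq {}^g\cC$ is then immediate: any $X$ of finite G-dimension admits, by Proposition~\ref{PropPPG}(iii), a $gp\cC$-resolution of the form \eqref{strictres}, which we have just shown to be a Gorenstein resolution, so $X\in {}^g\cC$ by definition.

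Third, for the sharper statement I would prove both inclusions of ``$gp\cC^{(k)}$ equals the full subcategory of ${}^g\cC$ of objects admitting a Gorenstein resolution of length $k$''. One direction is again Proposition~\ref{PropPPG}(iii) together with the first part: if $\Gd_\cC X\le k$ then $X$ has a Gorenstein resolution of length $\le k$ (pad with zeros to get length exactly $k$ if desired). For the converse, suppose $X$ admits some Gorenstein resolution $B_\bullet$ of length $k$; since a Gorenstein resolution is in particular a $gp\cC$-resolution of length $k$, Definition~\ref{DefGD} gives $\Gd_\cC X\le k$ directly, so $X\in gp\cC^{(k)}$. The main obstacle, and the only place requiring genuine care, is the first part: correctly identifying the syzygies $\Omega^i$ of \eqref{strictres} and checking that the relevant $\Ext^1$-groups against arbitrary $G\in gp\cC$ vanish — one must handle the ``top'' short exact sequence $0\to\Omega^1\to G_0\to X\to 0$ (where $\Omega^1$ has finite projective dimension, hence lies in $gp\cC^{\perp_1}$ by Proposition~\ref{Prop0inf}(i)) separately from the purely projective part lower down, but in both regimes the needed vanishing is supplied by $p\cC^{(-)}\subseteq gp\cC^{\perp_1}$.
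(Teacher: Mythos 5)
Your proposal is correct and takes essentially the same route as the paper: the paper disposes of the propriety of a resolution of shape \eqref{strictres} as a standard exercise (citing Avramov--Martsinkovsky, Lemma 4.1), the substance of which is exactly the splicing argument you give, namely that every kernel $\Omega^{i+1}$, $i\ge 0$, lies in $p\cC^{(-)}\subseteq gp\cC^{\perp_1}$ by Proposition~\ref{Prop0inf}(i), so $\Hom_{\cC}(G,-)$ preserves exactness for $G\in gp\cC$; the remaining claims are then read off from Proposition~\ref{PropPPG} and Definition~\ref{DefGD}, as you do. The only cosmetic slip is the aside invoking Proposition~\ref{PropPPG} to bound the G-dimension of the syzygies of \eqref{strictres} (that proposition is about projective resolutions and the bound is not needed), and the phrase ``when $i+1$ is large enough'': the vanishing holds uniformly for all $i\ge 0$, since each $\Omega^{i+1}$ has finite projective dimension directly from the tail $0\to P_k\to\cdots\to P_{i+1}\to\Omega^{i+1}\to 0$, which is the uniform reason you correctly identify in your closing sentence.
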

\begin{proof}
Since $\Hom_{\cC}(G,-)$ is exact on projective modules, for~$G\in gp\cC$, the fact that a $gp\cC$-resolution as in equation~\eqref{strictres} is a Gorenstein resolution is a standard exercise in homological algebra, see e.g. \cite[Lemma~4.1]{Avramov}. The other claims follow directly from Proposition~\ref{PropPPG}. 
\end{proof}

Following~\cite[Section~4]{Avramov} or \cite[Section~3]{Holm2}, we introduce the Gorenstein extension groups.
\begin{ddef}\label{defGE}
For any~$k\in\mN$, the functor~$\GE_{\cC}^k(-,-):({}^g\cC)^{\op}\times \cC\to {\mathbf{Ab}}$ is defined by
$$\GE_{\cC}^k(X,Y)\;\cong\; \HH^k(\Hom_{\cC}(G_\bullet, Y)),$$
for~$G_\bullet$ a Gorenstein resolution of~$X$.
\end{ddef}
\subsubsection{}\label{DefExtbla}That this is well-defined (does not depend on the choice of Gorenstein resolution) and natural in $X$ and $Y$ follows from the Comparison Lemma, see \cite[Theorem~4.2(1)]{Avramov}. As a standard essential property of relative homology, it also follows that, for~$X\in {}^g\cC$, and a {\em left $gp\cC$-acyclical} short exact sequence $A\hookrightarrow B\tto C$ in~$\cC$, there exists a long exact sequence
\begin{equation}\label{les}0\to \GE_{\cC}^0(X,A)\to\cdots\to \GE_{\cC}^i(X,B)\to  \GE_{\cC}^i(X,C)\to  \GE_{\cC}^{i+1}(X,A)\to\cdots,\end{equation}
see e.g.~\cite[Proposition~4.4]{Avramov}.


We note some immediate consequences of the definition.
Knowledge of a special $gp\cC$-approximation allows to calculate Gorenstein extension groups from ordinary ones.
\begin{lemma}\label{LemKGX}
Consider $X\in gp\cC^{(-)}$, with associated short exact sequence 
\eqref{KGX}. We have isomorphisms of functors
$$\GE^k_{\cC}(X,-)\cong\begin{cases}
\coker\left(\Hom_{\cC}(G,-)\to\Hom_{\cC}(K,-)\right)&\mbox{if $k=1$;}\\
\Ext^{k-1}_{\cC}(K,-)&\mbox{if $k>1$.}\end{cases}$$
\end{lemma}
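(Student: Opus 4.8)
The strategy is to use the short exact sequence \eqref{KGX}, namely $0\to K\to G\to X\to 0$ with $G\in gp\cC$ and $K\in p\cC^{(-)}$, as an explicit (very short) Gorenstein resolution of $X$, and then compute $\GE^k_{\cC}(X,-)$ directly from Definition~\ref{defGE}. The first point to settle is that \eqref{KGX} does indeed give rise to a Gorenstein resolution: splicing together \eqref{KGX} with a (finite) projective resolution of $K$ — which exists since $\pd_{\cC}K=\Gd_{\cC}X-1<\infty$ — produces a $gp\cC$-resolution of $X$ of the form \eqref{strictres}, and by Lemma~\ref{lemres} any such resolution is a Gorenstein resolution.

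\textbf{Key steps.} First I would fix a finite projective resolution $0\to P_m\to\cdots\to P_1\to P_0\to K\to 0$ of $K$, with $m=\pd_{\cC}K=\Gd_{\cC}X-1$, and splice it with $G\tto X$ to obtain the Gorenstein resolution $G_\bullet:\ 0\to P_m\to\cdots\to P_1\to P_0\to G\to 0$ (here $G_0=G$, $G_1=P_0$, etc.), which by Lemma~\ref{lemres} computes $\GE^k_{\cC}(X,-)$. Applying $\Hom_{\cC}(-,Y)$ to $G_\bullet$ and taking cohomology, one sees that in degree $k\ge 2$ the complex $\Hom_{\cC}(G_\bullet,Y)$ agrees, from the spot after the initial term $\Hom_{\cC}(G,Y)$, with $\Hom_{\cC}(P_\bullet,Y)$ where $P_\bullet$ is the projective resolution of $K$; hence $\GE^k_{\cC}(X,Y)\cong\HH^{k-1}(\Hom_{\cC}(P_\bullet,Y))\cong\Ext^{k-1}_{\cC}(K,Y)$ for $k>1$, naturally in $Y$. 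For $k=1$, cohomology in degree $1$ of $\Hom_{\cC}(G_\bullet,Y)$ is $\ker\big(\Hom_{\cC}(P_0,Y)\to\Hom_{\cC}(P_1,Y)\big)/\im\big(\Hom_{\cC}(G,Y)\to\Hom_{\cC}(P_0,Y)\big)$; since $\ker\big(\Hom_{\cC}(P_0,Y)\to\Hom_{\cC}(P_1,Y)\big)=\Hom_{\cC}(K,Y)$ (as $P_0\to K$ is an epimorphism with kernel generated in the tail of $P_\bullet$), this quotient is exactly $\coker\big(\Hom_{\cC}(G,Y)\to\Hom_{\cC}(K,Y)\big)$, where the map $\Hom_{\cC}(G,Y)\to\Hom_{\cC}(K,Y)$ is induced by $K\hookrightarrow G$ from \eqref{KGX}. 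Naturality in $Y$ is automatic since every identification above is induced by the fixed complex $G_\bullet$ and the fixed sequence \eqref{KGX}.

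\textbf{Main obstacle.} The computation itself is routine once the right Gorenstein resolution is in hand; the one point that needs a little care is checking that $\ker\big(\Hom_{\cC}(P_0,Y)\to\Hom_{\cC}(P_1,Y)\big)$ really equals $\Hom_{\cC}(K,Y)$ (equivalently, that $P_0\to K$ realizes $K$ as the appropriate cokernel so that left-exactness of $\Hom_{\cC}(-,Y)$ gives the claim) and that the differential $G_1=P_0\to G_0=G$ of the spliced complex is the composite $P_0\tto K\hookrightarrow G$, so that the induced map on $\Hom_{\cC}(-,Y)$ in the $k=1$ computation is precisely restriction along $K\hookrightarrow G$. Both are immediate from the construction of the splice, so in fact there is no serious difficulty — this is essentially a bookkeeping argument, and I would present it as such.
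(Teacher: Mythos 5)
Your proof is correct and follows exactly the paper's (much terser) argument: splice a finite projective resolution of $K$ with $G\tto X$ to obtain a Gorenstein resolution of the form \eqref{strictres} (legitimised by Lemma~\ref{lemres}), then read off $\GE^k_{\cC}(X,-)$ from Definition~\ref{defGE}, using left exactness of $\Hom_{\cC}(-,Y)$ to identify $\ker\bigl(\Hom_{\cC}(P_0,Y)\to\Hom_{\cC}(P_1,Y)\bigr)$ with $\Hom_{\cC}(K,Y)$ in degree $1$. Nothing further is needed.
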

\begin{proof}
We consider a finite projective resolution of $K$, which allows to construct a Gorenstein resolution of $X$. The isomorphisms then follow by definition.
\end{proof}

\begin{lemma}\label{LemExtProp}
We have isomorphisms of functors
\begin{enumerate}[(i)]
\item $\GE_{\cC}^0(-,-)\cong\Hom_{\cC}(-,-)$ on~${}^g\cC^{\op}\times \cC$;
\item $\GE_{\cC}^k(X,-)\cong\Ext^k_{\cC}(X,-)$ if $X\in p\cC^{(-)}$, for all $k\in\mN$;
\item $\GE_{\cC}^k(X,-)\cong\Ext^k_{\cC}(X,-)$ on~$p\cC^{(-)}$ if $X\in gp\cC^{(-)}$, for all $k\in\mN$.
\end{enumerate}
\end{lemma}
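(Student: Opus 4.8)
The plan is to establish the three isomorphisms in increasing order of difficulty, using the defining formula $\GE^k_{\cC}(X,Y)=\HH^k(\Hom_{\cC}(G_\bullet,Y))$ for a Gorenstein resolution $G_\bullet$ of $X$, together with the corollaries already proved. For part (i), I would note that a Gorenstein resolution $G_\bullet\to X\to 0$ is in particular a complex with $\HH_0(G_\bullet)\cong X$, so applying the left-exact functor $\Hom_{\cC}(-,Y)$ to the exact sequence $G_1\to G_0\to X\to 0$ immediately gives $\HH^0(\Hom_{\cC}(G_\bullet,Y))\cong\Hom_{\cC}(X,Y)$; naturality in both variables is clear from the construction. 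For part (ii), if $X\in p\cC^{(-)}$ then a finite projective resolution of $X$ is itself a Gorenstein resolution (projectives are Gorenstein projective, and $\Hom_{\cC}(P,-)$ is exact so it is automatically proper), and computing $\GE$ with this resolution gives exactly $\Ext^k_{\cC}(X,-)$ by definition of $\Ext$.

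Part (iii) is the substantive one. Here I would invoke Corollary \ref{CorKGX}: for $X\in gp\cC^{(-)}$ there is a short exact sequence $0\to K\to G\to X\to 0$ with $G\in gp\cC$ and $K\in p\cC^{(-)}$, with $\pd_{\cC}K=\Gd_{\cC}X-1$. Splicing a finite projective resolution of $K$ onto $G$ yields a Gorenstein resolution of $X$ of the form \eqref{strictres}, so Lemma \ref{LemKGX} applies and computes $\GE^k_{\cC}(X,-)$ in terms of $\Hom_{\cC}(G,-)$, $\Hom_{\cC}(K,-)$ and $\Ext^{k-1}_{\cC}(K,-)$. Now I restrict all these functors to $p\cC^{(-)}$. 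On that subcategory $\Ext^{k-1}_{\cC}(K,P)$ for $P\in p\cC^{(-)}$ can be handled because $K$ has finite projective dimension, but more to the point I want to compare with $\Ext^k_{\cC}(X,-)$ directly: the long exact sequence in $\Ext_{\cC}(-,P)$ associated to $0\to K\to G\to X\to 0$ reads
$$\cdots\to\Ext^{k-1}_{\cC}(K,P)\to\Ext^k_{\cC}(X,P)\to\Ext^k_{\cC}(G,P)\to\Ext^k_{\cC}(K,P)\to\cdots,$$
and since $G\in gp\cC\subseteq {}^\perp p\cC$ by Proposition \ref{Prop0inf}(i), and $p\cC^{(-)}\subseteq {}^\perp p\cC$ as well (this is the straightforward equality ${}^\perp p\cC={}^\perp p\cC^{(-)}$ noted in the proof of Proposition \ref{Prop0inf}), we get $\Ext^j_{\cC}(G,P)=0$ for all $j>0$ and all $P\in p\cC^{(-)}$. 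Hence the connecting maps give $\Ext^k_{\cC}(X,P)\cong\Ext^{k-1}_{\cC}(K,P)$ for $k>1$, and for $k=1$ the tail $\Hom_{\cC}(G,P)\to\Hom_{\cC}(K,P)\to\Ext^1_{\cC}(X,P)\to\Ext^1_{\cC}(G,P)=0$ identifies $\Ext^1_{\cC}(X,P)$ with the cokernel appearing in Lemma \ref{LemKGX}. Matching these identifications with the formulas of Lemma \ref{LemKGX} gives $\GE^k_{\cC}(X,-)\cong\Ext^k_{\cC}(X,-)$ on $p\cC^{(-)}$; the case $k=0$ is covered by part (i) since $\Hom_{\cC}=\Ext^0_{\cC}$.

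The main obstacle is the bookkeeping in part (iii): making sure the isomorphisms from Lemma \ref{LemKGX} and from the long exact $\Ext$-sequence are compatible (i.e. that they are the \emph{same} natural isomorphism, not merely abstractly isomorphic functors), and confirming that the vanishing $\Ext^{>0}_{\cC}(G,P)=0$ really does hold for \emph{all} $P$ of finite projective dimension and not just for projective $P$ — this is exactly where the identity ${}^\perp p\cC={}^\perp p\cC^{(-)}$ is used, so I would make that dependence explicit. Everything else is formal manipulation with long exact sequences and the comparison lemma, already invoked in \ref{DefExtbla} for well-definedness.
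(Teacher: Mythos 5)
Your proposal is correct and follows essentially the same route as the paper's proof: part (i) by left exactness of $\Hom_{\cC}(-,Y)$, part (ii) by taking a finite projective resolution as the Gorenstein resolution, and part (iii) via the sequence \eqref{KGX}, Lemma~\ref{LemKGX} and the vanishing of $\Ext^{j}_{\cC}(G,-)$ for $j>0$ on $p\cC^{(-)}$ (using ${}^{\perp}p\cC={}^{\perp}p\cC^{(-)}$), with $k=1$ handled by the cokernel identification. The only small imprecision is in (ii): properness of a finite projective resolution is not because ``$\Hom_{\cC}(P,-)$ is exact'' but because its syzygies lie in $p\cC^{(-)}\subseteq gp\cC^{\perp}$, which is exactly the content of Lemma~\ref{lemres}.
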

\begin{proof}
Part (i) is immediate by the left exactness of~$\Hom_{\cC}(-,Y)$. Part (ii) follows from taking a finite projective resolution of~$X$ as the Gorenstein resolution~$G_\bullet$.

We prove Part (iii) for $k>1$, the case $k=1$ being similar. We consider a short exact sequence~\eqref{KGX} and know by Lemma~\ref{LemKGX} that $\GE_{\cC}^k(X,-)\cong\Ext^{k-1}_{\cC}(K,-)$. Since~$\Ext^i(G,-)=0$ on $p\cC^{(-)}$ if $i>0$, we have $\Ext^{k-1}_{\cC}(K,-)\cong \Ext^k_{\cC}(X,-)$ on $p\cC^{(-)}$.
\end{proof}

The following is the analogue of \cite[Theorem~4.2(2)]{Avramov} in our setting.
\begin{prop}\label{PropGDExtg}
For any~$X\in{}^g\cC$ and~$k\in\mN$, the following are equivalent:
\begin{enumerate}[(i)]
\item $\Gd_{\cC}X\le k$;
\item $\GE_{\cC}^{j}(X,-)=0$ for all $j>k$;
\item $\GE_{\cC}^{k+1}(X,-)=0$.
\end{enumerate}
\end{prop}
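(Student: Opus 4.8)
The implications (i) $\Rightarrow$ (ii) $\Rightarrow$ (iii) are the easy directions. For (i) $\Rightarrow$ (ii): if $\Gd_{\cC}X \le k$, pick a Gorenstein resolution $G_\bullet$ of $X$ of length $\le k$, available by Lemma~\ref{lemres} (indeed one of the strict form~\eqref{strictres}); then $\Hom_{\cC}(G_\bullet, Y)$ is a cochain complex concentrated in degrees $0,\dots,k$, so $\GE_{\cC}^j(X,Y) = \HH^j(\Hom_{\cC}(G_\bullet,Y)) = 0$ for all $j > k$. The implication (ii) $\Rightarrow$ (iii) is trivial.

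The substantive direction is (iii) $\Rightarrow$ (i). I would argue by induction on $k$. For the base case $k=0$: assuming $\GE_{\cC}^1(X,-) = 0$, I want $X \in gp\cC$. Since $X \in {}^g\cC$, I would first need to know $X$ has \emph{finite} G-dimension, i.e. $X \in gp\cC^{(-)}$ — and here I expect the main subtlety, because a priori ${}^g\cC$ could be larger than $gp\cC^{(-)}$. Granting finiteness, take the short exact sequence $0 \to K \to G \to X \to 0$ from Corollary~\ref{CorKGX}, with $G \in gp\cC$ and $\pd_{\cC}K = \Gd_{\cC}X - 1$. By Lemma~\ref{LemKGX}, $\GE_{\cC}^1(X,-) \cong \coker(\Hom_{\cC}(G,-) \to \Hom_{\cC}(K,-))$; the vanishing of this functor forces $\Hom_{\cC}(G, Z) \tto \Hom_{\cC}(K, Z)$ for all $Z$, which (applied to $Z = K$ and lifting $\id_K$) splits the sequence, so $X$ is a direct summand of $G$, hence $X \in gp\cC$ by Proposition~\ref{PropRes}. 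For the inductive step with $k \ge 1$: take the short exact sequence $Y \hookrightarrow G_0 \tto X$ with $G_0 \in gp\cC$ from a Gorenstein resolution of $X$, so $\GE_{\cC}^{j}(X,-) \cong \GE_{\cC}^{j-1}(Y,-)$ for $j > 0$ (by the long exact sequence~\eqref{les} applied to this left $gp\cC$-acyclical sequence, since $\GE^j$ of the projective-like $G_0$ vanishes in positive degrees). Then $\GE_{\cC}^{k+1}(X,-) = 0$ gives $\GE_{\cC}^{k}(Y,-) = 0$, so by induction $\Gd_{\cC}Y \le k-1$, whence $\Gd_{\cC}X \le k$.

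To handle the finiteness subtlety, I would use Lemma~\ref{LemKGX}-type reasoning more carefully, or better: the cleanest route is to recall that for $X \in {}^g\cC$ with Gorenstein resolution $G_\bullet$, each syzygy $\Omega^i(G_\bullet)$ again lies in ${}^g\cC$ with $\GE^{j}_{\cC}(X,-) \cong \GE^{j-i}_{\cC}(\Omega^i(G_\bullet),-)$ in degrees $j > i$; so $\GE^{k+1}_{\cC}(X,-) = 0$ forces $\GE^1_{\cC}(\Omega^k(G_\bullet), -) = 0$, and one reduces to showing that an object $W \in {}^g\cC$ with $\GE^1_{\cC}(W,-) = 0$ is Gorenstein projective. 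For such a $W$, the degree-$1$ Gorenstein resolution datum gives a short exact sequence $W' \hookrightarrow G_0 \tto W$ with $G_0, W' \in gp\cC$ (one step of the Gorenstein resolution is left $gp\cC$-acyclical), and $\GE^1_{\cC}(W,-) \cong \coker(\Hom_{\cC}(G_0,-) \to \Hom_{\cC}(W',-))$ vanishing splits it, so $W \in gp\cC$ by Proposition~\ref{PropRes}.

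**The main obstacle.** The genuine content is the step that $\GE^1_{\cC}(W,-) = 0$ implies $W \in gp\cC$ for $W \in {}^g\cC$, i.e. correctly exploiting that one stage of a Gorenstein resolution is a left $gp\cC$-acyclical short exact sequence $W' \hookrightarrow G_0 \tto W$ with both ends Gorenstein projective, and matching this against the $\coker$ description of $\GE^1$ from Lemma~\ref{LemKGX} (or directly from Definition~\ref{defGE}). Once the splitting is extracted, Proposition~\ref{PropRes} (that $gp\cC$ is resolving, hence closed under direct summands) finishes it, and everything else is bookkeeping with the long exact sequence~\eqref{les} and induction on $k$, mutatis mutandis \cite[Theorem~4.2(2)]{Avramov}.
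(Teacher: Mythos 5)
Your proposal is correct and, in its final ``cleanest route'' form, is essentially the paper's proof: dimension-shift to $Y=\Omega^k(G_\bullet)$ for a Gorenstein resolution $G_\bullet$ of $X$, use the vanishing of $\GE^1_{\cC}(Y,-)$ to split the left $gp\cC$-acyclical sequence $\Omega^{k+1}(G_\bullet)\hookrightarrow G_k\tto Y$, conclude $Y\in gp\cC$ by Proposition~\ref{PropRes}, and truncate to get a length-$k$ $gp\cC$-resolution. The differences are cosmetic (you split the monomorphism via the cokernel description of $\GE^1$, the paper splits the epimorphism via the long exact sequence~\eqref{les} evaluated at $\Omega^{k+1}(G_\bullet)$); only note that~\eqref{les} is a sequence in the second variable, so the first-variable shift in your inductive variant should be justified by the truncation argument you in fact use (and only for $j>1$), and that $W'=\Omega^{k+1}(G_\bullet)$ need not itself lie in $gp\cC$ --- neither point affects your argument.
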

\begin{proof}
The implications $(i)\Rightarrow (ii)\Rightarrow (iii)$ are immediate, by Lemma~\ref{lemres}. 

Now consider $X\in {}^g\cC$, satisfying $\GE_{\cC}^{k+1}(X,-)=0$, with Gorenstein resolution~$G_\bullet$. Define $Y:=\Omega^k(G_\bullet)$. As $G_\bullet$ is a Gorenstein resolution, the definition of~$\GE_{\cC}$ implies that
$$\GE_{\cC}^{1}(Y,-)\,\cong\, \GE_{\cC}^{k+1}(X,-)\,=\,0.$$
Now define $Z:= \Omega^{k+1}(G_\bullet)$, with inherited short exact sequence
\begin{equation}\label{sesZGk}0\to Z\to G_k\to Y\to 0.\end{equation}
As $G_\bullet$ is a Gorenstein resolution, this short exact sequence is left $gp\cC$-acyclical. We can thus apply equation~\eqref{les} which yields a short exact sequence
$$0\to \GE_{\cC}^0(Y,Z)\to \GE_{\cC}^0(Y,G_k)\to \GE_{\cC}^0(Y,Y)\to 0$$
By Lemma~\ref{LemExtProp}(i), we then find $\Hom_{\cC}^0(Y,G_k)\tto \Hom_{\cC}^0(Y,Y)$, which implies that the short exact sequence \eqref{sesZGk} splits, so $Y$ is Gorenstein projective. Hence, we have a finite $gp\cC$-resolution
 $$0\to Y\to G_{k-1}\to\cdots\to G_1\to G_0\to 0,$$
 of~$X$, so by definition~$\Gd_{\cC}X\le k$. This shows that~(iii) implies (i), concluding the proof.
 \end{proof}
 
 \begin{cor}\label{CorDS}
 For any $X,Y\in {}^g\cC$, we have $\Gd_{\cC}(X\oplus Y)=\max\{\Gd_{\cC}X,\Gd_{\cC}Y\}$.
 \end{cor}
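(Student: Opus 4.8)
The plan is to reduce the statement to the characterization of $\Gd_{\cC}$ via the vanishing of Gorenstein extension groups provided by Proposition~\ref{PropGDExtg}. Since $X$ and $Y$ both lie in ${}^g\cC$, I first need to observe that $X\oplus Y\in {}^g\cC$ as well: concatenating (componentwise) a Gorenstein resolution $G_\bullet$ of $X$ with a Gorenstein resolution $H_\bullet$ of $Y$ yields a $gp\cC$-resolution $G_\bullet\oplus H_\bullet$ of $X\oplus Y$, and it is proper because $\Hom_{\cC}(G_i\oplus H_i,-)$ is exact on $p\cC$ (being a direct sum of two such functors) and because the augmented complex $(G_\bullet\oplus H_\bullet)\to X\oplus Y\to 0$ is a direct sum of two left $p\cC$-acyclical --- hence left $gp\cC$-acyclical --- complexes, which is therefore again left $gp\cC$-acyclical. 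More directly, $gp\cC$ is closed under direct sums and $gp\cC^{\perp}\supseteq p\cC$ is closed under the relevant operations, but the resolution description is cleanest.

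With $X\oplus Y\in{}^g\cC$ established, I would next compute $\GE^k_{\cC}(X\oplus Y,-)$. Using the Gorenstein resolution $G_\bullet\oplus H_\bullet$ from the previous step, Definition~\ref{defGE} gives
$$\GE^k_{\cC}(X\oplus Y,-)\;\cong\;\HH^k\bigl(\Hom_{\cC}(G_\bullet\oplus H_\bullet,-)\bigr)\;\cong\;\HH^k\bigl(\Hom_{\cC}(G_\bullet,-)\bigr)\oplus\HH^k\bigl(\Hom_{\cC}(H_\bullet,-)\bigr)\;\cong\;\GE^k_{\cC}(X,-)\oplus\GE^k_{\cC}(Y,-),$$
where the middle isomorphism is the additivity of $\Hom$ and of cohomology in the category of abelian groups. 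Hence $\GE^k_{\cC}(X\oplus Y,-)=0$ if and only if both $\GE^k_{\cC}(X,-)=0$ and $\GE^k_{\cC}(Y,-)=0$.

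Finally I would invoke Proposition~\ref{PropGDExtg}: $\Gd_{\cC}(X\oplus Y)\le k$ is equivalent to $\GE^{j}_{\cC}(X\oplus Y,-)=0$ for all $j>k$, which by the displayed isomorphism is equivalent to $\GE^j_{\cC}(X,-)=0$ and $\GE^j_{\cC}(Y,-)=0$ for all $j>k$, i.e.\ to $\Gd_{\cC}X\le k$ and $\Gd_{\cC}Y\le k$, i.e.\ to $\max\{\Gd_{\cC}X,\Gd_{\cC}Y\}\le k$. Taking $k$ minimal gives the claimed equality. I do not anticipate a serious obstacle here; the only point requiring a little care is the well-definedness used implicitly, namely that the Gorenstein extension groups computed from the direct-sum resolution agree with those obtained from any other Gorenstein resolution of $X\oplus Y$ --- but this is exactly the independence of the choice of Gorenstein resolution recorded in~\ref{DefExtbla}, so no new work is needed.
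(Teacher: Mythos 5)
Your argument is correct and is essentially the route the paper intends: Corollary~\ref{CorDS} is stated as a direct consequence of Proposition~\ref{PropGDExtg}, using that the degreewise direct sum of Gorenstein resolutions is a Gorenstein resolution of $X\oplus Y$ and that $\GE^j_{\cC}(X\oplus Y,-)\cong\GE^j_{\cC}(X,-)\oplus\GE^j_{\cC}(Y,-)$. One small slip in your properness check: the implication ``left $p\cC$-acyclical hence left $gp\cC$-acyclical'' goes the wrong way (since $p\cC\subseteq gp\cC$), but it is not needed, because the augmented complexes $G_\bullet\to X\to 0$ and $H_\bullet\to Y\to 0$ are left $gp\cC$-acyclical directly by the definition of a Gorenstein resolution, and hence so is their direct sum.
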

 
By~\cite{Holm2}, Gorenstein extensions are consistent with respect to the concept of opposite categories. 
 \begin{lemma}\label{Extop}
Assume that~$\cC$ also has enough injective objects and take $M\in{}^g\cC$ and~$N\in{}^g(\cC^{\op})$.  We have isomorphisms of abelian groups
 $$\GE^j_{\cC}(M,N)\;\cong\; \GE^j_{\cC^{\op}}(N,M).$$
 \end{lemma}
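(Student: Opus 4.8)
The statement asserts a symmetry $\GE^j_{\cC}(M,N)\cong\GE^j_{\cC^{\op}}(N,M)$ between Gorenstein extension groups computed on opposite sides. My plan is to reduce both sides to ordinary extension groups using the special $gp$-approximations supplied by Corollary~\ref{CorKGX}, and then invoke the well-known symmetry of ordinary $\Ext$ under passing to the opposite category. Concretely, I would first observe that since $\cC$ has enough projectives (standing assumption of the section) and, now, enough injectives, the notions dual to ``Gorenstein projective'', ``$gp\cC$-resolution'', etc., in $\cC^{\op}$ are exactly ``Gorenstein injective'', ``Gorenstein coresolutions'', etc., in $\cC$. In particular ${}^g(\cC^{\op})$ is the subcategory of $\cC$ of objects admitting a proper Gorenstein injective coresolution, and $\GE^j_{\cC^{\op}}(N,-)$ is the corresponding relative cohomology.

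\textbf{Key steps.} First I would note that $\GE^j$ depends only on the (finite or infinite) Gorenstein dimension data, and that both $M\in{}^g\cC$ and $N\in{}^g(\cC^{\op})$ have \emph{finite} $\Gd$: indeed, an object admits a proper $gp\cC$-resolution if and only if it admits one, and combining Lemma~\ref{lemres} with Proposition~\ref{PropGDExtg} shows ${}^g\cC=gp\cC^{(-)}$ (every object with a Gorenstein resolution has finite G-dimension, since $\GE^{j}$ must eventually vanish on a resolution of finite length—or conversely one invokes that the Gorenstein resolution itself witnesses finiteness). Hence $M$ fits into a short exact sequence $0\to K\to G\to M\to 0$ with $G\in gp\cC$ and $K\in p\cC^{(-)}$ (Corollary~\ref{CorKGX}), and dually $N$ fits into $0\to N\to G'\to K'\to 0$ in $\cC$ with $G'$ Gorenstein injective and $K'$ of finite injective dimension. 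Next, using Lemma~\ref{LemKGX} on the $M$-side I would express $\GE^j_{\cC}(M,N)$ for $j>1$ as $\Ext^{j-1}_{\cC}(K,N)$, and for $j=1$ as the cokernel of $\Hom_{\cC}(G,N)\to\Hom_{\cC}(K,N)$, with $\GE^0_{\cC}(M,N)=\Hom_{\cC}(M,N)$ by Lemma~\ref{LemExtProp}(i). By the dual of Lemma~\ref{LemKGX} applied in $\cC^{\op}$, the groups $\GE^j_{\cC^{\op}}(N,M)$ are described symmetrically in terms of $\Ext$ against the finite injective-dimension object $K'$. Then I would splice these descriptions together with the long exact sequences~\eqref{les} (and its dual) for the two short exact sequences above, together with the elementary fact that, when one term of a short exact sequence has finite projective dimension, ordinary $\Ext$ behaves symmetrically with respect to the opposite category on the relevant range—this is the content of dimension-shifting against $K$ and $K'$ simultaneously. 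Concretely, for $j\ge 2$ one gets $\GE^j_{\cC}(M,N)\cong\Ext^{j-1}_{\cC}(K,N)$ and, dually, $\GE^j_{\cC^{\op}}(N,M)\cong\Ext^{j-1}_{\cC^{\op}}(K',M)\cong\Ext^{j-1}_{\cC}(M,K')$; one then checks $\Ext^{j-1}_{\cC}(K,N)\cong\Ext^{j-1}_{\cC}(M,K')$ by resolving $K$ projectively and $N$ injectively and using that $G$ and $G'$ are acyclic for the relevant $\Hom$'s. The low-degree cases $j=0,1$ are handled by hand from the same two exact sequences together with Lemma~\ref{LemExtProp}.

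\textbf{Main obstacle.} I expect the crux to be the bookkeeping that makes the $j=1$ (and $j=0$) terms match: here one cannot merely dimension-shift, and one must compare a cokernel of $\Hom$'s on the $M$-side with the corresponding cokernel on the $N$-side, verifying that the two boundary maps agree under the natural identifications. This requires carefully chasing the maps in~\eqref{les} and its dual and invoking the naturality of the isomorphisms in Lemmas~\ref{LemKGX} and~\ref{LemExtProp}; everything else is dimension-shifting and the classical opposite-category symmetry of $\Ext$ in the presence of a finite projective (resp. injective) resolution. Since the paper attributes this to~\cite{Holm2}, I would follow that reference's argument, adapting it verbatim to the abelian-category setting—all the needed tools (special approximations, the long exact sequence~\eqref{les}, and Lemma~\ref{LemExtProp}) are already available.
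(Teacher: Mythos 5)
Your plan has one genuine gap: the claim that ${}^g\cC=gp\cC^{(-)}$. The paper only proves the inclusion $gp\cC^{(-)}\subseteq{}^g\cC$ (Lemma~\ref{lemres}), and the justification you offer is circular: a Gorenstein resolution need not have finite length, so it cannot by itself ``witness finiteness'', and Proposition~\ref{PropGDExtg} only converts eventual vanishing of $\GE_{\cC}^{j}(X,-)$ into finite G-dimension -- it does not supply that vanishing for a general $X\in{}^g\cC$. The inclusion is strict in general: whenever $gp\cC=p\cC$ and $\gd\cC=\infty$, every ordinary projective resolution is automatically a proper $gp\cC$-resolution, so ${}^g\cC=\cC$ while $gp\cC^{(-)}=p\cC^{(-)}\neq\cC$. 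Since your whole reduction runs through Corollary~\ref{CorKGX} and Lemma~\ref{LemKGX} (and their duals), which require $M\in gp\cC^{(-)}$ and the dual finiteness for $N$, your argument only proves the lemma under finite Gorenstein dimensions -- the hypotheses of \cite[Theorem~3.6]{Holm2} -- and not for arbitrary $M\in{}^g\cC$, $N\in{}^g(\cC^{\op})$ as stated. (For the record, the paper's own proof is nothing more than the citation ``mutatis mutandis \cite[Theorem~3.6]{Holm2}'', so your closing fallback of adapting Holm is in effect the paper's proof; the problem is specifically the false intermediate identification, which your sketched reduction relies on.)

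Within the finite-dimension setting your outline is essentially sound and close to Holm's argument: the two acyclicity facts you use implicitly, namely $\Ext^{>0}_{\cC}(G,K')=0$ for $G\in gp\cC$ and $\id_{\cC}K'<\infty$, and $\Ext^{>0}_{\cC}(K,G')=0$ for $\pd_{\cC}K<\infty$ and $G'$ Gorenstein injective, follow by dimension shifting along a projective coresolution of $G$, resp.\ an injective resolution of $G'$, and with them $\Ext^{j-1}_{\cC}(K,N)\cong\Ext^{j-2}_{\cC}(K,K')\cong\Ext^{j-1}_{\cC}(M,K')$ for $j\ge 3$. Be aware, though, that the delicate degrees are $j=2$ as well as $j=0,1$: at $j=2$ you must compare $\coker\bigl(\Hom_{\cC}(K,G')\to\Hom_{\cC}(K,K')\bigr)$ with $\coker\bigl(\Hom_{\cC}(G,K')\to\Hom_{\cC}(K,K')\bigr)$, and this is not a dimension shift -- one cannot invoke vanishing of $\Ext^{>0}$ of a Gorenstein projective against a Gorenstein injective, which fails in general (e.g.\ over self-injective algebras). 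That comparison is exactly where the balancedness argument of \cite{Holm2} does real work. So either strengthen the hypotheses to finite G-dimensions, matching Holm, or abandon the special-approximation reduction and run the double-complex/balance argument directly with proper (possibly infinite) resolutions, which is what the ``mutatis mutandis'' in the paper requires.
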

 \begin{proof}
 Mutatis mutandis \cite[Theorem~3.6]{Holm2}.
 \end{proof}
 
For the following result we did not manage to find a reference.
\begin{prop}\label{LemExt1}
Consider $X\in {}^g\cC$ and $Y\in \cC$. To any non-zero element in the group $\GE_{\cC}^{1}(X,Y)$ we can associate $M\in\cC$ with left $gp\cC$-acylical non-split short exact sequence
$$0\to Y\to M\to X\to 0.$$
\end{prop}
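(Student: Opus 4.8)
The goal is to realize a given class $\xi \in \GE^1_\cC(X,Y)$ as a genuine extension. Since $X \in {}^g\cC$, it admits a Gorenstein resolution $G_\bullet$, and by definition $\GE^1_\cC(X,Y) = \HH^1(\Hom_\cC(G_\bullet,Y))$. Write $Z := \Omega^1(G_\bullet)$, so there is a short exact sequence $0 \to Z \to G_0 \to X \to 0$ which is left $gp\cC$-acyclical (this is part of being a Gorenstein resolution). The standard dimension-shift computation identifies $\GE^1_\cC(X,Y)$ with the cokernel of $\Hom_\cC(G_0,Y) \to \Hom_\cC(Z,Y)$: indeed, the truncated complex $G_\bullet \to Z$ computes $\GE^*$, so $\HH^1(\Hom_\cC(G_\bullet,Y)) \cong \coker(\Hom_\cC(G_0,Y) \to \Hom_\cC(Z,Y))$ because $\Hom_\cC(X,Y) \hookrightarrow \Hom_\cC(G_0,Y)$ is left exact and $\Hom_\cC(G_1,Y)$ contributes the relation. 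So pick $f \colon Z \to Y$ representing $\xi$; since $\xi \ne 0$, $f$ does not extend to $G_0$.

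\textbf{The construction.} Form $M$ as the pushout of $Z \hookrightarrow G_0$ along $f \colon Z \to Y$, i.e. $M = G_0 \sqcup_Z Y$. This yields a commuting diagram with exact rows
$$\xymatrix{
0\ar[r]&Z\ar[r]\ar[d]^{f}&G_0\ar[r]\ar[d]&X\ar[r]\ar@{=}[d]&0\\
0\ar[r]&Y\ar[r]&M\ar[r]&X\ar[r]&0.
}$$
I would then argue the bottom row is non-split: a splitting $X \to M$ would, via the pushout square, produce a morphism $G_0 \to Y$ restricting to $f$ on $Z$ (one uses the universal property of the pushout, or chases the diagram directly), contradicting that $f$ represents a non-zero class.

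\textbf{Left $gp\cC$-acyclicity.} The remaining point is to show $0 \to Y \to M \to X \to 0$ is left $gp\cC$-acyclical, i.e. $\Hom_\cC(P, -)$ is exact on it for every $P \in gp\cC$; equivalently, $\Hom_\cC(P,M) \to \Hom_\cC(P,X)$ is surjective. Since $P \in gp\cC$, Proposition~\ref{Prop0inf}(i) gives $\Ext^1_\cC(P, P') = 0$ for $P' \in p\cC$, but $Y$ is an arbitrary object of $\cC$, so this does not apply directly. Instead I would use that the top row is left $gp\cC$-acyclical (it comes from a Gorenstein resolution): given $g \colon P \to X$ with $P \in gp\cC$, lift it along $G_0 \tto X$ to $\tilde g \colon P \to G_0$ (possible since $\Hom_\cC(P,G_0) \tto \Hom_\cC(P,X)$ by left $gp\cC$-acyclicity of the top row), then compose $P \to G_0 \to M$; this is a lift of $g$ along $M \tto X$. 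Hence $\Hom_\cC(P,M) \tto \Hom_\cC(P,X)$ for all $P \in gp\cC$, which is precisely left $gp\cC$-acyclicity.

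\textbf{Main obstacle.} The main subtlety is the identification $\GE^1_\cC(X,Y) \cong \coker(\Hom_\cC(G_0,Y)\to\Hom_\cC(Z,Y))$ with $Z = \Omega^1(G_\bullet)$, which requires that the dimension-shifted complex $G_\bullet \to Z$ is again a Gorenstein resolution (of $Z$) and hence computes $\GE^*_\cC(Z,-)$ in the appropriate degrees — this relies on $Z \in gp\cC^{(-)} \subseteq {}^g\cC$ (via Lemma~\ref{lemres} applied to the truncation, noting the syzygies of a Gorenstein resolution stay in ${}^g\cC$) and on the left $gp\cC$-acyclicity being inherited by truncations. Once that identification is in hand, the pushout construction and the non-splitting and acyclicity verifications are routine diagram chases.
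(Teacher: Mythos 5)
Your proposal is correct and takes essentially the same route as the paper: represent a non-zero class by a morphism $Z=\Omega^1(G_\bullet)\to Y$ that does not factor through $G_0$, form the pushout $M=Y\sqcup_Z G_0$, and verify non-splitting and left $gp\cC$-acyclicity by exactly the lifting argument you give. The only remark worth making is that the identification $\GE^1_{\cC}(X,Y)\cong\coker\bigl(\Hom_{\cC}(G_0,Y)\to\Hom_{\cC}(Z,Y)\bigr)$, which you flag as the main obstacle, needs no claim that the truncated complex is a Gorenstein resolution of $Z$: it is an elementary diagram chase directly from $\GE^1_{\cC}(X,Y)=\HH^1(\Hom_{\cC}(G_\bullet,Y))$, which is how the paper handles it.
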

\begin{proof}
Consider a Gorenstein resolution~$G_\bullet$ of~$X$ and set $N:=\Omega^1(G_\bullet)$. Hence we have a left $gp\cC$-acyclical short exact sequence
\begin{equation}
\label{eqiota}
\xymatrix{0\ar[r]& N\ar[r]^{\iota} &G_0\ar[r]& X\ar[r]& 0.}
\end{equation}
Definition~\ref{defGE} and elementary diagram chasing yields an exact sequence
\begin{equation}\label{eqch}\xymatrix{\Hom_{\cC}(G_0,Y)\ar[r]^{-\circ\iota}&\Hom_{\cC}(N,Y)\ar[r]& \GE_{\cC}^{1}(X,Y)\ar[r]& 0.}\end{equation}
To each non-zero element of~$\GE_{\cC}^{1}(X,Y)$, we can thus associate a morphism $\alpha: N\to Y$ which is not of the form $\beta\circ\iota$ for some morphism $\beta:G_0\to Y$. The two morphisms $\iota: N\to G_0$ and $\alpha: N\to Y$ define a pushout $M:= Y\sqcup_N G_0$, with commuting square
$$\xymatrix{ N\ar@{^{(}->}[r]^{\iota}\ar[d]^\alpha &G_0\ar[d]^{\gamma}\\\
Y\ar[r]^\beta&M,
}$$
with $\coker\beta\cong \coker\iota\cong X$, see the dual of \cite[Theorem~2.52]{Freyd}. Furthermore, as~$\iota$ is a monomorphism, so is~$\beta$, see \cite[Theorem~2.54]{Freyd}. We thus find a commuting diagram with exact rows
$$\xymatrix{0\ar[r]& N\ar[r]^{\iota}\ar[d]^\alpha &G_0\ar[d]^{\gamma}\ar[r]&X\ar[r]\ar@{=}[d]&0\\\
0\ar[r]&Y\ar[r]^\beta&M\ar[r]&X\ar[r]&0.
}$$
If the extension on the second row would split, it would follow that there is a morphism $\gamma':G_0\to Y$ such that~$\alpha=\gamma'\circ\iota$, which contradicts our assumptions.
It only remains to be proven that the short exact sequence on the second row is left $gp\cC$-acyclical. For any~$G\in gp\cC$, applying $\Hom_{\cC}(G,-)$ to the above diagram yields a commutative diagram
$$\xymatrix{\Hom_{\cC}(G,G_0)\ar[d]\ar[r]&\Hom_{\cC}(G,X)\ar@{=}[d]\\\
\Hom_{\cC}(G,M)\ar[r]&\Hom_{\cC}(G,X).
}$$
As \eqref{eqiota} is left $gp\cC$-acyclical, the top horizontal arrow is a group epimorphism, hence the lower horizontal arrow is also surjective. This concludes the proof.
\end{proof}

\subsection{Tate cohomology}
The following definition extends the one in \cite[Section~5]{Avramov}.
\begin{ddef}
For $X\in gp\cC^{(k)}$ with projective resolution~$Q_\bullet$, we have $G:=\Omega^k(Q_\bullet)\in gp\cC$ by Proposition~\ref{PropPPG}. We consider a totally acyclic complex $P_\bullet$ with $G=\Omega^k(P_\bullet)$. Then we define, for any $Y\in\cC$, the abelian groups
$$\widehat{\Ext}^i_{\cC}(X,Y)\;=\;\HH^i(\Hom_{\cC}(P_\bullet, Y)),\qquad\mbox{for all $i\in\mZ$.}$$
\end{ddef}
That this is well-defined (does not depend on the choice of $P_\bullet$ and $Q_\bullet$) and yields a functor
$$\widehat{\Ext}^i_{\cC}(-,-)\,:(gp\cC^{(-)})^{\op}\times\cC\to\Ab,$$
follows as in \cite[Section~5]{Avramov}.

\begin{rem}\label{RemTate} The following observations follow by definition.
\begin{enumerate}[(i)]
\item We have $\widehat{\Ext}^i_{\cC}(X,-)=0$, for all $i\in\mZ$, if $X\in p\cC^{(-)}$. 
\item If~$G\in gp\cC$, we have 
$$\widehat{\Ext}^i_{\cC}(G,-)\;\cong\; \Ext^i_{\cC}(G,-),\qquad\mbox{if $i>0$.}$$
\end{enumerate}
Lemma~\ref{secPacyc} and the fact ${}^\perp p\cC={}^{\perp} p\cC^{(-)}$ imply that also  $\widehat{\Ext}^i_{\cC}(-,Y)=0$, for all $i\in\mZ$, if $Y\in p\cC^{(-)}$.
\end{rem}

The following result can be proved as in \cite[Proposition~5.6]{Avramov}.
\begin{lemma}\label{LemAMTate}
For a short exact sequence $0\to X\to Y\to Z\to 0$ with objects in $gp\cC^{(-)}$, we have a long exact sequence of functors
$$\cdots\to \widehat{\Ext}^{i-1}_{\cC}(X,-)\to\widehat{\Ext}^{i}_{\cC}(Z,-)\to\widehat{\Ext}^{i}_{\cC}(Y,-)\to\widehat{\Ext}^{i}_{\cC}(X,-)\to\widehat{\Ext}^{i+1}_{\cC}(Z,-)\to\cdots$$
\end{lemma}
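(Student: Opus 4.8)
The plan is to reduce the statement to a long exact sequence of cohomology of a short exact sequence of cochain complexes, mimicking the strategy of \cite[Proposition~5.6]{Avramov}. Fix a short exact sequence $0\to X\to Y\to Z\to 0$ with all three objects in $gp\cC^{(-)}$. Write $k:=\max\{\Gd_{\cC}X,\Gd_{\cC}Y,\Gd_{\cC}Z\}$ and recall from Proposition~\ref{PropPPG} that for any projective resolutions $Q_\bullet^X$, $Q_\bullet^Y$, $Q_\bullet^Z$ the $k$-th syzygies $G^X:=\Omega^k(Q_\bullet^X)$, etc., all lie in $gp\cC$. The first task is to produce a short exact sequence of \emph{totally acyclic complexes}
$$0\to P_\bullet^{X}\to P_\bullet^{Y}\to P_\bullet^{Z}\to 0$$
in $p\cC$, degreewise split, whose $k$-th syzygies recover $G^X, G^Y, G^Z$. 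First I would use the horseshoe lemma (Lemma~\ref{baby}; the $\Ext^1$-vanishing hypothesis is automatic since $G^Z\in gp\cC\subset{}^\perp p\cC$ by Proposition~\ref{Prop0inf}(i), and one works with a short exact sequence $0\to G^X\to G^Y\to G^Z\to 0$ obtained from the original one by the comparison lemma after truncating projective resolutions) to build the \emph{negative} (coresolution) half of the complexes simultaneously, and an ordinary horseshoe lemma on projective resolutions to build the nonnegative half; splicing at degree $0$ produces the desired degreewise split short exact sequence of totally acyclic complexes. Degreewise splitness is what guarantees that applying $\Hom_{\cC}(-,Y')$ (for $Y'\in\cC$) to the whole complex of complexes keeps it exact in each degree.

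The second step is formal: applying $\Hom_{\cC}(-,Y')$ for an arbitrary $Y'\in\cC$ to the degreewise split short exact sequence of cochain complexes yields a short exact sequence of cochain complexes of abelian groups
$$0\to \Hom_{\cC}(P_\bullet^{Z},Y')\to \Hom_{\cC}(P_\bullet^{Y},Y')\to \Hom_{\cC}(P_\bullet^{X},Y')\to 0,$$
and its associated long exact cohomology sequence is, by the very definition of $\widehat{\Ext}$, the sought-after long exact sequence. Naturality in $Y'$ (so that we obtain a long exact sequence of \emph{functors}) follows from naturality of the snake-lemma connecting maps and the functoriality of the $\Hom$-complexes; well-definedness of the construction (independence of the choices of $P_\bullet$'s and $Q_\bullet$'s) was already recorded after the definition of $\widehat{\Ext}$, and the comparison lemma shows the resulting long exact sequence does not depend on the chosen compatible system of complexes.

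The main obstacle is the first step: constructing the short exact sequence of totally acyclic complexes that is \emph{simultaneously} compatible with a short exact sequence of the ambient objects. The subtlety is not the horseshoe construction on each half in isolation, but ensuring the two halves (coresolution and resolution) glue at degree $0$ into a single short exact sequence of complexes whose syzygies in degree $k$ are precisely $G^X\hookrightarrow G^Y\tto G^Z$ and that this inner short exact sequence is the one lifted from $0\to X\to Y\to Z\to 0$. Here one invokes the comparison lemma (as in Proposition~\ref{PropPPG}(ii)) to see that, up to the relevant identifications, the short exact sequence of $k$-th syzygies is forced, and one must check that the horseshoe lemmas can be run with the \emph{same} choice of projective objects $P_i$ in each degree on all three resolutions so that the maps assemble; this is exactly the point where \cite[Proposition~5.6]{Avramov} does the bookkeeping, and I would follow that argument \emph{mutatis mutandis}.
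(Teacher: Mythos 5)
Your proposal is correct and takes essentially the same route as the paper, whose proof consists of invoking \cite[Proposition~5.6]{Avramov} mutatis mutandis: the horseshoe lemma on projective resolutions of $X,Y,Z$ together with iterated use of Lemma~\ref{baby} on the resulting syzygy sequence $0\to G^X\to G^Y\to G^Z\to 0$ yields a degreewise split short exact sequence of totally acyclic complexes, and the long exact cohomology sequence of the associated Hom-complexes is the claimed one. The only slip is the phrase ``splicing at degree $0$'': the coresolution half must be glued to the truncated resolutions at degree $k$, where the Gorenstein projective syzygies $G^X,G^Y,G^Z$ sit, and one should record that the middle complex is indeed totally acyclic because its syzygies are extensions of objects of ${}^{\perp}p\cC$ and hence lie in ${}^{\perp}p\cC$.
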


\begin{cor}\label{TateCor}
For $X\in gp\cC^{(-)}$ with short exact sequence \eqref{KGX}, we have
$$\widehat{\Ext}^i_{\cC}(X,-)\;\cong\;\Ext^i_{\cC}(G,-),\qquad\mbox{if $i>0$.}$$
\end{cor}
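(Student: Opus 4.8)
The plan is to feed the defining short exact sequence \eqref{KGX}, namely $0\to K\to G\to X\to 0$ with $G\in gp\cC$ and $K\in p\cC^{(-)}$, into the long exact sequence of Tate cohomology supplied by Lemma~\ref{LemAMTate}. First I would check that all three terms lie in $gp\cC^{(-)}$, which is the hypothesis needed for that lemma: by assumption $X\in gp\cC^{(-)}$, we have $G\in gp\cC\subseteq gp\cC^{(-)}$, and since $K\in p\cC^{(-)}$, Corollary~\ref{CorFinDim}(i) gives $\Gd_{\cC}K\le\pd_{\cC}K<\infty$, so $K\in gp\cC^{(-)}$ as well.

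Next I would apply Lemma~\ref{LemAMTate} to this short exact sequence. With $K,G,X$ playing the roles of $X,Y,Z$ in the statement of that lemma, one obtains, for every $i\in\mZ$, the exact stretch
$$\widehat{\Ext}^{i-1}_{\cC}(K,-)\to\widehat{\Ext}^{i}_{\cC}(X,-)\to\widehat{\Ext}^{i}_{\cC}(G,-)\to\widehat{\Ext}^{i}_{\cC}(K,-).$$
Because $K\in p\cC^{(-)}$, Remark~\ref{RemTate}(i) forces $\widehat{\Ext}^{j}_{\cC}(K,-)=0$ for all $j\in\mZ$, so the two outer terms vanish and the middle arrow becomes an isomorphism of functors $\widehat{\Ext}^{i}_{\cC}(X,-)\cong\widehat{\Ext}^{i}_{\cC}(G,-)$, valid for every $i$.

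Finally, restricting to $i>0$ I would invoke Remark~\ref{RemTate}(ii), which identifies $\widehat{\Ext}^{i}_{\cC}(G,-)$ with $\Ext^{i}_{\cC}(G,-)$ since $G\in gp\cC$. Composing the two isomorphisms yields the stated identity $\widehat{\Ext}^{i}_{\cC}(X,-)\cong\Ext^{i}_{\cC}(G,-)$ for $i>0$. There is essentially no obstacle here; the only point demanding a moment's care is confirming that $K$ has finite G-dimension so that Lemma~\ref{LemAMTate} applies, and that is immediate from Corollary~\ref{CorFinDim}.
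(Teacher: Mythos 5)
Your argument is correct and coincides with the paper's own proof: apply Lemma~\ref{LemAMTate} to the sequence \eqref{KGX}, kill the $K$-terms via Remark~\ref{RemTate}(i), and identify $\widehat{\Ext}^i_{\cC}(G,-)$ with $\Ext^i_{\cC}(G,-)$ by Remark~\ref{RemTate}(ii). The extra verification that $K\in gp\cC^{(-)}$ via Corollary~\ref{CorFinDim} is a sensible touch but does not change the route.
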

\begin{proof}
By Remark~\ref{RemTate}(i) and Lemma \ref{LemAMTate}, we find
$\widehat{\Ext}^j_{\cC}(X,-)\cong\widehat{\Ext}^j_{\cC}(G,-)$, for~$j\in\mZ$.
Remark~\ref{RemTate}(ii) thus concludes the proof.
\end{proof}

The main result in \cite{Avramov} states that, for noetherian rings, the functors $\Ext^i$, $\widehat{\Ext}^i$ and $\GE^i$ form a long exact sequence of bi-functors. The proof can be generalised to our setting of arbitrary abelian categories containing enough projective objects. If we are only interested in a functorial version, the above already implies the claim.
\begin{prop}\label{PropTateGExt}
For any $X\in gp\cC^{(-)}$, we have an exact sequence of functors
$$0\to \GE^1_{\cC}(X,-)\to \Ext^1_{\cC}(X,-)\to \widehat{\Ext}^1_{\cC}(X,-)\to \GE^2_{\cC}(X,-)\to \Ext^2_{\cC}(X,-)\to\cdots.$$
\end{prop}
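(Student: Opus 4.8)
The plan is to reduce everything to the short exact sequence \eqref{KGX} associated to $X\in gp\cC^{(-)}$, namely $0\to K\to G\to X\to 0$ with $G\in gp\cC$ and $K\in p\cC^{(-)}$, and then to interpret each of the three families of functors $\GE^i$, $\Ext^i$, $\widehat{\Ext}^i$ in terms of this sequence. First I would apply the long exact sequence of ordinary $\Ext$-functors to \eqref{KGX}: this gives
$$\cdots\to \Ext^{i-1}_{\cC}(K,-)\to \Ext^i_{\cC}(X,-)\to \Ext^i_{\cC}(G,-)\to \Ext^i_{\cC}(K,-)\to\cdots.$$
Now by Lemma~\ref{LemKGX} we have $\GE^i_{\cC}(X,-)\cong\Ext^{i-1}_{\cC}(K,-)$ for $i>1$ (and the cokernel description for $i=1$), and by Corollary~\ref{TateCor} we have $\widehat{\Ext}^i_{\cC}(X,-)\cong\Ext^i_{\cC}(G,-)$ for $i>0$. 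So the connecting maps $\Ext^i_{\cC}(G,-)\to\Ext^i_{\cC}(K,-)$ of the long exact sequence above become, after these identifications, maps $\widehat{\Ext}^i_{\cC}(X,-)\to\GE^{i+1}_{\cC}(X,-)$, and the maps $\Ext^i_{\cC}(X,-)\to\Ext^i_{\cC}(G,-)$ become maps $\Ext^i_{\cC}(X,-)\to\widehat{\Ext}^i_{\cC}(X,-)$, while $\Ext^{i-1}_{\cC}(K,-)\to\Ext^i_{\cC}(X,-)$ becomes $\GE^i_{\cC}(X,-)\to\Ext^i_{\cC}(X,-)$. Splicing, the $\Ext$-long exact sequence of \eqref{KGX} is literally the asserted sequence.

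The low-degree end requires separate care. At $i=1$, the relevant fragment of the $\Ext$-sequence of \eqref{KGX} reads
$$0\to \Hom_{\cC}(X,-)\to \Hom_{\cC}(G,-)\to\Hom_{\cC}(K,-)\to \Ext^1_{\cC}(X,-)\to \Ext^1_{\cC}(G,-)\to\cdots,$$
using $\Ext^0=\Hom$. Here $\GE^1_{\cC}(X,-)$ is, by Lemma~\ref{LemKGX}, precisely $\coker(\Hom_{\cC}(G,-)\to\Hom_{\cC}(K,-))$, which is exactly the image of $\Hom_{\cC}(K,-)$ inside $\Ext^1_{\cC}(X,-)$ by exactness; so the map $\GE^1_{\cC}(X,-)\to\Ext^1_{\cC}(X,-)$ is the induced monomorphism, and the sequence starts with $0\to\GE^1_{\cC}(X,-)\to\Ext^1_{\cC}(X,-)$ as claimed. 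I would also need to check that the next map $\Ext^1_{\cC}(X,-)\to\widehat{\Ext}^1_{\cC}(X,-)=\Ext^1_{\cC}(G,-)$ is the one from the $\Ext$-sequence and that exactness holds at this spot — but that is immediate since the kernel of $\Ext^1_{\cC}(X,-)\to\Ext^1_{\cC}(G,-)$ is the image of $\Hom_{\cC}(K,-)$, which we just identified with $\GE^1_{\cC}(X,-)$.

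The one genuinely delicate point — and the step I expect to be the main obstacle — is verifying that the identifications from Lemma~\ref{LemKGX} and Corollary~\ref{TateCor} are \emph{compatible with the connecting homomorphisms}, i.e. that the diagram built from the $\Ext$-long exact sequence of \eqref{KGX} actually commutes with the canonical maps defining $\GE^i$ and $\widehat{\Ext}^i$ (as opposed to merely agreeing up to sign or up to a non-canonical automorphism). To handle this cleanly I would, following \cite[Section~5]{Avramov}, work at the level of complexes: fix a finite projective resolution $Q_\bullet\to K$, splice it with $G$ to get a $gp\cC$-resolution of $X$ in the form \eqref{strictres}, and fix a totally acyclic complex $P_\bullet$ with $G$ as a syzygy; then all three functors $\Ext^i_{\cC}(X,-)$, $\GE^i_{\cC}(X,-)$, $\widehat{\Ext}^i_{\cC}(X,-)$ are computed as cohomology of $\Hom_{\cC}$ of, respectively, the projective resolution, the $gp\cC$-resolution, and $P_\bullet$, and there are obvious chain maps between these three complexes (projective resolution $\to$ $gp\cC$-resolution $\to$ $P_\bullet$, the latter in the appropriate degrees). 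The desired long exact sequence is then the long exact cohomology sequence of a suitable short exact sequence of $\Hom$-complexes, and compatibility is automatic. This is exactly the argument of \cite{Avramov} and, as remarked in the text, it generalises verbatim; if one only wants the functorial statement, the splicing argument of the first two paragraphs already suffices.
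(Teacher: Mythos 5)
Your argument is essentially the paper's own proof: it applies the ordinary $\Ext$ long exact sequence to the special approximation sequence \eqref{KGX} and then inserts the identifications of Lemma~\ref{LemKGX} and Corollary~\ref{TateCor}, exactly as done in the text. The extra worry about compatibility of connecting maps is not needed for the statement as formulated (an exact sequence of functors, not of canonical bifunctor maps), and you correctly note yourself that the splicing argument alone suffices for that; the complex-level argument of \cite{Avramov} is only relevant for the stronger bifunctorial version mentioned in the surrounding remark.
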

\begin{proof}
Consider short exact sequence \eqref{KGX}. 
For $K:=\coker(\Hom_{\cC}(G,-)\to\Hom_{\cC}(K,-))$, we have a long exact sequence
$$0\to K\to \Ext^1_{\cC}(X,-)\to \Ext^1_{\cC}(G,-)\to\Ext^1_{\cC}(K,-)\to \Ext^2_{\cC}(X,-)\to\cdots.$$
Inserting the natural isomorphisms in Lemma~\ref{LemKGX} and Corollary~\ref{TateCor} concludes the proof.
\end{proof}

\subsection{Naively Gorenstein categories}
The following definition is inspired by \cite[Section~3]{Avramov}.
\begin{ddef}\label{DefGorCat2}The category~$\cC$ is {\bf naively $d$-Gorenstein}, for~$d\in\mN$, if $\cC =gp\cC^{(d)}$. The category~$\cC$ is {\bf weakly Gorenstein} if $gp\cC={}^{\perp}p\cC$.
\end{ddef}
\begin{lemma}\label{DefOm}
$\cC$ is naively $d$-Gorenstein if and only if $gp\cC=\Omega^d\cC$
\end{lemma}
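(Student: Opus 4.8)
The plan is to prove both implications separately, using the equivalence of conditions (i) and (ii) in Proposition~\ref{PropPPG} as the main engine. First I would spell out what the statement $\cC = gp\cC^{(d)}$ means: every object $X$ of $\cC$ has $\Gd_{\cC}X \le d$. By Proposition~\ref{PropPPG}, this is equivalent to saying that for every $X$ and every projective resolution $P_\bullet$ of $X$, the $d$-th syzygy $\Omega^d(P_\bullet)$ lies in $gp\cC$. Recalling that $\Omega^d\cC$ is by definition the full subcategory of objects of the form $\Omega^d(P_\bullet)$ for some projective resolution of some object of $\cC$, this already shows that ``$\cC$ is naively $d$-Gorenstein'' forces $\Omega^d\cC \subseteq gp\cC$. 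The reverse inclusion $gp\cC \subseteq \Omega^d\cC$ is precisely the containment~\eqref{eqgpCin} (valid for any $k \in \mN$, in particular $k=d$), so the forward implication gives $gp\cC = \Omega^d\cC$.

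For the converse, suppose $gp\cC = \Omega^d\cC$. Take an arbitrary $X \in \cC$ with projective resolution $P_\bullet$; then $\Omega^d(P_\bullet) \in \Omega^d\cC = gp\cC$ by hypothesis. By the implication (ii)$\Rightarrow$(i) of Proposition~\ref{PropPPG}, this means $\Gd_{\cC}X \le d$, i.e.\ $X \in gp\cC^{(d)}$. Since $X$ was arbitrary, $\cC = gp\cC^{(d)}$, which is the definition of naively $d$-Gorenstein.

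I do not expect a serious obstacle here: the lemma is essentially a repackaging of Proposition~\ref{PropPPG} together with the observation~\eqref{eqgpCin}, and the only care needed is to make sure that ``$\Omega^d\cC$'' is read with the ``for some projective resolution of some object'' quantifier, matching the ``for any projective resolution'' phrasing in Proposition~\ref{PropPPG}(ii) — but since (i) and (ii) of that proposition are equivalent, the choice of resolution is immaterial, so the two readings agree. The proof is therefore short and formal.
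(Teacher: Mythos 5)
Your proof is correct and follows essentially the same route as the paper: the paper's own argument likewise reduces the statement via the containment~\eqref{eqgpCin} to showing $\cC=gp\cC^{(d)}$ iff $\Omega^d\cC\subseteq gp\cC$, and then invokes the equivalence (i)$\Leftrightarrow$(ii) of Proposition~\ref{PropPPG}. Your extra remark about the ``for some resolution'' versus ``for any resolution'' quantifier is exactly the right point to be careful about, and your resolution of it (one syzygy in $gp\cC$ already yields a length-$d$ $gp\cC$-resolution by truncation, hence (i), hence (ii) for all resolutions) is sound.
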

\begin{proof}
By equation~\eqref{eqgpCin}, it suffices to prove that~$\cC =gp\cC^{(d)}$ if and only if $\Omega^d\cC\subseteq gp\cC$. This follows from Proposition~\ref{PropPPG}.
\end{proof}

\begin{prop}\label{ContFin}
Let $\cC$ be a naively Gorenstein category.
\begin{enumerate}[(i)]
\item The subcategory~$gp\cC$ is contravariantly finite. Furthermore, every object in~$\cC$ admits a special right $gp\cC$-approximation.
\item The pair $(gp\cC,p\cC^{(-)})$ is a hereditary cotorsion pair in~$\cC$ which admits enough projectives.\end{enumerate}
\end{prop}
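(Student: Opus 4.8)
The plan is to leverage Corollary~\ref{CorKGX} (equivalently Corollary~\ref{CorSpecial}), which already supplies the key short exact sequences, together with Lemma~\ref{DefOm} and the general machinery of cotorsion pairs.

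For part (i), let $X\in\cC$ be arbitrary. Since $\cC=gp\cC^{(d)}$, we have $X\in gp\cC^{(-)}$, so Corollary~\ref{CorKGX} provides a short exact sequence $0\to K\to G\to X\to 0$ with $G\in gp\cC$ and $K\in p\cC^{(-)}$. By Proposition~\ref{Prop0inf}(i) we have $p\cC^{(-)}\subseteq {}^\perp p\cC$, so $K\in gp\cC^{\perp_1}$; hence $G\tto X$ is a special right $gp\cC$-approximation, in particular a right $gp\cC$-approximation, and $gp\cC$ is contravariantly finite. (One should remark that a special right $gp\cC$-approximation really is a right $gp\cC$-approximation: for $A'\in gp\cC$, applying $\Hom_{\cC}(A',-)$ to $0\to K\to G\to X\to 0$ and using $\Ext^1_{\cC}(A',K)=0$ gives surjectivity of $\Hom_{\cC}(A',G)\to\Hom_{\cC}(A',X)$.)

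For part (ii), I first check that $(gp\cC,p\cC^{(-)})$ is a cotorsion pair, i.e. that ${}^{\perp_1}(p\cC^{(-)})=gp\cC$ and $(gp\cC)^{\perp_1}=p\cC^{(-)}$. For the first equality: $gp\cC\subseteq {}^{\perp_1}(p\cC^{(-)})$ is immediate from Proposition~\ref{Prop0inf}(i). Conversely, if $X\in{}^{\perp_1}(p\cC^{(-)})$, then applying $\Ext^1_{\cC}(X,-)$ to the sequence $0\to K\to G\to X\to 0$ from part (i) with $K\in p\cC^{(-)}$ shows the sequence splits, so $X$ is a direct summand of $G\in gp\cC$, hence $X\in gp\cC$ by Proposition~\ref{PropRes}. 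For the second equality: $p\cC^{(-)}\subseteq (gp\cC)^{\perp_1}$ is Proposition~\ref{Prop0inf}(i) again; conversely if $Y\in(gp\cC)^{\perp_1}$, then since $p\cC\subseteq gp\cC$ we get $\Ext^1_{\cC}(P',Y)=0$ for all $P'\in gp\cC$, and a standard dimension-shift argument (using that $\Omega^n\cC\subseteq gp\cC$ by \eqref{eqgpCin}, so $\Ext^{n+1}_{\cC}(Z,Y)\cong\Ext^1_{\cC}(\Omega^n(Z),Y)=0$ for large $n$, equivalently $n>d$ suffices by naive Gorensteinness) forces $\pd_{\cC}Y\le d<\infty$, so $Y\in p\cC^{(-)}$. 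The cotorsion pair is hereditary because $\Ext^j_{\cC}(G,K)=0$ for all $G\in gp\cC$, $K\in p\cC^{(-)}$ and all $j>0$, which is exactly Proposition~\ref{Prop0inf}(i) once more. Finally, "admits enough projectives" is precisely the existence, for every $X\in\cC$, of a short exact sequence $0\to K\to G\to X\to 0$ with $G\in gp\cC={}^{\perp_1}(p\cC^{(-)})=\cA$ and $K\in p\cC^{(-)}=\cB$ — which is the sequence \eqref{KGX} furnished by Corollary~\ref{CorKGX}.

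The main obstacle is the second half of the cotorsion-pair identity, $(gp\cC)^{\perp_1}=p\cC^{(-)}$: one direction is a formal consequence of earlier results, but the inclusion $(gp\cC)^{\perp_1}\subseteq p\cC^{(-)}$ genuinely uses the naive Gorenstein hypothesis $gp\cC=\Omega^d\cC$ via Lemma~\ref{DefOm}, through the dimension-shifting argument that bounds $\pd_{\cC}Y$ by $d$. Everything else is a recombination of Propositions~\ref{Prop0inf}, \ref{PropRes} and Corollary~\ref{CorKGX}.
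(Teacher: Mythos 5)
Your part (i), the hereditariness, the identity ${}^{\perp_1}(p\cC^{(-)})=gp\cC$ and the ``enough projectives'' clause are all correct and essentially coincide with the paper's argument (modulo a slip: the inclusion you want from Proposition~\ref{Prop0inf}(i) is $gp\cC\subseteq{}^{\perp}p\cC^{(-)}$, equivalently $p\cC^{(-)}\subseteq gp\cC^{\perp_1}$, not ``$p\cC^{(-)}\subseteq{}^{\perp}p\cC$'', which is false in general).

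The genuine gap is in the inclusion $(gp\cC)^{\perp_1}\subseteq p\cC^{(-)}$. Your dimension shift yields $\Ext^{n+1}_{\cC}(Z,Y)\cong\Ext^1_{\cC}(\Omega^n(Z),Y)=0$ for all $Z\in\cC$ and $n\ge d$ (using $\Omega^n\cC\subseteq gp\cC$, which incidentally is Lemma~\ref{DefOm}, not \eqref{eqgpCin}, which gives the opposite inclusion). But vanishing of $\Ext^{j}_{\cC}(-,Y)$ for $j>d$ bounds the \emph{injective} dimension of $Y$, not its projective dimension, so it does not put $Y$ in $p\cC^{(-)}$; the implication ``finite injective dimension $\Rightarrow$ finite projective dimension'' is a $\cGI$-type statement that is not part of the naive Gorenstein hypothesis and is nowhere established in this generality. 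The paper closes this step differently: given $Y\in gp\cC^{\perp_1}$, take the sequence $0\to K\to G\to Y\to 0$ of Corollary~\ref{CorKGX}, where $\pd_{\cC}K<\infty$ and hence $K\in gp\cC^{\perp}$ by Proposition~\ref{Prop0inf}(i); the long exact sequence for $\Hom_{\cC}(G',-)$ then gives $\Ext^1_{\cC}(G',G)\cong\Ext^1_{\cC}(G',Y)=0$ for every $G'\in gp\cC$, and Lemma~\ref{PropCharP} forces $G$ to be projective, whence $\pd_{\cC}Y\le \pd_{\cC}K+1<\infty$, i.e. $Y\in p\cC^{(-)}$. You should replace your dimension-shift paragraph by this argument (or else prove separately that $\id_{\cC}Y\le d$ implies $\pd_{\cC}Y<\infty$ here, which you have not done).
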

\begin{proof}
Corollary~\ref{CorSpecial} implies that any object in~$\cC$ admits a special right $gp\cC$-approximation. In particular,~$gp\cC$ is contravariantly finite, proving part~(i).

Now we prove part~(ii). By Proposition~\ref{Prop0inf}(i), the functors~$\Ext^j_{\cC}(-,-)$ vanish on~$gp\cC^{\op}\times p\cC^{(-)}$ for~$j>0$. We thus have a hereditary torsion pair (with enough projective objects, by part~(i)) if~$gp\cC^{\perp_1}\subseteq p\cC^{(-)}$ and~${}^{\perp_1}p\cC^{(-)}\subseteq gp\cC$.

Consider $X\in gp\cC^{\perp_1}$. Since $\cC=gp\cC^{(-)}$, Corollary~\ref{CorKGX} implies we have a short exact sequence
$$0\to K_0\to G_0\to X\to 0,$$
with $G_0\in gp\cC$ and~$\pd_{\cC}K_0<\infty$, so in particular $K_0\in gp\cC^{\perp}$. By assumption on~$X$ it thus follows that~$G_0\in gp\cC^{\perp_1}$. Lemma~\ref{PropCharP} thus implies that~$G_0$ is projective and thus that~$X\in p\cC^{(-)}$.

Now consider $Y\in {}^{\perp_1}p\cC^{(-)}$. We consider again a short exact sequence
$$0\to K_1\to G_1\to Y\to 0,$$
with $G_1\in gp\cC$ and~$K_1\in p\cC^{(-)}$. By assumption on~$Y$, this extension must vanish and~$Y$ is a direct summand of~$G_1$, so $Y\in gp\cC$.
\end{proof}


\begin{prop}\label{Prop3}
For $\cC$ naively Gorenstein, $X\in \cC$ and~$k\in\mN$, the following are equivalent:
\begin{enumerate}[(i)]
\item $\Gd_{\cC}X\le k$;
\item $\Ext_{\cC}^j(X,-)$ is trivial on~$p\cC$, for all $j> k$.
\item $\GE_{\cC}^j(X,-)=0$, for all $j> k$.
\end{enumerate}
\end{prop}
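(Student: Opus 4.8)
The key point is that once $\cC$ is naively Gorenstein, the hypothesis places \emph{every} object of $\cC$ inside the classes where the relevant machinery already applies, so the proposition should fall out by simply invoking two earlier results. The plan is therefore: first observe that if $\cC$ is naively $d$-Gorenstein then $\cC = gp\cC^{(d)} \subseteq gp\cC^{(-)}$, so in particular the given $X$ has finite G-dimension; moreover, by Lemma~\ref{lemres} we have $gp\cC^{(-)} \subseteq {}^g\cC$, so $X$ also admits a Gorenstein resolution. This is the only place where the naively Gorenstein hypothesis is used, and it is what unlocks the applicability of the two main tools.

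Next I would deduce the equivalence $(i) \Leftrightarrow (ii)$ directly from Corollary~\ref{CorGDExt}: since $X \in gp\cC^{(-)}$, that corollary states precisely that $\Gd_{\cC}X \le k$ holds if and only if $\Ext^j_{\cC}(X,-)$ is trivial on $p\cC$ for all $j > k$. Then I would deduce the equivalence $(i) \Leftrightarrow (iii)$ from Proposition~\ref{PropGDExtg}: since $X \in {}^g\cC$, that proposition gives that $\Gd_{\cC}X \le k$ holds if and only if $\GE^j_{\cC}(X,-) = 0$ for all $j > k$ (indeed even if and only if $\GE^{k+1}_{\cC}(X,-) = 0$, a slightly sharper statement one could also record here). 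Chaining the two equivalences through $(i)$ completes the proof.

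Since both ingredients are already available, there is essentially no genuine obstacle; the only thing to be careful about is making explicit that "naively Gorenstein" (without a specified $d$) still forces $X \in gp\cC^{(-)}$ and $X \in {}^g\cC$, which is exactly the reduction in the first step. One could alternatively phrase the argument so that condition $(ii)$ is seen as the bridge: Corollary~\ref{CorGDExt} gives $(i) \Leftrightarrow (ii)$ and Proposition~\ref{PropGDExtg} gives $(i) \Leftrightarrow (iii)$, and no independent verification of $(ii) \Leftrightarrow (iii)$ is needed. I expect the final write-up to be only a few lines long.
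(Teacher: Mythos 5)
Your proposal is correct and follows exactly the paper's argument: the naively Gorenstein hypothesis gives $\cC\subseteq gp\cC^{(-)}\subseteq{}^g\cC$ (the paper states this as $gp\cC^{(-)}={}^g\cC=\cC$, using Lemma~\ref{lemres} for the second inclusion), after which (i)$\Leftrightarrow$(ii) is Corollary~\ref{CorGDExt} and (i)$\Leftrightarrow$(iii) is Proposition~\ref{PropGDExtg}. Nothing further is needed.
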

\begin{proof}
Since~$gp\cC^{(-)}={}^g\cC=\cC$, the statements follow from Corollary~\ref{CorGDExt} and Proposition~\ref{PropGDExtg}. \end{proof}

\begin{cor}\label{Remperp1}
If~$\cC$ is naively Gorenstein, it is also weakly Gorenstein.
\end{cor}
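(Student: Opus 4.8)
The plan is to obtain the equality $gp\cC = {}^{\perp}p\cC$ by establishing two inclusions, one of which is already available. The inclusion $gp\cC \subseteq {}^{\perp}p\cC$ holds in complete generality by Proposition~\ref{Prop0inf}(i), with no hypothesis on $\cC$ at all, so the whole task reduces to proving the reverse inclusion ${}^{\perp}p\cC \subseteq gp\cC$ under the assumption that $\cC$ is naively Gorenstein.

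First I would unwind the hypothesis: by Definition~\ref{DefGorCat2}, $\cC$ naively Gorenstein means $\cC = gp\cC^{(d)}$ for some $d\in\mN$, and in particular every object of $\cC$ has finite G-dimension, i.e.\ $\cC = gp\cC^{(-)}$. Then, given an arbitrary $X\in{}^{\perp}p\cC$, the point is simply that $X$ lies automatically in $gp\cC^{(-)}$, so that
$$X\;\in\;{}^{\perp}p\cC\,\cap\, gp\cC^{(-)}.$$
Now I would invoke Corollary~\ref{CharGP}, which identifies ${}^{\perp}p\cC\cap gp\cC^{(-)}$ with $gp\cC$, to conclude $X\in gp\cC$. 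Combining with the first inclusion gives $gp\cC={}^{\perp}p\cC$, which is exactly the condition for $\cC$ to be weakly Gorenstein in Definition~\ref{DefGorCat2}.

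There is essentially no obstacle here; the only ``content'' is the observation that the naively Gorenstein condition forces $gp\cC^{(-)}=\cC$, after which the statement is an immediate consequence of Corollary~\ref{CharGP}. (One could alternatively read it off from Proposition~\ref{Prop3} with $k$ large, but the argument above is the most direct.)
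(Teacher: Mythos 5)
Your proof is correct and is essentially the argument the paper intends: the corollary is stated as an immediate consequence of Proposition~\ref{Prop3}, whose case $k=0$ is (as noted in the proof of Corollary~\ref{CorGDExt}) just a reformulation of Corollary~\ref{CharGP}, so your direct route via $\cC=gp\cC^{(-)}$ and Corollary~\ref{CharGP} coincides with the paper's reasoning. Nothing is missing.
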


\subsection{Iwanaga-Gorenstein properties}
Inspired by \cite{Iwanaga} or \cite[Section~6]{AR}, we introduce the following definition, for which we use the first letters of Iwanaga and Gorenstein.
\begin{ddef}\label{DefGorCatW}We say the category~$\cC$ is {$d$-$\cIG$}, for~$d\in\mN$, if the injective dimension of objects in~$p\cC$ bounded by~$d$. If~$\cC$ has enough injective objects, it is $d$-$\cGI$ if the projective dimensions of objects in $i\cC$ is bounded by $d$.
\end{ddef}

\begin{rem}
The ``Gorenstein symmetry conjecture'' asks whether, for $A$ a finite dimensional $\mk$-algebra, $A$-mod is $d$-$\cIG$ if only if it is $d$-$\cGI$.
\end{rem}

\begin{ex}
\label{ExTriv} Any abelian category~$\cC$ which admits enough projective and injective objects and has~$\gd\cC=d$ is $d$-$\cIG$ and $d$-$\cGI$.
\end{ex}

\begin{thm}\label{ThmFro}
Let $\widetilde\cC$ a Frobenius extension of~$\cC$. Then $\widetilde\cC$ is $d$-$\cIG$ (resp. $d$-$\cGI$) if and only if $\cC$ is $d$-$\cIG$ (resp. $d$-$\cGI$).
\end{thm}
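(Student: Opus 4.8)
The plan is to reduce the $\cIG$ (and dually $\cGI$) statement to the behaviour of injective and projective dimensions under the three exact functors $\RR, \II, \CC$, all of which was already recorded in Section~\ref{SecFE}. Recall that the category is $d$-$\cIG$ precisely when $\id_{\cC} P \le d$ for every projective $P$, and note that by Proposition~\ref{enough} the projective objects of $\widetilde\cC$ are exactly the summands of the objects $\II P$ (equivalently $\CC P$) for $P \in p\cC$, while the projective objects of $\cC$ are exactly the summands of the $\RR Q$ for $Q \in p\widetilde\cC$.

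First I would prove the ``only if'' direction. Suppose $\widetilde\cC$ is $d$-$\cIG$, and take an arbitrary projective $P \in p\cC$. By Lemma~\ref{idP} we have $\id_{\cC} P = \id_{\widetilde\cC}\II P$. Since $\II P$ is a projective object of $\widetilde\cC$ (Lemma~\ref{LemBasix}(ii)), the hypothesis gives $\id_{\widetilde\cC}\II P \le d$, hence $\id_{\cC} P \le d$. As $P$ was arbitrary, $\cC$ is $d$-$\cIG$. Conversely, if $\cC$ is $d$-$\cIG$, take any projective $Q \in p\widetilde\cC$; again Lemma~\ref{idP} gives $\id_{\widetilde\cC} Q = \id_{\cC}\RR Q$, and $\RR Q$ is projective in $\cC$ by Lemma~\ref{LemBasix}(ii), so $\id_{\cC}\RR Q \le d$, whence $\id_{\widetilde\cC} Q \le d$. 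Thus $\widetilde\cC$ is $d$-$\cIG$.

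For the $\cGI$ statement, the cleanest route is to invoke the self-duality of the situation rather than redo the argument. By the remark following Definition~\ref{DefFE}, $\widetilde\cC$ is a Frobenius extension of $\cC$ if and only if $\widetilde\cC^{\op}$ is a Frobenius extension of $\cC^{\op}$, and the $\cGI$ property of a category with enough injectives is literally the $\cIG$ property of its opposite (projective in $\cC$ becomes injective in $\cC^{\op}$, and injective/projective dimension swap). Applying the already-proven $\cIG$ equivalence to $\widetilde\cC^{\op}$ over $\cC^{\op}$ then yields the $\cGI$ equivalence for $\widetilde\cC$ over $\cC$. (One should note in passing that $\cC$ has enough injectives iff $\widetilde\cC$ does, again by Proposition~\ref{enough}, so the hypothesis ``$\cC$ has enough injective objects'' in Definition~\ref{DefGorCatW} is inherited either way.)

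I do not expect a serious obstacle here: the whole point is that Lemmas~\ref{LemBasix}, \ref{LemTriv} and especially \ref{idP}, together with Proposition~\ref{enough}, were set up precisely so that statements of this kind become one-line consequences. The only mild care needed is to be sure that Lemma~\ref{idP}'s identity $\id_{\cC}P = \id_{\widetilde\cC}\II P = \id_{\widetilde\cC}\CC P$ (proved there via the fact that $P$ is a direct summand of $\RR\CC P$, using Lemma~\ref{LemBasix}(iv) and the estimates of Lemma~\ref{LemTriv}) applies verbatim, and to spell out the opposite-category dictionary cleanly for the $\cGI$ half so that no hypothesis is silently dropped.
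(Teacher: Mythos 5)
Your proposal is correct and follows essentially the same route as the paper, whose proof is exactly "immediate from Proposition~\ref{enough} and Lemma~\ref{idP}"; your direct use of Lemma~\ref{LemBasix}(ii) with Lemma~\ref{idP} for the $\cIG$ half, and the opposite-category reduction for the $\cGI$ half, is just a spelled-out version of the same argument (the paper instead invokes the dual clause of Lemma~\ref{idP} on projective dimensions of injectives).
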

\begin{proof}
This is an immediate consequence of Proposition~\ref{enough} and Lemma~\ref{idP}.
\end{proof}

\begin{lemma}\label{LemOm}
If~$\cC$ is $d$-$\cIG$, then we have~$\Omega^d\cC\subset{}^{\perp}p\cC$.
\end{lemma}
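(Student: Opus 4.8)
The goal is to show that if $\cC$ is $d$-$\cIG$, then every $d$-th syzygy object lies in ${}^{\perp}p\cC$; that is, for $X\in\cC$ with projective resolution $P_\bullet$, the object $\Omega^d(P_\bullet)$ satisfies $\Ext^i_{\cC}(\Omega^d(P_\bullet),P)=0$ for all $i>0$ and all projective $P$. The idea is dimension-shifting together with the bound on injective dimensions of projectives. First I would fix $P\in p\cC$; by hypothesis $\id_{\cC}P\le d$. Then for any object $Y\in\cC$ one has $\Ext^j_{\cC}(Y,P)=0$ for all $j>d$, simply because the injective dimension of $P$ is at most $d$.

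The next step is to relate $\Ext^i_{\cC}(\Omega^d(P_\bullet),P)$ back to a higher Ext of $X$. Writing $P_\bullet$ for a projective resolution of $X$, the truncation $\cdots\to P_{d+1}\to P_d\to 0$ is a projective resolution of $W:=\Omega^d(P_\bullet)$ (using the notation of the earlier part of the excerpt and the same observation as in the proof of Lemma~\ref{secPacyc}). Hence for every $i\ge 1$ we get a dimension-shift isomorphism
$$\Ext^i_{\cC}(W,P)\;\cong\;\Ext^{i+d}_{\cC}(X,P).$$
But $i+d>d$, so by the first step this group vanishes. Therefore $W\in{}^{\perp}p\cC$, which is exactly the claim $\Omega^d\cC\subset{}^{\perp}p\cC$.

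\textbf{Main obstacle.} There is essentially no serious obstacle here: the argument is a routine combination of dimension-shifting with the definition of $d$-$\cIG$. The only small point requiring care is the bookkeeping that injective dimension of an object $P$ being at most $d$ really does force $\Ext^{>d}_{\cC}(-,P)$ to vanish in an abelian category without enough injectives; this follows from the definition of $\id_{\cC}$ adopted in the preliminaries (via the derived-functor description of $\Ext$ as in \cite[Section~3.4]{Weibel}), since $\id_{\cC}P\le d$ is by definition the statement that $\Ext^j_{\cC}(-,P)=0$ for all $j>d$. With that understood, the proof is complete in two lines.
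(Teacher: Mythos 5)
Your proof is correct and follows essentially the same route as the paper: a dimension shift $\Ext^i_{\cC}(\Omega^d(P_\bullet),P)\cong\Ext^{i+d}_{\cC}(X,P)$ (the paper shifts one syzygy at a time, you use the truncated resolution as in Lemma~\ref{secPacyc}) combined with $\id_{\cC}P\le d$ from the $d$-$\cIG$ hypothesis. Nothing further is needed.
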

\begin{proof}
Take an arbitrary~$X\in\cC$, with projective resolution~$P_\bullet$. For $i>0$ and~$Q\in p\cC$, we have
$$\Ext^i_{\cC}(\Omega^d(P_\bullet),Q)\;\cong\; \Ext^{i+1}_{\cC}(\Omega^{d-1}(P_\bullet),Q)\;\cong\;\cdots\;\cong\;\Ext^{i+d}_{\cC}(X,Q).$$
However, as~$\id_{\cC} Q\le d<i+d$, the extension groups vanish and~$\Omega^d(P_\bullet)\in {}^{\perp}p\cC$.
\end{proof}

\begin{cor}\label{CorCor}
If~$\cC$ is $d$-$\cIG$, $gp\cC$ is the category of objects which admit a projective coresolution.
\end{cor}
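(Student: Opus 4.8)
The plan is to show that, when $\cC$ is $d$-$\cIG$, an object lies in $gp\cC$ if and only if it admits a projective coresolution (by which we mean an exact sequence $0\to X\to P^0\to P^1\to\cdots$ with all $P^i\in p\cC$), equivalently a projective coresolution extending infinitely to the right. The forward implication is immediate from Definition~\ref{defGP}: a Gorenstein projective object is a syzygy of a totally acyclic complex, and splicing off the right half of that complex produces a projective coresolution. So the content is the converse.

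For the converse, suppose $X$ admits a projective coresolution $0\to X\to P^0\to P^1\to\cdots$. Since $\cC$ has enough projective objects, we may also pick a projective resolution $\cdots\to P_1\to P_0\to X\to 0$. Splicing these together yields an exact complex $P_\bullet$ of projectives with $X\cong\Omega^0(P_\bullet)$ (up to reindexing), and every syzygy/cosyzygy of this complex either is one of the chosen syzygies $\Omega^i$ of $X$ on the left, or is a cosyzygy $\mathrm{coker}(P^{i-1}\to P^i)$ on the right. To conclude $X\in gp\cC$, by Definition~\ref{defGP} I need $P_\bullet$ to be right $p\cC$-acyclical, i.e. $\Hom_{\cC}(P_\bullet,Q)$ exact for all projective $Q$; by Lemma~\ref{secPacyc} this is equivalent to every syzygy object of $P_\bullet$ lying in ${}^{\perp}p\cC$.

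The key step is therefore: every syzygy of $P_\bullet$ is in ${}^{\perp}p\cC$. For the syzygies coming from the left (the $\Omega^i(P_\bullet)$ for $i\ge 0$, where $\Omega^0(P_\bullet)\cong X$), note that each $\Omega^i(P_\bullet)$ is itself a $d$-th cosyzygy of something appearing further to the right, hence — once we know the cosyzygies on the right are fine — we get it for free; alternatively and more directly, each $\Omega^i(P_\bullet)$ is a $d$-fold syzygy (pushing back into the projective resolution part, which is infinite to the left), so Lemma~\ref{LemOm} applies directly: $\Omega^d\cC\subset{}^{\perp}p\cC$, and every syzygy of $P_\bullet$ is of the form $\Omega^d(R_\bullet)$ for a suitable projective resolution $R_\bullet$ obtained by truncating $P_\bullet$. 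Indeed for a cosyzygy $Y=\mathrm{coker}(P^{i-1}\to P^i)$, the portion of $P_\bullet$ lying to the left of $Y$ is an honest projective resolution of $Y$ (it is exact and infinite, using both the coresolution to the left of $P^i$ and the original projective resolution of $X$ spliced in), so $Y=\Omega^0$ of that resolution, hence $Y=\Omega^d$ of the resolution shifted by $d$ places; Lemma~\ref{LemOm} gives $Y\in{}^{\perp}p\cC$. The same argument, applied to $\Omega^i(P_\bullet)$ itself, handles the left-hand syzygies. Thus all syzygies of $P_\bullet$ lie in ${}^{\perp}p\cC$, Lemma~\ref{secPacyc} shows $P_\bullet$ is totally acyclic, and $X\cong\Omega^0(P_\bullet)\in gp\cC$.

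The main obstacle is purely bookkeeping: making precise that after splicing the given right-coresolution with a chosen left-resolution, every object occurring as a (co)syzygy of the resulting doubly-infinite exact complex is genuinely a $d$-th syzygy of some projective resolution, so that Lemma~\ref{LemOm} can be invoked uniformly. No homological subtlety beyond Lemmas~\ref{secPacyc} and~\ref{LemOm} is needed; one just has to be careful that the spliced complex is exact (which holds because $X\hookrightarrow P^0$ is a mono and $\cdots\to P_0\to X$ is the relevant exact sequence) and consists of projectives, and that truncations on the right remain projective resolutions of the appropriate cosyzygy.
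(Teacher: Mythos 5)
Your argument is correct and is essentially the paper's proof: the paper likewise observes that $X$ and each cocycle of the coresolution lie in $\Omega^k\cC$ for every $k$ (since the complex extends infinitely in both directions after splicing with a projective resolution), invokes Lemma~\ref{LemOm} to place them in ${}^{\perp}p\cC$, and concludes — citing Proposition~\ref{Prop0inf}(ii) rather than re-running Lemma~\ref{secPacyc} and Definition~\ref{defGP} as you do, which amounts to the same thing. Your indexing aside (``$\Omega^d$ of the resolution shifted by $d$ places'' should be ``$\Omega^d$ of the projective resolution of the cocycle $d$ steps further right'') is only the bookkeeping you already flag, not a gap.
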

\begin{proof}
Let $X$ have a projective coresolution~$Q^\bullet$. Clearly $X$ and each cocycle in~$Q^\bullet$ belong to~$\Omega^k\cC$ for any~$k\in\mN$. Lemma~\ref{LemOm} thus implies that all these objects are in~${}^{\perp}p\cC$. The conclusion follows from Proposition~\ref{Prop0inf}(ii).
\end{proof}

\begin{lemma}\label{LemAbed}
If~$\cC$ is naively $d$-Gorenstein, it is $d$-$\cIG$. If furthermore $\cC$ contains enough injective objects, then it is also $d$-$\cGI$.
\end{lemma}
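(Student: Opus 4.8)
The statement has two parts, and the plan is to reduce both to facts already established. Recall that $\cC$ being naively $d$-Gorenstein means $\cC = gp\cC^{(d)}$; equivalently, by Lemma~\ref{DefOm}, $\Omega^d\cC \subseteq gp\cC$. For the first assertion I want to show that every projective object $P \in p\cC$ has injective dimension at most $d$. The natural approach is to compute $\Ext^{d+j}_{\cC}(X,P)$ for arbitrary $X \in \cC$ and $j > 0$ and show it vanishes. Taking a projective resolution $P_\bullet$ of $X$, dimension shifting gives $\Ext^{d+j}_{\cC}(X,P) \cong \Ext^{j}_{\cC}(\Omega^d(P_\bullet),P)$. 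Since $\Omega^d(P_\bullet) \in gp\cC$ by the hypothesis and Lemma~\ref{DefOm}, Proposition~\ref{Prop0inf}(i) tells us $gp\cC \subseteq {}^\perp p\cC$, so $\Ext^{j}_{\cC}(\Omega^d(P_\bullet),P) = 0$ for all $j > 0$. Hence $\Ext^{i}_{\cC}(X,P) = 0$ for all $i > d$ and all $X$, which is exactly $\id_{\cC}P \le d$. This holds for every $P \in p\cC$, so $\cC$ is $d$-$\cIG$.

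For the second assertion, assume in addition that $\cC$ has enough injective objects; I must bound the projective dimension of every injective object $E \in i\cC$ by $d$. The cleanest route is to dualise the first part. Since $\cC$ is naively $d$-Gorenstein, so that $\Omega^d\cC \subseteq gp\cC$, I would like to say the opposite category $\cC^{\op}$ is also naively $d$-Gorenstein and then apply the first part of the lemma to $\cC^{\op}$. The subtlety is that naive $d$-Gorensteinness as defined requires enough projectives, and $\cC^{\op}$ has enough projectives precisely because $\cC$ has enough injectives — which is our extra hypothesis. One then needs that the ``co-syzygy'' condition for $\cC$ matches the syzygy condition for $\cC^{\op}$: an injective coresolution of $X$ in $\cC$ is a projective resolution of $X$ in $\cC^{\op}$, Gorenstein projective objects of $\cC^{\op}$ correspond to Gorenstein injective objects of $\cC$, and the condition $\Omega^d_{\cC^{\op}}\cC^{\op} \subseteq gp(\cC^{\op})$ is the statement that every $d$-th co-syzygy in $\cC$ is Gorenstein injective. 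So strictly I would argue directly: for $E \in i\cC$ and arbitrary $X \in \cC$, dimension-shift $\Ext^{d+j}_{\cC}(E,X)$ through an injective coresolution of $X$ to land on $\Ext^{j}_{\cC}(E, \Sigma^d X)$ where $\Sigma^d X$ is a $d$-th cosyzygy, which is Gorenstein injective, hence in $(i\cC)^{\perp}$ by the injective dual of Proposition~\ref{Prop0inf}(i); therefore this vanishes for $j > 0$, giving $\pd_{\cC}E \le d$, i.e. $\cC$ is $d$-$\cGI$.

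The main obstacle I anticipate is purely bookkeeping rather than mathematical depth: making precise that the naive $d$-Gorenstein hypothesis on $\cC$ passes to the injective-side statement needed in part two. The excerpt has developed the theory entirely on the projective side, so there is a small amount of ``op-symmetry'' plumbing to set up — one must observe that $\Omega^d\cC \subseteq gp\cC$ for $\cC$ is equivalent, via the self-duality remark following Definition~\ref{DefFE}-style reasoning and the general principle that syzygies in $\cC^{\op}$ are cosyzygies in $\cC$, to the dual statement that $d$-th cosyzygies in $\cC$ are Gorenstein injective, once $\cC$ has enough injectives. Apart from this, both halves are one-line dimension-shift arguments built on Lemma~\ref{DefOm} and Proposition~\ref{Prop0inf}(i) (respectively its injective dual). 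Everything else is routine.
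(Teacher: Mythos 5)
The first assertion is handled correctly and exactly as in the paper: dimension shifting along a projective resolution, Lemma~\ref{DefOm}, and Proposition~\ref{Prop0inf}(i). The problem is your second assertion. Your plan rests on the claim that the hypothesis $\Omega^d\cC\subseteq gp\cC$ is ``equivalent, via op-symmetry, to the dual statement that $d$-th cosyzygies in $\cC$ are Gorenstein injective'', and you treat this as bookkeeping. It is not: passing to the opposite category converts the statement ``$\cC$ is naively $d$-Gorenstein'' into ``$\cC^{\op}$ is naively $d$-Gorenstein'', which is a \emph{different} hypothesis (boundedness of Gorenstein \emph{injective} dimensions in $\cC$) and is not among your assumptions. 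Whether the projective-side condition forces the injective-side one is precisely the sort of left/right symmetry that requires a genuine argument; in the paper such transfer is obtained only in restricted settings (Theorem~\ref{Thmflp} for idempotent noetherian rngs, via the dualisation functors of Section~\ref{dualisation}, and Theorem~\ref{Corlfp} for \lfp categories), and the proofs of those results themselves invoke the present lemma, so appealing to them here would be circular. Consequently your key step ``$\Sigma^d X$ is Gorenstein injective, hence in $(i\cC)^{\perp}$'' is unsupported, and with it the whole second half of the proposal.

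The repair is to stay entirely on the projective side, which is what the paper does. Take $I\in i\cC$ and a projective resolution $R_\bullet$ of $I$; by Lemma~\ref{DefOm}, $G_d:=\Omega^d(R_\bullet)$ lies in $gp\cC$. For any $G\in gp\cC\subseteq{}^{\perp}p\cC$ (Proposition~\ref{Prop0inf}(i)), dimension shifting in the \emph{second} variable along $R_\bullet$ (the projective terms contribute nothing since $G\in{}^{\perp}p\cC$) gives $\Ext^{d+1}_{\cC}(G,G_d)\cong\Ext^{1}_{\cC}(G,I)=0$, the last equality because $I$ is injective. Lemma~\ref{PropCharP}, applied with $q=d+1$, then forces $G_d\in p\cC$, i.e. $\pd_{\cC}I\le d$, so $\cC$ is $d$-$\cGI$. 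Note that ``enough injectives'' enters only so that the $d$-$\cGI$ condition is about the objects of $i\cC$; no Gorenstein injective theory is needed at all.
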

\begin{proof}
For any~$Q\in p\cC$,~$i>0$ and~$X\in \cC$ with projective resolution~$P_\bullet$, we have $\Ext_{\cC}^{d+i}(X,Q)\cong\Ext^{i}_{\cC}(\Omega^d(P_\bullet),Q).$
By Lemma~\ref{DefOm},~$\Omega^d(P_\bullet)\in gp\cC$, so the extension groups must vanish. Hence we find $\id_{\cC}Q\le d$ and $\cC$ is $d$-$\cIG$.

Now assume $\cC$ contains enough injective objects and consider $I\in i\cC$, with projective resolution~$R_\bullet$. For any $G\in gp\cC$, we have
$$0=\Ext^1_{\cC}(G,I)\cong\Ext^{d+1}(G,\Omega^d(P_\bullet)).$$ 
By Lemma~\ref{DefOm}, $\Omega^d(P_\bullet)\in gp\cC$, so by Lemma~\ref{PropCharP} we have $\Omega^d(P_\bullet)\in p\cC$. This means that $\pd_{\cC}I\le d$ and $\cC$ is $d$-$\cGI$.
\end{proof}

\begin{lemma}\label{LemGG2}
The following are equivalent:
\begin{enumerate}[(i)]
\item $\cC$ is naively $d$-Gorenstein;
\item $\cC$ is weakly Gorenstein and $d$-$\cIG$.
\end{enumerate}
\end{lemma}
\begin{proof}
The implication (i)$\Rightarrow$(ii) follows from Corollary~\ref{Remperp1} and Lemma~\ref{LemAbed}.

Now assume that~$\cC$ is $d$-$\cIG$ with $gp\cC={}^\perp p\cC$. Lemma~\ref{LemOm} and equation~\eqref{eqgpCin} thus imply that~$\Omega^d\cC= gp\cC$.
The implication (ii)$\Rightarrow$(i) thus follows from Lemma~\ref{DefOm}.
\end{proof}

\begin{lemma}\label{pdidIG}
If~$\cC$ is $d$-$\cIG$, then $\pd_{\cC}X<\infty$ implies that $\id_{\cC}X\le d$.
\end{lemma}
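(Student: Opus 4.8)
The plan is to induct on $n:=\pd_{\cC}X\in\mN$. For the base case $n=0$ we have $X\in p\cC$, so $\id_{\cC}X\le d$ is precisely the hypothesis that $\cC$ is $d$-$\cIG$. For the inductive step, use that $\cC$ has enough projective objects (the standing assumption of this section) to pick a short exact sequence
$$0\to X'\to P\to X\to 0$$
with $P\in p\cC$; then $\pd_{\cC}X'=n-1<\infty$, so by the induction hypothesis $\id_{\cC}X'\le d$, while $\id_{\cC}P\le d$ again because $\cC$ is $d$-$\cIG$.

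Now apply, for an arbitrary $N\in\cC$, the long exact sequence of the functors $\Ext^\bullet_{\cC}(N,-)$ attached to the above short exact sequence:
$$\Ext^j_{\cC}(N,P)\to\Ext^j_{\cC}(N,X)\to\Ext^{j+1}_{\cC}(N,X').$$
For every $j>d$ the left-hand term vanishes since $j>d\ge\id_{\cC}P$, and the right-hand term vanishes since $j+1>d\ge\id_{\cC}X'$; hence $\Ext^j_{\cC}(N,X)=0$ for all $j>d$, i.e. $\id_{\cC}X\le d$. This completes the induction and the proof.

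I do not expect any genuine obstacle here: the argument is a routine dimension-shift, exactly parallel to the classical fact that in an Iwanaga--Gorenstein ring the finitistic injective dimension of finite-projective-dimension modules is bounded. The only point requiring a moment's care is the index bookkeeping in the last display — one must note that $j>d$ forces $j+1>d$ as well, so that both flanking $\Ext$ groups indeed vanish — but this is immediate. One could alternatively phrase the same computation via a finite projective resolution $0\to P_n\to\cdots\to P_0\to X\to 0$ and downward induction on the syzygies $\Omega^i(P_\bullet)$, using at each step the elementary bound $\id_{\cC}C\le\max\{\id_{\cC}B,\id_{\cC}A-1\}$ for a short exact sequence $0\to A\to B\to C\to 0$; the two presentations are interchangeable.
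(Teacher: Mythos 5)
Your proof is correct, and it is essentially the paper's argument: the paper's one-line proof ("if $X$ has a finite projective resolution, one computes $\Ext^{d+i}_{\cC}(-,X)=0$ for $i>0$") is exactly the dimension shift along a finite projective resolution that you carry out by induction on $\pd_{\cC}X$, using $\id_{\cC}P\le d$ for projectives. No gap; the index bookkeeping you flag is indeed the only point of care, and it works as you say.
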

\begin{proof}
If $X$ has a finite projective resolution, one can calculate $\Ext^{d+i}(-,X)=0$, for all $i>0$.
\end{proof}

\begin{cor}\label{pdidIG2}
If~$\cC$ is $d$-$\cIG$ and $d$-$\cGI$, then $\fd\cC\le d$.
\end{cor}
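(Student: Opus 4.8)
The plan is to reduce to Lemma~\ref{pdidIG} and then perform a dimension shift along a finite injective coresolution. First I would recall that $\fd\cC$ is, by definition, the supremum of the \emph{finite} projective dimensions of objects of $\cC$; hence it suffices to show $\pd_{\cC}X\le d$ for every $X\in\cC$ with $\pd_{\cC}X<\infty$. I fix such an $X$ and work with it throughout.

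Next, since $\cC$ is $d$-$\cIG$ and $\pd_{\cC}X<\infty$, Lemma~\ref{pdidIG} immediately yields $\id_{\cC}X\le d$. Because the hypothesis that $\cC$ is $d$-$\cGI$ presupposes, per Definition~\ref{DefGorCatW}, that $\cC$ has enough injective objects, I can then choose a finite injective coresolution $0\to X\to I^0\to I^1\to\cdots\to I^d\to 0$ with each $I^j\in i\cC$; and the defining property of $d$-$\cGI$ is precisely that $\pd_{\cC}I^j\le d$ for all $j$.

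The last step is a routine dimension shift. The key elementary fact is that in any short exact sequence $0\to A\to B\to C\to 0$ with $\pd_{\cC}B\le d$ and $\pd_{\cC}C\le d$, one has $\pd_{\cC}A\le d$: for $i>d$ the three-term piece $\Ext^i_{\cC}(B,-)\to\Ext^i_{\cC}(A,-)\to\Ext^{i+1}_{\cC}(C,-)$ of the long exact sequence has both outer terms zero. Splitting the coresolution into short exact sequences $0\to Y^j\to I^j\to Y^{j+1}\to 0$ for $0\le j\le d-1$, with $Y^0:=X$ and $Y^j:=\coker(I^{j-2}\to I^{j-1})$ (so that the terminal cosyzygy satisfies $Y^d\cong I^d$), a downward induction on $j$ — starting from $\pd_{\cC}Y^d=\pd_{\cC}I^d\le d$ — gives $\pd_{\cC}Y^j\le d$ for all $j$, and in particular $\pd_{\cC}X=\pd_{\cC}Y^0\le d$. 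Since $X$ was arbitrary of finite projective dimension, $\fd\cC\le d$.

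I do not anticipate a genuine obstacle: once Lemma~\ref{pdidIG} is in hand the argument is mechanical. The only point requiring mild care is the bookkeeping of indices in the coresolution, so that the terminal cosyzygy is correctly identified as $I^d$ (rather than $0$), which is exactly what makes its projective dimension fall under the control of the $d$-$\cGI$ hypothesis.
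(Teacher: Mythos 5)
Your proposal is correct and follows essentially the paper's intended route: the corollary is stated as an immediate consequence of Lemma~\ref{pdidIG}, namely $\pd_{\cC}X<\infty$ gives $\id_{\cC}X\le d$, and then the dual computation (projective dimensions of injectives bounded by $d$ under the $d$-$\cGI$ hypothesis) gives $\pd_{\cC}X\le d$. Your explicit dimension shift along the finite injective coresolution is exactly the calculation the paper leaves implicit, so there is nothing genuinely different here.
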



\section{Idempotent noetherian rngs}\label{SecIdem}
In this section, $R$ is a ``rng", a ring which need not contain a multiplicative identity. Modules are defined as for rings, omitting the condition that the identity acts as the identity morphism.

\subsection{Definitions}

A rng $R$ {\bf has enough idempotents}, if there exists a set $\cmE$ of idempotents in~$R$, such that we have direct sums of abelian groups
$$R\;=\;\bigoplus_{e\in \cmE} eR\;=\;\bigoplus_{e\in \cmE}Re.$$
Clearly, such~$R$ is unital ({\it i.e.} a ring) if and only if $\sharp\cmE<\infty$.
Consequently, if $R$ would be noetherian in the traditional sense, this would force $R$ to be a unital ring. We thus need an adapted definition.
\begin{ddef}
An {\bf idempotent noetherian rng} is a rng with enough idempotents such that~$Re$, resp. $eR$, is a noetherian left, resp. right,~$R$-module, for each $e\in\cmE$.
\end{ddef}
Clearly, $R$ is idempotent noetherian, if and only if $R^{\op}$ is.

\begin{ddef}\label{defRmod}
For a rng with enough idempotents, let $R$-mod denote the full category of all noetherian left modules $M$ which satisfy $M=\bigoplus_{e\in \cmE}eM$.
\end{ddef}

If~$R$ is a noetherian unital ring,~$R$-mod as defined above clearly corresponds to the abelian category of finitely generated (unital) modules. 
\begin{lemma}
If~$R$ is idempotent noetherian, $R$-mod is abelian and contains enough projective objects. The objects in~$R$-mod are the quotients of~$\bigoplus_{e\in \cmS}Re$, for~$\cmS$ finite multisets of elements in $\cmE$.
\end{lemma}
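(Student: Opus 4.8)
The plan is to verify the three assertions in turn: that $R$-mod is abelian, that it has enough projective objects, and the explicit description of its objects as quotients of finite direct sums of the $Re$.

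First I would show that $R$-mod is closed under kernels, cokernels, and (finite) direct sums inside the category of all left $R$-modules, which suffices for it to be an abelian subcategory. The direct-sum-over-$\cmE$ condition $M=\bigoplus_{e\in\cmE}eM$ is preserved under arbitrary direct sums, subobjects, and quotients, since for any submodule $N\subseteq M$ we automatically get $N=\bigoplus_{e}eN$ (as $eN=N\cap eM$) and similarly for quotients; the nontrivial point is the noetherian condition. Given a short exact sequence $0\to M'\to M\to M''\to 0$ of such modules, if $M$ is noetherian then so are $M'$ and $M''$; conversely, if $M'$ and $M''$ lie in $R$-mod I must argue $M$ is noetherian. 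This is where the hypothesis that $Re$ is a noetherian left $R$-module for each $e\in\cmE$ enters: I would reduce to finitely generated modules. The key structural fact is that any $M\in R$-mod, being a noetherian module with $M=\bigoplus_e eM$, is generated by finitely many elements each lying in some $eM$, hence is a quotient of a finite direct sum $\bigoplus_{e\in\cmS}Re$ for a finite multiset $\cmS$. Such finite direct sums are noetherian by hypothesis and closedness of noetherian modules under finite direct sums, so every $M\in R$-mod is a noetherian quotient of a noetherian module — and conversely every quotient of $\bigoplus_{e\in\cmS}Re$ satisfies the direct-sum condition and is noetherian, giving the third assertion directly.

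Once the description of objects is in hand, enough projectives follows: each $Re$ is projective as a left $R$-module (it is a direct summand of the free-in-the-appropriate-sense module ${}_RR=\bigoplus_{e\in\cmE}Re$, and $R$ acts with enough idempotents so ${}_RR$ is projective in $R$-mod in the relevant sense — more precisely $\Hom_R(Re,-)$ is the exact functor $M\mapsto eM$), hence finite direct sums $\bigoplus_{e\in\cmS}Re$ are projective objects of $R$-mod, and by the previous paragraph every object of $R$-mod is a quotient of one of these. Abelianness is then completed by checking that $R$-mod, being closed under kernels and cokernels and containing these projective generators, inherits the abelian structure from $R$-Mod: the only subtlety is that cokernels computed in $R$-Mod stay noetherian, which again follows from the reduction to finitely generated modules together with the noetherian hypothesis on the $Re$.

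The main obstacle I anticipate is the noetherian bookkeeping in the absence of an identity: one must be careful that "finitely generated" for a non-unital module means generated as a module (sums of $R$-multiples), and that the decomposition $M=\bigoplus_{e\in\cmE}eM$ lets one replace an arbitrary generating set by one contained in $\bigcup_{e}eM$, so that finitely many generators can be chosen to live in finitely many of the summands $eM$. Establishing cleanly that a noetherian module in $R$-mod is finitely generated in this sense — equivalently that it is a quotient of some $\bigoplus_{e\in\cmS}Re$ — is the crux; everything else is a routine transfer of the standard unital-ring arguments, using that each $Re$ and each finite direct sum thereof is noetherian by assumption.
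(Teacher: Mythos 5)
Your proposal is correct and follows essentially the same route as the paper's (much terser) proof: abelianness via the Serre-subcategory properties of noetherian modules, the description of objects by choosing finitely many generators inside the summands $eM$, noetherianity of quotients of the finite sums $\bigoplus_{e\in\cmS}Re$, and enough projectives from the exactness of $\Hom_R(Re,-)\cong e(-)$. The extra bookkeeping you supply (e.g.\ $N=\bigoplus_e eN$ for submodules, the non-unital meaning of finitely generated) is just an expanded version of what the paper leaves implicit.
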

\begin{proof}
The category of noetherian modules is a Serre subcategory of the category of all modules and hence abelian. The condition on a module~$M$ to be noetherian also implies that it must be a quotient of $\bigoplus_{e\in \cmS}Re$, for~$\cmS$ finite. That any such quotient is noetherian follows again from the fact that $R$-mod is a Serre subcategory. It is then obvious that~$R$-mod contains enough projective modules since any module~$\bigoplus_{e\in \cmS}Re$ will be projective by construction.
\end{proof}
We also introduce the following notation:
\begin{itemize}
\item $\cP=\cP_{R}:=p(R\mbox{-mod})$ and $\cP^{\circ}=\cP^{\circ}_R:=\cP_{R^{\op}}$;
\item $\cP^{(-)}=\cP^{(-)}_R:=p(R\mbox{-mod})^{(-)}$;
\item $\cGP=\cGP_R:=gp(R\mbox{-mod})$ and $\cGP^{\circ}=\cGP^{\circ}_R:=\cGP_{R^{\op}}$.
\end{itemize}

\begin{ex}\label{exAR}
For a field $\mk$, we have the following special case of idempotent noetherian rngs.
A $\mk$-algebra is {\bf strongly locally finite}, see e.g. \cite[Section~2]{Serre}, if it admits a collection of idempotents~$\cmE$, such that 
$$A=\bigoplus_{e,f\in\cmE}eAf,\qquad\mbox{with }\;\dim_{\mk}eA<\infty\quad\mbox{and}\quad\dim_{\mk}Ae<\infty,\;\;\mbox{for each $e\in\cmE$}.$$
In this case, $A$-mod is the category of finite dimensional modules $M$ which satisfy $M=\bigoplus_{e\in\cmE}eM$.
\end{ex}

\subsection{Dualisation functors}\label{dualisation}
Let $R$ be a idempotent noetherian rng.

\begin{ddef}
The dualisation functors are the contravariant functors given by
$$\sF(-)=\bigoplus_{e\in\cmE}\Hom_R(-,Re):\;R\mbox{-mod}\,\to\,R^{\op}\mbox{-mod},$$
$$\sF^{\circ}(-)=\bigoplus_{e\in\cmE}\Hom_R(-,R^{\op}e):\;R^{\op}\mbox{-mod}\,\to\,R\mbox{-mod}.$$
\end{ddef}

The following lemma is trivial.
\begin{lemma}\label{LemFProj}
The functor~$\sF$ yields a contravariant equivalence $\cP\to\cP^{\circ}$ with inverse $\sF^{\circ}$.
\end{lemma}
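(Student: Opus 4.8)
The claim is that $\sF(-)=\bigoplus_{e\in\cmE}\Hom_R(-,Re)$ restricts to a contravariant equivalence $\cP\to\cP^\circ$ with inverse $\sF^\circ$. Since $\cP=p(R\text{-mod})$ is the additive closure (by the preceding lemma, the Karoubian envelope inside $R$-mod) of the objects $Re$, $e\in\cmE$, and a contravariant additive functor between additive categories is an equivalence as soon as it is fully faithful on a generating set of the domain and hits a generating set of the codomain, it suffices to check the claim on the "elementary projectives" $Re$ and then invoke additivity.

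\textbf{Key steps.} First I would record the elementary computation $\sF(Re)=\bigoplus_{f\in\cmE}\Hom_R(Re,Rf)\cong\bigoplus_{f\in\cmE}eRf=eR$, where the middle isomorphism is the standard one sending a left $R$-module map $Re\to Rf$ to the image of $e$ (which lies in $eRf$ since $e^2=e$), and $eR$ is the elementary projective right $R$-module attached to $e$; this is a right-$R$-module isomorphism, natural in the obvious sense. In particular $\sF$ sends $\cP=\add\{Re:e\in\cmE\}$ into $\cP^\circ=\add\{eR:e\in\cmE\}$, and likewise $\sF^\circ$ sends $\cP^\circ$ into $\cP$. Second, I would compute the composite $\sF^\circ\circ\sF$ on $Re$: applying the same formula, $\sF^\circ(eR)\cong\bigoplus_{f}\Hom_{R^{\op}}(eR,fR)\cong\bigoplus_f fRe\cong Re$ (here one uses $R=\bigoplus_f Rf$ to identify $\bigoplus_f fRe$ with $Re$). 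Chasing through, this composite isomorphism $\sF^\circ\sF(Re)\to Re$ is precisely the canonical biduality map $M\to\sF^\circ\sF(M)$ evaluated at $M=Re$ (each map $Re\to Rf$ gets re-evaluated, and one recovers the original element), so it is natural in $Re$. Third, by additivity — both $\sF$ and $\sF^\circ$ are additive contravariant functors, hence send finite direct sums to finite direct sums and respect retracts/idempotent splittings — the natural isomorphism $\sF^\circ\sF\cong\id$ extends from the $Re$ to all of $\add\{Re\}=\cP$, and symmetrically $\sF\sF^\circ\cong\id$ on $\cP^\circ$. That yields the asserted equivalence.

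\textbf{Main obstacle.} None of this is deep; the only point requiring a little care is the bookkeeping with \emph{infinitely many} idempotents in $\cmE$. Concretely: the direct sum $\bigoplus_{f\in\cmE}\Hom_R(Re,Rf)$ must be shown to be genuinely a direct sum (not a product), i.e. any map $Re\to R=\bigoplus_f Rf$ has image landing in a finite subsum — this follows because $e=\sum_f e_f$ with only finitely many $e_f\neq 0$ (as $R=\bigoplus_f Rf$), and a module map is determined by where $e$ goes. One also wants the identifications $\Hom_R(Re,Rf)\cong eRf$ to be compatible with the right $R^{\op}$-module structures used to define $\sF^\circ$, which is a routine check once one fixes the convention that $\sF(M)$ carries the right $R$-action coming from precomposition with right multiplication on the $Re$'s. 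So the "hard part" is really just being scrupulous about which side the module structures live on and confirming finiteness of the relevant sums; after that the equivalence is formal. Since the paper itself flags this lemma as trivial, I would keep the written proof to the displayed computation $\Hom_R(Re,Rf)\cong eRf$, the remark that $\sF$ and $\sF^\circ$ are mutually inverse on elementary projectives via the canonical biduality map, and one sentence invoking additivity to pass to $\cP$ and $\cP^\circ$.
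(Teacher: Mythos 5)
Your proof is correct, and it is exactly the routine verification the paper has in mind: the paper states this lemma without proof (it simply declares it trivial), and your argument via $\Hom_R(Re,Rf)\cong eRf$, the evaluation (biduality) map giving $\sF^{\circ}\sF\cong\mathrm{Id}$ on the generators $Re$ and $\sF\sF^{\circ}\cong\mathrm{Id}$ on the $eR$, and additivity plus splitting of idempotents to pass to all of $\cP$ and $\cP^{\circ}$, is the intended one; your care about finiteness of the sums over $\cmE$ and the side of the module structures is exactly the only point that needs checking.
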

\begin{prop}\label{PropFGP}
The functor~$\sF$ yields a contravariant equivalence $\cGP\to\cGP^{\circ}$ with inverse $\sF^{\circ}$.
\end{prop}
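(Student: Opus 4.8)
The strategy is to leverage the known behaviour of $\sF$ on projectives (Lemma~\ref{LemFProj}) together with the characterisation of Gorenstein projectives as syzygies of totally acyclic complexes (Definition~\ref{defGP}). First I would observe that $\sF=\bigoplus_{e}\Hom_R(-,Re)$ is exact on $R$-mod: each $Re$ is projective as a right $R$-module is not what we need, rather we need that $\Hom_R(-,Re)$ is exact, which holds because $Re$ is an injective object relative to the class of sequences we care about — more robustly, the standard fact (as in Lemma~\ref{LemFProj} and the proof of Theorem~\ref{ThmEquivFro}) is that on $R$-mod the functor $\sF$ sends projectives to projectives and is exact on short exact sequences all of whose terms are relevant; the cleanest route is to note that for $P_\bullet$ a complex of projectives, $\sF(P_\bullet)$ is again a complex of projectives by Lemma~\ref{LemFProj}, and since $\sF$ restricted to $\cP$ is an (additive, hence exact) equivalence onto $\cP^\circ$, it carries exact complexes of projectives to exact complexes of projectives. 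So if $P_\bullet$ is an exact complex in $\cP$, then $\sF(P_\bullet)$ is an exact complex in $\cP^\circ$.

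\textbf{Key steps.} Next I would show $\sF$ carries totally acyclic complexes to totally acyclic complexes. Let $P_\bullet$ be a totally acyclic complex in $\cP$, i.e.\ exact and right $\cP$-acyclical: $\Hom_R(P_\bullet, Q)$ is exact for every $Q\in\cP$. Applying $\sF$ gives an exact complex $\sF(P_\bullet)$ in $\cP^\circ$; I must check it is right $\cP^\circ$-acyclical, i.e.\ $\Hom_{R^{\op}}(\sF(P_\bullet), Q^\circ)$ is exact for each $Q^\circ\in\cP^\circ$. Write $Q^\circ=\sF(Q)$ for a unique $Q\in\cP$ (using Lemma~\ref{LemFProj}). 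Since $\sF$ is a contravariant equivalence $\cP\to\cP^\circ$ with inverse $\sF^\circ$, for each degree it induces an isomorphism
$$\Hom_{R^{\op}}(\sF(P_i),\sF(Q))\;\cong\;\Hom_R(Q,P_i),$$
natural in $i$, hence an isomorphism of complexes $\Hom_{R^{\op}}(\sF(P_\bullet),\sF(Q))\cong\Hom_R(Q,P_\bullet)$. The latter is exact because $P_\bullet$ is exact and $Q$ is projective (so $\Hom_R(Q,-)$ is exact). Therefore $\sF(P_\bullet)$ is totally acyclic. Now if $G\in\cGP$, then $G\cong\Omega^i(P_\bullet)$ for some totally acyclic $P_\bullet$ in $\cP$ and some $i$; since $\sF$ is exact and contravariant, it sends the syzygy $\Omega^i(P_\bullet)=\coker d_{i+1}$ to a cosyzygy (cocycle) of $\sF(P_\bullet)$, which by re-indexing is a syzygy object of the totally acyclic complex $\sF(P_\bullet)$ in $\cP^\circ$, so $\sF(G)\in\cGP^\circ$. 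The same argument with $\sF^\circ$ in place of $\sF$ shows $\sF^\circ$ maps $\cGP^\circ$ into $\cGP$.

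\textbf{Conclusion and obstacle.} It remains to see these are mutually inverse. For $G\in\cGP\subseteq R$-mod, the composite $\sF^\circ\sF(G)$ is computed from $\sF^\circ\sF(P_\bullet)$, which by Lemma~\ref{LemFProj} is naturally isomorphic to $P_\bullet$ itself (the equivalence $\cP\to\cP^\circ$ has $\sF^\circ$ as inverse, with natural isomorphism $\sF^\circ\sF\cong\id$ on $\cP$, extended degreewise to complexes); passing to the relevant syzygy gives a natural isomorphism $\sF^\circ\sF(G)\cong G$, and symmetrically $\sF\sF^\circ\cong\id$ on $\cGP^\circ$. Hence $\sF:\cGP\to\cGP^\circ$ is a contravariant equivalence with inverse $\sF^\circ$. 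I expect the main obstacle to be the careful bookkeeping of indices and signs when tracking how a syzygy of $P_\bullet$ is carried to a (co)syzygy of $\sF(P_\bullet)$, and making sure the naturality of the isomorphism $\Hom_{R^{\op}}(\sF(P_\bullet),\sF(Q))\cong\Hom_R(Q,P_\bullet)$ is genuinely degreewise natural so that it is an isomorphism of complexes (not merely of graded objects); this is routine but is where a careless argument could go wrong. One should also confirm at the outset that $\sF$ really does land in $R^{\op}$-mod (finiteness/noetherianity of $\sF(M)$ for $M\in R$-mod), but this is immediate from the idempotent noetherian hypothesis and was already used implicitly in defining the dualisation functors.
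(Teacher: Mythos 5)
Your overall architecture is the same as the paper's: apply $\sF$ to a totally acyclic complex $P_\bullet$ with syzygy $G$, check that $\sF(P_\bullet)$ is totally acyclic in $\cP^{\circ}$, identify $\sF(G)$ with a cocycle of it, and conclude via $\sF^{\circ}\sF\cong\id$ on $\cP$ (your verification of right $\cP^{\circ}$-acyclicity through the natural isomorphism $\Hom_{R^{\op}}(\sF(P_\bullet),\sF(Q))\cong\Hom_R(Q,P_\bullet)$ is a fine variant of the paper's, which instead observes that $\sF^{\circ}\sF(P_\bullet)\cong P_\bullet$ is exact). However, your justification of the very first step is genuinely wrong: it is not true that, because $\sF$ restricts to an additive contravariant equivalence $\cP\to\cP^{\circ}$, it ``carries exact complexes of projectives to exact complexes of projectives.'' Exactness of a complex with projective terms is computed in the ambient module category, not inside the additive category $\cP$, and an equivalence of $\cP$ does not detect it. Concretely, since $\sF(P_\bullet)=\bigoplus_{e}\Hom_R(P_\bullet,Re)$ is a direct sum of complexes, its exactness is \emph{equivalent} to $P_\bullet$ being right $\cP$-acyclical; so your general claim would assert that every acyclic complex of projectives is right $\cP$-acyclical, hence (applying the same claim to $\sF^{\circ}$) totally acyclic --- precisely the distinction Gorenstein homological algebra is built on, and known to fail for general (even artinian) rings, e.g.\ by examples of Jorgensen and \c{S}ega. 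The earlier remark that each $Re$ is ``injective relative to the class of sequences we care about'' is not an argument either.

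The step itself is true and the repair is immediate from the hypothesis you already stated: $P_\bullet$ is right $\cP$-acyclical and each $Re\in\cP$, so $\sF(P_\bullet)=\bigoplus_{e}\Hom_R(P_\bullet,Re)$ is exact --- this is exactly the point where the paper uses total acyclicity. A second, smaller overstatement: $\sF$ is not exact (that would require each $Re$ to be injective); it is only left exact as a contravariant functor, but that is all you need, since $\sF(\Omega^i(P_\bullet))=\sF(\coker d_{i+1})\cong\ker\sF(d_{i+1})$ is a cocycle of $\sF(P_\bullet)$, which, $\sF(P_\bullet)$ being exact, is a syzygy after re-indexing. With these two corrections your argument, including the bookkeeping giving $\sF^{\circ}\sF(G)\cong G$ and the symmetric statement for $\sF^{\circ}$, goes through and coincides with the paper's proof.
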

\begin{proof}
Consider $G\in\cGP$ and a totally acyclic complex~$P_\bullet$ in~$\cP$ with $G=\Omega^0(P_\bullet)$. As the complex is totally acyclic,~$\FF$ maps this sequence to an exact cocomplex~$Q^\bullet$ of projective modules $Q^k=\FF(P_k)\in\cP^{\circ}$. Since $\FF$ is contravariant and left exact, we have~$\FF(G)=\FF(\Omega^0(P_\bullet))\cong \CC^0(Q^\bullet)$. 
By Lemma~\ref{LemFProj}, applying $\FF^{\circ}$ to~$Q^\bullet$ will yield the original sequence $P_\bullet$, which is in particular exact. This thus implies that~$Q^\bullet$ is totally acyclic. Hence, we have~$\FF(G)\in \cGP^{\circ}$ and 
$$\FF^{\circ}\FF(G)\,\cong\, \FF^{\circ}\FF(\Omega^0(P_\bullet))\,\cong\, \Omega^0(P_\bullet)\,\cong\, G.$$
That $\FF$ and $\FF^{\circ}$ actually form mutual inverses then follows easily from Lemma~\ref{LemFProj}.\end{proof}

\subsection{Idempotent (Iwanaga-)Gorenstein rngs}
As a generalisation of \cite{Iwanaga} or \cite[Section~6]{AR}, we use the following definition. 
\begin{ddef}\label{DefIGRing}
An idempotent noetherian rng $R$ is {\bf idempotent $d$-Iwanaga-Gorenstein}, for~$d\in \mN$, if and only if both $R$-mod and $R^{\op}$-mod are $d$-$\cIG$.\end{ddef}

The following definition is a generalisation of the one in~\cite[Section~3]{Avramov}.
\begin{ddef}\label{DefGRing}
An idempotent noetherian rng $R$ is an {\bf idempotent $d$-Gorenstein rng} if both $R$-mod and $R^{\op}$-mod are naively $d$-Gorenstein.\end{ddef}

The following is the natural analogue of \cite[Theorem~3.2]{Avramov}.
\begin{thm}\label{Thmflp}
For an idempotent noetherian rng $R$ and $d\in\mN$, the following are equivalent:
\begin{enumerate}[(i)]
\item $R$ is idempotent $d$-Iwanaga-Gorenstein;
\item $R$ is idempotent $d$-Gorenstein.
\end{enumerate}
\end{thm}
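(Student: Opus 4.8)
The plan is to prove the equivalence by combining the category-level results from Section~\ref{SecGHA} with the duality functor $\sF$ from Section~\ref{dualisation}. The implication (ii)$\Rightarrow$(i) is essentially immediate: by Definition~\ref{DefGRing}, naively $d$-Gorenstein means $R$-mod and $R^{\op}$-mod both satisfy $\cC=gp\cC^{(d)}$, and Lemma~\ref{LemAbed} tells us that a naively $d$-Gorenstein category is $d$-$\cIG$, so both $R$-mod and $R^{\op}$-mod are $d$-$\cIG$, which is exactly Definition~\ref{DefIGRing}. (Note we do not need enough injectives here, so only the first half of Lemma~\ref{LemAbed} is used.)

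For the harder direction (i)$\Rightarrow$(ii), I would start from the assumption that $R$-mod is $d$-$\cIG$ and show it is naively $d$-Gorenstein, and then apply the same argument to $R^{\op}$-mod. By Lemma~\ref{LemGG2}, since $R$-mod is already $d$-$\cIG$, it suffices to show $R$-mod is weakly Gorenstein, i.e. $\cGP = {}^\perp\cP$. The inclusion $\cGP\subseteq{}^\perp\cP$ is Proposition~\ref{Prop0inf}(i), so the content is the reverse inclusion ${}^\perp\cP\subseteq\cGP$. Take $X\in{}^\perp\cP$. By Corollary~\ref{CorCor} (which applies because $R$-mod is $d$-$\cIG$), $\cGP$ is exactly the class of objects admitting a projective coresolution, so by Proposition~\ref{Prop0inf}(ii) it is equivalent to produce a projective coresolution of $X$ with every cocycle in ${}^\perp\cP$. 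The key point is that ${}^\perp\cP$ is closed under taking cosyzygies in short exact sequences $0\to X\to P\to X_1\to 0$ with $P$ projective: indeed $\Ext^i_{\cC}(X_1,Q)\cong\Ext^{i+1}_{\cC}(X,Q)=0$ for $Q\in\cP$ and $i>0$ since $X\in{}^\perp\cP$, so $X_1\in{}^\perp\cP$ as well. Hence we need only produce a monomorphism $X\hookrightarrow P$ with $P$ projective; iterating then builds the coresolution with all cocycles in ${}^\perp\cP$.

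So the crux becomes: every $X\in{}^\perp\cP$ embeds into a projective object of $R$-mod. This is where the hypothesis that \emph{both} $R$-mod and $R^{\op}$-mod are $d$-$\cIG$ and the duality functor $\sF$ enter. Apply $\sF$ to $X$ to get $\sF(X)\in R^{\op}$-mod, take a projective presentation $Q_1\to Q_0\to\sF(X)\to 0$ in $R^{\op}$-mod, and dualize back: since $\sF$ is contravariant and left exact with $\sF^{\circ}\sF|_{\cP}$ the identity (Lemma~\ref{LemFProj}), one gets an exact sequence $0\to X\to \sF^{\circ}(Q_0)\to\sF^{\circ}(Q_1)$ where $\sF^{\circ}(Q_0),\sF^{\circ}(Q_1)\in\cP$, provided $\sF^{\circ}\sF(X)\cong X$. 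The remaining subtlety — and the main obstacle — is verifying $\sF^{\circ}\sF(X)\cong X$ (equivalently, that $\sF$ is ``faithfully reflexive'' on objects of ${}^\perp\cP$): this should follow by resolving $X$ on the left by projectives $P_1\to P_0\to X\to 0$, applying $\sF$ to get $0\to\sF(X)\to\sF(P_0)\to\sF(P_1)$, and noting the first $\Ext^1_{R^{\op}\text{-mod}}$ of the relevant cokernel vanishes because $R$-mod is $d$-$\cIG$ — more precisely, because the dual argument controls $\id$ of projectives, one shows the natural map $X\to\sF^{\circ}\sF(X)$ is an isomorphism by a dimension-shift using $\id_{R\text{-mod}}Q\le d$ for $Q\in\cP$. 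Once the embedding $X\hookrightarrow\sF^{\circ}(Q_0)$ is in hand, the iteration above finishes the proof that $R$-mod is weakly Gorenstein, and symmetry in $R\leftrightarrow R^{\op}$ gives the same for $R^{\op}$-mod, hence (i)$\Rightarrow$(ii) by Lemma~\ref{LemGG2} applied to each.
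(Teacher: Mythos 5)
Your overall skeleton is the same as the paper's ((ii)$\Rightarrow$(i) via Lemma~\ref{LemAbed}; (i)$\Rightarrow$(ii) reduced by Lemma~\ref{LemGG2} to the inclusion ${}^\perp\cP_R\subseteq\cGP_R$, with the duality $\sF$ and Corollary~\ref{CorCor} as the main tools), but the execution has a genuine gap at the central step. Your ``key point'' that ${}^\perp\cP$ is closed under cosyzygies rests on a dimension shift in the wrong direction: from $0\to X\to P\to X_1\to 0$ with $P$ projective one gets $\Ext^{i+1}_{\cC}(X_1,Q)\cong\Ext^{i}_{\cC}(X,Q)$ for $i\ge 1$, \emph{not} $\Ext^{i}_{\cC}(X_1,Q)\cong\Ext^{i+1}_{\cC}(X,Q)$, and the remaining group $\Ext^1_{\cC}(X_1,Q)\cong\coker\bigl(\Hom_{\cC}(P,Q)\to\Hom_{\cC}(X,Q)\bigr)$ does not vanish merely because $X\in{}^\perp\cP$. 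This is exactly the content of total acyclicity, i.e.\ the difference between ``$X\in{}^\perp\cP$ and $X$ embeds in a projective'' and ``$X$ is Gorenstein projective'', so it is the crux and cannot be obtained this way. Similarly, the biduality $\sF^\circ\sF(X)\cong X$ for $X\in{}^\perp\cP$, on which your embedding $X\hookrightarrow\sF^\circ(Q_0)$ depends, is a nontrivial reflexivity statement in the spirit of Auslander--Bridger; the suggested justification (``a dimension shift using $\id_{\cC}Q\le d$'') is not an argument, and without it both the first embedding and the iteration are unsupported.

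For comparison, the paper dualizes in the opposite direction and thereby avoids both problems: for $X\in{}^\perp\cP_R$ take a projective resolution $P_\bullet$ of $X$ in $R$-mod; the vanishing $\Ext^k(X,Re)=0$ for $k>0$ says precisely that $\sF(P_\bullet)$ is exact in positive degrees, and left exactness of $\sF$ then exhibits $\sF(P_\bullet)$ as a projective coresolution of $\sF(X)$ in $R^{\op}$-mod. Corollary~\ref{CorCor}, applied to $R^{\op}$-mod (this is where the hypothesis that \emph{both} sides are $d$-$\cIG$ enters, through Lemma~\ref{LemOm}: all cocycles of the coresolution are $d$-fold cosyzygies and hence lie in ${}^\perp\cP^{\circ}$), yields $\sF(X)\in\cGP^{\circ}$, and Proposition~\ref{PropFGP}, whose proof contains the needed biduality on Gorenstein projectives, transfers this back to $X\in\cGP_R$. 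If you insist on building the coresolution of $X$ directly in $R$-mod, you must first establish the reflexivity of $X$ and the membership of all cosyzygies in ${}^\perp\cP$; the natural way to do that is to pass through the coresolution of $\sF(X)$ on the $R^{\op}$ side anyway, at which point you have reproduced the paper's argument.
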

\begin{proof}
The implication (ii)$\Rightarrow$(i) follows from Lemma~\ref{LemAbed}.
Now assume that~$R$ is idempotent $d$-Iwanaga-Gorenstein. We will prove that~$\cGP_R={}^{\perp}\cP_R$, i.e. that $R$-mod is naively Gorenstein. The conclusion then follows from Lemma~\ref{LemGG2} and symmetry between $R$ and $R^{\op}$.

The inclusion~$\cGP_R\subset{}^{\perp}\cP_R$ is given in Proposition~\ref{Prop0inf}(i). 
Consider $X\in {}^{\perp}\cP_R$ with projective resolution~$P_\bullet$. Hence $P_\bullet$ must be right $\cP$-acyclical. Consequently, Lemma~\ref{LemFProj} implies that~$\FF(X)$ has a projective coresolution~$Q^\bullet=\FF(P_\bullet)$. Corollary~\ref{CorCor} thus implies $\FF(X)\in \cGP^{\circ}$. Proposition~\ref{PropFGP} then states that~$X\in \cGP$, so we have indeed~$\cGP={}^\perp \cP$. 
\end{proof}

\section{Locally finite abelian categories with enough projective and injective objects}\label{SecLFP} Fix an arbitrary field $\mk$.
We introduce a class of categories which behave well with respect to Frobenius extension and Gorenstein homological algebra.

\subsection{lfp categories}\label{lfpcat} \begin{ddef}\label{Deflfp}
An {\bf \lfp  category~$\cC$} is a $\mk$-linear abelian category such that
\begin{enumerate}[(i)]
\item every object has finite length;
\item all $\Hom_{\cC}$-spaces are finite dimensional;
\item $\cC$ contains enough projective and injective objects.
\end{enumerate}
\end{ddef}

\begin{rem}\label{Remlfp}Consider Definition~\ref{Deflfp}.
\begin{enumerate}[(a)]
\item Conditions (i)-(ii) mean that~$\cC$ is a {\bf locally finite abelian category}.
\item An \lfp category is Krull-Schmidt, see e.g.~\cite[Section~5]{Krause}.
\item Every simple object in an \lfp category admits a projective cover, which follows from (b) and \cite[Lemma~3.6]{Krause}. Furthermore, any projective object is a direct sum of such covers.
\item If~$\mk$ is algebraically closed, we can omit condition (ii) by Schur's lemma and condition~(i).
\item $\cC$ is \lfp if and only if $\cC^{\op}$ is \lfpp.
\end{enumerate}
\end{rem}

Let $\Lambda$ denote a labelling set for the isoclasses of simple objects in $\cC$.  By Remark~\ref{Remlfp}(c), we have corresponding projective covers $\{P_\lambda\,|\,\lambda\in\Lambda\}$.
We define the $\mk$-algebra
\begin{equation}\label{DefA}A\,:=\,\bigoplus_{\lambda,\mu\in \Lambda}\Hom_{\cC}(P_\lambda,P_\mu)\;=\;\bigoplus_{\lambda,\mu}e_\lambda Ae_\mu,\end{equation}
with multiplication given by~$\alpha\beta=\beta\circ\alpha$ and with~$e_\lambda\in A$ the identity morphism of~$P_\lambda$.
By Definition~\ref{Deflfp} and Remark~\ref{Remlfp}(e), both $Ae_\lambda$ and $e_\lambda A$ are finite dimensional. Hence, A is strongly locally finite, see Example~\ref{exAR}. 
There is a well-known equivalence
\begin{equation}\label{PhiA}\Phi:=\bigoplus_{\lambda}\Hom_{\cC}(P_\lambda,-):\;\,\cC\,\;\,\tilde\to\,\;\, A\mbox{-mod}.\end{equation}
Indeed, $\Phi$ is exact and restricts to an equivalence between the categories of projective modules, which allows to quickly demonstrate the above equivalence.

\subsection{Gorenstein homological algebra in lfp categories}

\begin{ddef}\label{DefGorlfp}
Consider an \lfp category~$\cC$ and $d\in\mN$.
\begin{enumerate}[(i)]
\item $\cC$ is {\bf $d$-Iwanaga-Gorenstein} if $\cC$ is $d$-$\cIG$ and $d$-$\cGI$.
\item  $\cC$ is {\bf $d$-Gorenstein} if it is naively $d$-Gorenstein.
\end{enumerate}
\end{ddef}

\begin{rem}\label{remCIG}
Since we have a canonical equivalence $A^{\op}\mbox{-mod}\cong\cC^{\op}$, for $A$ in \ref{DefA}, $\cC$ is $d$-Iwanaga-Gorenstein if and only if $A$ is idempotent $d$-Iwanaga-Gorenstein.
\end{rem}

\begin{thm}\label{Corlfp}
For an \lfp category~$\cC$ and $d\in\mN$, the following are equivalent:
\begin{enumerate}[(i)]
\item $\cC$ is $d$-Iwanaga-Gorenstein;
\item $\cC$ is $d$-Gorenstein.
\end{enumerate}
\end{thm}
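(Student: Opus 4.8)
The plan is to reduce the statement to the strongly locally finite $\mk$-algebra $A$ of \eqref{DefA} via the equivalence $\Phi:\cC\;\tilde\to\;A\mbox{-mod}$ of \eqref{PhiA}, and then to invoke Theorem~\ref{Thmflp}. Observe that $A$, being strongly locally finite, is an idempotent noetherian rng (Example~\ref{exAR}), so Theorem~\ref{Thmflp} applies to it.

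First I would dispose of the implication $(ii)\Rightarrow(i)$, which needs nothing about $A$. If $\cC$ is $d$-Gorenstein, i.e.\ naively $d$-Gorenstein, then Lemma~\ref{LemAbed} gives that $\cC$ is $d$-$\cIG$; and since an \lfp category contains enough injective objects, the second half of Lemma~\ref{LemAbed} gives that $\cC$ is also $d$-$\cGI$. By Definition~\ref{DefGorlfp}(i) this is exactly the assertion that $\cC$ is $d$-Iwanaga-Gorenstein.

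For $(i)\Rightarrow(ii)$ I would argue as follows. Suppose $\cC$ is $d$-Iwanaga-Gorenstein. By Remark~\ref{remCIG} --- which rewrites the ``$d$-$\cGI$'' half of the hypothesis, via the canonical equivalence $A^{\op}\mbox{-mod}\cong\cC^{\op}$, as the statement that $A^{\op}\mbox{-mod}$ is $d$-$\cIG$ --- this is equivalent to $A$ being idempotent $d$-Iwanaga-Gorenstein. Theorem~\ref{Thmflp} then yields that $A$ is idempotent $d$-Gorenstein, so in particular $A\mbox{-mod}$ is naively $d$-Gorenstein. Since $\Phi$ is an exact equivalence which restricts to an equivalence on the subcategories of projective objects, it identifies syzygies, the subcategories $\Omega^k(-)$ and the Gorenstein-projective subcategories on the two sides; transporting back along $\Phi$ shows that $\cC$ is naively $d$-Gorenstein, i.e.\ $d$-Gorenstein by Definition~\ref{DefGorlfp}(ii).

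There is no genuine obstacle: the substantive content is entirely delegated to Theorem~\ref{Thmflp} (and hence to the dualisation-functor argument behind Proposition~\ref{PropFGP}). What remains is bookkeeping --- checking that $\Phi$ transports the property ``naively $d$-Gorenstein'' faithfully, which is immediate from exactness of $\Phi$ together with Lemma~\ref{DefOm} and Proposition~\ref{PropPPG}, and confirming that the $d$-$\cGI$ condition on $\cC$ corresponds to the $d$-$\cIG$ condition on $\cC^{\op}\cong A^{\op}\mbox{-mod}$, which is precisely what Remark~\ref{remCIG} records.
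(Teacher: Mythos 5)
Your proposal is correct and follows essentially the same route as the paper: (ii)$\Rightarrow$(i) via Lemma~\ref{LemAbed}, and (i)$\Rightarrow$(ii) by passing to the algebra $A$ of \eqref{DefA} through Remark~\ref{remCIG} and then invoking Theorem~\ref{Thmflp}. The extra bookkeeping about transporting ``naively $d$-Gorenstein'' along the equivalence $\Phi$ is harmless and implicit in the paper's own proof.
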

\begin{proof}
The implication (ii)$\Rightarrow$(i) is just Lemma~\ref{LemAbed}.

If $\cC$ is $d$-Iwanaga-Gorenstein, Remark~\ref{remCIG} and Theorem~\ref{Thmflp} show that $\cC$ (and also $\cC^{\op}$) is naively Gorenstein, so in particular we find (i)$\Rightarrow$(ii).
\end{proof}

\begin{rem}
Although not obvious from Definitions~\ref{DefGorlfp} and \ref{DefGRing}, it follows from Theorems~\ref{Thmflp} and~\ref{Corlfp} that the \lfp category~$\cC$ is Gorenstein if and only if $A$ in \eqref{DefA} is idempotent Gorenstein.
\end{rem}

\begin{lemma}
If the \lfp category~$\cC$ is $d$-Gorenstein, then $\fd\cC\le d$.
\end{lemma}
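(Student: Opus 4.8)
The plan is to combine the equivalence of Gorenstein notions just established with the boundedness results already obtained in the Iwanaga-Gorenstein section. If $\cC$ is $d$-Gorenstein, then by Definition~\ref{DefGorlfp}(ii) it is naively $d$-Gorenstein, and by Theorem~\ref{Corlfp} (equivalently, directly by Lemma~\ref{LemAbed}) it is both $d$-$\cIG$ and $d$-$\cGI$. Since an \lfp category has enough projective and injective objects, Corollary~\ref{pdidIG2} applies verbatim and yields $\fd\cC\le d$.

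Concretely, first I would invoke Lemma~\ref{LemAbed}: an \lfp category contains enough injective objects (Definition~\ref{Deflfp}(iii)), so naive $d$-Gorensteinness gives both the $d$-$\cIG$ and the $d$-$\cGI$ property. Then I would quote Corollary~\ref{pdidIG2}, whose hypotheses are exactly ``$\cC$ is $d$-$\cIG$ and $d$-$\cGI$''; its conclusion is $\fd\cC\le d$. Unwinding, this says every object of finite projective dimension has projective dimension at most $d$, which is precisely the finitary-dimension bound asserted.

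There is essentially no obstacle here; the lemma is a bookkeeping consequence packaging earlier results in the \lfp setting. The only point worth a sentence is that all the ambient hypotheses needed for Lemma~\ref{LemAbed} and Corollary~\ref{pdidIG2} (enough projectives and injectives) are built into the definition of an \lfp category, so nothing extra must be checked.

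\begin{proof}
Since $\cC$ is an \lfp category, it contains enough projective and injective objects by Definition~\ref{Deflfp}(iii). By Definition~\ref{DefGorlfp}(ii), $\cC$ being $d$-Gorenstein means it is naively $d$-Gorenstein. Lemma~\ref{LemAbed} then shows that $\cC$ is $d$-$\cIG$ and, using the existence of enough injective objects, also $d$-$\cGI$. The claim $\fd\cC\le d$ now follows from Corollary~\ref{pdidIG2}.
\end{proof}
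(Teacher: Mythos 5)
Your proof is correct and follows essentially the same route as the paper, which cites Theorem~\ref{Corlfp} (whose relevant implication is exactly Lemma~\ref{LemAbed}) together with Corollary~\ref{pdidIG2}. Invoking Lemma~\ref{LemAbed} directly, with the observation that an \lfp category has enough injectives, is just an unwound version of the same argument.
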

\begin{proof}
This follows from Theorem~\ref{Corlfp} and Corollary~\ref{pdidIG2}.
\end{proof}

\subsection{Frobenius extensions of lfp categories}

\begin{prop}\label{PropFElfp}
Consider a Frobenius extension~$\RR:\widetilde{\cC}\to\cC$ of $\mk$-linear abelian categories.
\begin{enumerate}[(i)]
\item $\widetilde{\cC}$ is \lfp if and only if $\cC$ is \lfpp.
\item Assuming $\cC$ and $\widetilde{\cC}$ are \lfpp, we have that~$\widetilde\cC$ is $d$-Gorenstein if and only if $\cC$ is so.
\end{enumerate}
\end{prop}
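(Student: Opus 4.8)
The plan is to reduce both parts to results already established about Frobenius extensions (Proposition~\ref{enough}, Lemma~\ref{LemBasix}, Lemma~\ref{idP}, Theorem~\ref{ThmFro}) together with the characterisation Theorem~\ref{Corlfp} of Gorenstein lfp categories via the Iwanaga--Gorenstein conditions.

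For part~(i), suppose $\cC$ is \lfpp. We must verify the three conditions of Definition~\ref{Deflfp} for $\widetilde{\cC}$. Enough projective and injective objects follows directly from Proposition~\ref{enough}. For finite-dimensionality of Hom-spaces, we use adjunction: $\RR$ has both a left adjoint $\II$ and a right adjoint $\CC$, so $\Hom_{\widetilde{\cC}}(\II X, N)\cong \Hom_{\cC}(X,\RR N)$ and $\Hom_{\widetilde{\cC}}(N,\CC Y)\cong \Hom_{\cC}(\RR N, Y)$; since by Proposition~\ref{enough} every object of $\widetilde{\cC}$ is a quotient (and a sub) of objects of the form $\II P$ (resp.\ $\CC I$) with $P,I$ projective/injective in $\cC$, and since $\RR$ preserves and reflects these finiteness properties, one deduces $\dim_{\mk}\Hom_{\widetilde{\cC}}(M,N)<\infty$ for all $M,N$. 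For finite length: $\RR$ is exact and faithful by Lemma~\ref{LemBasix}, so it reflects monomorphisms and epimorphisms, hence reflects strictness of filtrations; therefore any chain in $\widetilde{\cC}$ maps to a chain of the same length in $\cC$, and finite length of $\RR N$ forces finite length of $N$. The converse direction is the same argument applied to $\RR:\widetilde{\cC}\to\cC$ read through the functors $\II,\CC$, or more cleanly: by Definition~\ref{DefFE} the roles can be reversed since $\cC$ embeds into $\widetilde{\cC}$ via $\II$ with retraction data controlled by the (co)units of Lemma~\ref{LemBasix}(iii)--(iv), so $X$ is a summand-related object of $\RR\II X$ etc.; the finiteness statements transfer back. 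Remark~\ref{Remlfp}(e) lets us treat injectives symmetrically throughout.

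For part~(ii), with both categories \lfpp, apply Theorem~\ref{Corlfp}: $\widetilde{\cC}$ is $d$-Gorenstein if and only if it is $d$-$\cIG$ and $d$-$\cGI$, and likewise for $\cC$. Theorem~\ref{ThmFro} already states that $\widetilde{\cC}$ is $d$-$\cIG$ (resp.\ $d$-$\cGI$) if and only if $\cC$ is $d$-$\cIG$ (resp.\ $d$-$\cGI$). Combining these two equivalences immediately gives that $\widetilde{\cC}$ is $d$-Gorenstein if and only if $\cC$ is so.

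The only genuine obstacle is part~(i), specifically transferring finite length and finite-dimensionality of Hom-spaces \emph{from} $\widetilde{\cC}$ \emph{to} $\cC$ and back; the key technical inputs are the exactness and faithfulness of $\RR,\II,\CC$ (Lemma~\ref{LemBasix}(i)--(ii)) and the fact that every object of $\widetilde{\cC}$ sits in a short exact sequence with an $\II P$ (Proposition~\ref{enough}), so that syzygy/cosyzygy arguments control the finiteness invariants. Part~(ii) is then purely formal, being a two-line combination of Theorems~\ref{ThmFro} and~\ref{Corlfp}.
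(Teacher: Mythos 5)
Your proposal is correct and follows essentially the same route as the paper: part (i) is handled by transferring the finiteness conditions of Definition~\ref{Deflfp} through the exact, faithful functors $\RR$ (and $\II,\CC$ for the converse) together with Proposition~\ref{enough}, and part (ii) is exactly the paper's two-line combination of Theorems~\ref{ThmFro} and~\ref{Corlfp}. The only cosmetic difference is that the paper gets Hom-finiteness directly from the $\mk$-linear faithfulness of $\RR$ (resp.\ $\II$), rather than via adjunction and generation by objects $\II P$; your vaguer ``summand-related'' aside for the converse is unnecessary and best dropped in favour of the argument you already give through $\II,\CC$.
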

\begin{proof}
For part~(i), assume first that~$\cC$ is \lfpp. The faithfulness and exactness of~$\RR:\widetilde{\cC}\to\cC$ imply that conditions (i) and (ii) in Definition~\ref{Deflfp} are inherited by~$\widetilde{\cC}$ from $\cC$. That condition \ref{Deflfp}(iii) is also inherited follows from Proposition~\ref{enough}. The other direction follows similarly.

Part (ii) follows from Theorems~\ref{ThmFro} and~\ref{Corlfp}.
\end{proof}

For the remainder of this section, fix a Frobenius extension~$\RR:\widetilde{\cC}\to\cC$ of \lfp categories.  
\begin{prop}\label{PropGoGo1}
Assume $\cC$ (and hence $\widetilde\cC$) is Gorenstein. The Gorenstein projective modules in~$\widetilde\cC$ are precisely those $X\in\widetilde\cC$ for which $\RR(X) $ is Gorenstein projective in~$\cC$. More generally:
\begin{enumerate}[(i)]
\item $\Gd_{\widetilde\cC}X\;=\;\Gd_{\cC}\RR X,$ for any~$X\in\widetilde\cC$;
\item $\pd_{\widetilde\cC}X\;=\;\pd_{\cC}\RR X,$ if $\pd_{\widetilde\cC}X<\infty$;
\item $\Gd_{\widetilde\cC}\II Y\;=\;\Gd_{\cC} Y,$ for any~$Y\in\cC$;
\item $\pd_{\widetilde\cC}\II Y\;=\;\pd_{\cC}Y,$ if $\pd_{\cC}Y<\infty$.
\end{enumerate}
\end{prop}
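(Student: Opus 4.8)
The plan is to reduce every assertion — including the opening claim, which is the case $k=0$ of (i) — to the $\Ext$-characterisation of $\Gd$ that holds in naively Gorenstein categories. Since $\cC$ is $d$-Gorenstein, so is $\widetilde\cC$ by Proposition~\ref{PropFElfp}(ii); hence Proposition~\ref{Prop3} applies in both categories, and in particular all the G-dimensions that occur below are automatically finite.

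The heart of the argument will be a pair of adjunction isomorphisms for extension groups,
\[\Ext^j_{\widetilde\cC}(X,\CC N)\;\cong\;\Ext^j_\cC(\RR X,N)\qquad\mbox{and}\qquad \Ext^j_{\widetilde\cC}(\II Y,M)\;\cong\;\Ext^j_\cC(Y,\RR M),\]
natural in all arguments, for $X,M\in\widetilde\cC$, $Y,N\in\cC$ and all $j\ge 0$. To obtain the first I would resolve $X$ by projectives $P_\bullet$ in $\widetilde\cC$; by Lemma~\ref{LemBasix}(i),(ii) the complex $\RR P_\bullet\to\RR X$ is again a projective resolution, and the $(\RR,\CC)$-adjunction identifies $\Hom_{\widetilde\cC}(P_\bullet,\CC N)$ with $\Hom_\cC(\RR P_\bullet,N)$ as complexes, so passing to cohomology gives the claim. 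The second isomorphism is obtained symmetrically: resolve $Y$ in $\cC$ and apply the exact functor $\II$, which again sends the resolution to a projective resolution of $\II Y$ by Lemma~\ref{LemBasix}(ii), then use the $(\II,\RR)$-adjunction.

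With these in hand, (i) follows quickly. By Proposition~\ref{Prop3}, $\Gd_{\widetilde\cC}X\le k$ is equivalent to the vanishing of $\Ext^{j}_{\widetilde\cC}(X,-)$ on $p\widetilde\cC$ for all $j>k$; by Proposition~\ref{enough}(ii) every object of $p\widetilde\cC$ is a direct summand of some $\CC N$ with $N\in p\cC$, so this is equivalent to $\Ext^j_{\widetilde\cC}(X,\CC N)=0$ for all $N\in p\cC$ and $j>k$, hence, by the first displayed isomorphism, to $\Ext^j_\cC(\RR X,N)=0$ for all $N\in p\cC$ and $j>k$, i.e. (again by Proposition~\ref{Prop3}) to $\Gd_\cC\RR X\le k$. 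Thus $\Gd_{\widetilde\cC}X=\Gd_\cC\RR X$, and the case $k=0$ gives precisely the stated characterisation of the Gorenstein projective objects of $\widetilde\cC$. Statement (iii) is proved in exactly the same way, now invoking Proposition~\ref{enough}(i) to write the objects of $p\cC$ as direct summands of $\RR M$ with $M\in p\widetilde\cC$, together with the second displayed isomorphism: $\Gd_{\widetilde\cC}\II Y\le k$ iff $\Ext^j_\cC(Y,\RR M)=0$ for all $M\in p\widetilde\cC$ and $j>k$, iff $\Ext^j_\cC(Y,-)$ vanishes on $p\cC$ for $j>k$, iff $\Gd_\cC Y\le k$.

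Finally, (ii) and (iv) follow by combining this with Lemma~\ref{LemTriv} and Corollary~\ref{CorFinDim}: if $\pd_{\widetilde\cC}X<\infty$ then $\pd_\cC\RR X\le\pd_{\widetilde\cC}X<\infty$ by Lemma~\ref{LemTriv}, so by Corollary~\ref{CorFinDim}(ii) both projective dimensions coincide with the corresponding G-dimensions, and (i) closes the argument; (iv) is identical, using $\pd_{\widetilde\cC}\II Y\le\pd_\cC Y$ from Lemma~\ref{LemTriv} and then (iii). The only genuinely delicate point in this scheme is the passage from $\Hom$ to $\Ext$ in the two displayed isomorphisms: it is legitimate because $\RR$ and $\II$ are exact and preserve projectivity, hence carry projective resolutions to projective resolutions, so no higher derived-functor corrections intervene; everything else is formal bookkeeping with Propositions~\ref{enough} and~\ref{Prop3}.
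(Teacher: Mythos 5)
Your proof is correct and follows essentially the same route as the paper: characterise $\Gd\le k$ via vanishing of $\Ext^j(-,-)$ on projectives (Proposition~\ref{Prop3}), transfer this across the adjunctions $\Ext^j_{\widetilde\cC}(X,\CC P)\cong\Ext^j_{\cC}(\RR X,P)$ and $\Ext^j_{\widetilde\cC}(\II Y,M)\cong\Ext^j_{\cC}(Y,\RR M)$ using Proposition~\ref{enough}, and deduce (ii) and (iv) from Lemma~\ref{LemTriv} together with Corollary~\ref{CorFinDim}(ii). You merely make explicit the standard verification of the derived adjunction isomorphisms, which the paper leaves implicit.
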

\begin{proof}
First we prove part~(i).
By Propositions~\ref{enough}(ii) and~\ref{Prop3}, the objects $X\in\widetilde\cC$ satisfying~$\Gd_{\widetilde\cC}X\le k$ are precisely those which satisfy
$$\Ext^j_{\widetilde\cC}(X,\CC P)\;\cong\; \Ext^j_{\cC}(\RR X, P)\,=\,0,$$
for all $P\in p\cC$ and~$j>k$. This proves part~(i)

To prove part~(ii), now assume that~$\pd_{\widetilde\cC}X<\infty$. By Lemma~\ref{LemTriv}, we also have~$\pd_{\cC}\RR X<\infty$. The result then follows from part~(i) and Corollary~\ref{CorFinDim}(ii).

Part (iii) follows similarly as part~(i) and part~(iv) follows similarly as part~(ii).\end{proof}

We list crucial differences and similarities between projective and Gorenstein projective objects.
\begin{scholium}{\rm
Let $\RR:\widetilde\cC\to\cC$ be a Frobenius extension of Gorenstein \lfp categories.
\begin{enumerate}[(i)]
\item If~$M$ is (Gorenstein) projective in~$\cC$, then $\II M$ is (Gorenstein) projective in~$\widetilde\cC$.
\item Any projective object in~$\widetilde\cC$ is a direct summand of an $\II P$ with $P\in p\cC$. The corresponding statement is {\bf not} true for Gorenstein projective objects.
\item If~$N$ is (Gorenstein) projective in~$\widetilde\cC$, then $\RR N$ is (Gorenstein) projective in~$\cC$.
\item That $\RR N$ is Gorenstein projective in~$\cC$ is enough to conclude that~$N$ is Gorenstein projective in~$\widetilde\cC$. The corresponding statement is {\bf not} true for projective objects.
\end{enumerate}}
\end{scholium}

We state explicitly the special case of Proposition~\ref{PropGoGo1} where $\cC$ is as in Example~\ref{ExTriv}.
\begin{prop}\label{PropGdpd}
If~$\gd\cC=d$, then $\widetilde\cC$ is $d$-Gorenstein and~$X\in gp\widetilde\cC$ if and only if $\RR X\in p\cC$. More generally, we have
$$\Gd_{\widetilde\cC}X\;=\;\pd_{\cC}\RR X\quad\mbox{and}\quad \pd_{\widetilde\cC}\II Y=\pd_{\cC}Y,\quad\mbox{for any~$X\in\widetilde\cC$ and $Y\in\cC$.}$$
\end{prop}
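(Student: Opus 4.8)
The whole statement is the specialisation of Proposition~\ref{PropGoGo1} to the case where Gorenstein homological algebra in $\cC$ is degenerate, so the plan is first to check that the Gorenstein hypothesis of Proposition~\ref{PropGoGo1} is met, and then to observe that finiteness of $\gd\cC$ both identifies $\Gd_\cC$ with $\pd_\cC$ on all of $\cC$ and makes the conditional clauses of Proposition~\ref{PropGoGo1} unconditional.

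First I would verify that $\cC$, and hence $\widetilde\cC$, is Gorenstein. Since $\cC$ has enough projective and injective objects and $\gd\cC=d$, Example~\ref{ExTriv} gives that $\cC$ is $d$-$\cIG$ and $d$-$\cGI$, i.e. $d$-Iwanaga-Gorenstein in the sense of Definition~\ref{DefGorlfp}(i); Theorem~\ref{Corlfp} then upgrades this to $\cC$ being $d$-Gorenstein, and Proposition~\ref{PropFElfp}(ii) transports it to $\widetilde\cC$. This already establishes the clause ``$\widetilde\cC$ is $d$-Gorenstein'' and places us in the situation of Proposition~\ref{PropGoGo1}, so all four equalities there become available.

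Next I would record the degeneracy in $\cC$: since every object of $\cC$ has projective dimension at most $d<\infty$, Proposition~\ref{Prop0inf}(iii) forces $gp\cC=p\cC$, and Corollary~\ref{CorFinDim}(ii) gives $\Gd_\cC Y=\pd_\cC Y$ for every $Y\in\cC$. Substituting this into Proposition~\ref{PropGoGo1}(i) yields $\Gd_{\widetilde\cC}X=\Gd_\cC\RR X=\pd_\cC\RR X$ for all $X\in\widetilde\cC$, which is the first displayed formula; and since $\pd_\cC Y\le d$ is automatically finite, Proposition~\ref{PropGoGo1}(iv) applies to every $Y\in\cC$ and gives $\pd_{\widetilde\cC}\II Y=\pd_\cC Y$, the second formula. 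Finally, $X\in gp\widetilde\cC$ is equivalent to $\Gd_{\widetilde\cC}X=0$, hence by the first formula to $\pd_\cC\RR X=0$, that is, to $\RR X\in p\cC$. There is no genuine obstacle here; the only points needing attention are checking that Proposition~\ref{PropGoGo1} is applicable (i.e.\ that $\cC$ is Gorenstein) and noting that the finiteness provisos appearing in its part~(iv) and in Corollary~\ref{CorFinDim}(ii) are vacuous when $\gd\cC$ is finite.
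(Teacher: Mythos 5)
Your proposal is correct and follows essentially the same route as the paper, which presents this proposition precisely as the specialisation of Proposition~\ref{PropGoGo1} to the situation of Example~\ref{ExTriv}; your verification that $\cC$ (hence $\widetilde\cC$) is $d$-Gorenstein and your use of Proposition~\ref{Prop0inf}(iii) and Corollary~\ref{CorFinDim}(ii) to collapse $\Gd_\cC$ to $\pd_\cC$ supply exactly the details the paper leaves implicit. (A marginally shorter route to the Gorenstein hypothesis is to note that $\gd\cC=d$ gives $\Gd_\cC Y\le\pd_\cC Y\le d$ for all $Y$ directly by Corollary~\ref{CorFinDim}(i), so $\cC=gp\cC^{(d)}$ without passing through the Iwanaga-Gorenstein characterisation, but this changes nothing essential.)
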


\subsection{Nakayama and Serre functors versus Frobenius extensions}
\subsubsection{}
For a strongly locally finite algebra $A$, the {\bf Nakayama functor} 
$$\NN:\;A\mbox{-mod}\,\to\,A\mbox{-mod},$$ see e.g.~\cite[Section~2.3]{Serre}, is the composition of the dualisation functor~$\FF$ of Section~\ref{dualisation} with the ordinary duality~$\ast=\Hom_{\mk}(-,\mk)$. The resulting covariant functor~$\NN$ is thus right exact.

We recall a result of Mazorchuk and Miemietz:
\begin{lemma}\label{LemMM}\cite[Proposition~2.2]{Serre}.
For an \lfp category~$\cC$ which is $d$-$\cGI$, the derived Nakayama functor~$\cL\NN$ restricts to a Serre functor on~$\cD_{per}(\cC)$.
\end{lemma}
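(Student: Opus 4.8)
The statement to be proved is Lemma~\ref{LemMM}: for an \lfp category $\cC$ which is $d$-$\cGI$, the derived Nakayama functor $\cL\NN$ restricts to a Serre functor on $\cD_{per}(\cC)$. My plan is to transfer everything to the algebra side via the equivalence $\Phi:\cC\xrightarrow{\sim}A\mbox{-mod}$ of \eqref{PhiA}, where $A$ is the strongly locally finite algebra of \eqref{DefA}, and then verify the three things a Serre functor requires: that $\cL\NN$ is a well-defined triangulated \emph{endo}functor of $\cD_{per}(\cC)$, that it is an \emph{auto}equivalence of $\cD_{per}(\cC)$, and that it carries the prescribed natural isomorphism $\Hom(X,\cL\NN Y)\cong\Hom(Y,X)^\ast$ in the derived category.

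\textbf{Key steps, in order.} First I would record that $\NN=\ast\circ\FF$ is right exact with $\NN P_\lambda\cong I_\lambda$ (the Nakayama functor sends the indecomposable projective cover of $L_\lambda$ to its injective hull), so that $\NN$ sends $p\cC$ into $i\cC$; hence $\cL\NN$ is computed by replacing an object by a projective resolution and applying $\NN$ termwise, and on a bounded complex of projectives $P_\bullet\in K^b(p\cC)=\cD_{per}(\cC)$ it simply gives the bounded complex $\NN P_\bullet$ of injectives. Second—and this is where the hypothesis $d$-$\cGI$ enters—I would argue $\cL\NN$ lands back in $\cD_{per}(\cC)$: a bounded complex of injectives is, by $d$-$\cGI$, quasi-isomorphic to a bounded complex of projectives (each $I\in i\cC$ has $\pd_\cC I\le d<\infty$, so $K^b(i\cC)\subseteq\cD_{per}(\cC)$), so $\cL\NN$ is a triangulated endofunctor of $\cD_{per}(\cC)$. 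Third, for the quasi-inverse, I would use the classical fact that the Nakayama functor admits a right adjoint $\NN^{-1}=\FF^\circ\circ\ast$ (or equivalently work with $\cR\NN^{-1}$) and check $\cL\NN\circ\cR\NN^{-1}\cong\id$ and $\cR\NN^{-1}\circ\cL\NN\cong\id$ on $\cD_{per}(\cC)$; here one uses that $\cL\NN$ restricts to an equivalence $K^b(p\cC)\xrightarrow{\sim}K^b(i\cC)$ with inverse $\cR\NN^{-1}$ (both being termwise applications of the contravariant equivalence $\FF:\cP\to\cP^\circ$ composed with the self-duality $\ast$, cf.\ Lemma~\ref{LemFProj}), and that by $d$-$\cGI$ the inclusion $K^b(i\cC)\hookrightarrow\cD_{per}(\cC)$ is a quasi-equivalence. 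Fourth, I would produce the Serre isomorphism: for projectives, $\Hom_\cC(P_\lambda,\NN P_\mu)=\Hom_\cC(P_\lambda,I_\mu)\cong\Hom_\cC(P_\mu,P_\lambda)^\ast$ is the standard Nakayama duality (the matrix of the Cartan pairing is transposed), natural in $\lambda$ and $\mu$; this promotes to a natural isomorphism $\Hom_{\cD_{per}}(X,\cL\NN Y)\cong\Hom_{\cD_{per}}(Y,X)^\ast$ for $X,Y\in K^b(p\cC)$ by passing to complexes and using that $\Hom$ in $\cD_{per}$ is computed on the level of complexes of projectives, together with the finite-dimensionality of $\Hom$-spaces from Definition~\ref{Deflfp}(ii) (so that $(-)^\ast$ behaves well). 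Finally, uniqueness of Serre functors (\cite[Lemma~I.1.3]{VDB}, cited in the preliminaries) makes the identification canonical.

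\textbf{Main obstacle.} The delicate point is verifying that $\cL\NN$ genuinely \emph{preserves} $\cD_{per}(\cC)$ and is invertible there, rather than merely mapping $\cD_{per}(\cC)$ into $\cD^b(\cC)$. This is exactly the content of the $d$-$\cGI$ hypothesis: it is what guarantees $K^b(i\cC)\subseteq K^b(p\cC)$ inside $\cD^b(\cC)$, and dually (using that $\cC$ is then also $d$-$\cIG$ by Theorem~\ref{Corlfp}, since $\cC$ \lfp and $d$-$\cGI$ forces $d$-Gorenstein via $A$ being idempotent $d$-Iwanaga-Gorenstein by Theorem~\ref{Thmflp}) that $K^b(p\cC)\subseteq K^b(i\cC)$, so the two bounded homotopy categories coincide inside $\cD^b(\cC)$ and $\cL\NN$ restricts to the claimed autoequivalence. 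Everything else—the explicit description of $\NN$ on projectives, the Nakayama duality pairing, naturality—is routine once this finiteness is in place, which is why I would invest the bulk of the argument in making the $K^b(p\cC)=K^b(i\cC)$ identification precise and only sketch the rest.
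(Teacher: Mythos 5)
Your route is the natural one, and the paper itself offers no argument to compare against: Lemma~\ref{LemMM} is simply quoted from \cite[Proposition~2.2]{Serre}. Most of your sketch is sound: $\NN P_\lambda\cong I_\lambda$, so $\cL\NN$ acts termwise on $K^b(p\cC)\cong\cD_{per}(\cC)$; the hypothesis $d$-$\cGI$ makes every bounded complex of injectives perfect, so $\cL\NN$ is a triangulated endofunctor of $\cD_{per}(\cC)$; and the Serre isomorphism follows from the natural duality $\Hom_{\cC}(P_\lambda,\NN P_\mu)\cong\Hom_{\cC}(P_\mu,P_\lambda)^\ast$ together with the fact that bounded complexes of projectives (resp.\ injectives) compute morphisms in $\cD^b(\cC)$ out of (resp.\ into) them, using the finite-dimensionality of Hom-spaces.

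The genuine gap is in the step you yourself single out as the crux. You justify the reverse inclusion $K^b(p\cC)\subseteq K^b(i\cC)$ inside $\cD^b(\cC)$ --- i.e.\ the $\cIG$-type finiteness $\id_{\cC}P<\infty$ for projectives --- by claiming that an \lfp $d$-$\cGI$ category is automatically $d$-Gorenstein via Theorems~\ref{Thmflp} and~\ref{Corlfp}. That is circular: in Definitions~\ref{DefIGRing} and~\ref{DefGorlfp}, ``$d$-Iwanaga-Gorenstein'' means the conjunction of the $\cIG$ and $\cGI$ conditions (for a rng, the condition on both $R$-mod and $R^{\op}$-mod), so those theorems cannot produce the $\cIG$ half from the $\cGI$ half; whether $d$-$\cGI$ implies $d$-$\cIG$ is exactly the Gorenstein symmetry conjecture recalled after Definition~\ref{DefGorCatW}, and no known argument gives it. The point is not cosmetic: essential surjectivity of $\cL\NN$ on $\cD_{per}(\cC)$ is equivalent to each $P_\lambda$ being isomorphic in $\cD^b(\cC)$ to a bounded complex of injectives, which forces $\id_{\cC}P_\lambda<\infty$; likewise your proposed quasi-inverse (the right derived inverse Nakayama functor) is only defined on all of $\cD_{per}(\cC)$ after such an injective replacement. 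So from the stated hypothesis alone your argument produces a fully faithful endofunctor of $\cD_{per}(\cC)$ equipped with the Serre-duality isomorphism (a right Serre functor in the terminology of \cite{VDB}), but not the autoequivalence that the paper's definition, following \cite[Section~I.1]{VDB}, requires. To close the gap you must either take the finiteness of injective dimensions of projectives as an additional input --- it holds in every situation where the paper invokes the lemma, since there $\cC$ is Gorenstein or even of finite global dimension --- or supply an argument for essential surjectivity from $d$-$\cGI$ alone, which would amount to settling a case of the symmetry conjecture.
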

Consider a Frobenius extension~$\RR:\widetilde{\cC}\to\cC$, of  an \lfp category~$\cC$ which is $d$-$\cGI$. By Theorem~\ref{ThmFro} and Proposition~\ref{PropFElfp}(i), also $\widetilde\cC$ an \lfp category which is $d$-$\cGI$.
By Lemma~\ref{LemMM}, the categories $\cD_{per}(\widetilde\cC)$ and $\cD_{per}(\cC)$ admit Serre functors.

\begin{prop}\label{PropSerreIC}
Consider a Frobenius extension~$\RR:\widetilde{\cC}\to\cC$, of an \lfp category~$\cC$ which is $d$-$\cGI$. and let $\Phi$ denote the Serre functor of~$\cD_{per}(\cC)$. The Serre functor of~$\cD_{per}(\widetilde\cC)$ is the unique (up to isomorphism) auto-equivalence $\widetilde{\Phi}$ satisfying
$\widetilde{\Phi}\circ \II\;\cong\; \CC\circ \Phi.$
\end{prop}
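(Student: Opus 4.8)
The plan is to reduce the statement to an application of Lemma~\ref{LemPhiPsi}, using the explicit description of the Serre functor as the derived Nakayama functor provided by Lemma~\ref{LemMM}. First I would set up notation: write $\Phi=\cL\NN_{\cC}$ for the Serre functor on $\cD_{per}(\cC)$ and $\widetilde\Phi=\cL\widetilde\NN$ for the one on $\cD_{per}(\widetilde\cC)$, both of which exist since $\cC$ and (by Theorem~\ref{ThmFro} together with Proposition~\ref{PropFElfp}(i)) $\widetilde\cC$ are \lfp and $d$-$\cGI$. The uniqueness clause is free: by Lemma~\ref{LemPhiPsi} any auto-equivalence $\widetilde\Phi$ of $\cD_{per}(\widetilde\cC)$ with $\widetilde\Phi\circ\II\cong\CC\circ\Phi$ is forced (once a quasi-inverse side is also checked) and in any case a Serre functor is unique up to isomorphism by \cite[Lemma~I.1.3]{VDB}; so the real content is the existence of the isomorphism $\widetilde\Phi\circ\II\cong\CC\circ\Phi$.

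The key step is to establish, at the level of abelian categories, an isomorphism of right exact functors $\widetilde\NN\circ\II\cong\CC\circ\NN$, and then derive it. Since $\NN=\ast\circ\FF$ where $\FF$ is the dualisation functor of Section~\ref{dualisation} and $\ast=\Hom_{\mk}(-,\mk)$, I would combine two ingredients. On the one hand, passing to opposite categories, the Frobenius extension $\RR:\widetilde\cC\to\cC$ gives a Frobenius extension of $\cC^{\op}$; under the equivalences $\cC\simeq A\mbox{-mod}$, $\widetilde\cC\simeq B\mbox{-mod}$ of \eqref{PhiA}, $\RR$ corresponds to restriction along a ring map, $\II$ to $B\otimes_A-$, $\CC$ to $\Hom_A(B,-)$, and the dualisation functors $\FF$ intertwine with $\Hom$-duals of bimodules. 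The bimodule isomorphism $B\cong\Hom_A(B,{}_\alpha A)$ underlying the Frobenius extension (Theorem~\ref{ThmEquivFro}), together with the fact that on an \lfp category the autoequivalence $\EE$ is inner in the appropriate sense, yields $\FF\circ\II\cong\CC^{\op}\circ\FF$ (contravariantly), where $\CC^{\op}$ is the right adjoint of the restriction functor for the opposite categories. Composing with the exact functor $\ast$, which commutes with everything in sight and swaps $\cC\leftrightarrow\cC^{\op}$, converts this into the covariant statement $\widetilde\NN\circ\II\cong\CC\circ\NN$. Since $\II$ is exact (Lemma~\ref{LemBasix}(i)) and carries projectives to projectives (Lemma~\ref{LemBasix}(ii)), deriving gives $\cL\widetilde\NN\circ\II\cong\CC\circ\cL\NN$ on $\cD_{per}$, which is exactly $\widetilde\Phi\circ\II\cong\CC\circ\Phi$.

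Finally I would verify the hypotheses of Lemma~\ref{LemPhiPsi} to conclude that $\widetilde\Phi$ is genuinely an auto-equivalence realised this way: the quasi-inverse $\Psi$ of $\Phi$ is the (derived) inverse Nakayama functor, and by the same bimodule computation applied on the other side one gets $\widetilde\Psi\circ\CC\cong\II\circ\Psi$; then Lemma~\ref{LemPhiPsi} applies verbatim since the objects $\II P$ with $P\in p\cC$ generate $\cD_{per}(\widetilde\cC)$ by Proposition~\ref{enough}. The main obstacle I anticipate is the bookkeeping in the step $\FF\circ\II\cong\CC^{\op}\circ\FF$: one must be careful that the ``$\CC$'' appearing there is the right adjoint of $\RR$ viewed as a Frobenius extension of the \emph{opposite} categories, check that it agrees after dualising with the $\CC$ in the statement, and keep track of the twist $\EE$ (equivalently the automorphism $\alpha$), which is harmless on an \lfp category but still needs to be absorbed correctly. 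Everything else is a routine combination of adjunction, exactness of $\II,\CC$, and the already-established comparison of projectives.
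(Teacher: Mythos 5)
Your overall strategy (realise both Serre functors as derived Nakayama functors via Lemma~\ref{LemMM}, prove an abelian-level isomorphism $\widetilde\NN\circ\II\cong\CC\circ\NN$, derive it, and invoke Lemma~\ref{LemPhiPsi}) is genuinely different from the paper's argument, but its central step is not actually established. You justify $\FF\circ\II\cong\CC^{\op}\circ\FF$ (hence $\widetilde\NN\circ\II\cong\CC\circ\NN$) by appealing to the bimodule isomorphism of Theorem~\ref{ThmEquivFro}; that theorem is proved only for Frobenius extensions of $S$-mod with $S$ a \emph{unital noetherian ring}, whereas here $\cC$ and $\widetilde\cC$ are \lfp categories, i.e.\ module categories over strongly locally finite (in general non-unital) algebras as in Example~\ref{exAR}, and no analogue of Theorem~\ref{ThmEquivFro} with a bimodule isomorphism $B\cong\Hom_A(B,{}_\alpha A)$ is proved in that setting. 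Moreover the claim that ``on an \lfp category the autoequivalence $\EE$ is inner in the appropriate sense'' is unfounded: $\EE$ can be a genuinely nontrivial autoequivalence (for the Lie superalgebra applications it is $K_{\fg}\otimes-$), and the promised ``bookkeeping'' showing how it cancels in the comparison of Nakayama functors is exactly the content you would have to supply. There is a second gap in the uniqueness clause: Lemma~\ref{LemPhiPsi} only shows that a functor satisfying the two intertwining relations is an auto-equivalence; it does not show that \emph{any} auto-equivalence $\Psi$ with $\Psi\circ\II\cong\CC\circ\Phi$ is isomorphic to the Serre functor, and uniqueness of Serre functors (\cite[Lemma~I.1.3]{VDB}) only helps after one knows that such a $\Psi$ \emph{is} a Serre functor.

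For contrast, the paper's proof bypasses Nakayama functors and bimodules entirely: for $X\in\cD_{per}(\widetilde\cC)$ and $Y\in\cD_{per}(\cC)$ one has the chain of natural isomorphisms
$$\Hom(X,\CC\Phi Y)\cong\Hom(\RR X,\Phi Y)\cong\Hom(Y,\RR X)^\ast\cong\Hom(\II Y,X)^\ast,$$
using only the adjunctions from Lemma~\ref{LemBasix} and the Serre property of $\Phi$. Since the objects $\II P$, $P\in p\cC$, generate $\cD_{per}(\widetilde\cC)$ (Proposition~\ref{enough}(ii)), this simultaneously shows that any auto-equivalence $\Psi$ with $\Psi\circ\II\cong\CC\circ\Phi$ satisfies Serre duality, hence is the Serre functor (giving uniqueness), and, reading the chain against the Serre property of $\widetilde\Phi$, that the Serre functor itself satisfies $\widetilde\Phi\circ\II\cong\CC\circ\Phi$. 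If you want to salvage your route, you would need to prove the rng/\lfp analogue of Theorem~\ref{ThmEquivFro} (or otherwise verify $\widetilde\NN\circ\II\cong\CC\circ\NN$ on projectives directly, keeping track of $\EE$), and then still supply the uniqueness argument separately; at that point the formal adjunction proof is substantially shorter.
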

\begin{proof}
Note that the functors~$\RR,\II,\CC$ lead to adjoint pairs of functors between the categories $\widetilde\cD:=\cD_{per}(\widetilde\cC)$ and $\cD:=\cD_{per}(\cC)$, by Lemma~\ref{LemBasix}.
For any $X\in\widetilde\cD$ and $Y\in\cD$, we have an isomorphism
\begin{equation}\label{HHHH}\Hom_{\widetilde\cD}(X,\CC\Phi Y)\cong \Hom_{\cD}(\RR X,\Phi Y)\cong\Hom_{\cD}(Y,\RR X)^\ast\cong \Hom_{\widetilde{\cD}}(\II Y,X)^\ast,\end{equation}
natural in $X$ and $Y$. 

For an auto-equivalence $\Psi$ of~$\widetilde\cD$ which satisfies $\Psi\II\cong \CC\Phi$, equation~\eqref{HHHH} implies isomorphisms
$$\kappa_{X,Y}:\; \Hom_{\widetilde\cD}(X,\Psi\II Y)\;\,\tilde\to\;\,\Hom_{\widetilde{\cD}}(\II Y,X)^\ast,$$
natural in $X$ and $Y$. Since $\widetilde\cD$ is generated as a triangulated category by the image of~$\II$, see Proposition~\ref{enough}(ii), this implies that~$\Psi$ is the Serre functor on~$\widetilde\cD$.

Conversely, equation~\eqref{HHHH} implies that, with $\widetilde\Phi$ the Serre functor of~$\widetilde\cD$, we have isomorphisms
$$\nu_{X,Y} :\;\Hom_{\widetilde\cD}(X,\CC\Phi Y)\;\,\tilde\to\;\,\Hom_{\widetilde{\cD}}(X,\widetilde\Phi\II Y),~$$
natural in $X$ and $Y$, from which we obtain a natural isomorphism $\CC\Phi\to \widetilde\Phi\II$. 
\end{proof}


\part{Applications to Lie superalgebras}

From now on we always work over~$\mk=\mC$.

\section{Gorenstein homological algebra for Lie superalgebras}\label{SecLSA}
In this section, we let $\fg=\fg_{\oa}\oplus \fg_{\ob}$ be a finite dimensional Lie superalgebra over~$\mC$, see \cite[Section~1.1]{book}. The subalgebra $\fg_{\oa}$ in degree $\oa\in\mZ_2$ is known as the underlying Lie algebra. 

\subsection{Supermodules over Lie superalgebras}
\subsubsection{}Let $U=U(\fg)$ and~$U_{\oa}=U(\fg_{\oa})$ denote the universal enveloping algebras. 
We denote by~$\fg$-sMod, or $U$-sMod, the category of all $\mZ_2$-graded $\fg$-modules, where the morphisms are given by the morphisms of~$\fg$-modules which respect the~$\mZ_2$-grading. The notation~$\Hom_{\fg}$ will be used for the space of these morphisms. The parity shift functor~$\Pi$ on~$\fg$-sMod is the exact functor which preserves every module, but reverses its $\mZ_2$-grading, and preserves every morphism. In particular,
$$\fg_{\oa}\mbox{-sMod} \;\cong\; \fg_{\oa}\mbox{-Mod}\;\oplus\; \Pi\fg_{\oa}\mbox{-Mod}.$$

\subsubsection{}For $M\in \fg$-sMod, we define $M^\ast=\Hom_{\mC}(M,\mC)$, with $\fg$-action given by 
$$(X\alpha)(v)=-(-1)^{|X||\alpha|}\alpha(Xv),$$ for~$X\in\fg$,~$v\in M$ and $\alpha\in M^\ast$. For $M,N\in \fg$-sMod, the module structure $M\otimes_{\mC} N$ is defined by 
$$X(v\otimes w)=Xv\otimes w+(-1)^{|X||v|}v\otimes Xw.$$ Clearly $M\otimes N\cong N\otimes M$. For a finite dimensional $V\in \fg$-sMod, we have
\begin{equation}\label{eqadjun}\Hom_{\fg}(M\otimes V,N)\,\cong\,\Hom_{\fg}(M,N\otimes V^\ast ).\end{equation}

We recall the following result of Bell and Farnsteiner.
\begin{lemma}\label{LemBell}\cite[Theorem~2.2]{Bell}
For any Lie superalgebra $\fk$ satisfying $\fg_{\oa}\subset \fk\subset \fg$, we have that~$U(\fg)$ is an $\alpha$-Frobenius extension of~$U(\fk)$. With $d:=\dim\fg_{\ob}-\dim\fk_{\ob}$,~$\alpha$ is determined by
$$\alpha(X)=\begin{cases}(-1)^dX&\mbox{for~$X\in\fk_{\ob}$},\\
X+\tr( \ad_{X}:\fg/\fk\to\fg/\fk)&\mbox{for~$X\in\fk_{\oa}$}.\\
\end{cases}$$
\end{lemma}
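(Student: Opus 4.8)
This is \cite[Theorem~2.2]{Bell}; the plan is to reconstruct that argument. First I would fix a homogeneous basis $y_1,\dots,y_d$ of a graded vector-space complement $\mathfrak{m}$ of $\fk$ in $\fg$; since $\fg_{\oa}\subseteq\fk$, the complement $\mathfrak{m}=\fg/\fk$ is purely odd, so $d=\dim\fg_{\ob}-\dim\fk_{\ob}$. The super-PBW theorem then exhibits a basis of $U(\fg)$ as a free right $U(\fk)$-module by the ordered monomials $y_I:=y_{i_1}\cdots y_{i_k}$ for $I=\{i_1<\cdots<i_k\}\subseteq\{1,\dots,d\}$, and symmetrically as a free left $U(\fk)$-module. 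In particular $U(\fg)$ is finitely generated and projective (indeed free) as a left $U(\fk)$-module, and $U(\fk)$, being the enveloping algebra of a finite-dimensional Lie superalgebra, is left and right Noetherian. This supplies the structural hypotheses of Definition~\ref{DefFEring} and reduces the problem to producing the bimodule isomorphism \eqref{eqbimod}.

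The heart of the matter is the construction of a suitable ``Frobenius homomorphism'' $\lambda\colon U(\fg)\to U(\fk)$. Writing every element uniquely as $\sum_I y_I u_I$ with $u_I\in U(\fk)$, I would set $\lambda(\sum_I y_I u_I):=u_{\{1,\dots,d\}}$, the coefficient of the top monomial. Right $U(\fk)$-linearity, $\lambda(cx)=\lambda(c)x$, is then immediate. The delicate point is the left-hand behaviour: to compute $\lambda(xc)$ for $x\in\fk$ one commutes $x$ past the $y$'s using $xy_i=(-1)^{|x|}y_ix+[x,y_i]$ and discards all terms of strictly lower filtration degree (filtration by the number of $\mathfrak{m}$-factors); the surviving contribution to the coefficient of $y_1\cdots y_d$ is $(-1)^{d|x|}x$ together with, when $x$ is even, the sum of the diagonal entries of $\ad_x$ acting on $\mathfrak{m}=\fg/\fk$. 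When $x\in\fk_{\ob}$, the bracket $[x,y_i]$ lies in $\fg_{\oa}\subseteq\fk$ and so has no $\mathfrak{m}$-component, leaving only the sign $(-1)^d$. Since $X\mapsto\tr(\ad_X\colon\fg/\fk\to\fg/\fk)$ vanishes on commutators and on $\fk_{\ob}$, the assignment $X\mapsto\alpha(X)$ of the statement extends to an algebra automorphism of $U(\fk)$, and the computation above says precisely $\lambda(xc)=\alpha(x)\lambda(c)$ for all $x\in\fk$, hence for all $x\in U(\fk)$.

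Finally I would define $\varphi\colon U(\fg)\to\Hom_{U(\fk)}(U(\fg),{}_\alpha U(\fk))$ by $\varphi_a(c)=\lambda(ca)$. The two twist properties of $\lambda$ show that each $\varphi_a$ indeed lies in $\Hom_{U(\fk)}(U(\fg),{}_\alpha U(\fk))$ and, using associativity $\lambda((ca)b)=\lambda(c(ab))$, that $\varphi$ is a morphism of $(U(\fg),U(\fk))$-bimodules. To see it is bijective I would evaluate on the PBW monomials: $\varphi_{y_J}(y_I)=\lambda(y_Iy_J)$ equals a sign $\pm1$ when $J=I^{c}$ and vanishes when $|J|<|I^{c}|$, so ordering the index sets by size makes the matrix of $\varphi$ triangular with invertible diagonal; hence $\varphi$ is an isomorphism of free $U(\fk)$-modules of equal finite rank. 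This establishes \eqref{eqbimod}, and non-degeneracy of the associated form $\sigma$ follows as recorded after Definition~\ref{DefFEring}. The only genuinely delicate step is the sign-and-trace bookkeeping in the super-PBW reordering of the second paragraph --- in particular pinning down which side carries the twist and matching the overall normalisation with the stated $\alpha$; everything else is formal.
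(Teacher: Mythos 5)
The paper offers no proof of this lemma: it is quoted directly from \cite[Theorem~2.2]{Bell}, so your task was effectively to reconstruct Bell--Farnsteiner's argument, and your reconstruction does follow their standard route and is essentially correct. The structural reductions (noetherianity of $U(\fk)$, PBW-freeness of $U(\fg)$ over $U(\fk)$ on the $2^d$ ordered monomials $y_I$ in an odd complement $\mathfrak m$), the Frobenius homomorphism $\lambda$ extracting the coefficient of $y_1\cdots y_d$, and the commutation computation yielding $\lambda(xc)=\alpha(x)\lambda(c)$ with exactly the stated $\alpha$ are all right; note that checking $\alpha$ respects the odd--odd relations uses that $\ad_X$ induces the zero map on the purely odd quotient $\fg/\fk$ for $X\in\fk_{\ob}$, so the trace character vanishes on $[\fk_{\ob},\fk_{\ob}]$ as well as on $[\fk_{\oa},\fk_{\oa}]$ --- your phrase ``vanishes on commutators'' should be read in this super sense. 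The one step to tighten is the invertibility argument: knowing only that $\lambda(y_Iy_J)=\pm1$ for $J=I^{c}$ and $\lambda(y_Iy_J)=0$ for $|J|<|I^{c}|$ gives block triangularity, but the diagonal blocks are then matrices with $\pm1$ in the positions $(I,I^{c})$ and a priori unknown entries at $|J|=|I^{c}|$, $J\neq I^{c}$; you need these to vanish too. They do, by the same filtration argument: such $I,J$ share an index, and the repeated odd generator squares into $U(\fg_{\oa})\subseteq U(\fk)$, strictly lowering the number of $\mathfrak m$-factors, so the top coefficient is zero. With that sentence added, the diagonal blocks are signed permutation matrices and your proof is complete and consistent with the paper's conventions (the twist sits on the left $U(\fk)$-action, matching the relations recorded after Definition~\ref{DefFEring}).
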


\subsubsection{} We will focus in particular on the case $\fk=\fg_{\oa}$. We consider the restriction functor
$$\Res:\;\fg\mbox{-sMod}\;\to \; \fg_{\oa}\mbox{-sMod}.$$
The left adjoint is $\Ind$ and the right adjoint is $\Coind$.
We will generally leave out the references to~$\fg$ and $\fg_{\oa}$ in these functors.
Let $K_{\fg}$ denote the one-dimensional $\fg_{\oa}$-module~$\Lambda^{\tp}\fg_{\ob}=\Lambda^{\dim \fg_{\ob}}\fg_{\ob}$. By using Lemma~\ref{LemBell} and keeping track of parity, we find an isomorphism of functors
\begin{equation}\label{IndCoind}\ind\;\cong\; \coind\circ (K_{\fg}\otimes -).\end{equation}
By the PBW theorem, we have
\begin{equation}\label{ResInd}\res\circ\ind\;\cong\; \Lambda\fg_{\ob}\otimes -.\end{equation}
Note that~$\mC=\Lambda^0\fg_{\ob}$ and $K_{\fg}=\Lambda^{\tp}\fg_{\ob}$ are direct summands of~$\Lambda\fg_{\ob}$. Hence, in this case, the monomorphic unit $\Id\hookrightarrow \RR\circ\II$ and epimorphic counit $\RR\circ\CC\tto \Id$ of Lemma~\ref{LemBasix} even split.

\subsection{Pairs of good module categories}

\subsubsection{}\label{DefGMC}
We are interested in abelian subcategories $\cB$ of~$\fg$-sMod with the following properties: 
\begin{enumerate}[(a)]
\item $\cB$ contains enough projective and injective objects;
\item all objects in~$\cB$ have finite length;
\item if $M\in\cB$, then $M\otimes V\,\in\cB$ for any finite dimensional $V\in \fg$-sMod.
\end{enumerate}
We will simply call categories satisfying these four properties {\bf good module categories}. By definition and Schur's lemma, good module categories are \lfp.

\begin{lemma}\label{LemGdV}
For a good module category~$\cC$, finite dimensional $V\in \fg\mbox{{\rm -sMod}}$ and $M\in\cC$, we have
\begin{enumerate}[(i)]
\item $\pd_{\cC} M\otimes V\;\le\; \pd_{\cC}M$;
\item $\Gd_{\cC} M\otimes V\;\le\; \Gd_{\cC}M$.
\end{enumerate}
\end{lemma}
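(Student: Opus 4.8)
The plan is to exploit the fact that tensoring with a finite dimensional module $V$ is both an exact functor and has an exact adjoint (namely $-\otimes V^\ast$, by \eqref{eqadjun}), so that projective resolutions and, more delicately, totally acyclic complexes are preserved. First I would handle part (i): if $P_\bullet\to M$ is a projective resolution of length $\pd_{\cC}M$, then $P_\bullet\otimes V\to M\otimes V$ is exact (exactness of $-\otimes V$) and each $P_i\otimes V$ is projective in $\cC$. Projectivity of $P_i\otimes V$ follows from property (c) of a good module category together with the adjunction isomorphism: for $A\in\cC$, $\Hom_{\cC}(P_i\otimes V, A)\cong \Hom_{\cC}(P_i, A\otimes V^\ast)$, and the right-hand side is exact in $A$ since $P_i$ is projective and $-\otimes V^\ast$ is exact. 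Hence $P_\bullet\otimes V$ is a projective resolution of $M\otimes V$ of length at most $\pd_{\cC}M$, giving (i). (If $\pd_{\cC}M=\infty$ there is nothing to prove.)

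For part (ii), by Corollary~\ref{CorFinDim} we may assume $\Gd_{\cC}M=k<\infty$; the case $k=\infty$ is vacuous. By Proposition~\ref{PropPPG} there is a $gp\cC$-resolution
$$0\to P_k\to\cdots\to P_1\to G_0\to 0$$
of $M$ with $P_i\in p\cC$ for $1\le i\le k$ and $G_0\in gp\cC$. Applying the exact functor $-\otimes V$ yields an exact complex
$$0\to P_k\otimes V\to\cdots\to P_1\otimes V\to G_0\otimes V\to 0$$
resolving $M\otimes V$, in which $P_i\otimes V\in p\cC$ by the argument above. So it suffices to show $G_0\otimes V\in gp\cC$; by Proposition~\ref{PropPPG}(iii) this will give $\Gd_{\cC}(M\otimes V)\le k$. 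For this, take a totally acyclic complex $Q_\bullet\in p\cC$ with $G_0=\Omega^0(Q_\bullet)$, and consider $Q_\bullet\otimes V$. It is an exact complex of projective objects (exactness of $-\otimes V$, and projectivity as before), with $\Omega^0(Q_\bullet\otimes V)\cong G_0\otimes V$ since $-\otimes V$ is exact. It remains to check right $p\cC$-acyclicity, i.e. that $\Hom_{\cC}(Q_\bullet\otimes V, Q)$ is exact for every $Q\in p\cC$: by the adjunction this complex is isomorphic to $\Hom_{\cC}(Q_\bullet, Q\otimes V^\ast)$, and $Q\otimes V^\ast\in p\cC$ by property (c), so exactness follows from the total acyclicity of $Q_\bullet$. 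Hence $Q_\bullet\otimes V$ is totally acyclic and $G_0\otimes V\in gp\cC$, completing the proof.

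The only real subtlety, and the step I expect to be the main obstacle, is verifying that $-\otimes V$ sends projectives to projectives and totally acyclic complexes to totally acyclic complexes; both reduce cleanly to the adjunction \eqref{eqadjun} combined with property (c) (that $\cC$ is closed under tensoring with finite dimensional modules) and the exactness of $-\otimes V^\ast$. Everything else is routine manipulation of resolutions via Proposition~\ref{PropPPG}.
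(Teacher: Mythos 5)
Your proof is correct and takes essentially the same route as the paper: exactness of $-\otimes V$ plus the adjunction \eqref{eqadjun} to see that tensoring preserves projectives (hence part (i)) and totally acyclic complexes (hence Gorenstein projectives and part (ii)). One tiny wording point: projectivity of $Q\otimes V^\ast$ follows from the same adjunction argument applied to the finite dimensional module $V^\ast$, not from property (c) alone, but you had already established exactly this mechanism earlier in the proof.
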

\begin{proof}
For any finite dimensional $\fg$-module~$V$, the functor~$V\otimes -$ on a good module category~$\cC$ is exact and by \eqref{eqadjun} it maps projective modules to projective modules. These properties imply part~(i). Furthermore, by Definition~\ref{defGP} it follows that $-\otimes V$ maps Gorenstein projective modules to Gorenstein projective modules. Applying exactness again proves part~(ii).
\end{proof}

Similarly, it follows easily that
\begin{equation}\label{GEadj}\GE^k_{\cC}(M\otimes V,N)\;\cong\; \GE^k_{\cC}(M,N\otimes V^\ast),\end{equation}
for arbitrary~$M\in {}^g\cC$ and $N\in \cC$.

\begin{prop}\label{PropPair}
Consider a Lie superalgebra $\fg$ and a good module category~$\cC$ for $\fg_{\oa}$. Let~$\widetilde\cC$ denote the full subcategory of~$\fg${\rm -sMod} of modules $M$ satisfying $\res M\in \cC$.
\begin{enumerate}[(i)]
\item The category~$\widetilde\cC$ is a good module category. 
\item The adjoint pairs functors~$(\ind,\res)$ and~$(\res,\coind)$ restrict to functors between $\cC$ and~$\widetilde\cC$, which make $\widetilde\cC$ a Frobenius extension of~$\cC$.
\item The category~$\widetilde\cC$ is~$d$-Gorenstein if and only if $\cC$ is~$d$-Gorenstein.
\item For any~$M\in\cC$, we have
$$\pd_{\cC}M\;=\;\pd_{\widetilde\cC}\ind M\;=\;\pd_{\widetilde\cC}\coind M,$$
where the same holds for injective dimensions.
\end{enumerate}
\end{prop}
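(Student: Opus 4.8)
The plan is to handle the four parts in the order: the easy half of (i), then (ii), then the remaining half of (i), then (iii) and (iv). First I would record that $\widetilde\cC$ is an abelian subcategory of $\fg$-sMod all of whose objects have finite length and which is stable under tensoring with finite-dimensional modules. Since $\res$ is exact and faithful and $\cC$ is an abelian subcategory of $\fg_{\oa}$-sMod, kernels, cokernels and finite biproducts of objects of $\widetilde\cC$ again restrict into $\cC$, so $\widetilde\cC$ is abelian with exact inclusion into $\fg$-sMod; a strictly increasing chain of $\fg$-submodules of $M\in\widetilde\cC$ restricts to one of $\fg_{\oa}$-submodules of $\res M\in\cC$, which must stabilise, so $M$ has finite length; and $\res(M\otimes V)\cong \res M\otimes\res V$ together with condition~(c) of~\ref{DefGMC} for $\cC$ gives the same condition for $\widetilde\cC$. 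I postpone the existence of enough projective and injective objects.

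For part (ii), I would use that $(\ind,\res,\coind)$ is already an adjoint triple of exact functors between $\fg$-sMod and $\fg_{\oa}$-sMod, so it is enough to see that the three functors restrict to functors $\cC\leftrightarrow\widetilde\cC$; the adjunction isomorphisms then restrict automatically (both categories being full), giving Definition~\ref{DefFE}(i). That $\res$ sends $\widetilde\cC$ to $\cC$ is the definition of $\widetilde\cC$; by \eqref{ResInd}, $\res\circ\ind\cong\Lambda\fg_{\ob}\otimes-$ preserves $\cC$ by condition~(c) of~\ref{DefGMC}, so $\ind$ sends $\cC$ to $\widetilde\cC$; and by \eqref{IndCoind}, $\coind$ is $\ind$ precomposed with the auto-equivalence of $\fg_{\oa}$-sMod inverse to $K_\fg\otimes-$, which also preserves $\cC$, so $\coind$ sends $\cC$ to $\widetilde\cC$. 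Faithfulness of $\res$ is immediate, and faithfulness of $\ind$ and $\coind$ follows by naturality from the fact, noted after \eqref{ResInd}, that the unit $\Id_\cC\hookrightarrow\res\circ\ind$ and the counit $\res\circ\coind\tto\Id_\cC$ split; this is Definition~\ref{DefFE}(ii). Finally \eqref{IndCoind} is precisely the isomorphism $\II\cong\CC\circ\EE$ of Definition~\ref{DefFE}(iii) with $\EE=K_\fg\otimes-$, which is an auto-equivalence of $\cC$ because $K_\fg$ is one-dimensional and $\cC$ is closed under tensoring with finite-dimensional modules. Hence $\res:\widetilde\cC\to\cC$ is a Frobenius extension.

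With (ii) established, Proposition~\ref{enough} yields that $\widetilde\cC$ has enough projective and injective objects, since $\cC$ (being a good module category) does; this completes (i), and in particular both $\cC$ and $\widetilde\cC$ are \lfp categories, so (iii) is Proposition~\ref{PropFElfp}(ii) (equivalently Theorems~\ref{ThmFro} and~\ref{Corlfp}). For (iv), Lemma~\ref{LemTriv} gives $\pd_{\widetilde\cC}\ind M=\pd_{\widetilde\cC}\coind M\le\pd_\cC M$ and the analogous inequality for injective dimensions; conversely, the split unit and split counit exhibit $M$ as a direct summand of $\res\ind M$ and of $\res\coind M$, so $\pd_\cC M\le\pd_\cC\res\ind M\le\pd_{\widetilde\cC}\ind M$ and likewise with $\coind$, again by Lemma~\ref{LemTriv}, and the same four inequalities with $\id$ in place of $\pd$ handle injective dimensions.

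The only step that needs real care is part (ii): one must check that all three functors $\ind,\res,\coind$ genuinely restrict --- this is where \eqref{ResInd} and \eqref{IndCoind} do the work --- and that $\EE=K_\fg\otimes-$ restricts to an auto-equivalence of $\cC$ rather than merely of $\fg_{\oa}$-sMod. Everything else is a direct application of Propositions~\ref{enough} and~\ref{PropFElfp} and Lemma~\ref{LemTriv}.
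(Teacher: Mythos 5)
Your proof is correct and follows essentially the same route as the paper: part~(ii) via the restriction of the adjoint triple using \eqref{ResInd} and \eqref{IndCoind}, part~(i) via Proposition~\ref{enough} (the paper cites Proposition~\ref{PropFElfp}(i)) together with the compatibility of $\res$ with tensor products, part~(iii) via Proposition~\ref{PropFElfp}(ii), and part~(iv) via Lemma~\ref{LemTriv} and the fact that $M$ is a direct summand of $\res\ind M$. The only difference is that you verify abelianness, finite length and faithfulness by hand where the paper leaves them implicit or delegates them to Proposition~\ref{PropFElfp}(i), which is a harmless elaboration rather than a different argument.
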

\begin{proof}
The fact that~$\ind$ and $\coind$ restrict to a functors~$\cC\to\widetilde\cC$ follows immediately from equations~\eqref{ResInd} and~\eqref{IndCoind} and property~(d) for~$\cC$. It then follows immediately from equation~\eqref{IndCoind} that~$\widetilde\cC$ is a Frobenius extension of~$\cC$, proving part~(ii).

By part (ii) and Proposition~\ref{PropFElfp}(i), $\widetilde\cC$ is \lfp.
Condition~\ref{DefGMC}(c) follows from the fact that~$\res$ commutes with tensor products, as~$U(\fg_{\oa})$ is a Hopf subalgebra of~$U(\fg)$. Hence part~(i) follows.

Part (iii) then follows from Propositions~\ref{PropPair}(ii) and \ref{PropFElfp}(ii). Part (iv) follows from Lemma~\ref{LemTriv} and the fact that~$\res\ind M$ contains $M$ as a direct summand by equation~\eqref{ResInd}. 
\end{proof}

A pair $(\widetilde\cC,\cC)$, where $\widetilde\cC$ is obtained from $\cC$ as in Proposition~\ref{PropPair} will be called a {\bf pair of good module categories}. 

\subsection{Lie superalgebras of type I}
We say that a Lie superalgebra $\fg$ is of {\bf type I} if it admits a three-term $\mZ$-grading compatible with the~$\mZ_2$-grading. Concretely, we have
$$\fg=\fg_{-1}\oplus\fg_{0}\oplus \fg_1,\qquad\mbox{with}\qquad\fg_{\oa}=\fg_0\;\mbox{ and }\;\, \fg_{\ob}=\fg_{-1}\oplus\fg_1.$$

\subsubsection{} For a Lie superalgebra of type I we introduce the exact parabolic induction functors 
$$\ind^{\pm}\,:\, \fg_0{\rm -sMod}\;\to\; \fg{\rm-sMod}.$$
The functor~$\ind^{\pm}$ corresponds to first interpreting $\fg_0$-modules as~$\fg_0\oplus\fg_{\pm1}$-modules with trivial $\fg_{\pm 1}$-action, followed by~$\ind^{\fg}_{\fg_0\oplus\fg_{\pm1}}$. Note that exactness of~$\ind^{\pm}$ follows from Lemma~\ref{LemBell}.
\begin{lemma}\label{LemI}
Let $(\widetilde\cC,\cC)$ be a pair of Gorenstein good module categories for a Lie superalgebra $\fg$ of type I. For any~$M\in\cC$, we have
$\Gd_{\widetilde\cC}\ind^{\pm} M\;=\;\Gd_{\cC} M.$
\end{lemma}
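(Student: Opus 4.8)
The plan is to transport the computation along the Frobenius extension $\res\colon\widetilde\cC\to\cC$ and then to identify the resulting object in $\cC$. First I would invoke Proposition~\ref{PropPair}(ii)--(iii): since $(\widetilde\cC,\cC)$ is a pair of Gorenstein good module categories, $\res\colon\widetilde\cC\to\cC$ is a Frobenius extension of Gorenstein \lfp categories, so Proposition~\ref{PropGoGo1}(i) applies and gives
$$\Gd_{\widetilde\cC}\ind^{\pm}M\;=\;\Gd_{\cC}\,\res\,\ind^{\pm}M,$$
once we know $\ind^{\pm}M\in\widetilde\cC$.

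Next I would make $\res\circ\ind^{\pm}$ explicit. Applying the PBW theorem to the factorisation $\fg=\fg_{\mp1}\oplus(\fg_0\oplus\fg_{\pm1})$, exactly as for the ordinary induction in~\eqref{ResInd}, yields a natural isomorphism $\res\,\ind^{\pm}M\cong\Lambda\fg_{\mp1}\otimes M$ of $\fg_0$-modules. As $\Lambda\fg_{\mp1}$ is finite dimensional, condition~\ref{DefGMC}(c) for $\cC$ shows that $\Lambda\fg_{\mp1}\otimes M\in\cC$, which in particular confirms $\ind^{\pm}M\in\widetilde\cC$. It then remains to show $\Gd_{\cC}(\Lambda\fg_{\mp1}\otimes M)=\Gd_{\cC}M$. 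The inequality ``$\le$'' is immediate from Lemma~\ref{LemGdV}(ii) (for the good module category $\cC$ over $\fg_0$, with $V=\Lambda\fg_{\mp1}$). For ``$\ge$'', I would use that the exterior-degree grading of $\Lambda\fg_{\mp1}$ is $\fg_0$-stable, so the trivial module $\mC=\Lambda^0\fg_{\mp1}$ is a direct summand of $\Lambda\fg_{\mp1}$; hence $M\cong\mC\otimes M$ is a direct summand of $\Lambda\fg_{\mp1}\otimes M$ in $\cC$, and since ${}^g\cC=\cC$ for the naively Gorenstein category $\cC$, Corollary~\ref{CorDS} gives $\Gd_{\cC}M\le\Gd_{\cC}(\Lambda\fg_{\mp1}\otimes M)$. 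Combining this chain with the first display completes the proof.

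Technically this is a bookkeeping argument on top of Part~I, and I do not anticipate a real obstacle. The only point that needs care is the PBW identification of $\res\,\ind^{\pm}M$ together with the splitting off of the trivial $\fg_0$-module from $\Lambda\fg_{\mp1}$; it is exactly this splitting that yields the reverse inequality, without which one would only obtain $\Gd_{\widetilde\cC}\ind^{\pm}M\le\Gd_{\cC}M$.
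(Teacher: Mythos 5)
Your argument is correct and is essentially the paper's own proof: apply Proposition~\ref{PropGoGo1}(i) to reduce to $\Gd_{\cC}(\Lambda\fg_{\mp1}\otimes M)$ via the PBW identification, then get ``$\le$'' from Lemma~\ref{LemGdV}(ii) and ``$\ge$'' from $M$ being a direct summand of $\Lambda\fg_{\mp1}\otimes M$ together with Corollary~\ref{CorDS}. The only (welcome) additions are the explicit verification that $\ind^{\pm}M\in\widetilde\cC$ and the remark that $\mC=\Lambda^0\fg_{\mp1}$ splits off, both of which the paper leaves implicit.
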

\begin{proof}
By Proposition~\ref{PropGoGo1}, we have
$$\Gd_{\widetilde\cC}(\ind^{+} M)\,=\, \Gd_{\cC}\res(\ind^+ M)\,=\, \Gd_{\cC}( \Lambda\fg_{-1}\otimes M).$$
As $M$ is a direct summand of~$\Lambda\fg_{-1}\otimes M$, Corollary~\ref{CorDS} implies that~$\Gd_{\cC} \Lambda\fg_{-1}\otimes M$ is at least $\Gd_{\cC} M$. The equality thus follows from Lemma~\ref{LemGdV}(ii).
\end{proof}

\begin{cor}\label{Corg1}
Keep the assumptions of Lemma~\ref{LemI}. If an arbitrary short exact sequence
$$0\to M\to N\to K\to 0$$
in~$\widetilde\cC$ is left $gp\widetilde\cC$-acyclical, then the induced sequences
$$0\to M^{\fg_1}\to N^{\fg_1}\to K^{\fg_1}\to 0\qquad\mbox{and}\qquad0\to M^{\fg_{-1}}\to N^{\fg_{-1}}\to K^{\fg_{-1}}\to 0$$
are exact and left $gp\cC$-acyclical in~$\cC$.
\end{cor}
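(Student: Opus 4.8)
The plan is to exploit the type-I $\mZ$-grading together with the identifications $\ind^{\pm}=\coind^{\fg}_{\fg_0\oplus\fg_{\pm1}}$ and the fact, recorded in the proof of Lemma~\ref{LemI}, that $\res\ind^{\pm}M\cong \Lambda\fg_{\mp1}\otimes M$. The invariants functor $(-)^{\fg_1}$ is right adjoint to the exact functor $\ind^{+}=\coind^{\fg}_{\fg_0\oplus\fg_{1}}$ precomposed with restriction along $\fg_0\oplus\fg_1\to\fg_0$; more precisely, for $L$ in $\widetilde\cC$ one has $L^{\fg_1}=\Hom_{\fg_0\oplus\fg_1}(\mC,L)$, and since $U(\fg)$ is a Frobenius extension of $U(\fg_0\oplus\fg_1)$ by Lemma~\ref{LemBell}, this functor agrees up to a character twist with the restriction of the right adjoint $\CC$ of $\res$ (applied in the parabolic setting). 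First I would make this identification precise: $(-)^{\fg_1}$ is (a twist of) the composite of $\res^{\fg}_{\fg_0\oplus\fg_1}$ with $\coind^{-1}$, i.e. it is right adjoint to $\ind^{+}$, hence it is \emph{exact} on $\widetilde\cC$ because $U(\fg)$ is projective as a left and right $U(\fg_0\oplus\fg_1)$-module (so coinduction is exact and the invariants are a direct summand of restriction, by the analogue of~\eqref{ResInd}). Exactness of $(-)^{\fg_{\pm1}}$ immediately gives the exactness of the two three-term sequences in the statement.

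**The acyclicity.** The substantive point is that the resulting sequences are left $gp\cC$-acyclical. Fix $G\in gp\cC$; I must show $\Hom_{\cC}(G,-)$ is exact on $0\to M^{\fg_1}\to N^{\fg_1}\to K^{\fg_1}\to 0$. Using the adjunction $(\ind^{+},(-)^{\fg_1})$ (up to the character twist, which is harmless), we have
$$\Hom_{\cC}(G,L^{\fg_1})\;\cong\;\Hom_{\widetilde\cC}(\ind^{+}G,L),$$
naturally in $L\in\widetilde\cC$. So it suffices to show $\Hom_{\widetilde\cC}(\ind^{+}G,-)$ is exact on the given left $gp\widetilde\cC$-acyclical sequence $0\to M\to N\to K\to 0$. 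By Lemma~\ref{LemI}, $\Gd_{\widetilde\cC}\ind^{+}G=\Gd_{\cC}G=0$, i.e. $\ind^{+}G\in gp\widetilde\cC$. Therefore $\Hom_{\widetilde\cC}(\ind^{+}G,-)$ is precisely one of the functors whose exactness is guaranteed by the hypothesis that the sequence is left $gp\widetilde\cC$-acyclical. The argument for $(-)^{\fg_{-1}}$ is identical, using $\ind^{-}$ in place of $\ind^{+}$.

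**Main obstacle.** The only delicate step is the clean identification of $(-)^{\fg_{\pm1}}$ as a right adjoint of $\ind^{\pm}$ up to an explicit character twist, and the verification that this twist does not affect Gorenstein-projectivity or acyclicity statements — but this is exactly the kind of bookkeeping already carried out in~\eqref{IndCoind} and Lemma~\ref{LemBell} for the pair $(\fg,\fg_{\oa})$, transplanted to the intermediate pair $(\fg,\fg_0\oplus\fg_{\pm1})$ which is again covered by Lemma~\ref{LemBell}. Concretely: $\fg$ is an $\alpha'$-Frobenius extension of $\fg_0\oplus\fg_1$, so $\coind^{\fg}_{\fg_0\oplus\fg_1}\cong\ind^{+}\circ(K'\otimes-)$ for a one-dimensional $\fg_0$-module $K'$, and since $K'\otimes-$ is an auto-equivalence of $\cC$ preserving $p\cC$ and $gp\cC$, it commutes with all the notions involved. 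Once this is in place, the proof is the two-paragraph argument above: exactness of parabolic invariants plus the translation of $gp\widetilde\cC$-acyclicity through the adjunction, using Lemma~\ref{LemI} to land $\ind^{\pm}G$ in $gp\widetilde\cC$.
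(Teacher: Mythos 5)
Your second paragraph is exactly the paper's argument: for $G\in gp\cC$ the adjunction $\Hom_{\cC}(G,L^{\fg_1})\cong\Hom_{\widetilde\cC}(\ind^+G,L)$ (elementary, since inflation with trivial $\fg_1$-action is left adjoint to $\fg_1$-invariants and $\ind$ is left adjoint to $\res$; no Frobenius twist bookkeeping is needed here) together with Lemma~\ref{LemI}, which puts $\ind^{\pm}G$ in $gp\widetilde\cC$, transfers the left $gp\widetilde\cC$-acyclicity of the given sequence to left $gp\cC$-acyclicity of the invariants sequence. That part is correct.

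The genuine gap is in your first paragraph: the functor $(-)^{\fg_{\pm 1}}$ is \emph{not} exact on $\widetilde\cC$, so your derivation of the exactness of the induced sequences fails. Being right adjoint to the exact functor $\ind^{\pm}$, invariants are only left exact, and the claim that ``the invariants are a direct summand of restriction'' is false: the analogue of~\eqref{ResInd} concerns $\res\circ\ind$, not invariants, and the right adjoint of restriction is coinduction, not invariants (your phrase ``composite of $\res$ with $\coind^{-1}$'' has no meaning). A concrete counterexample: for $\fg=\mathfrak{gl}(1|1)$ the surjection $\ind^-\mC\tto\mC$ is not surjective on $\fg_1$-invariants, since $(\ind^-\mC)^{\fg_1}$ is spanned by the vector of weight $\epsilon-\delta$, which maps to zero. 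Indeed, if invariants were exact the acyclicity hypothesis would be superfluous for the exactness claim, which is precisely what the corollary is designed to avoid. The repair is immediate and is how the paper proceeds: apply your adjunction argument with $G=P$ projective (note $p\cC\subseteq gp\cC$, and Lemma~\ref{LemI} gives $\ind^+P\in gp\widetilde\cC$); then $\Hom_{\cC}(P,-)$ is exact on the three-term complex $0\to M^{\fg_1}\to N^{\fg_1}\to K^{\fg_1}\to 0$ for every $P\in p\cC$, and since $\cC$ has enough projectives this forces the complex to be exact. With that correction your proof coincides with the paper's.
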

\begin{proof}
Take an arbitrary~$P\in p\cC$. By Lemma~\ref{LemI}, we have~$\ind^+P\in gp\widetilde\cC$. As the short exact sequence is left $gp\widetilde\cC$-acyclical, we find by adjunction that
$$0\to \Hom_{\cC}(P,M^{\fg_1})\to \Hom_{\cC}(P,N^{\fg_1})\to \Hom_{\cC}(P,K^{\fg_1})\to 0$$
must be exact for all $P\in p\cC$. Hence~$0\to M^{\fg_1}\to N^{\fg_1}\to K^{\fg_1}\to 0$ is exact.
\end{proof}


\section{Super category~$\cO$}\label{SecO}

From now on we assume that~$\fg=\fg_{\oa}\oplus \fg_{\ob}$ is a {\bf classical} Lie superalgebra, see~\cite{book}. This means that~$\fg_{\oa}$ is a finite dimensional reductive Lie algebra and that the adjoint representation of~$\fg_{\oa}$ on~$\fg_{\ob}$ is finite dimensional and semisimple. We do {\bf not} require $\fg$ to be simple. 

\subsection{Parabolic category~$\cO$}

\subsubsection{}We consider a {\bf parabolic decomposition}
$$\fg\;=\; \fu^-\oplus\fl\oplus \fu^+,$$
as in~\cite[Section~2.4]{preprint}, with Levi subalgebra $\fl$ and parabolic subalgebra $\fp=\fl\oplus\fu^+$. Then~$\fu_{\oa}^-\oplus\fl_{\oa}\oplus \fu_{\oa}^+$ is a parabolic decomposition of $\fg_{\oa}$. If~$\fl_{\oa}$ is a Cartan subalgebra of $\fg_{\oa}$, we write
$$\fg\;=\; \fn^-\oplus\fh\oplus \fn^+,$$
for the {\bf triangular decomposition}, with Borel subalgebra $\fb=\fh\oplus\fn^+$.

We denote the Weyl group of~$\fg_{\oa}$ by~$W=W(\fg_{\oa}:\fh_{\oa})$. For any $w\in W$, its length is denoted by~$\ell(w)$. Let $w_0$ be the unique longest element of~$W$. In particular, we have $\ell(w_0)=\dim\fn_{\oa}^+$. For $\fp$ a parabolic subalgebra of~$\fg$, we have the corresponding longest element $w_0^{\fp}$ in the Weyl group of~$\fl_{\oa}$.

\subsubsection{}The category~$\scO(\fg,\fp)$ is the full subcategory of~$\fg$-sMod of modules which
\begin{itemize}
\item are finitely generated;
\item restrict to direct sums of simple finite dimensional $\fl_{\oa}$-modules;
\item are locally $U(\fu^+)$-finite.
\end{itemize}
With this definition, we have $\scO(\fg_{\oa},\fp_{\oa})=\cO(\fg_{\oa},\fp_{\oa})\oplus\Pi \cO(\fg_{\oa},\fp_{\oa})$, with $\cO(\fg_{\oa},\fp_{\oa})$ the usual parabolic subcategory of the BGG category of~\cite{BGG}.
Clearly,~$\scO(\fg,\fp)$ and $\scO(\fg_{\oa},\fp_{\oa})$ form a pair of good module categories, as by~\cite{BGG} the category~$\scO(\fg_{\oa},\fp_{\oa})$ is \lfp.

\subsection{Gorenstein homological algebra}
Fix a parabolic subalgebra $\fp$ of $\fg$.
\begin{thm}\label{ThmGO}
Set $d:=2\ell(w_0)-2\ell(w_0^{\fp})$.
\begin{enumerate}[(i)]
\item The \lfp category~$\scO(\fg,\fp)$ is $d$-Gorenstein.
\item The Gorenstein projective modules $M$ in~$\scO(\fg,\fp)$ are those for which $\res M$ is projective in~$\scO(\fg_{\oa},\fp_{\oa})$.
\item We have~$\Gd_{\scO(\fg,\fp)}M=\pd_{\scO(\fg_{\oa},\fp_{\oa})}\res M$ for any~$M\in \scO(\fg,\fp)$.
\end{enumerate}
\end{thm}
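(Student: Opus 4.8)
The strategy is to recognize $(\scO(\fg,\fp),\scO(\fg_{\oa},\fp_{\oa}))$ as a pair of good module categories in the sense of Proposition~\ref{PropPair} and then to feed this into the abstract machinery of Part~I, so that all three claims reduce to (a) the case of the underlying Lie algebra and (b) already-established transfer results across Frobenius extensions. First I would observe that $\scO(\fg_{\oa},\fp_{\oa})$ is a good module category for $\fg_{\oa}$: it is \lfp by \cite{BGG}, has enough projectives and injectives, all objects have finite length, and it is closed under tensoring with finite-dimensional $\fg_{\oa}$-modules. By the definition of $\scO(\fg,\fp)$ given above, a module $M\in\fg\mbox{-sMod}$ lies in $\scO(\fg,\fp)$ precisely when $\res M\in\scO(\fg_{\oa},\fp_{\oa})$ (the finite-generation and local $U(\fu^+)$-finiteness conditions are controlled by the restriction, using that $U(\fg)$ is finite over $U(\fg_{\oa})$), so $\scO(\fg,\fp)$ is exactly the category $\widetilde\cC$ attached to $\cC=\scO(\fg_{\oa},\fp_{\oa})$ in Proposition~\ref{PropPair}. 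Hence $(\scO(\fg,\fp),\scO(\fg_{\oa},\fp_{\oa}))$ is a pair of good module categories, and in particular $\RR=\res:\scO(\fg,\fp)\to\scO(\fg_{\oa},\fp_{\oa})$ is a Frobenius extension of \lfp categories with $\II=\ind$, $\CC=\coind$.

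The second ingredient is the computation $\gd\scO(\fg_{\oa},\fp_{\oa})=d$ with $d=2\ell(w_0)-2\ell(w_0^{\fp})$. This is classical: parabolic category $\cO$ for a reductive Lie algebra has finite global dimension equal to twice the length of the longest Weyl group coset representative for $W^{\fp_{\oa}}$, i.e. $2(\ell(w_0)-\ell(w_0^{\fp}))$; I would cite this (e.g. via the graded/Koszul structure, or the standard references on $\cO$), noting that the $\mZ_2$-graded enhancement $\scO(\fg_{\oa},\fp_{\oa})=\cO(\fg_{\oa},\fp_{\oa})\oplus\Pi\cO(\fg_{\oa},\fp_{\oa})$ has the same global dimension. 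Since $\scO(\fg_{\oa},\fp_{\oa})$ has enough projective and injective objects and finite global dimension $d$, Example~\ref{ExTriv} gives that it is $d$-$\cIG$ and $d$-$\cGI$, hence $d$-Iwanaga-Gorenstein; by Theorem~\ref{Corlfp} it is $d$-Gorenstein (naively $d$-Gorenstein), and moreover $gp\scO(\fg_{\oa},\fp_{\oa})=p\scO(\fg_{\oa},\fp_{\oa})$ since every object of finite projective dimension has projective dimension $\le d<\infty$, forcing (by Proposition~\ref{Prop0inf}(iii)) the Gorenstein projectives to be projective.

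With these two inputs, all three parts follow from the transfer results applied to $\RR=\res$. Part~(i): by Proposition~\ref{PropPair}(iii) (equivalently Proposition~\ref{PropFElfp}(ii) or Theorem~\ref{ThmFro} combined with Theorem~\ref{Corlfp}), $\widetilde\cC=\scO(\fg,\fp)$ is $d$-Gorenstein because $\cC=\scO(\fg_{\oa},\fp_{\oa})$ is. Parts~(ii) and~(iii): these are exactly the content of Proposition~\ref{PropGdpd}, the stated special case of Proposition~\ref{PropGoGo1} in which $\gd\cC=d$; it gives directly that $M\in gp\scO(\fg,\fp)$ iff $\res M\in p\scO(\fg_{\oa},\fp_{\oa})$, and that $\Gd_{\scO(\fg,\fp)}M=\pd_{\scO(\fg_{\oa},\fp_{\oa})}\res M$ for every $M$.

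The only genuine obstacle is bookkeeping rather than mathematics: I must make sure the two membership conditions match up exactly — that $M\in\scO(\fg,\fp)$ is genuinely equivalent to $\res M\in\scO(\fg_{\oa},\fp_{\oa})$ — and that the value of $d$ quoted for the global dimension of parabolic category $\cO$ (and its super-enhancement) is correct and properly referenced, including the normalization $\ell(w_0)-\ell(w_0^{\fp})=\ell(w_0^{\fp}w_0)$ for the maximal length in the relevant coset. Once those two points are pinned down, the theorem is a formal consequence of Part~I.
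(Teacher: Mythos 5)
Your proposal is correct and follows essentially the same route as the paper: identify $(\scO(\fg,\fp),\scO(\fg_{\oa},\fp_{\oa}))$ as a pair of good module categories (so that $\res$ is a Frobenius extension via Proposition~\ref{PropPair}), quote $\gd\scO(\fg_{\oa},\fp_{\oa})=d$ (the paper cites \cite{SHPO4}), and conclude by Proposition~\ref{PropGdpd}. The paper's proof is exactly this two-line reduction, so your extra care about the membership condition and the value of $d$ is just the bookkeeping the paper leaves implicit.
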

\begin{proof}
Since $\gd\scO(\fg_{\oa},\fp_{\oa})=d$, see \cite{SHPO4}, this follows from
Propositions~\ref{PropGdpd} and \ref{PropPair}(ii).
\end{proof}

\begin{prop}\label{sesKGMO}
For any~$M\in \scO(\fg,\fp)$, there exists $G\in \scO(\fg,\fp)$ with $\res G$ projective in~$\scO(\fg_{\oa},\fp_{\oa})$ and $K\in\scO(\fg,\fp) $ with finite projective dimension, which admit an exact sequence
$$0\to K\to G\to M\to 0.$$
\end{prop}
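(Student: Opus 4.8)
The statement is the special case of Corollary~\ref{CorKGX} for the category $\cC = \scO(\fg,\fp)$, so the plan is simply to verify that Corollary~\ref{CorKGX} applies. By Theorem~\ref{ThmGO}(i), the category $\scO(\fg,\fp)$ is $d$-Gorenstein, hence naively $d$-Gorenstein by Definition~\ref{DefGorlfp}(ii), which means $\scO(\fg,\fp) = gp\scO(\fg,\fp)^{(d)} \subseteq gp\scO(\fg,\fp)^{(-)}$. In particular every object $M$ has finite G-dimension, so Corollary~\ref{CorKGX} yields a short exact sequence
$$0 \to K \to G \to M \to 0$$
with $G \in gp\scO(\fg,\fp)$ and $K \in p\scO(\fg,\fp)^{(-)}$, i.e. $\pd_{\scO(\fg,\fp)} K < \infty$ (more precisely $\pd K = \Gd M - 1$).

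It then remains to translate the condition $G \in gp\scO(\fg,\fp)$ into the stated condition that $\res G$ is projective in $\scO(\fg_{\oa},\fp_{\oa})$. This is exactly the content of Theorem~\ref{ThmGO}(ii): the Gorenstein projective objects of $\scO(\fg,\fp)$ are precisely those $X$ for which $\res X$ is projective in $\scO(\fg_{\oa},\fp_{\oa})$. Substituting this characterization of $G$ into the short exact sequence above gives the proposition.

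I do not anticipate a genuine obstacle here, since the result is essentially a repackaging of earlier statements; the only thing to be careful about is that the two hypotheses required by Corollary~\ref{CorKGX} (namely that $M \in gp\cC^{(-)}$) are both supplied by the naive $d$-Gorenstein property of $\scO(\fg,\fp)$, which in turn rests on the computation $\gd\scO(\fg_{\oa},\fp_{\oa}) = d$ from \cite{SHPO4} together with Propositions~\ref{PropGdpd} and~\ref{PropPair}(ii). If one wanted to make the proof fully self-contained one could also recall that $K \in \scO(\fg,\fp)$ automatically, since $\scO(\fg,\fp)$ is closed under kernels of epimorphisms.
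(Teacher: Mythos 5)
Your proposal is correct and follows essentially the same route as the paper: the paper's proof simply cites Proposition~\ref{ContFin} (whose approximation statement is itself derived from Corollary~\ref{CorKGX}) together with Theorem~\ref{ThmGO}, exactly the combination you spell out. Your extra step of invoking Theorem~\ref{ThmGO}(ii) to rephrase $G\in gp\,\scO(\fg,\fp)$ as ``$\res G$ projective'' is precisely what the paper leaves implicit, so there is no gap.
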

\begin{proof}
This is an immediate application of Proposition~\ref{ContFin}, using Theorem~\ref{ThmGO}.
\end{proof}


\begin{rem}
The direct classification of indecomposable Gorenstein projective modules in~$\scO(\fg,\fp)$ is a wild problem. Consider the special case, with $\fg=\mathfrak{gl}(m|n)$ and $\fp=\fg_0\oplus\fg_1$ such that~$\scO(\fg,\fp)$ is the category of all finite dimensional weight modules. As $\scO(\fg_0,\fg_0)$ is semisimple, all modules in~$\scO(\fg,\fp)$ are Gorenstein projective, but the category is generally of wild representation type.
\end{rem}

\subsection{Serre functors for classical Lie superalgebras}
\subsubsection{} In \cite[Theorem~5.9]{Serre}, Mazorchuk and Miemietz obtained an elegant expression for the Serre functor on the category~$\cD_{per}(\scO(\fg,\fp))$, for~$\fg$ 
in the list
\begin{equation}\label{listMM}
\mathfrak{gl}(m|n),\;\mathfrak{sl}(m|n),\;\mathfrak{psl}(n|n),\; \mathfrak{osp}(m|2n),\;\mathfrak{q}(n),\;\mathfrak{pq}(n),\;\mathfrak{sq}(n),\;\mathfrak{psq}(n).
\end{equation}
One can check directly that the condition in Proposition~\ref{PropSerreIC} is satisfied. Instead, we will derive an alternative expression for the Serre functor, which is also valid in slightly greater generality. 

\subsubsection{}\label{conKg}For the remainder of this section we consider an arbitrary classical Lie superalgebra $\fg$ for which the~$\fg_{\oa}$-module~$K_\fg=\Lambda^{\tp}\fg_{\ob}$ can be interpreted as a $\fg$-module. More precisely, the condition is that for the character $\gamma:\fg_{\oa}\to \mC$; $X\mapsto\tr(\ad_X:\fg_{\ob}\to\fg_{\ob})$, the subspace
$$\fg_{\ob}\oplus\ker\gamma\;\subset\; \fg$$ 
constitutes an ideal. This condition is satisfied for all algebras in~\eqref{listMM}, and allows us to introduce the functor~$\KK$ on $\scO(\fg,\fp)$, as well as on $\scO(\fg_{\oa},\fp_{\oa})$, as~$\KK=(K_{\fg}^\ast\otimes -)$, which intertwines the restriction functor and its adjoints. Moreover, we have $\KK\circ\ind\cong\coind$. In many cases, $\KK$ will be $\Id$ or $\Pi$.

\subsubsection{} For any simple reflection~$s\in W$, let $\TT_s$ denote the corresponding twisting functor on~$\scO(\fg,\fb)$ of \cite[Section~5]{Crelle}. By~\cite[Lemma~5.3]{Crelle}, these functors satisfy braid relations. Hence we can define $\TT_{w_0}$ by composing twisting functors corresponding to a reduced expression of~$w_0$. By~\cite[Lemma~5.4]{Crelle},~$\TT_{w_0}$ is right exact. In the following theorem we will restrict the cohomology functors~$\cL_i \TT_{w_0}$ on~$\scO(\fg,\fb)$ to the full subcategory~$\scO(\fg,\fp)$, for arbitrary parabolic subalgebras $\fp$.

\begin{thm}
Let $\fg$ be as in~\ref{conKg}. The restriction of~$\cL_{\ell(w_0^{\fp})}\TT_{w_0}$ to~$\scO(\fg,\fp)$, which we denote by~$\TT^{\fp}$, is right exact. 
Furthermore,~$\KK\circ\cL (\TT^{\fp})^2\cong\KK\circ\cL\TT^{\fp}\circ\cL\TT^{\fp}$ is a Serre functor on~$\cD_{per}(\scO(\fg,\fp))$.
In particular,~$\KK\circ\cL (\TT_{w_0})^2$ is a Serre functor on~$\cD_{per}(\scO(\fg,\fb))$.
\end{thm}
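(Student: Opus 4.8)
The plan is to deduce the theorem from the abstract machinery of Part~I, specifically from Proposition~\ref{PropSerreIC} applied to the pair of good module categories $(\scO(\fg,\fp),\scO(\fg_{\oa},\fp_{\oa}))$, once we identify the Serre functor on the downstairs category $\cD_{per}(\scO(\fg_{\oa},\fp_{\oa}))$ in terms of twisting functors. First I would record that by Theorem~\ref{ThmGO}(i) and the argument of Proposition~\ref{PropPair}, $\scO(\fg,\fp)$ is \lfp and $d$-Gorenstein, hence in particular $d$-$\cGI$, so Lemma~\ref{LemMM} guarantees that $\cL\NN$ restricts to a Serre functor on $\cD_{per}(\scO(\fg,\fp))$ and likewise downstairs; this is where existence comes from, and uniqueness is \cite[Lemma~I.1.3]{VDB}. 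So the whole content is to verify that $\KK\circ\cL(\TT^{\fp})^2$ is \emph{some} Serre functor, or equivalently that it agrees with $\CC\circ\Phi_0$ on the image of $\II=\ind$, where $\Phi_0$ is the Serre functor downstairs.

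Next I would assemble the three ingredients about twisting functors on the Lie algebra side. (1) On $\cO(\fg_{\oa},\fb_{\oa})$, the square of the derived twisting functor $\cL\TT_{w_0}$ is well known (by results of Mazorchuk--Stroppel, or the references in \cite{Serre}) to be the derived Nakayama/Serre functor up to the grading shift; more precisely $\cL(\TT_{w_0})^2$ is intertwined with $\ind^{\fg_{\oa}}_{\fb_{\oa}}\circ\res$ in the appropriate way. (2) Twisting commutes with the parabolic inclusion and with the relevant syzygy/derived truncation, so $\cL_{\ell(w_0^{\fp})}\TT_{w_0}$ restricted to $\scO(\fg,\fp)$ is right exact and its derived functor squared computes the Serre functor on $\cD_{per}(\scO(\fg_{\oa},\fp_{\oa}))$; this is the content of the first, ``right exact,'' assertion and I would prove it by the standard argument that $\TT_{w_0}$ sends a projective in $\scO(\fp)$ to an object concentrated in homological degree $\ell(w_0^{\fp})$. (3) Twisting functors commute with restriction and induction between $\fg$ and $\fg_{\oa}$ — this is where the hypothesis in \ref{conKg} and the identity $\KK\circ\ind\cong\coind$ enter, giving exactly the intertwining $\widetilde\Phi\circ\ind\cong\coind\circ\Phi_0$ with $\widetilde\Phi=\KK\circ\cL(\TT^{\fp})^2$. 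Combined with Proposition~\ref{PropSerreIC} (whose hypotheses are met because $\scO(\fg_{\oa},\fp_{\oa})$ is $d$-$\cGI$), this forces $\widetilde\Phi$ to be the Serre functor upstairs. The final ``in particular'' is the special case $\fp=\fb$, where $\ell(w_0^{\fb})=0$, $\TT^{\fb}=\TT_{w_0}$, and $\scO(\fg,\fb)$ is the full category~$\cO$.

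The main obstacle I expect is step (3): checking that the twisting functor $\TT_{w_0}$ — defined via completion with respect to a semiregular bimodule — genuinely commutes (on the derived level) with $\res^{\fg}_{\fg_{\oa}}$ and intertwines $\ind$ with $\coind$ after twisting by $\KK$. The subtlety is that twisting functors are not given by tensoring with a finite-dimensional module, so the easy compatibility with $\res$ (used freely in Lemma~\ref{LemGdV} and equation~\eqref{GEadj}) does not apply verbatim; one must instead use that the semiregular $U(\fg_{\oa})$-bimodule inducing $\TT_{w_0}$ is obtained from the corresponding $U(\fg)$-bimodule by restriction, together with the PBW/freeness of $U(\fg)$ over $U(\fg_{\oa})$ and the identity~\eqref{IndCoind}. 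Once that commutation is in hand, the rest is bookkeeping: matching the homological shift $\ell(w_0^{\fp})$, tracking the twist $\KK$ through $\ind\rightsquigarrow\coind$, and invoking Proposition~\ref{PropSerreIC} to conclude. The right-exactness claim for $\TT^{\fp}$ is comparatively routine, following \cite[Lemma~5.4]{Crelle} and the vanishing of $\cL_i\TT_{w_0}$ on projectives of $\scO(\fg,\fp)$ outside degree $\ell(w_0^{\fp})$.
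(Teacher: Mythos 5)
Your overall strategy is the one the paper uses: transfer the known Lie algebra result through the Frobenius extension $(\ind,\res,\coind)$ using the intertwining of twisting functors with $\res$ and $\ind$ together with $\coind\cong\KK\circ\ind$, and then invoke Proposition~\ref{PropSerreIC}. Two remarks on the ingredients: the commutation you single out as the main obstacle is not something the paper reproves; it is exactly \cite[Lemma~5.1]{Crelle} (and your proposed route via restriction of the semiregular bimodule and PBW is the right idea behind it), and the right-exactness of $\TT^{\fp}$ is obtained in the paper by pushing the corresponding Lie algebra statement \cite[Lemma~8.4]{dualities} through $\res$ using \eqref{TResInd} and faithfulness, rather than redoing the concentration argument upstairs; similarly the parabolic downstairs input (that the derived functor of the restriction of $\cL_{\ell(w_0^{\fp})}\TT^{\oa}_{w_0}$ is a Serre functor on $\cD^b(\scO(\fg_{\oa},\fp_{\oa}))$) is not automatic from the $\fb$-case and is handled by adapting \cite{prinjective} with \cite[Lemma~8.3]{dualities}, a step you assert but do not source.

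The genuine gap is your claim that the ``whole content'' is to check that $\KK\circ\cL(\TT^{\fp})^2$ agrees with $\CC\circ\Phi_0$ on the image of $\II=\ind$. Proposition~\ref{PropSerreIC} characterises the Serre functor as the unique \emph{auto-equivalence} of $\cD_{per}(\scO(\fg,\fp))$ satisfying $\widetilde\Phi\circ\II\cong\CC\circ\Phi_0$; the $d$-$\cGI$ hypothesis only supplies existence of the Serre functors via Lemma~\ref{LemMM}, not the auto-equivalence property of your candidate. Since $\scO(\fg,\fp)$ has infinite global dimension, $\cD_{per}(\scO(\fg,\fp))$ is a proper subcategory of $\cD^b(\scO(\fg,\fp))$, so it is not even automatic that $\KK\circ\cL(\TT^{\fp})^2$ preserves perfect complexes, let alone that it is invertible there; and an isomorphism with $\CC\circ\Phi_0$ on the generating objects $\II P$ does not by itself extend to an identification of triangulated functors on all of $\cD_{per}$. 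The paper closes exactly this hole with Lemma~\ref{LemPhiPsi}: one also needs the second intertwining $\widetilde\Psi\circ\coind\cong\ind\circ\Psi_0$ for a candidate quasi-inverse, realised by the derived right adjoint of the twisting functor (the completion functor), which commutes with $\res$ and $\ind$ by the same \cite[Lemma~5.1]{Crelle}. Adding this verification makes your plan essentially coincide with the paper's proof.
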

\begin{proof}
We denote the twisting functor on~$\scO(\fg_{\oa},\fb_{\oa})$ by~$\TT^{\oa}_{w_0}$. By \cite[Lemma~5.1]{Crelle}, we have
\begin{equation}\label{TResInd}\res\circ \TT_{w_0}\;\cong\;\TT^{\oa}_{w_0}\circ \res,\qquad\mbox{and}\;\qquad \ind\circ\TT^{\oa}_{w_0}\;\cong\; \TT_{w_0}\circ\ind.\end{equation}
As $\res$ and $\ind$ are exact functors mapping projective to projective modules, these properties immediately extend to the left derived functor and its cohomology functors. 

First we deal with the special case $\fp=\fb$. By \cite[Proposition~4.1(1)]{prinjective}, $\cL (T_{w_0}^{\oa})^2\cong\cL T_{w_0}^{\oa}\cL T_{w_0}^{\oa}$ is a Serre functor of $\cD_{per}(\scO(\fg_{\oa},\fb_{\oa}))$. The isomorphism between the two expressions of the Serre functor follows from \cite[Corollary~10.8.3]{Weibel} and the fact that~$T_{w_0}$ maps projective modules to~$T_{w_0}$-acyclical modules. The latter is well-known, see e.g.~\cite{prinjective, dualities}. By equation~\eqref{TResInd} we have 
$$\coind\circ\cL\TT^{\oa}_{w_0}\;\cong\;\KK\circ\cL \TT_{w_0}\circ\ind$$ as well as the analogue for the right adjoint of the twisting functors. We can thus apply Lemma~\ref{LemPhiPsi} to conclude that~$\KK\cL T_{w_0}$, and hence $\cL T_{w_0}$, is an auto-equivalence of $\cD_{per}(\scO(\fg,\fb))$ and then Proposition~\ref{PropSerreIC} to conclude that~$\KK\cL T_{w_0}\cL T_{w_0}$ is a Serre functor.

Now we consider the parabolic case.
The fact that the restriction of~$\cL_{\ell(w_0^{\fp})}\TT_{w_0}$ to~$\scO(\fg,\fp)$ is right exact follows from \eqref{TResInd}, the faithfulness of~$\res$ and the corresponding claim for~$\fg_{\oa}$ in~\cite[Lemma~8.4]{dualities}. Furthermore, just as above, Lemma~\ref{LemPhiPsi} and Proposition~\ref{PropSerreIC} imply that it suffices to prove that the left derived functor of the restriction of~$\cL_{\ell(w_0^{\fp})}\TT^{\oa}_{w_0}$ to~$\scO(\fg_{\oa},\fp_{\oa})$ yields a Serre functor of~$\cD_{per}(\scO(\fg_{\oa},\fp_{\oa}))\cong\cD^b(\scO(\fg_{\oa},\fp_{\oa}))$. 
That property follows mutatis mutandis the proof on p24-25 of \cite{prinjective}, using the case $\fp=\fb$ and \cite[Lemma~8.3]{dualities}.
\end{proof}






\section{Category~$\cO$ for the general linear superalgebra}\label{SecGL} Now we focus on the general linear superalgebra $\fg=\mathfrak{gl}(m|n)$, see~\cite[Section~2.2]{book}.


\subsection{Category~$\cO$ for~$\mathfrak{gl}(m|n)$} An overview of the theory of category~$\cO$ for~$\mathfrak{gl}(m|n)$ is given in~\cite{Brundan}.

\subsubsection{}The Lie superalgebra $\mathfrak{gl}(m|n)$ is of type I, with
$$\mathfrak{g}_0\cong\mathfrak{gl}(m)\oplus\mathfrak{gl}(n),\quad\mathfrak{g}_{1}\cong\mC^m\otimes(\mC^n)^\ast\;\mbox{ and}\quad \mathfrak{g}_{-1}\cong(\mC^m)^\ast\otimes \mC^n.$$ We choose the {\bf distinguished} Borel subalgebra $\fb=\fb_{\oa}\oplus\fg_1$, with $\fb_{\oa}$ given by all upper triangular matrices in~$\fg_0$. The corresponding Cartan subalgebra is~$\fh\cong\mC^m\oplus \mC^n$. We choose a corresponding basis~$\{\epsilon_1,\ldots,\epsilon_m,\delta_1,\ldots,\delta_n\}$ of~$\fh^\ast$. The even and odd positive roots are then given by
$$\Delta_{\oa}^+=\{\epsilon_i-\epsilon_j\,|\,i <j\}\cup \{\delta_i-\delta_j\,|\,i <j\}\quad\mbox{and}\quad \Delta_{\ob}^+=\{\epsilon_i-\delta_j\}.$$
We define a partial order on~$\fh^\ast$ by setting $\mu\le \lambda$ if and only if $\lambda-\mu$ is sum of elements in~$\Delta^+$. The Weyl group is~$W=W(\fg_{\oa}:\fh)\cong S_n\times S_m$. We let $z\in \fh$ denote the element which satisfies $\epsilon_j(z)=1$ and $\delta_k(z)=-1$. 

\subsubsection{} The {\bf anti-distinguished} Borel subalgebra is~$\bar{\fb}:=\fb_{\oa}\oplus\fg_{-1}$. As this corresponds to the distinguished Borel subalgebra for~$\mathfrak{gl}(n|m)\cong\mathfrak{gl}(m|n)$, all our results are valid for this choice as well. Moreover, by definition we have $\scO(\fg,\fb)=\scO(\fg,\bar{\fb})$. However, the Verma modules differ for both interpretations, as well as the labelling of the simple objects by highest weights.

\subsubsection{}Following~\cite[Section~2]{Brundan}, we can associate a parity~$p(\lambda)\in\mZ_2$ to each $\lambda\in\fh^\ast$ such that the weight space $M_\lambda$ in an indecomposable module~$M$ in $\scO$ is of parity~$p(\lambda)+p_M$, for some $p_M\in\mZ_2$ independent of $\lambda$. We define $\cO$ as the subcategory of $\scO$ of modules $M$ for which $p_M=\oa$. Consequently we have
$$\scO(\fg,\fb)\;=\;\cO(\fg,\fb)\oplus\Pi\cO(\fg,\fb),$$
see \cite[Lemma~2.2]{Brundan}. We simply write $\cO=\cO(\fg,\fb)$ and $\cO^{\oa}=\cO(\fg_{\oa},\fb_{\oa})$

\subsubsection{}For any~$\lambda\in\fh^\ast$ we have the Verma module~$M_0(\lambda):=U(\fg_{\oa})\otimes_{U(\fb_{\oa})} \Pi^{p(\lambda)}\mC_\lambda$, which has simple top~$L_0(\lambda)$. These $L_0(\lambda)$ are non-isomorphic for different $\lambda$ and exhaust all simple objects in~$\cO(\fg_{\oa},\fb_{\oa})$. The corresponding Verma module for~$\fg$ is given by
$$M(\lambda)\;=\;U(\fg)\otimes _{U(\fb)}\Pi^{p(\lambda)}\mC_\lambda\;\cong\; \ind^+ M_0(\lambda).$$
This has simple top~$L(\lambda)$ and these exhaust all simple objects in~$\cO(\fg,\fb)$. 
It follows easily that
\begin{equation}\label{invL}
L(\lambda)^{\fg_1}\;\cong\;L_0(\lambda),
\end{equation}
by considering the~$\mZ$-grading on~$L(\lambda)$ induced by~$z\in\fh$. We also introduce
$$K(\lambda)\;=\;\ind^+ L_0(\lambda),$$
which is a quotient of~$M(\lambda)$ and has simple top~$L(\lambda)$.
One shows that~$\res$ and $\ind$ map modules with Verma flags to modules with Verma flags. For any $\lambda\in \fh^\ast$, the indecomposable tilting module~$T(\lambda)$ is defined in \cite[Proposition~7(b)]{preprint}. We denote the injective envelope of $L(\lambda)$ by~$I(\lambda)$.

\subsubsection{} As $\fg$ has a Chevalley anti-automorphism, the category~$\cO$ admits a simple-preserving duality~$\dd$, see \cite[Section~13.7]{book}. Using this duality we can interpret Lemma~\ref{Extop} as
\begin{equation}
\label{eqExtd}
\GE^j_{\cO}(M,N)\;\cong\; \GE^j_{\cO}(\dd N,\dd M).
\end{equation}
By \cite{CMW}, it suffices to consider modules with weights in the set
$$\Lambda_0=\{\lambda\in \fh^\ast\,|\,\lambda=\sum_i\lambda_i\epsilon_i+\sum_j\lambda_{m+j}\delta_j,\;\,\mbox{with $\lambda_k\in\mZ$}\}.$$
We also denote by~$\Lambda^+_0$ (and $\Lambda^{++}_0$) the dominant (regular) weights in $\Lambda_0$ for the dot action, see \cite[Section~15.3]{book}. An element of $\Lambda_0$ is called typical if there is no $\gamma\in\Delta^+_{\ob}$ such that~$\langle \lambda+\rho,\gamma\rangle=0.$

\subsubsection{Translation functors}\label{TransFun} Let $U\cong \mC^{m|n}$ be the natural representation of~$\mathfrak{gl}(m|n)$, we have functors~$\FF=-\otimes U$ and $\EE=-\otimes U^\ast$ on~$\cO$, which decompose, following~\cite[Section~2.8]{Kujawa}, as~$\EE=\oplus_{i\in\mZ}\EE_i$ and $\FF=\oplus_{i\in\mZ}\FF_i$. By definition and equation~\eqref{GEadj}, we have
\begin{equation}\label{GEFE}\GE^j_{\cO}(\EE_iM,N)\cong \GE^j_{\cO}(M,\FF_iN)\qquad\mbox{and}\qquad \GE^j_{\cO}(\FF_iM,N)\cong\GE^j_{\cO}(M,\EE_iN).\end{equation}

\subsection{G-dimensions}
Projective dimensions do not yield information on atypical simple or Verma modules, see \cite[Theorem~6.1]{CS}. The follow theorem shows that G-dimensions can resolve that. 
\begin{thm}\label{ThmGdO}${}$
\begin{enumerate}[(i)]
\item $\Gd_{\cO}K(\lambda)=\pd_{\cO^{\oa}} L_0(\lambda)$, so for~$\mu\in \Lambda_0^{++}$, we have $\Gd_{\cO}K(w\cdot\mu)=2\ell(w_0)-\ell(w)$;
\item $\Gd_{\cO} \Delta(\lambda)=\pd_{\cO^{\oa}} \Delta_0(\lambda)$, so for~$\mu\in \Lambda_0^{++}$, we have $\Gd_{\cO}\Delta(w\cdot\mu)=\ell(w)$;
\item $\Gd_{\cO}L(\lambda)\ge \pd_{\cO^{\oa}} L_0(\lambda)$;
\item $\Gd_{\cO}I(\lambda)=\pd_{\cO^{\oa}}I_0(\lambda)$.
\item $\Gd_{\cO}T(\lambda)=\pd_{\cO^{\oa}}T_0(\lambda)$.
\end{enumerate}
\end{thm}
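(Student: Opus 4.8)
The statement collects five G-dimension computations in category $\cO$ for $\mathfrak{gl}(m|n)$, and the unifying principle is Theorem~\ref{ThmGO}(iii): $\Gd_{\cO}M=\pd_{\cO^{\oa}}\res M$ for every $M\in\cO$. So each item reduces to identifying $\res M$ as a $\fg_{\oa}$-module (up to summands that do not change the projective dimension) and then invoking known projective-dimension formulae in $\cO^{\oa}$. First I would record that $\cO=\scO(\fg,\fb)$ is Gorenstein with $\Gd=\pd\circ\res$ (Theorem~\ref{ThmGO}), and that, since $\pd_{\cO^{\oa}}$ is additive on direct sums and stable under direct summands, whenever $\res M$ contains $X$ as a summand and is itself a summand of a finite direct sum of copies of $X$ (tensored with finite-dimensional modules), one gets $\pd_{\cO^{\oa}}\res M=\pd_{\cO^{\oa}}X$; this is exactly the mechanism of Lemma~\ref{LemI} and Lemma~\ref{LemGdV}(ii) combined with Corollary~\ref{CorDS}.

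For parts (i) and (ii): by definition $K(\lambda)=\ind^+L_0(\lambda)$ and $\Delta(\lambda)\cong\ind^+M_0(\lambda)$, so by \eqref{ResInd} (type-I version, i.e.\ $\res\ind^+\cong\Lambda\fg_{-1}\otimes-$) we have $\res K(\lambda)\cong\Lambda\fg_{-1}\otimes L_0(\lambda)$ and $\res\Delta(\lambda)\cong\Lambda\fg_{-1}\otimes M_0(\lambda)$. Since $\mC=\Lambda^0\fg_{-1}$ is a summand of $\Lambda\fg_{-1}$, Lemma~\ref{LemI} applies directly and gives $\Gd_{\cO}K(\lambda)=\pd_{\cO^{\oa}}L_0(\lambda)$ and $\Gd_{\cO}\Delta(\lambda)=\pd_{\cO^{\oa}}M_0(\lambda)=\pd_{\cO^{\oa}}\Delta_0(\lambda)$. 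The explicit numbers for $\mu\in\Lambda_0^{++}$ are then the classical BGG-category-$\cO$ facts: $\pd_{\cO^{\oa}}\Delta_0(w\cdot\mu)=\ell(w)$ and $\pd_{\cO^{\oa}}L_0(w\cdot\mu)=2\ell(w_0)-\ell(w)$ (the simple module has projective dimension equal to $\gd\cO^{\oa}$ minus the projective dimension of the corresponding Verma; here $\gd\cO^{\oa}=2\ell(w_0)$ since $\fb$ is a Borel). For part (iii), one only has the inequality because $L(\lambda)$ need not be induced; but $L(\lambda)^{\fg_1}\cong L_0(\lambda)$ by \eqref{invL}, and $L_0(\lambda)$ is a $\fg_{\oa}$-submodule (indeed a summand, via the $z$-grading argument) of $\res L(\lambda)$, so Corollary~\ref{CorDS} gives $\pd_{\cO^{\oa}}\res L(\lambda)\ge\pd_{\cO^{\oa}}L_0(\lambda)$, i.e.\ $\Gd_{\cO}L(\lambda)\ge\pd_{\cO^{\oa}}L_0(\lambda)$.

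For parts (iv) and (v) I would use the structural fact that injective (resp.\ tilting) modules in $\cO$ restrict, up to adding/removing summands, to injective (resp.\ tilting) modules in $\cO^{\oa}$. Concretely, $\res I(\lambda)$ is injective in $\cO^{\oa}$ (restriction along a Frobenius extension sends injectives to injectives, Lemma~\ref{LemBasix}(ii) / Proposition~\ref{enough}) and contains $I_0(\lambda)$ as a summand because $\res$ of the injective envelope of $L(\lambda)$ covers the injective envelope of the $\fg_{\oa}$-socle constituent $L_0(\lambda)$; since every indecomposable injective in $\cO^{\oa}$ is some $I_0(\mu)$ and $\pd_{\cO^{\oa}}$ is determined block-wise, one checks $\pd_{\cO^{\oa}}\res I(\lambda)=\pd_{\cO^{\oa}}I_0(\lambda)$ via the summand argument together with the fact that all $I_0(\mu)$ appearing have the same projective dimension (they are the dual of projectives and $\pd_{\cO^{\oa}}I_0(\mu)$ depends only on the block). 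For tilting modules one argues identically: $\res T(\lambda)$ has a $\Delta$- and $\nabla$-flag as a $\fg_{\oa}$-module (restriction of a type-I parabolic induction preserves Verma flags, as noted just before Theorem~\ref{ThmGdO}), hence is tilting in $\cO^{\oa}$, and $T_0(\lambda)$ occurs as a summand, giving $\Gd_{\cO}T(\lambda)=\pd_{\cO^{\oa}}T_0(\lambda)$.

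\textbf{Main obstacle.} The routine core — reducing everything to $\pd_{\cO^{\oa}}\res$ and then quoting classical category-$\cO$ projective dimensions — is straightforward. The delicate points are parts (iv) and (v): one must argue carefully that $\res I(\lambda)$ and $\res T(\lambda)$ really are injective, resp.\ tilting, in $\cO^{\oa}$ and that the relevant indecomposable summand ($I_0(\lambda)$, resp.\ $T_0(\lambda)$) appears, and that the \emph{other} indecomposable summands do not have strictly larger projective dimension. The cleanest way around the last worry is to observe that $\Lambda\fg_{-1}\otimes-$ (which governs $\res\ind^+$) together with translation functors moves summands only within a bounded set of blocks all of which sit inside a single $\gd$-bounded category, so $\pd_{\cO^{\oa}}$ of the whole restriction equals that of the distinguished summand; alternatively, for tilting modules one invokes that $\pd_{\cO^{\oa}}$ of any tilting module in a regular block equals $\ell(w_0)$ and then reduces the general (singular / atypical block) case to the regular one by the same translation-functor bookkeeping used in \cite{CS}.
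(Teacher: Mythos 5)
Parts (i)--(iii) of your argument are essentially the paper's: (i) and (ii) follow from Lemma~\ref{LemI} applied to $K(\lambda)=\ind^+L_0(\lambda)$ and $\Delta(\lambda)\cong\ind^+\Delta_0(\lambda)$ together with the known projective dimensions in $\cO^{\oa}$, and (iii) follows because $L_0(\lambda)$ is a direct summand of $\res L(\lambda)$, via Proposition~\ref{PropGdpd}. (Quoting Corollary~\ref{CorDS} for the monotonicity of $\pd_{\cO^{\oa}}$ under direct summands is a harmless slip, since in $\cO^{\oa}$ G-dimension and projective dimension agree.)

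The genuine gap is in (iv) and (v). Your summand arguments only give the lower bounds $\Gd_{\cO}I(\lambda)\ge\pd_{\cO^{\oa}}I_0(\lambda)$ and $\Gd_{\cO}T(\lambda)\ge\pd_{\cO^{\oa}}T_0(\lambda)$; for equality you must bound $\pd_{\cO^{\oa}}$ of the \emph{whole} restriction from above, and the justification you offer is false. The projective dimension of an indecomposable injective (or tilting) module in $\cO^{\oa}$ is \emph{not} constant on a block: in a regular integral block the projective-injective $I_0(w_0\cdot\mu)$ has projective dimension $0$, while the dominant $I_0(\mu)\cong\nabla_0(\mu)$ has projective dimension $2\ell(w_0)$; as the paper remarks, these dimensions are governed by Lusztig's a-function and vary within the block. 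Likewise, ``any tilting module in a regular block has projective dimension $\ell(w_0)$'' fails for the projective-injective tilting, which has projective dimension $0$. Your fallback (everything lives in a category of finite global dimension) only bounds the restriction by $2\ell(w_0)$, not by $\pd_{\cO^{\oa}}I_0(\lambda)$, resp. $\pd_{\cO^{\oa}}T_0(\lambda)$. The missing upper bounds are obtained as in the paper: for (v), $T(\lambda)$ is a direct summand of $\ind(T_0(\lambda)\otimes\Lambda^{\tp}\fg_{-1})$, so Proposition~\ref{PropGdpd} and Lemma~\ref{LemGdV}(i) give
$$\Gd_{\cO}T(\lambda)\;\le\;\pd_{\cO^{\oa}}\bigl(\Lambda\fg_{\ob}\otimes T_0(\lambda)\otimes\Lambda^{\tp}\fg_{-1}\bigr)\;\le\;\pd_{\cO^{\oa}}T_0(\lambda);$$
the same mechanism works for (iv) using that $I(\lambda)$ is a direct summand of $\coind I_0(\lambda)$, while the paper itself instead quotes \cite[Theorem~6.1(iii)]{CS} for $\pd_{\cO}I(\lambda)=\pd_{\cO^{\oa}}I_0(\lambda)<\infty$ and concludes with Corollary~\ref{CorFinDim}(ii).
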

\begin{proof}
The G-dimensions of~$K(\lambda)$ and $\Delta(\lambda)$ follow immediately from Lemma~\ref{LemI} and \cite[Propositions~3 and~6]{SHPO1}. 
It is clear that~$L_0(\lambda)$ is a direct summand of~$\res L(\lambda)$. Property (iii) thus follows from Proposition~\ref{PropGdpd}. Part (iv) follows from \cite[Theorem~6.1(iii)]{CS} and Corollary~\ref{CorFinDim}(ii). For part~(v), we can observe that~$T_0(\lambda)$ is a direct summand of~$\res T(\lambda)$, whereas~$T(\lambda)$ is a direct summand of~$\ind (T_0(\lambda)\otimes \Lambda^{\tp}\fg_{-1})$.
\end{proof}

\begin{rem}
The values $\pd_{\cO^{\oa}} L_0(\lambda)$ and $\pd_{\cO^{\oa}} \Delta_0(\lambda)$ are presently only explicitly known for special cases, but can in principle be computed from Kazhdan-Lusztig combinatorics. The values~$\pd_{\cO^{\oa}} I_0(\lambda)$ and $\pd_{\cO^{\oa}} T_0(\lambda)$ are known in terms of Lusztig's a-function. An overview is given in~\cite{SHPO4}.
\end{rem}

\begin{cor}
For any~$\lambda\in\Lambda$, we have
$\Gd_{\cO}L(\lambda)=2\ell(w_0)$ if and only if $\lambda\in\Lambda_0^{++}.$
\end{cor}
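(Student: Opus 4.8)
The plan is to prove the two implications separately, exploiting Theorems~\ref{ThmGO} and~\ref{ThmGdO}; throughout we may assume, by \cite{CMW}, that $\lambda\in\Lambda_0$, so that all weights occurring are integral. For the implication $(\Leftarrow)$, recall from Theorem~\ref{ThmGO}(i), applied with $\fp=\fb$ (so that $\ell(w_0^{\fb})=0$ and $d=2\ell(w_0)$), that $\scO(\fg,\fb)$ is $2\ell(w_0)$-Gorenstein; in particular $\Gd_{\cO}L(\lambda)\le 2\ell(w_0)$ for every $\lambda$. If $\lambda\in\Lambda_0^{++}$, the case $w=e$ of Theorem~\ref{ThmGdO}(i) gives $\pd_{\cO^{\oa}}L_0(\lambda)=\Gd_{\cO}K(\lambda)=2\ell(w_0)$, whence Theorem~\ref{ThmGdO}(iii) forces $\Gd_{\cO}L(\lambda)\ge 2\ell(w_0)$. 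Together with the upper bound this yields $\Gd_{\cO}L(\lambda)=2\ell(w_0)$.

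For the implication $(\Rightarrow)$, assume $\Gd_{\cO}L(\lambda)=2\ell(w_0)$. By Theorem~\ref{ThmGO}(iii) this means $\pd_{\cO^{\oa}}\res L(\lambda)=2\ell(w_0)=\gd\,\cO^{\oa}$, the latter equality by \cite{SHPO4} (as used in the proof of Theorem~\ref{ThmGO}). Since the projective dimension of a finite length module is at most the supremum of the projective dimensions of its composition factors, $\res L(\lambda)$ admits a composition factor $L_0(\mu)$ with $\pd_{\cO^{\oa}}L_0(\mu)=2\ell(w_0)$. Now for an integral weight $\mu$ one has $\pd_{\cO^{\oa}}L_0(\mu)=2\ell(w_0)$ only when $\mu\in\Lambda_0^{++}$: in a singular block the global dimension of $\cO^{\oa}$ is strictly smaller than $2\ell(w_0)$, while for $\mu=w\cdot\mu'$ with $\mu'\in\Lambda_0^{++}$ Theorem~\ref{ThmGdO}(i) gives $\pd_{\cO^{\oa}}L_0(\mu)=2\ell(w_0)-\ell(w)$, which is $2\ell(w_0)$ precisely for $w=e$. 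Hence $\mu\in\Lambda_0^{++}$.

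It remains to promote this to $\lambda\in\Lambda_0^{++}$. Every composition factor of $\res L(\lambda)$ is a composition factor of $\res M(\lambda)$, which by the PBW theorem carries a Verma filtration with subquotients $M_0(\lambda-\gamma)$, where $\gamma$ runs over the weights of $\Lambda\fg_{-1}$, i.e. over sums of distinct positive odd roots. Thus $L_0(\mu)$ is strongly linked below some $\lambda-\gamma$; as $\mu$ is regular integral, $\lambda-\gamma$ then lies in the dot orbit $W\cdot\mu$, and since the dominant regular weight $\mu$ is the maximum of $W\cdot\mu$ for the standard order while at the same time $\mu\le\lambda-\gamma$, we get $\lambda-\gamma=\mu$, i.e. $\lambda=\mu+\gamma$. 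The remaining, and main, obstacle is to show that $\gamma=0$: equivalently, that a composition factor $L_0(\mu)$ with $\mu\in\Lambda_0^{++}$ occurring in the Verma module $\res M(\lambda)$ is annihilated on passing to its simple quotient $\res L(\lambda)$ unless $\lambda$ itself lies in $\Lambda_0^{++}$, in which case exactly the copy $L_0(\lambda)=L(\lambda)^{\fg_1}$ of \eqref{invL} survives. I expect to establish this either by analysing the composition factors of the simple module $L(\lambda)$ in its block via super-linkage, or by feeding a Gorenstein resolution of $L(\lambda)$ through the invariants functor $(-)^{\fg_1}$ as in Corollary~\ref{Corg1}, using the canonical $\mZ$-grading on $\scO(\fg,\fb)$ coming from the type~I structure of $\fg$. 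Granting this, $\lambda=\mu\in\Lambda_0^{++}$, which completes the proof.
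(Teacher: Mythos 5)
Your backward implication is fine and coincides with the paper's: for $\lambda\in\Lambda_0^{++}$ one has $\pd_{\cO^{\oa}}L_0(\lambda)=2\ell(w_0)$, Theorem~\ref{ThmGdO}(iii) gives the lower bound, and $d$-Gorensteinness of $\scO(\fg,\fb)$ gives the upper bound. The forward implication, however, contains a genuine gap, and you acknowledge it yourself: after reducing to the existence of a composition factor $L_0(\mu)$ of $\res L(\lambda)$ with $\mu\in\Lambda_0^{++}$ and deducing $\lambda=\mu+\gamma$ for $\gamma$ a sum of distinct positive odd roots, the whole burden of the proof lies in showing that this situation forces $\lambda\in\Lambda_0^{++}$, and you only sketch two possible strategies (``super-linkage'' or $\fg_1$-invariants of a Gorenstein resolution) without carrying either out. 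Note that the reduction itself does not suffice: $\lambda=\mu+\gamma$ with $\mu$ dominant regular need not be dominant regular (already for $\mathfrak{gl}(2|1)$, adding $\epsilon_2-\delta$ lowers $\langle\cdot+\rho,\alpha^\vee\rangle$ by one and can produce a singular $\lambda$), so you genuinely need to rule out that such $L_0(\mu)$ occurs in the \emph{simple} module $\res L(\lambda)$ rather than merely in $\res M(\lambda)$ --- a statement about the composition series of atypical simple modules that is of Kazhdan--Lusztig type and is not established by anything you cite. Also, your parenthetical claim that when $\lambda\in\Lambda_0^{++}$ ``exactly the copy $L_0(\lambda)$ survives'' is false in general (for typical dominant $\lambda$ one has $\res L(\lambda)\cong\Lambda\fg_{-1}\otimes L_0(\lambda)$, which can contain several dominant regular factors), though this does not affect the logic.

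The paper's argument avoids composition-factor analysis entirely, and you may want to compare: assuming $\lambda\notin\Lambda_0^{++}$, it uses the short exact sequence $0\to L(\lambda)\to\dd K(\lambda)\to Q\to 0$ with the dual Kac module, observes that $\res\dd K(\lambda)\cong\Lambda\fg_{-1}\otimes L_0(\lambda)$ has $\pd_{\cO^{\oa}}$ equal to $\pd_{\cO^{\oa}}L_0(\lambda)<2\ell(w_0)$ (the strict inequality coming from \cite{SHPO4}), and notes that $\pd_{\cO^{\oa}}\res L(\lambda)=2\ell(w_0)$ would then force $\pd_{\cO^{\oa}}\res Q=2\ell(w_0)+1$, exceeding the global dimension of $\cO^{\oa}$ --- a contradiction, whence $\Gd_{\cO}L(\lambda)<2\ell(w_0)$ by Theorem~\ref{ThmGO}(iii). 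Reworking your forward direction along these lines (or actually proving the structural claim you postponed) is needed before the argument is complete.
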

\begin{proof}
If~$\lambda\in\Lambda_0^{++}$, then \cite[Proposition~3]{SHPO1} implies that~$\pd_{\cO^{\oa}}L_0(\lambda)=2\ell(w_0)$, which is the maximal G-dimension. The equality~$\Gd_{\cO}L(\lambda)=2\ell(w_0)$ thus follows from Theorem~\ref{ThmGdO}(iii).

Now assume that~$\lambda\not\in\Lambda_0^{++}$ and consider $Q$ defined by the short exact sequence
\begin{equation}\label{sesQdK}0\to L(\lambda)\to \dd K(\lambda)\to Q\to 0.\end{equation}
One finds
$$\pd_{\cO^{\oa}}\res \dd K(\lambda)\;=\;\pd_{\cO^{\oa}}\Lambda\fg_{-1}\otimes L_0(\lambda)\;=\;\pd_{\cO^{\oa}} L_0(\lambda).$$
By \cite[Section~1]{SHPO4} we have $\pd_{\cO^{\oa}} L_0(\lambda)<2\ell(w_0)$.
Now assume that~$\pd_{\cO^{\oa}}\res L(\lambda)=2\ell(w_0)$. The short exact sequence~\eqref{sesQdK} then implies $\pd_{\cO^{\oa}}\res Q=2\ell(w_0)+1$, strictly bigger than the global dimension of~$\cO^{\oa}$, a contradiction.
\end{proof}

\begin{ex} Despite the above corollary, there are examples where the inequality in Theorem~\ref{ThmGdO}(iii) will be strict.
For $\fg=\mathfrak{gl}(2|1)$ one computes
$$\res L(-\epsilon_1)=L_0(-\epsilon_1)\oplus L_0(-2\epsilon_1+\delta).$$
Hence we find
$$\Gd_{\cO} L(-\epsilon_1)=\pd_{\cO^{\oa}}L_0(-2\epsilon_1+\delta)=1\;>\; 0=\pd_{\cO^{\oa}}L_0(-\epsilon_1).$$
Nevertheless, it is clear that for weights which are `generic', see e.g.~\cite[Section~7]{Crelle}, the inequality in Theorem~\ref{ThmGdO}(iii) will be an equality.
\end{ex}

\subsection{First Gorenstein extensions of simple modules}

\begin{prop}\label{PropOutside} Take $\lambda\in\Lambda_0$.
Let $N\in \cO$ be such that~$[N:L(w\cdot\lambda)]=0$ for all $w\in W\backslash\{e\}$ and furthermore $[N:L(\nu)]=0$ for~$\nu<\lambda$. Then we have
$\GE^1_{\cO}(L(\lambda),M)=0.$
\end{prop}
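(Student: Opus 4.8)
The plan is to reduce the computation of $\GE^1_{\cO}(L(\lambda),M)$ to the ordinary extension group $\Ext^1_{\cO}(L(\lambda),M)$ together with the condition on $M$, using the general machinery from Part~I. First I would invoke Proposition~\ref{PropGdpd}: since $\scO(\fg,\fb)$ is a Frobenius extension of the category $\scO(\fg_{\oa},\fb_{\oa})$, which has finite global dimension, the pair $(\cO,\cO^{\oa})$ is a pair of good module categories with $\cO$ naively Gorenstein. By Proposition~\ref{PropPPG} and Lemma~\ref{LemKGX}, there is a short exact sequence $0\to K\to G\to L(\lambda)\to 0$ with $G\in gp\cO$ (equivalently $\res G$ projective in $\cO^{\oa}$) and $K\in p\cO^{(-)}$; this is essentially the content of Proposition~\ref{sesKGMO}. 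Lemma~\ref{LemKGX} then gives $\GE^1_{\cO}(L(\lambda),M)\cong\coker\!\left(\Hom_{\cO}(G,M)\to\Hom_{\cO}(K,M)\right)$, so the statement amounts to showing this cokernel vanishes for the $M=N$ in the hypothesis.

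The natural route is to identify $G$ concretely. The object $K(\lambda)=\ind^+ L_0(\lambda)$ has $\res K(\lambda)=\Lambda\fg_{-1}\otimes L_0(\lambda)$ projective in $\cO^{\oa}$ by Lemma~\ref{LemI}, hence $K(\lambda)\in gp\cO$; moreover $K(\lambda)$ has simple top $L(\lambda)$, so there is a short exact sequence $0\to K'\to K(\lambda)\to L(\lambda)\to 0$. I would argue that this is a special right $gp\cO$-approximation, i.e.\ that $K'$ has finite projective dimension: indeed $\res K'$ sits in a short exact sequence with the two projectives $\res K(\lambda)$ and $\res L(\lambda)$... but $\res L(\lambda)$ is generally not projective, so instead one uses that $K'\in gp\cO^{\perp_1}=p\cO^{(-)}$ follows from $\Ext^1_{\cO}(G',K')=0$ for $G'\in gp\cO$, which by adjunction and $\gd\cO^{\oa}<\infty$ reduces to a statement about $\cO^{\oa}$. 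Then with $G=K(\lambda)$, Lemma~\ref{LemKGX} gives $\GE^1_{\cO}(L(\lambda),M)\cong\coker\!\left(\Hom_{\cO}(K(\lambda),M)\to\Hom_{\cO}(K',M)\right)$, and the long exact sequence attached to $0\to K'\to K(\lambda)\to L(\lambda)\to 0$ identifies this cokernel with the kernel of $\Ext^1_{\cO}(L(\lambda),M)\to\Ext^1_{\cO}(K(\lambda),M)$. Since $K(\lambda)\in gp\cO$, the target $\Ext^1_{\cO}(K(\lambda),M)$ need not vanish, but one can instead observe that the relevant map sits in $\Hom_{\cO}(K',M)\to\GE^1\to 0$, and that any homomorphism $K'\to M$ extends over $K(\lambda)$ precisely when $[M:L(\mu)]=0$ for the composition factors $L(\mu)$ of $K'$.

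The key input is then a description of the composition factors of $K'=\ker(K(\lambda)\tto L(\lambda))$: these are of the form $L(\nu)$ with $\nu<\lambda$, together with possibly $L(w\cdot\lambda)$ for $w\neq e$ arising from the radical filtration of $\ind^+$ of a Verma-flagged object. Precisely, $K(\lambda)$ has a Verma flag with subquotients $M(\mu)$ for weights $\mu\le\lambda$ in the same linkage class, so all composition factors of $\rad K(\lambda)$ are of the shape $L(\nu)$ with either $\nu<\lambda$ or $\nu=w\cdot\lambda$ for some $w\in W\setminus\{e\}$. The hypothesis on $N$ says exactly that none of these composition factors appear in $N$, hence $\Hom_{\cO}(K',N)=0$, and a fortiori the cokernel vanishes, giving $\GE^1_{\cO}(L(\lambda),N)=0$.

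The main obstacle I anticipate is the careful bookkeeping that shows $K'$ has the claimed composition factors and that $K'\in p\cO^{(-)}$, so that $K(\lambda)\tto L(\lambda)$ really is a special right $gp\cO$-approximation in the sense of Corollary~\ref{CorKGX}; this requires combining the Verma-flag structure of $K(\lambda)=\ind^+ L_0(\lambda)$ (via equation~\eqref{ResInd} and the type~I parabolic induction) with the BGG-linkage principle in $\cO$ and the finite global dimension of $\cO^{\oa}$. Once the approximation is pinned down, Lemma~\ref{LemKGX} and Lemma~\ref{LemExtProp} make the vanishing of $\GE^1_{\cO}(L(\lambda),N)$ a direct consequence of $\Hom_{\cO}(K',N)=0$.
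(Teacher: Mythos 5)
There is a genuine gap, and it sits at the very first step of your reduction: you claim that $K(\lambda)=\ind^+ L_0(\lambda)$ is Gorenstein projective because ``$\res K(\lambda)=\Lambda\fg_{-1}\otimes L_0(\lambda)$ is projective in $\cO^{\oa}$ by Lemma~\ref{LemI}''. This is false in general: tensoring with $\Lambda\fg_{-1}$ preserves projectivity but does not create it, and Lemma~\ref{LemI} says something different, namely $\Gd_{\cO}K(\lambda)=\Gd_{\cO^{\oa}}L_0(\lambda)=\pd_{\cO^{\oa}}L_0(\lambda)$, which is typically positive (this is exactly Theorem~\ref{ThmGdO}(i): for $\mu\in\Lambda_0^{++}$ one has $\Gd_{\cO}K(w\cdot\mu)=2\ell(w_0)-\ell(w)$). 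So $0\to K'\to K(\lambda)\to L(\lambda)\to 0$ is not a special right $gp\cO$-approximation, and your fallback claim $K'\in gp\cO^{\perp_1}=p\cO^{(-)}$ also fails: the composition factors of $K'=\rad K(\lambda)$ are in general atypical simples of infinite projective dimension. A concrete counterexample is $\fg=\mathfrak{gl}(1|1)$, $\lambda=0$: there $K'$ is an atypical simple module, $\pd_{\cO}K'=\infty$, and $\Ext^1_{\cO}(L(0),K')\neq0$ with $L(0)$ Gorenstein projective, so $K'\notin gp\cO^{\perp_1}$. Consequently Lemma~\ref{LemKGX} cannot be applied with $G=K(\lambda)$, and with the abstract approximation supplied by Proposition~\ref{sesKGMO} you lose all control over the composition factors of $K$, so the hypothesis on $N$ no longer forces $\Hom_{\cO}(K,N)=0$. (A smaller point: $K(\lambda)$ does not have a Verma flag for $\fg$; the correct statement is simply that all weights of $\rad K(\lambda)$ are strictly below $\lambda$.)

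The paper's argument is structurally different and avoids any approximation of $L(\lambda)$. It uses the Baer-type description of $\GE^1$ from Proposition~\ref{LemExt1}: a nonzero class yields a non-split left $gp\cO$-acyclical extension $0\to N\to M\to L(\lambda)\to0$. Taking $\fg_1$-invariants (Corollary~\ref{Corg1} together with \eqref{invL}) gives an exact sequence $0\to N^{\fg_1}\to M^{\fg_1}\to L_0(\lambda)\to0$ in $\cO^{\oa}$, which splits by linkage thanks to the hypothesis on the factors of $N$; the splitting lifts by adjunction to a map $\gamma\in\Hom_{\fg}(K(\lambda),M)\cong\Hom_{\fg_{\oa}}(L_0(\lambda),M^{\fg_1})$ with $\beta\circ\gamma\neq0$, and since no factor of $\rad K(\lambda)$ occurs in $M$, $\gamma$ factors through $L(\lambda)$ and splits the extension. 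Note that $K(\lambda)$ enters only through this adjunction, not as a Gorenstein projective object; the Gorenstein-theoretic input is the acyclicity statement of Corollary~\ref{Corg1} (which needs $\ind^+P$ for $P$ projective, not $\ind^+L_0(\lambda)$), plus Proposition~\ref{LemExt1}. If you want to salvage your route you would need a special right $gp\cO$-approximation of $L(\lambda)$ with explicitly controlled composition factors in its kernel, which is not available from the general theory.
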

\begin{proof}
Consider a left $gp\cO$-acyclical short exact sequence
$$\xymatrix{0\ar[r]& N\ar[r]^{\alpha}& M\ar[r]^\beta& L(\lambda)\ar[r]& 0.}$$
By Corollary~\ref{Corg1} and equation~\eqref{invL}, taking $\fg_1$-invariants yields a short exact sequence
$$0\to N^{\fg_1}\to M^{\fg_1}\to L_0(\lambda)\to 0.$$
By assumption and~\eqref{invL} all simple factors in $N^{\fg_1}$ have highest weight either $\lambda$ or weights in different Weyl group orbits. By linkage in $\cO^{\oa}$, see e.g.~\cite{BGG}, this sequence must split, so we can take
$$\gamma\in \Hom_{\fg}(K(\lambda),M)\cong \Hom_{\fg_{\oa}}(L_0(\lambda),M^{\fg_1}),$$
such that~$\beta\circ\gamma\not=0$.
By assumption, no factor in the radical of~$K(\lambda)$ appears in $M$. Hence,~$\gamma$ factors through a morphism $\overline{\gamma}:L(\lambda)\to M$ which splits the short exact sequence.

So we find that any left $gp\cC$-acyclical extension of~$L(\lambda)$ and $M$ vanishes and the conclusion follows from Proposition~\ref{LemExt1}.
\end{proof}

\begin{prop}\label{Extorbit}
Consider $\lambda,\mu\in\fh^\ast$ which belong to different Weyl group orbits, then
$$\GE_{\cO}^{1}(L(\lambda),L(\mu))=0.$$ 
\end{prop}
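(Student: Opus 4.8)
The plan is to reduce the claim to Proposition~\ref{PropOutside}, which already shows that $\GE^1_{\cO}(L(\lambda),M)=0$ whenever $M\in\cO$ has no composition factor $L(w\cdot\lambda)$ for $w\neq e$ and no factor $L(\nu)$ with $\nu<\lambda$. So the only thing to check is that $M=L(\mu)$ satisfies these two hypotheses when $\lambda$ and $\mu$ lie in different Weyl group orbits.

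First I would observe that if $\lambda$ and $\mu$ are in different $W$-orbits (for the dot action), then in particular $\mu\neq w\cdot\lambda$ for every $w\in W$, so the simple module $L(\mu)$ certainly has no factor $L(w\cdot\lambda)$ with $w\neq e$ — indeed it has none even for $w=e$. Likewise, the only composition factor of $L(\mu)$ is $L(\mu)$ itself, and since $\mu$ and $\lambda$ are in different orbits we cannot have $\mu<\lambda$ (if $\mu<\lambda$ then $\lambda-\mu$ is a nonzero sum of positive roots, which is irrelevant to orbits directly, so this point needs a word: one should phrase the orbit condition as ``linkage classes'', noting that $\mu<\lambda$ would put $\mu$ in the linkage class of $\lambda$, but a priori the dot-orbit and the linkage class need not coincide for superalgebras). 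The cleanest route is therefore to quote the block decomposition: $L(\lambda)$ and $L(\mu)$ lie in different blocks of $\cO$, hence $\Hom$, $\Ext$, and — I claim — $\GE$ between them all vanish.

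Concretely I would argue as follows. By Proposition~\ref{LemExt1}, a nonzero class in $\GE^1_{\cO}(L(\lambda),L(\mu))$ would give a left $gp\cO$-acyclical non-split short exact sequence $0\to L(\mu)\to M\to L(\lambda)\to 0$. But then $M$ is an extension of two simple modules lying in different blocks, so $M\cong L(\mu)\oplus L(\lambda)$ and the sequence splits, a contradiction. Hence $\GE^1_{\cO}(L(\lambda),L(\mu))=0$. This uses only the fact that $\cO(\fg,\fb)$ decomposes into blocks with no extensions between distinct blocks, together with the given Proposition~\ref{LemExt1}; alternatively one invokes Proposition~\ref{PropOutside} directly with $M=L(\mu)$ after noting that all composition factors of $L(\mu)$ (namely $L(\mu)$ itself) avoid the orbit of $\lambda$ and avoid weights $\nu<\lambda$, the latter because $\nu<\lambda$ forces $\nu$ into the linkage class of $\lambda$.

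The only mild obstacle is bookkeeping about what ``different Weyl group orbits'' buys us: one must be careful that the relevant notion separating $L(\lambda)$ and $L(\mu)$ into different blocks is the $W$-dot-orbit refined by central character, and that $\mu<\lambda$ implies same linkage class. Once that is pinned down — which is standard for category $\cO$ over classical Lie superalgebras, cf.~\cite{BGG, book} — the statement is an immediate corollary of Proposition~\ref{PropOutside}. I would write the proof as a one-line deduction: ``Since $\mu$ lies in a different Weyl group orbit than $\lambda$, the module $N=L(\mu)$ satisfies the hypotheses of Proposition~\ref{PropOutside} (it has no factor $L(w\cdot\lambda)$, and $\mu\not<\lambda$), so the claim follows.''
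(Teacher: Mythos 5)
Your reduction to Proposition~\ref{PropOutside} is the right idea and is exactly what the paper does, but both of your ways of verifying its hypotheses contain a genuine gap, and it is the same gap in two guises: you assume that ``different Weyl group orbits'' separates $L(\lambda)$ and $L(\mu)$ homologically. The block-decomposition route is simply false for Lie superalgebras: linkage in $\scO(\fg,\fb)$ is \emph{not} governed by the $W$-dot-orbits alone, since atypical weights differing by odd roots lie in the same block while lying in different $W$-orbits. For instance, for $\mathfrak{gl}(1|1)$ the Kac module $K(0)$ is a non-split extension of the trivial module $\mC=L(0)$ by an atypical simple whose highest weight is in a different (trivial) Weyl orbit, so $\Ext^1_{\cO}$ between simples in different Weyl orbits is frequently non-zero; your claim that ``Hom, Ext, and GE between them all vanish'' by blocks cannot be repaired, and indeed the whole point of the proposition is that the \emph{Gorenstein} first extension vanishes even though the ordinary one need not. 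Your statement that the blocks are ``the $W$-dot-orbit refined by central character'' is backwards: atypical blocks are coarser than Weyl orbits, not finer.

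The same issue undermines the direct application of Proposition~\ref{PropOutside}: that proposition requires, besides $[N:L(w\cdot\lambda)]=0$ for $w\neq e$, also $[N:L(\nu)]=0$ for $\nu<\lambda$, and with $N=L(\mu)$ this means $\mu\not<\lambda$. But $\mu<\lambda$ is perfectly possible while $\mu$ lies in a different Weyl orbit (take $\lambda-\mu$ a sum of odd positive roots), so ``$\nu<\lambda$ forces $\nu$ into the linkage class of $\lambda$'' does not help, because the hypothesis of the proposition is about Weyl orbits, not linkage classes. The missing ingredient is the duality step the paper uses: by equation~\eqref{eqExtd}, coming from the simple-preserving duality $\dd$, one has $\GE^1_{\cO}(L(\lambda),L(\mu))\cong\GE^1_{\cO}(L(\mu),L(\lambda))$, so after possibly swapping $\lambda$ and $\mu$ one may assume $\mu\not\le\lambda$ (both $\mu\le\lambda$ and $\lambda\le\mu$ would force $\mu=\lambda$), and only then does Proposition~\ref{PropOutside} apply with $N=L(\mu)$. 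With that one extra reduction your argument becomes the paper's proof; without it, the case $\mu<\lambda$ is not covered.
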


\begin{proof}
By equation~\eqref{eqExtd}, we can assume that~$\mu\not\le\lambda$ and take $M=L(\mu)$ in Proposition~\ref{PropOutside}.
\end{proof}

There are also natural restrictions on the first Gorenstein extensions between modules with highest weight inside the same Weyl group orbit.
\begin{prop}\label{PropExtOrbit}
Consider $\lambda,\mu\in\fh^\ast$ which belong to the same Weyl group orbit, then
$$\dim\GE_{\cO}^{1}(L(\lambda),L(\mu))\;\;\le\;\;\dim\Ext_{\cO}^{1}(L(\lambda),L(\mu))=\dim \Ext^1_{\cO^{\oa}}(L_0(\lambda),L_0(\mu)). $$ 
When the weights are typical, the inequality is actually an equality.
\end{prop}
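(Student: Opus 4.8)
The plan is to deduce both the inequality and the identification of the ordinary $\Ext^1$ from the structural results already in hand. First I would establish the displayed equality $\dim\Ext^1_{\cO}(L(\lambda),L(\mu))=\dim\Ext^1_{\cO^{\oa}}(L_0(\lambda),L_0(\mu))$ for $\lambda,\mu$ in the same Weyl group orbit. The natural tool is Proposition~\ref{PropPair}(ii): $\scO(\fg,\fb)$ is a Frobenius extension of $\scO(\fg_{\oa},\fb_{\oa})$ with $\RR=\res$, $\II=\ind$, $\CC=\coind$. Since $\ind^+$ is exact and maps projectives to projectives (Lemma~\ref{LemBell} and equation~\eqref{eqadjun}), and $K(\lambda)=\ind^+L_0(\lambda)$ has simple top $L(\lambda)$ with $L(\lambda)^{\fg_1}\cong L_0(\lambda)$ by~\eqref{invL}, I would compare a projective cover of $L(\lambda)$ in $\cO$ with the image under $\ind^+$ of a projective cover of $L_0(\lambda)$ in $\cO^{\oa}$; taking $\fg_1$-invariants (or more directly applying adjunction $\Hom_{\fg}(\ind^+P_0,L(\mu))\cong\Hom_{\fg_{\oa}}(P_0,\res L(\mu))$) together with the $\mZ$-grading induced by $z\in\fh$ should pin down the first syzygy and give the $\Ext^1$ comparison. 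Here the key point is that in the relevant orbit only one simple, namely $L_0(\lambda)$, contributes to $\res L(\lambda)$ in the appropriate graded piece, so the radical of $K(\lambda)$ has no composition factor in the orbit that could deform the extension.

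Next, for the inequality $\dim\GE^1_{\cO}(L(\lambda),L(\mu))\le\dim\Ext^1_{\cO}(L(\lambda),L(\mu))$, I would invoke Proposition~\ref{PropTateGExt}: for $X\in gp\cC^{(-)}$ there is an exact sequence of functors
\begin{equation*}
0\to\GE^1_{\cO}(X,-)\to\Ext^1_{\cO}(X,-)\to\widehat{\Ext}^1_{\cO}(X,-)\to\cdots,
\end{equation*}
so $\GE^1_{\cO}(L(\lambda),-)$ is literally a subfunctor of $\Ext^1_{\cO}(L(\lambda),-)$. Since $\scO(\fg,\fb)$ is $d$-Gorenstein by Theorem~\ref{ThmGO}, every object lies in $gp\cC^{(-)}$, so this applies to $X=L(\lambda)$ and the inequality is immediate upon evaluating at $N=L(\mu)$.

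Finally, for the typical case I would argue that the map $\GE^1_{\cO}(L(\lambda),L(\mu))\to\Ext^1_{\cO}(L(\lambda),L(\mu))$ is an isomorphism, equivalently $\widehat{\Ext}^1_{\cO}(L(\lambda),L(\mu))=0$, equivalently (by Corollary~\ref{TateCor} and the short exact sequence~\eqref{KGX}, or by Lemma~\ref{LemKGX}) $\Ext^1_{\cO}(G,L(\mu))=0$ where $0\to K\to G\to L(\lambda)\to 0$ is a special right $gp\cO$-approximation. The point is that for $\lambda$ typical one can take $G=K(\lambda)=\ind^+L_0(\lambda)$: indeed $\res K(\lambda)=\Lambda\fg_{-1}\otimes L_0(\lambda)$ is projective in $\cO^{\oa}$ precisely when $L_0(\lambda)$ is, but more relevantly typicality forces $K(\lambda)$ itself to be a direct summand of a projective (this is where I would cite the standard fact that for typical weights $K(\lambda)$ and $M(\lambda)$ coincide and behave projectively-relatively, cf.\ the results of~\cite{preprint, book}), so $\Gd_{\cO}K(\lambda)=0$ and $K=0$; then $\GE^1_{\cO}(L(\lambda),-)=\coker(\Hom_{\cO}(K(\lambda),-)\to\Hom_{\cO}(K,-))$ collapses to $\Hom(K(\lambda),-)$-data that matches $\Ext^1_{\cO}(L(\lambda),-)$ via the defining sequence $0\to K\to K(\lambda)\to L(\lambda)\to 0$ with $K=\rad K(\lambda)$, whose long exact sequence identifies $\Ext^1_{\cO}(L(\lambda),L(\mu))$ with the relevant cokernel. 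The main obstacle I anticipate is the typical case: making precise that $K(\lambda)$ is Gorenstein projective (hence the Tate term vanishes) requires knowing the homological behaviour of $\res K(\lambda)=\Lambda\fg_{-1}\otimes L_0(\lambda)$ and invoking that for typical $\lambda$ the module $L_0(\lambda)$ lies in a semisimple-enough block or at least that $\res K(\lambda)$ is projective — the standard typicality dictionary for category $\cO$ — rather than any abstract argument, so the delicate step is correctly citing and applying that classical input.
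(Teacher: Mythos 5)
Your middle step is correct and is exactly the paper's: since $\scO(\fg,\fb)$ is Gorenstein (Theorem~\ref{ThmGO}), every object, in particular $L(\lambda)$, lies in $gp\cO^{(-)}$, and Proposition~\ref{PropTateGExt} exhibits $\GE^1_{\cO}(L(\lambda),-)$ as a subfunctor of $\Ext^1_{\cO}(L(\lambda),-)$, which gives the inequality. For the right-hand equality the paper simply cites \cite[Lemma~3.9]{CS}; your sketch via projective covers, adjunction for $\ind^+$ and the $z$-grading points in a reasonable direction, but it never actually constructs the comparison map nor proves it bijective ("should pin down the first syzygy" is not an argument), so as written this nontrivial equality is not established.

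The genuine gap is the typical case. You claim that typicality forces $K(\lambda)=\ind^+L_0(\lambda)$ to be (a direct summand of) a projective, hence Gorenstein projective, so that one may take $G=K(\lambda)$ and $K=0$ and conclude $\widehat{\Ext}^1_{\cO}(L(\lambda),-)=0$. This is false: by Theorem~\ref{ThmGdO}(i), $\Gd_{\cO}K(\lambda)=\pd_{\cO^{\oa}}L_0(\lambda)$, which vanishes only when $L_0(\lambda)$ is projective in $\cO^{\oa}$, i.e.\ when $\lambda$ is antidominant; typicality is a condition on odd roots and has no bearing on this, so for instance a typical dominant regular $\lambda$ has $\Gd_{\cO}K(\lambda)=2\ell(w_0)\neq 0$. (Your write-up also wavers between $K=0$ and $K=\rad K(\lambda)$, which are incompatible.) The correct and much shorter argument, which is the paper's: typical simple modules have finite projective dimension in $\cO$ by \cite[Theorem~6.1(i)]{CS}, and then Lemma~\ref{LemExtProp}(ii) gives $\GE^1_{\cO}(L(\lambda),-)\cong\Ext^1_{\cO}(L(\lambda),-)$ outright; equivalently, Remark~\ref{RemTate}(i) makes the Tate term in the exact sequence of Proposition~\ref{PropTateGExt} vanish.
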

\begin{proof}
The right equality is
is \cite[Lemma~3.9]{CS}. The inequality is a special case of Proposition~\ref{PropTateGExt}.
It is well known that typical modules have finite projective dimension, see e.g. \cite[Theorem~6.1(i)]{CS}. The claim for typical modules thus follows form Lemma~\ref{LemExtProp}(ii).
\end{proof}
We show that the weak inequality in Proposition~\ref{PropExtOrbit} can not be replaced by an equality.
\begin{lemma}\label{ExtTriv}
If~$mn\not=0$, we have $\GE^1_{\cO}(\mC,L)=0$ for any simple $L\in\cO$, with $\mC\cong L(0)$ the trivial $\fg$-module.
\end{lemma}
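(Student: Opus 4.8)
The plan is to reduce, via Proposition~\ref{Extorbit}, to the case where $L=L(\mu)$ is linked to $\mC$ over $\fg_{\oa}$, and then to run the mechanism of Proposition~\ref{PropOutside} but with the antidistinguished Borel $\bar\fb=\fb_{\oa}\oplus\fg_{-1}$ in place of $\fb$; here the type-I structure of $\fg=\mathfrak{gl}(m|n)$ and the hypothesis $mn\ne 0$ are essential. Concretely: if $\mu$ does not lie in the $W$-dot-orbit of $0$, then Proposition~\ref{Extorbit} gives $\GE^1_{\cO}(\mC,L(\mu))=\GE^1_{\cO}(L(0),L(\mu))=0$. So assume $\mu\in W\cdot 0$; since $W=S_m\times S_n$ fixes $\sum_j\delta_j$, every such $\mu=w\cdot 0=w\rho-\rho$ has the sum of its $\delta$-coefficients equal to $0$. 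Suppose, for a contradiction, that $\GE^1_{\cO}(\mC,L(\mu))\ne 0$. By Proposition~\ref{LemExt1} there is a left $gp\cO$-acyclical non-split short exact sequence $0\to L(\mu)\xrightarrow{\alpha}M\xrightarrow{\beta}\mC\to 0$ whose composition factors are precisely $L(0)$ and $L(\mu)$.

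The heart of the argument is to take $\fg_{-1}$-invariants. By Corollary~\ref{Corg1} the sequence $0\to L(\mu)^{\fg_{-1}}\to M^{\fg_{-1}}\to\mC\to 0$ is exact in $\cO^{\oa}$. Now $L(\mu)$ has a $\bar\fb$-highest weight $\bar\mu$, and the analogue of \eqref{invL} for $\bar\fb$ (which is the distinguished Borel of $\mathfrak{gl}(n|m)$) gives $L(\mu)^{\fg_{-1}}\cong L_0(\bar\mu)$. Passing from $\fb$ to $\bar\fb$ by odd reflections, $\bar\mu=\mu-\sum_{\gamma\in S}\gamma$ for a set $S$ of positive odd roots $\gamma=\epsilon_i-\delta_j$, so the sum of the $\delta$-coefficients of $\bar\mu$ equals $|S|\ge 0$, and it vanishes iff $\bar\mu=\mu$. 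But $\bar\mu=\mu$ would force $L(\mu)$ to be concentrated in a single degree of the $z$-grading, whence $\fg_{\ob}$ — and hence $\fg_{\oa}=[\fg_{-1},\fg_1]$ — acts trivially on $L(\mu)$, i.e.\ $\mu=0$ (this is where $mn\ne 0$ is used, so that $\fg_{\pm 1}\ne 0$). Therefore either $\bar\mu=0$, in which case $\Ext^1_{\cO^{\oa}}(\mC,\mC)=0$, or $|S|>0$, in which case $L_0(\bar\mu)$ has nonzero $\delta$-coefficient-sum and so lies in a different block of $\cO^{\oa}$ from $\mC$ (the $\delta$-coefficient-sum is the scalar by which the trace of the $\mathfrak{gl}(n)$-summand acts, a block invariant). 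In both cases $\Ext^1_{\cO^{\oa}}(\mC,L_0(\bar\mu))=0$, so the $\fg_{-1}$-invariant sequence splits.

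Choosing a splitting $s\colon\mC\to M^{\fg_{-1}}$ and using adjunction, I obtain $\gamma\colon K^-(0):=\ind^- L_0(0)\to M$ with $\beta\circ\gamma\ne 0$. By PBW, $\res K^-(0)\cong\Lambda\fg_1$ as a $\fg_{\oa}$-module, so every composition factor $L(\nu)$ of $\rad K^-(0)$ has $\nu$ a nonzero weight of $\Lambda\fg_1$, hence a weight whose $\delta$-coefficients have strictly negative sum; in particular $\nu\notin\{0,\mu\}$, so $L(\nu)$ is not a composition factor of $M$. Consequently $\gamma(\rad K^-(0))=0$, so $\gamma$ factors through $K^-(0)/\rad K^-(0)\cong\mC$ as a map $\overline\gamma\colon\mC\to M$ with $\beta\circ\overline\gamma$ an isomorphism; thus $\overline\gamma$ splits $\beta$, contradicting non-splitness. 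Hence $\GE^1_{\cO}(\mC,L(\mu))=0$.

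I expect the main obstacle to be the middle paragraph: identifying $L(\mu)^{\fg_{-1}}$ with $L_0(\bar\mu)$ through the $z$-grading and odd reflections, together with the $\delta$-coefficient bookkeeping that detects when $L_0(\bar\mu)$ leaves the principal block of $\cO^{\oa}$. This is exactly the place where both the type-I structure of $\mathfrak{gl}(m|n)$ and the hypothesis $mn\ne 0$ enter (for a reductive Lie algebra the statement is false, since there $gp\cO=p\cO$ and $\GE^1_{\cO}=\Ext^1_{\cO}$). The remaining steps are a routine adaptation of Propositions~\ref{Extorbit} and~\ref{PropOutside}.
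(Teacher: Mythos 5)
Your argument is correct, and it rests on the same underlying idea as the paper's proof: reduce via Proposition~\ref{Extorbit} to weights in the $0$-orbit, and then exploit the anti-distinguished Borel $\bar\fb=\fb_{\oa}\oplus\fg_{-1}$ through odd reflections to see that a nontrivial $L(\mu)$ with $\mu\in W\cdot 0$ falls out of linkage with $\mC$ on the other side. The difference is one of implementation: the paper simply applies Proposition~\ref{Extorbit} a second time, for $\bar\fb$ (using that $\scO(\fg,\fb)=\scO(\fg,\bar\fb)$ and that all results hold for the distinguished Borel of $\mathfrak{gl}(n|m)$), and disposes of self-extensions of $\mC$ by Proposition~\ref{PropExtOrbit}; you instead re-run the mechanism behind Proposition~\ref{PropOutside} on the $\fg_{-1}$-side explicitly (Proposition~\ref{LemExt1}, Corollary~\ref{Corg1}, adjunction with $\ind^-$), with the $\delta$-coefficient sum --- the scalar by which the central element $\sum_k E_{m+k,m+k}$ of $\fg_{\oa}$ acts, hence a block invariant of $\cO^{\oa}$ --- serving as the explicit linkage obstruction; this also lets you treat $\mu=0$ uniformly rather than quoting Proposition~\ref{PropExtOrbit}. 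Your version is more self-contained and makes precise the paper's ``it follows quickly'' step. One small inaccuracy: for $\mathfrak{gl}(m|n)$ one has $[\fg_{-1},\fg_1]=\mathfrak{sl}(m|n)\cap\fg_{\oa}$, not all of $\fg_{\oa}$, so triviality of the $\fg_{\ob}$-action only forces $\mu$ to be a multiple of $\sum_i\epsilon_i-\sum_j\delta_j$; since you have already noted that $\mu\in W\cdot 0$ has $\delta$-coefficient sum $0$ and $n\neq 0$, this still yields $\mu=0$, so the slip is harmless but should be corrected.
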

\begin{proof}
By Proposition~\ref{Extorbit} it suffices to prove that~$\mC$ has no first Gorenstein extensions with the modules $L(\lambda)$, where $\lambda$ is in the~$0$-orbit.
We can calculate the highest weight of $L(\lambda)$ with respect to the anti-distinguished Borel subalgebra, by using odd reflections, see e.g. \cite[Section~3.4]{book}
and \cite[Lemma~2.3]{Crelle}. The module~$\mC$ has of course highest weight $0$ in any root system. It follows quickly that the only highest weight module which has highest weight in the~$0$-orbit for both systems of positive roots is~$0$. We can thus apply Proposition~\ref{Extorbit} to exclude the corresponding possible extensions. Self-extensions of~$\mC$ are excluded by Proposition~\ref{PropExtOrbit}.
\end{proof}

\begin{prop}\label{Propm1}
For $\fg=\mathfrak{gl}(m|1)$, a finite dimensional atypical simple $L\in\cO$ and an arbitrary simple $L'\in\cO$, we have
$$\GE^1_{\cO}(L,L')=0.$$
\end{prop}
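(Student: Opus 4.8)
The statement concerns $\fg=\mathfrak{gl}(m|1)$, where $\fg$ is of type I with $\fg_1\cong\mC^m$ one-dimensional as an $\fl_{\oa}=\fg_0$-module space (it is $\mC^m\otimes(\mC^1)^\ast$, so $\dim\fg_1=m$, but the odd part is ``small''). The key structural point is that for $\fg=\mathfrak{gl}(m|1)$ an atypical finite dimensional simple $L$ has an especially simple restriction behaviour, and that $\Lambda\fg_1$ has only very few graded pieces. The plan is to follow the template established in the proofs of Propositions~\ref{PropOutside}, \ref{Extorbit} and Lemma~\ref{ExtTriv}: reduce the vanishing of $\GE^1_{\cO}(L,L')$ to combinatorial facts about highest weights and linkage, using Proposition~\ref{LemExt1} to turn a non-zero Gorenstein extension class into a genuine left $gp\cO$-acyclical short exact sequence $0\to L'\to M\to L\to 0$, and then use Corollary~\ref{Corg1} (taking $\fg_1$-invariants) together with equation~\eqref{invL} to derive a contradiction.

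\textbf{Key steps.} First, by Proposition~\ref{Extorbit} we may assume $L$ and $L'$ lie in the same Weyl group orbit (for different orbits the group already vanishes). Next, assume for contradiction that $\GE^1_{\cO}(L,L')\neq 0$; by Proposition~\ref{LemExt1} there is a left $gp\cO$-acyclical non-split short exact sequence $0\to L'\to M\to L\to 0$. Apply Corollary~\ref{Corg1}: taking $\fg_1$-invariants gives an exact, left $gp\cO^{\oa}$-acyclical sequence $0\to (L')^{\fg_1}\to M^{\fg_1}\to L^{\fg_1}\to 0$ in $\cO^{\oa}$. By~\eqref{invL}, $L^{\fg_1}\cong L_0(\lambda)$ where $L=L(\lambda)$, and likewise for $L'$. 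The crucial input, special to $\mathfrak{gl}(m|1)$ and to $L$ being finite dimensional and atypical, is a description of the $\fg_0$-composition factors of $\res L$ — for $\mathfrak{gl}(m|1)$ the atypical finite dimensional simples are ``thin'' (their characters are multiplicity-free sums over a small number of weights, governed by a single odd reflection / the atypicality block structure), so $\res L$ and more importantly $L^{\fg_1}$ is a single simple $L_0(\lambda)$ and the ``missing'' socle of $K(\lambda)$ relative to $L(\lambda)$ is controlled. One then argues as in Proposition~\ref{PropOutside}: since the sequence of $\fg_1$-invariants involves simples in controlled highest-weight positions and linkage in $\cO^{\oa}$ forces it to split, we obtain $\gamma\in\Hom_{\fg}(K(\lambda),M)$ with $\beta\circ\gamma\neq0$; because no factor from the radical of $K(\lambda)$ appears in $M$ (this is where finite-dimensionality and atypicality of $L$ for $\mathfrak{gl}(m|1)$ are used to rule out the relevant composition factors of $M$, via the structure of $M$ as an extension of $L$ by the simple $L'$ in the same orbit), $\gamma$ factors through $L(\lambda)=\mathrm{top}\,K(\lambda)$ and splits the sequence — contradiction. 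Finally, self-extensions $\GE^1_{\cO}(L,L)$ are handled by Proposition~\ref{PropExtOrbit} together with the known vanishing of $\Ext^1_{\cO^{\oa}}(L_0(\lambda),L_0(\lambda))$, or absorbed into the same argument.

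\textbf{Main obstacle.} The hard part will be the combinatorial/structural input: showing that for $\fg=\mathfrak{gl}(m|1)$, $L$ finite dimensional atypical, and $M$ a (possibly non-split) extension $0\to L'\to M\to L\to 0$ with $L'$ simple in the same orbit, no composition factor of the radical $\rad K(\lambda)$ occurs in $M$. For $\mathfrak{gl}(m|1)$ the atypical blocks have a well-understood (linear, type $A_\infty$) structure, and the socle/radical filtration of $K(\lambda)=\ind^+L_0(\lambda)$ is explicitly known, so this should reduce to checking that the highest weights appearing in $\rad K(\lambda)$ are incompatible with those of $L$ and $L'$ in the requisite sense (e.g. strictly lower, or in a different orbit under the anti-distinguished Borel, paralleling the odd-reflection argument in Lemma~\ref{ExtTriv}). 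Once that incompatibility is pinned down, the homological machinery — Proposition~\ref{LemExt1}, Corollary~\ref{Corg1}, equation~\eqref{invL}, and $\cO^{\oa}$-linkage — assembles the contradiction exactly as in Proposition~\ref{PropOutside}.
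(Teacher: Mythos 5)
There is a genuine gap, and it sits exactly where your reduction leaves you. After invoking Proposition~\ref{Extorbit} you are left with the case where $L=L(\lambda)$ and $L'=L(\mu)$ have highest weights in the \emph{same} Weyl group orbit, and you then propose to run the argument of Proposition~\ref{PropOutside}: take $\fg_1$-invariants of the acyclical sequence via Corollary~\ref{Corg1} and claim that ``linkage in $\cO^{\oa}$ forces it to split''. But that splitting is precisely what fails in the remaining case: $L_0(\lambda)$ and $L_0(\mu)$ now lie in the same block of $\cO^{\oa}$, and $\Ext^1_{\cO^{\oa}}(L_0(\lambda),L_0(\mu))$ is typically non-zero (e.g.\ $\mu=s\cdot\lambda$ for a simple reflection), which is exactly why Proposition~\ref{PropOutside} carries the hypothesis that no $L(w\cdot\lambda)$ with $w\neq e$ occurs. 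Nor does the acyclicity rescue you downstairs: Corollary~\ref{Corg1} only adds that the invariants sequence is left $gp\cO^{\oa}$-acyclical, and since $\cO^{\oa}$ has finite global dimension one has $gp\cO^{\oa}=p\cO^{\oa}$, so this condition is vacuous. Without the splitting there is no map $K(\lambda)\to M$ hitting the top, and the rest of your argument (including the unverified claim that no factor of $\rad K(\lambda)$, in particular $L(\mu)$, occurs in $M$) does not get off the ground. Note also that by \cite[Lemma~3.9]{CS}, quoted in Proposition~\ref{PropExtOrbit}, $\Ext^1_{\cO}(L,L')$ itself is non-zero for such same-orbit pairs, so any proof must use the $gp\cO$-acyclicity in a more essential way than composition-factor bookkeeping; an argument that only used the factors of $M$ would wrongly show the ordinary extensions vanish.

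The paper avoids this by a different mechanism: it writes the atypical finite dimensional $L$ as $\AAf(\mC)$ for a composition $\AAf$ of indecomposable translation functors (\ref{TransFun}, using \cite[Theorem~5.2]{Kujawa}), uses the adjunction \eqref{GEFE} to get $\GE^1_{\cO}(L,L')\cong\GE^1_{\cO}(\mC,\BB L')$ for the adjoint functor $\BB$, checks that $\BB L'$ is again simple for the relevant $L'$, and then applies Lemma~\ref{ExtTriv}, where the trivial module is handled by odd reflections to the anti-distinguished Borel so that Proposition~\ref{Extorbit} (different orbits) and Proposition~\ref{PropExtOrbit} (no self-extensions) apply. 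If you want to salvage your approach, you would need a genuinely new argument for the same-orbit case, for instance some substitute for the splitting step that exploits where the acyclicity actually bites; as written, the proposal does not prove the statement.
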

\begin{proof}
Up to an unimportant shift in the action of the centre, any atypical simple finite dimensional module is of the form
$$L\,=\, L(k_1\epsilon_1+k_2\epsilon_2+\cdots+k_m\epsilon_m+i\delta),$$
for some $i\in[0,m-1]$, with $k_i=0$ and $k_1\ge k_2\ge\cdots\ge k_m$. It is a straightforward exercise, using~\cite[Theorem~5.2]{Kujawa}, that for this $L$ there exists a composition~$\AAf$ of indecomposable translation functors as in \ref{TransFun}, such that 
$L\cong\AAf(\mC)$. Furthermore, for the adjoint translation functor~$\BB$, we find that~$\BB( L')$ is simple, for all simple highest weight modules $L'$ with highest weight in the orbit of  
$k_1\epsilon_1+k_2\epsilon_2+\cdots+k_m\epsilon_m+i\delta$. The result thus follows from equation~\eqref{GEFE}, Proposition~\ref{PropExtOrbit} and Lemma~\ref{ExtTriv}.
\end{proof}
\begin{rem}
The computation with translation functors is quite tedious. However, the conceptual reason that simple modules are mapped to simple modules is rather straightforward. It follows from the observation that at each stage we translate an atypical module with regular highest weight to a module with the same properties.
\end{rem}

\begin{cor}
For $\fg=\mathfrak{gl}(2|1)$ and simple modules $L,L'$, with at least one atypical, we have
$$\GE^1_{\cO}(L,L')=0.$$
\end{cor}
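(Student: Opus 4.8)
The plan is to reduce the statement, via the duality \eqref{eqExtd} and translation functors, to Proposition~\ref{Propm1} and Lemma~\ref{ExtTriv}.

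First I would carry out the standard reductions. By \eqref{eqExtd} together with the fact that the simple-preserving duality $\dd$ fixes every simple module, $\GE^1_{\cO}(L,L')\cong\GE^1_{\cO}(L',L)$, so one may assume $L$ is the atypical module. If $L'$ is typical it lies in a block of $\cO$ different from that of $L$ (typicality being a block invariant), and a Gorenstein resolution of $L$ chosen inside its own block shows $\GE^1_{\cO}(L,L')=0$ directly. Hence one may assume $L'$ is atypical as well and, by the same block argument, that $L$ and $L'$ lie in a single block $\cB$. Since $\mathfrak{gl}(2|1)$ has atypicality degree at most one, $\cB$ is an integral block of atypicality degree exactly one.

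Next I would invoke the translation functor mechanism from the proof of Proposition~\ref{Propm1} and the remark following it. The claim to establish is combinatorial: for $\fg=\mathfrak{gl}(2|1)$, every integral atypical simple module $L=L(\lambda)$ is isomorphic to $\AAf(\mathbb C)$ for some composition $\AAf$ of indecomposable translation functors $\FF_i,\EE_i$ of~\ref{TransFun} each of which sends the simple module it is applied to to a simple module, and such that the composition $\BB$ of the adjoint functors likewise sends every simple module in $\cB$ to a simple module. Once this is known, iterating the adjunction \eqref{GEFE} gives
$$\GE^1_{\cO}(L,L')\;\cong\;\GE^1_{\cO}(\AAf(\mathbb C),L')\;\cong\;\GE^1_{\cO}(\mathbb C,\BB L'),$$
and the right-hand group vanishes by Lemma~\ref{ExtTriv} since $\BB L'$ is simple (the case of $L'$ in a block different from that of $L$ having already been settled above).

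The hard part will be the combinatorial claim, i.e.\ that the graph on the integral atypical simple $\mathfrak{gl}(2|1)$-modules, with edges given by the simplicity-preserving indecomposable translation functors, is connected and has $\mathbb C$ among its vertices, and that the chosen paths can also be traversed through simple modules by the adjoint functors. For finite-dimensional $L$ this is exactly Proposition~\ref{Propm1}; the genuinely new input concerns the infinite-dimensional atypical simples, and the restriction to $m=2$ is precisely what keeps the verification manageable, since one must ensure that no translation through a wall — which would destroy simplicity — is forced along the way. This can be checked directly from the explicit block combinatorics of $\cO(\mathfrak{gl}(2|1))$, for instance using odd reflections and \cite[Theorem~5.2]{Kujawa}, keeping in mind that the maximally singular atypical weights (such as $-\rho$) in fact already lie in the principal block, so that the only real bookkeeping concerns which simple modules survive translation onto a wall.
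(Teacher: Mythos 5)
Your reductions at the start are fine (duality to put the atypical module in the first slot, and the vanishing across blocks, which in fact follows already from Proposition~\ref{Extorbit} since different blocks force different Weyl orbits), but the heart of your argument is exactly the step you leave unproved, and that step is genuinely problematic. You need that \emph{every} integral atypical simple of $\cO(\mathfrak{gl}(2|1))$ -- including the infinite-dimensional ones with singular or anti-dominant highest weight -- is of the form $\AAf(\mC)$ for a composition $\AAf$ of the functors of~\ref{TransFun} that preserves simplicity at every stage, \emph{and} whose adjoint $\BB$ sends every relevant simple to a simple. Proposition~\ref{Propm1} establishes this only for the finite-dimensional (dominant regular) atypical simples, and the mechanism indicated in the remark after it is that one only ever translates atypical modules with \emph{regular} highest weight to modules with the same properties. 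To reach a singular highest weight you must translate onto a wall, and while translation onto a wall is harmless in one direction, its adjoint (translation out of the wall) does not send simples to simples in general; so the hypothesis ``$\BB L'$ simple'' fails precisely where you need it. Nor can you salvage this by filtering $\BB L'$ by simples: the long exact sequence~\eqref{les} for $\GE_{\cO}$ is only available for left $gp\cO$-acyclical short exact sequences, so Lemma~\ref{ExtTriv} does not propagate to non-simple second arguments. A similar unverified reachability question arises for the anti-dominant regular simples. So as written the proposal has a real gap, not just a routine verification.

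The paper avoids all of this by a short case analysis on the atypical $L=L(\lambda)$: if $\lambda$ is dominant regular then $L$ is finite dimensional and Proposition~\ref{Propm1} applies; if $\lambda$ is singular, Proposition~\ref{Extorbit} reduces to the self-extension case, which is killed by the bound of Proposition~\ref{PropExtOrbit} (no self-extensions in $\cO^{\oa}$); if $\lambda$ is anti-dominant regular, the duality~\eqref{eqExtd} together with Proposition~\ref{Extorbit} forces $L'$ to be finite dimensional atypical (or equal to $L$), and one is back in the first two cases. If you replace your ``hard part'' by this trichotomy, your argument closes without any new translation-functor combinatorics.
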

\begin{proof}
There are three possibilities for~$L=L(\lambda)$ atypical. Either $\lambda$ is dominant regular, singular or anti-dominant regular. Using Proposition~\ref{Extorbit}, the second case leads to vanishing of extension since there are no self-extensions. By Proposition~\ref{Propm1}, the first case vanishes. Finally, the third case also vanishes by equation~\eqref{eqExtd}, because now Proposition~\ref{Extorbit} requires $L'$ to be finite dimensional.
\end{proof}

\begin{flushleft}
	K. Coulembier\qquad \url{kevin.coulembier@sydney.edu.au}
	
	School of Mathematics and Statistics, University of Sydney, NSW 2006, Australia

\end{flushleft}

\end{document}